%% file: paper.tex
\documentclass[russian,british]{amsart}
\usepackage[utf8]{inputenc}         
\usepackage[T1,T2A]{fontenc}        
\usepackage{babel}
\usepackage{amssymb}                
\usepackage{mathtools}              
\usepackage{thmtools,thm-restate}   
\usepackage[inline]{enumitem}       
\usepackage{graphicx}               
\usepackage{cite}                   
\usepackage[unicode]{hyperref}      
\usepackage[english,capitalise,nameinlink,noabbrev]{cleveref}
\usepackage{tikz-cd}                
\usetikzlibrary{babel}              
\usepackage{chngcntr}               
\usepackage{printlen}               
\usepackage{xparse}                 
\usepackage{mleftright}             
\usepackage[symbol]{footmisc}       
\usepackage{ifthen}                 %
\usepackage{fullpage}               

\makeatletter
\AtBeginDocument{\check@mathfonts}
\makeatother

\hypersetup{
	colorlinks=true,
	linkcolor=blue,
	citecolor=green,
	urlcolor=black
}

\newcounter{common}[section]
\newcounter{cdiagram}           
\makeatletter
	\let\c@equation=\c@common
	\let\c@subsection=\c@common
	\let\c@cdiagram=\c@common
	\let\c@paragraph\undefined
\makeatother
\counterwithout{subsection}{section}
\counterwithout{subsubsection}{subsection}

\newenvironment{cdiagram}{
	\equation \addtocounter{equation}{-1} \refstepcounter{cdiagram} \tikzcd
}{
	\endtikzcd \endequation
}
\newenvironment{cdiagram*}{
	\[ \tikzcd
}{
	\endtikzcd \]
}

\declaretheoremstyle[
	spaceabove=\medskipamount, spacebelow=\medskipamount, headfont=\normalfont\bfseries,
	bodyfont=\itshape, notefont=\mdseries, notebraces={(}{)},
	headformat=margin, postheadspace={ }, headpunct={.}, qed={\huge $\lrcorner$}
]{mythm}
\declaretheorem[sibling=common, style=mythm]{lemma,theorem,proposition,corollary}
\declaretheoremstyle[
	spaceabove=\medskipamount, spacebelow=\medskipamount, headfont=\normalfont\bfseries,
	bodyfont=\normalfont, notefont=\mdseries, notebraces={(}{)},
	headformat=margin, postheadspace={ }, headpunct={.}, qed={\huge $\lrcorner$}
]{mydef}
\declaretheorem[sibling=common, style=mydef]{definition,example}
\declaretheoremstyle[
	spaceabove=\medskipamount, spacebelow=\medskipamount, title={}, headfont=\normalfont,
	bodyfont=\normalfont, notefont=\mdseries, notebraces={(}{)},
  headformat=margin, postheadspace=0pt, headpunct={}, qed={\huge $\lrcorner$}
]{mypar}
\let\paragraph\undefined
\declaretheorem[sibling=common, style=mypar]{paragraph}

\newcommand*{\eqlap}[1]{\llap{(#1)}}
\newtagform{margin}[\eqlap]{\hspace{-4pt}}{}
\usetagform{margin}

\creflabelformat{equation}{#2#1#3}
\crefname{subsubsection}{Subsubsection}{Subsubsections}
\crefname{paragraph}{Paragraph}{Paragraphs}
\crefname{cdiagram}{Diagram}{Diagrams}

\def\p[#1]_#2{
	\setbox0=\hbox{$\scriptstyle{#2}$}
	\setbox2=\hbox{$\displaystyle{#1}$}
	\setbox4=\hbox{${}'\mathsurround=0pt$}
	\dimen0=.5\wd0 \advance\dimen0 by-.5\wd2
	\ifdim\dimen0>0pt
	\ifdim\dimen0>\wd4 \kern\wd4 \else\kern\dimen0\fi\fi
	\mathop{{#1}'}_{\kern-\wd4 #2}
}
\def\sump_#1{\p[\sum]_{#1}}
\def\prodp_#1{\p[\prod]_{#1}}
\def\minp_#1{\p[\min]_{#1}}
\def\maxp_#1{\p[\max]_{#1}}
\newcommand*{\NN}{\mathbb{N}}
\newcommand*{\NNO}{\mathbb{N}_0}
\newcommand*{\ZZ}{\mathbb{Z}}

\newcommand*{\CC}{\mathbb{C}}
\newcommand*{\CCi}{\mathbb{C}_\infty}
\newcommand*{\FF}{\mathbb{F}}
\newcommand*{\PP}{\mathbb{P}}
\newcommand*{\LL}{\mathcal{L}}
\renewcommand{\AA}{\mathbb{A}}
\newcommand*{\cJ}{\mathcal{J}}
\newcommand*{\finadele}{\AA_F^{fin}}
\newcommand*{\invertadele}{\bigl(\finadele\bigr)^{\!\times}}
\newcommand*{\fp}{\mathfrak{p}}
\DeclareMathOperator{\WeakMF}{\mathbf{Weak}}
\DeclareMathOperator{\StrongMF}{\mathbf{Strong}}
\DeclareMathOperator{\CuspMF}{\mathbf{Cusp}}
\newcommand*{\cw}{\mathcal{W}}
\newcommand*{\cm}{\mathcal{M}}
\newcommand*{\cs}{\mathcal{S}}
\newcommand*{\LLi}[2]{\protect\overleftarrow{\LL_{#1}^{#2}}}
\newcommand*{\LLNRi}{\LLi{N}{r}}
\newcommand*{\longto}{\relbar\joinrel\to}
\newcommand*{\isoto}{\xrightarrow{\mathmakebox[1.4ex]{\sim}}}
\newcommand*{\longisoto}{\xrightarrow{\mathmakebox[2.5ex]{\sim}}}
\newcommand*{\into}{\hookrightarrow}
\newcommand*{\longinto}{\lhook\joinrel\longrightarrow}
\newcommand*{\onto}{\twoheadrightarrow}
\newcommand*{\longonto}{\relbar\joinrel\onto}
\newcommand*{\ionto}{\lhook\joinrel\twoheadrightarrow}
\newcommand*{\longionto}{\lhook\joinrel\longonto}
\newcommand*{\xinto}[2][]{\xhookrightarrow[#1]{#2}}
\newcommand*{\xonto}[2][]{\xrightarrow[#1]{#2}\mathrel{\mkern-14mu}\rightarrow}
\newcommand*{\xionto}[2][]{\xhookrightarrow[#1]{#2}\mathrel{\mkern-22mu}\rightarrow\mathrel{\mkern6mu}}

\DeclareMathOperator{\End}{End}

\DeclareMathOperator{\D}{D}
\DeclareMathOperator{\Cl}{Cl}
\let\Im=\relax \DeclareMathOperator{\Im}{Im}
\DeclareMathOperator{\Sur}{Sur}
\DeclareMathOperator{\Inj}{Inj}
\DeclareMathOperator{\Bij}{Bij}
\DeclareMathOperator{\Free}{Free}
\DeclareMathOperator{\GLL}{GL}
\newcommand*{\GL}[2][]{\GLL_{#1}\parens{#2}}

\DeclareMathOperator{\bd}{\partial}

\DeclareMathOperator{\dd}{\operatorname{d}}

\newcommand*{\dbd}[2][]{\frac{\dd^{#1}}{\dd\!#2^{#1}}}
\mleftright	
\newcommand*{\set}[1]{\left\{#1\right\}}
\newcommand*{\setst}[2]{\left\{#1\:\middle|\:#2\right\}}

\newcommand*{\norm}[1]{\left\lVert 1\right\rVert}
\newcommand*{\abs}[1]{\left\lvert#1\right\rvert}

\newcommand*{\parens}[1]{\left(#1\right)}

\newcommand*{\brackets}[1]{\left[#1\right]}

\newcommand*{\size}{\#}
\newcommand*{\sizep}[1]{\size\parens{#1}}

\newcommand*{\lquotient}[2]{\left. #1 \middle\backslash #2 \right.}
\newcommand*{\rquotient}[2]{\left. #1 \middle/ #2 \right.}
\newcommand*{\lrquotient}[3]{\left. #1 \middle\backslash #2 \middle/ #3 \right.}
\newcommand*{\cin}[2]{\bigl[{#1}^{#2}\bigr]}

\newcommand*{\littleo}[1]{o\parens{#1}}
\newcommand*{\pcoord}[1]{%
  \begingroup\lccode`~=`: \lowercase{\endgroup
  \edef~}{\mathbin{\mathchar\the\mathcode`:}\nobreak}%
  \left(
  \begingroup
  \mathcode`:=\string"8000
  #1%
  \endgroup
  \right)
}
\newcommand*{\lrsptext}[1]{\quad\text{#1}\quad}
\newcommand*{\lsptext}[1]{\quad\text{#1}\ }
\newcommand*{\rsptext}[1]{\ \text{#1}\quad}
\newcommand*{\sptext}[1]{\ \text{#1}\ }
\newcommand*{\ie}{i.e.\ }

\newcommand*{\defeq}{\coloneqq}
\newcommand*{\eqdef}{\eqqcolon}

\begin{document}

\title[Lattice Drinfeld modular forms]{Drinfeld modular forms of higher rank from a lattice-oriented point of view}
\author{Liam Baker}
\address{
  Mathematics Division\\
  Department of Mathematical Sciences\\
  University of Stellenbosch\\
  Stellenbosch\\
  7599\\
  South Africa}
\email{liambaker@sun.ac.za}
\subjclass[2020]{11F52, 11G09; 11F23, 14G22}

\date{\today}


\begin{abstract}	
  A space $\LL_N^r$ of Drinfeld modules of rank $r \geq 1$ with level structure, or equivalently lattices of rank $r$ with level structure, is introduced, and its irreducible components and group actions on it are investigated.
  A metric is defined on this space, its completion $\LLNRi$ is established and the aforementioned group actions are extended to the completion.
  A decomposition of the completion into multiple smaller spaces $\LL_N^s$ is proven.
  Drinfeld modular forms are defined as homogeneous holomorphic functions on $\LL_N^r$ which are continuous on the completion $\LLNRi$, and the group actions above are extended to actions on the spaces of modular forms.
  Finally, the modular forms defined here are compared with those of Basson, Breuer, and Pink and with those of Gekeler, and it is shown that the cusp forms (those which are zero on the boundary) coincide.
\end{abstract}
 
\maketitle

\tableofcontents

\include{text/0__intro}

\include{text/1__DModules_lattices}

\include{text/2__lattice_modules}

\include{text/3__lattices_2}

\include{text/4__modular_forms}


\include{text/6__conclusion}

\bibliographystyle{amsalpha}
\bibliography{phdthesis}


\end{document}

%% file: text/0__intro.tex
\addtocounter{section}{-1}
\section{Introduction} \label{sec:intro}

\subsection{Literature overview}

In the beginning, Drinfeld defined what he called \emph{elliptic modules} to prove a special case of the Langlands conjecture for $\GLL_2$ over function fields \cite{drinfeld1974english}.
These modules are now called \emph{Drinfeld modules}, and are similar to elliptic curves, although they have arbitrarily high rank $r \in \NN$.
In particular, Drinfeld constructed a moduli space of Drinfeld modules of rank $r$ with level structure both as an algebraic variety and analytically as a double quotient of an $r-1$-dimensional space $\Omega^r$, which is a rigid analytic space over a field $\CCi$ of positive characteristic.

There is, however, a natural definition of a \emph{Drinfeld modular form} on $\Omega^r$ with values in $\CCi$ as given by Goss in \cite{goss1980eisenstein}; these can be defined algebraically à la Katz \cite{katz1973p} and analytically in analogy with classical modular forms, with $\Omega^r$ playing the role of the complex upper half plane.

In the case of rank $2$, these modular forms are functions of one variable and are in closest analogy with classical modular forms, which only exist in rank $2$.
The bulk of the study of Drinfeld modular forms has thus focused on this case; for surveys of the developments in this area, see \cite{gekeler2006dmc,cornelissen1997survey,gekeler1999survey}.

In arbitrary rank, the next development was due to Kapranov \cite{kapranov1988english} who constructed a compactification of the moduli variety of Drinfeld $\FF_q[T]$-modules with level structure, which he used to prove finite dimensionality of the space of Drinfeld modular forms of any particular weight, as in \cite{goss1992integrals}.

More recently, Basson, Breuer, and Pink wrote a series of papers \cite{BBPI,BBPII,BBPIII} culminating in the memoir \cite{BBP}, establishing a theory of modular forms of arbitrary rank, building on the papers \cite{Pink2013compactification,breuer2009drinfeld,basson2017product,basson2017certain} and Basson's PhD paper \cite{basson2014coefficients}, followed by Pink's \cite{Pink2019Areciprocal}.
In parallel, Gekeler has developed a theory of modular forms of arbitrary rank for the case of the simplest base ring $\FF_q[T]$ in the series \cite{gekelerHigherI,gekelerHigherII,gekelerHigherIII,gekelerHigherIV,gekelerHigherV,gekelerHigherVI,gekelerHigherVII,gekeler2026expansions}, where the connection with the Bruhat-Tits building is greatly used.

For a more detailed discussion of the history of Drinfeld modular forms of higher rank, see \cite[Section 7]{basson2017certain}.

\subsection{Motivation}

In the classical case of modular forms, the simplest case is that of Eisenstein series. For a lattice $\Lambda = \ZZ \omega_1 +\ZZ \omega_2 \subset \CC$ and integer $k > 2$, the $k$th Eisenstein series is defined by
\[ E^k(\Lambda) = \sump_{\lambda \in \Lambda} \lambda^{-k} = \sump_{m,n \in \ZZ} \frac{1}{(m\omega_1+n\omega_2)^k} \,.\footnote{Here and in what follows, primed sum $\sump_{\lambda \in \Lambda}$ and product $\prodp_{\lambda \in \Lambda}$ operators denote a sum or product over all nonzero elements of the index set $\Lambda$.} \]
This series is most conceptually simply viewed as a homogeneous function of lattices, which is holomorphic in a suitable sense.
However, it is most often normalised in the literature to $\omega_2 = 1$ using the homogeneity property, resulting in a function of one complex variable $\omega_1$, which can then be studied using the well developed theory of complex functions.
The homogeneity condition then transforms into a restriction of the behaviour of this function of one variable under the action of $\GL[2]{\ZZ}$.

In the Drinfeld `upper half plane', we have lattices of arbitrarily high rank $r$:
\[ \Lambda = A\omega_1 +A\omega_2 +\dotsb +A\omega_r\,. \]
Eisenstein series and other modular forms can be defined similarly as functions of lattices, but if we normalise to make the last component $\omega_r = 1$ as before, we have a function of $r-1$ variables, which is not as easily dealt with as the case $r = 2$.
Other work that has been done on Drinfeld modules of higher rank (such as \cite{BBPI,BBPII,BBPIII,BBP} and \cite{gekelerHigherI,gekelerHigherII,gekelerHigherIII,gekelerHigherIV,gekelerHigherV,gekelerHigherVI,gekelerHigherVII}) has been done from this perspective of functions of $r-1$ variables, whereas this paper investigates modular forms as functions on the space of lattices.

As it is not as easily seen that the space of lattices can be given rigid analytic structure (so that one can reasonable speak of a holomorphic or analytic function on such a space), we first link this space to earlier work to carry over rigid analyticity proven there into our setting.

This viewpoint yields some unexpected rewards:
\begin{itemize}
  \item We define our modular forms of higher rank in a relatively `low-tech' way, \ie largely avoiding the use of modern algebraic geometry and instead using tools such as metric spaces.
  This may help those who wish to enter and make progress in this field without experience in algebraic geometry.
  \item We also find actions of the group $\cJ(A)$ of fractional ideals of $A$ and the general linear group $\GL[r]{A/N}$ for $N$ an ideal of $A$ on the spaces of modular forms of rank $r$.
  The latter action subsumes that of $\GL[r]{A}$, which specialises to the subgroup of $\GL[r]{A/N}$ with determinant in the base field $\FF_q$, and that of $\parens{A/N}^\times$, being the Galois group of the field $F(\zeta_N)$ of $F$ with $N$-division points of the Carlitz module adjoined.
\end{itemize}

\subsection{Main results}

Our main results show that the modular forms defined in this paper are roughly equivalent to those defined by other authors:
firstly, those of Basson, Breuer, and Pink, in that there is a bijection between our cusp forms and tuples of BBP cusp forms:
\begin{restatable*}{theorem}{cuspFormsToCompsBijective} \label{thm:cuspFormsToComps_Bijective}
  If $H$ is a set of representatives in $\GL[r]{\finadele}$ for the double quotient
  \[ \lrquotient{\GL[r]{F}}{\GL[r]{\finadele}}{K(N)}, \]
  then $f \mapsto (f_g)_{g \in H}$ is a bijection between the spaces $\CuspMF_N^r$ and $\prod_{g \in H} \cs_*(\Gamma_g)$.
\end{restatable*}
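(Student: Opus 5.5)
We prove that $f \mapsto (f_g)_{g\in H}$ is well defined, injective and surjective.

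\emph{Setup.} By the decomposition of $\LL_N^r$ into irreducible components, $\LL_N^r$ is a disjoint union indexed by $H$. For $g\in H$, write $\Lambda_g = g\cdot A^r$, a projective $A$-module of rank $r$, with the level structure induced by $g$. The component labelled $g$ is identified with $\CCi^\times\times\rquotient{\Omega^r}{\Gamma_g}$, where $\Gamma_g = \GL[r]{F}\cap gK(N)g^{-1}$. The map is $\omega \mapsto$ the lattice $\omega\Lambda_g$ with its level structure, and the factor $\CCi^\times$ records homogeneity. Given a modular form $f$ of weight $k$ on $\LL_N^r$, set
\[ f_g(\omega) = f\bigl(\omega\Lambda_g,\ \text{level structure from } g\bigr), \qquad \omega\in\Omega^r, \]
with $\omega$ normalised so that $\omega_r=1$.

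\emph{Well-definedness.} Homogeneity of $f$ gives the weight-$k$ transformation law of $f_g$ under $\Gamma_g$. Holomorphy of $f_g$ on $\Omega^r$ follows from the rigid analytic structure transported to $\LL_N^r$ from earlier work. The remaining point is that $f_g$ is a BBP cusp form. Here we use the decomposition of the completion $\LLNRi$ into spaces $\LL_N^s$ with $s<r$. A sequence of lattices tending to the boundary corresponds, in BBP terms, to approaching a boundary stratum of the Satake compactification: some successive minima blow up, and the lattice degenerates to a sublattice of rank $s$. Continuity of $f$ on $\LLNRi$ together with vanishing on the boundary therefore gives the cuspidality condition. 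To make this precise, I would compare the leading term in BBP's expansion at a cusp, which is their boundary restriction (Satake map), with the value of the extended $f$ on the corresponding point of $\LL_N^s$.

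\emph{Injectivity and surjectivity.} Injectivity is immediate, since the components cover $\LL_N^r$. For surjectivity, take cusp forms $(h_g)_{g\in H}$. Define $f$ on each component by homogeneity, $f(c\,\omega\Lambda_g) = c^{-k}h_g(\omega)$; this is well defined because $h_g$ is $\Gamma_g$-invariant of weight $k$. Then extend $f$ by zero to the boundary $\LLNRi\setminus\LL_N^r$. The function is holomorphic and homogeneous, so it remains to show that the extension is continuous.

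\emph{Main obstacle.} The hard step is continuity at the boundary in the surjectivity direction, together with its converse in well-definedness: vanishing of the Satake boundary restriction must be shown equivalent to $f(\Lambda_n)\to 0$ for every sequence $\Lambda_n\to$ boundary in the paper's metric. I would first use the reduction theory of BBP, namely fundamental domains given by successive-minimum (Minkowski) bases, to reduce to sequences in a Siegel-type domain near a single cusp. On such a domain the cusp form satisfies a uniform bound $|h_g(\omega)|\le C\,|u(\omega)|$, where $u$ is a parameter at the boundary that goes to $0$; this bound comes from the expansion at that cusp. Since $u$ tends to $0$ exactly when the metric distance to the boundary tends to $0$, the bound gives $f(\Lambda_n)\to 0$ uniformly. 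Matching the metric on $\LLNRi$ with the BBP boundary parameter is the technical core of the proof.
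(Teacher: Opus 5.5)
\emph{Verdict: there is a gap.} The paper's proof and yours share the same outline:
\begin{itemize}
\item $f$ is built from $(h_g)_{g\in H}$ by homogeneity on each component $C_g$ and extended by zero to $\bd_N^r$;
\item injectivity holds because the components exhaust $\LL_N^r$;
\item cuspidality of $f_g$ follows because the limit of $f_g|_k\gamma=f_{\gamma^{-1}g}$ along a vertical line is the value of $f$ at a boundary point, by \cref{prop:lattice_limit} and \cref{lem:lmLimitLevels}. This is essentially \cref{prop:mFormToComps,prop:cFormToComps}, combined with \cref{prop:BBP_verticalStrips}.
\end{itemize}
The gap is the step you call the technical core. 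Continuity of the zero-extended $f$ on $\LLNRi$ is the entire content of surjectivity, and you do not prove it. A preliminary point you also omit: all $C_g$ share the boundary $\bd_N^r$ (\cref{thm:LLNR_irredComponent_completion}), so a convergent sequence may move between components. The paper first handles this with the finite closed cover $\{C_g\cup\bd_N^r\}_{g\in H}$ of \cref{coro:LLNR_irredComponentBdClosed}. It then argues on one component at a time, as in \cref{prop:mFormToComps,prop:cFormToComps}, via the vertical-line criterion and without any reduction theory.

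The mechanism you sketch fails as stated, for two reasons.

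\emph{The matching with $u$ is wrong.} You claim that $u$ tends to $0$ exactly when the metric distance to the boundary tends to $0$. This is false. Fix $\omega$ and let $\abs{t}\to\infty$. The points $t\,\Theta([\psi(\omega),g])$ then converge to the zero lattice, which lies in $\bd_N^r$ (\cref{prop:lattice_large,prop:LLNRi_zeroNeighbour}), yet $u(\omega)$ does not change. Near $0$, and near boundary points of codimension $\geq 2$, several successive minima (and possibly the scaling parameter) escape at once. A single codimension-one parameter does not record this.

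\emph{The uniform bound does not follow from the expansion.} The bound $\abs{h_g}\le C\abs{u}$ on a whole Siegel domain does not come from the expansion $h_g=\sum_{n\ge1}h_n(\omega')u^n$ at a codimension-one cusp. That expansion is controlled only locally in $\omega'$. Its coefficients are merely weak modular forms of rank $r-1$, and they can be unbounded as $\omega'$ degenerates. For example, take $A=\FF_q[T]$, $r=3$, $N=(T)$, the identity component, $\psi=(\omega_1,\omega_2,1)$, $\Lambda'=A\omega_2+A$ and $u=1/e_{T\Lambda'}(\omega_1)$. Let $l_i$ be the standard basis of $(T^{-1}A/A)^3$. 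The cusp form $E^1_{l_1+l_2}E^1_{l_2}E^1_{l_3}$ has $u^3$-coefficient $T^3e_{\Lambda'}(\omega_2/T)/e_{\Lambda'}(1/T)$. This is unbounded, since $\abs{e_{\Lambda'}(\omega_2/T)}\ge \dd(\omega_2/T,\Lambda')\to\infty$.

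Sequences converging to boundary points of codimension $\ge2$ force exactly this degeneration of $\omega'$. To close the argument you would need two further ingredients, neither of which you supply:
\begin{itemize}
\item an additional estimate weighing the decay of $u$ on the Siegel domain against the growth of all the $h_n$ (or expansions at parabolics of every type);
\item boundedness of $h_g$ on a full fundamental set, to control the factor $t^{-k}$ near $0$.
\end{itemize}
On any route, you must control convergence along arbitrary $\dd_{\LLNRi}$-convergent sequences, uniformly in $\psi'$ and in the codimension of the limit, not only along single vertical lines. You identify this correctly, but your proposal leaves it open.
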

Here $\finadele$ is the ring of finite adeles of $F$, $K(N)$ is the principal congruence subgroup of $\GL[r]{A/N}$ for $N$ an ideal of $A$, $\CuspMF_N^r$ is our space of cusp forms (defined later), and the $\cs_*(\Gamma_g)$ are spaces of BBP cusp forms.

Secondly, we have a similar result relating our modular forms and Gekeler's:


\begin{restatable*}{theorem}{toGekelerLevelN}
    Let $(Y_c)_{c \in \Cl(F)}$ be a tuple of $A$-lattices in $F^r$ with $\pi(Y_c) = c$ for each $c \in \Cl(F)$, let $k$ be a nonnegative integer, let $N$ be an ideal of $A$, and for each $A$-lattice $Y \subset F^r$ let $\Bij(Y)$ be the set of $A$-linear bijections $N^{-1}Y/Y \ionto (N^{-1}/A)^r$.
    Then there is an injection
    \begin{align*}
        \StrongMF_N^{k,r} &\longinto \prod_{c \in \Cl(F)} \prod_{t \in \operatorname{Bij}(Y_c)} \mathbf{Mod}_{k}(\GL{Y_c}(N)) \\
        f : \LLNRi \to \CCi &\longmapsto \left(i \mapsto f(\Xi_{Y_c}(i), t \circ i^{-1})\right)_{c \in \Cl(F), t \in \operatorname{Bij}(Y_c)}
    \end{align*}
    from our modular forms of weight $k$ for $K(N)$ to a tuple of Gekeler modular forms of weight $k$ and level $N$.
    Moreover, when restricted to cusp forms, this is a bijection.
    
    These induce an injection and a bijection of the algebras
    \begin{align*}
        \StrongMF_N^r &\longinto \prod_{c \in \Cl(F)} \prod_{t \in \operatorname{Bij}(Y_c)} \mathbf{Mod}(\GL{Y_c}(N)) \lsptext{and} \\
        \CuspMF_N^r &\longionto \prod_{c \in \Cl(F)} \prod_{t \in \operatorname{Bij}(Y_c)} \mathbf{Mod}^{cusp}(\GL{Y_c}(N))
    \end{align*}
    of our modular forms to Gekeler modular forms respectively.
\end{restatable*}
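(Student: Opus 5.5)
We deduce the statement from \cref{thm:cuspFormsToComps_Bijective}, by matching each BBP component with a tuple of Gekeler components.

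\textbf{Decomposing $\LL_N^r$.} We first decompose $\LL_N^r$ into pieces indexed by $\Cl(F)$ and $\Bij(Y_c)$. A lattice with level-$N$ structure of rank $r$ is a pair $(\Lambda,\phi)$. Here $\Lambda$ is a projective $A$-module of rank $r$ embedded discretely in $\CCi$, so its Steinitz class lies in $\Cl(F)$ and $\Lambda\cong Y_c$ for a unique $c$. The level structure is an isomorphism $\phi\colon N^{-1}\Lambda/\Lambda\to (N^{-1}/A)^r$. Choosing an isomorphism $i\colon Y_c\to\Lambda$, which is what the map $\Xi_{Y_c}(i)$ records, we get $\Lambda=\Xi_{Y_c}(i)$. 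Moreover $t=\phi\circ i$ lies in $\Bij(Y_c)$, with $i$ determined up to $\GL{Y_c}$. Since $\GL{Y_c}$ acts on $\Bij(Y_c)$ and $\GL{Y_c}(N)$ is the stabiliser of each $t$, the pairs $(i,t)$ modulo $\GL{Y_c}(N)$ parametrise exactly one connected piece of $\LL_N^r$ per $(c,t)$.

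\textbf{Well-definedness and injectivity.} For $f\in\StrongMF_N^{k,r}$, homogeneity of degree $-k$ in $\Lambda$, together with invariance of $(\Xi_{Y_c}(i\gamma),t\circ(i\gamma)^{-1})$ under $\gamma\in\GL{Y_c}(N)$ (as $t\gamma=t$ modulo $N$), gives Gekeler's weight-$k$ transformation law. Holomorphy is inherited from the rigid analytic structure on $\LL_N^r$ established earlier. Boundedness at the cusps, as Gekeler requires, follows from continuity of $f$ on $\LLNRi$ together with the decomposition of the boundary of $\LLNRi$ into the spaces $\LL_N^s$, $s<r$. Injectivity holds because the pieces $(c,t)$ cover $\LL_N^r$, and $f$ on $\LLNRi$ is determined by its restriction to the dense subset $\LL_N^r$.

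\textbf{Cusp forms and surjectivity.} Cusp forms are exactly those $f$ vanishing on $\LLNRi\setminus\LL_N^r$. On the Gekeler side these correspond to forms vanishing at every boundary stratum. For surjectivity we start from a tuple of Gekeler cusp forms and glue it into a function $f$ on $\LL_N^r$. We then extend $f$ by $0$ to $\LLNRi$ and must show that this extension is continuous. This is the main obstacle: it requires comparing the metric on $\LLNRi$ near a boundary point in some $\LL_N^s$ with Gekeler's description of neighbourhoods of cusps via the Bruhat--Tits building or the imaginary part. It also requires showing that cusp forms tend to $0$ uniformly there.

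The shortest route is to reuse \cref{thm:cuspFormsToComps_Bijective}. The adelic double coset decomposition and the $(c,t)$ indexing are in natural bijection, and under this bijection each $\Gamma_g$ equals the corresponding $\GL{Y_c}(N)$. It then suffices to check that BBP cusp forms for $\Gamma_g$ coincide with Gekeler cusp forms for $\GL{Y_c}(N)$, which can be done via their expansions at cusps. Finally, the algebra statements follow by summing over $k$: the maps are compatible with products and preserve weights.
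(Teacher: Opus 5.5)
\textbf{The gap is in your parametrisation.} The step that fails is your parametrisation of $\LL_N^r$ by pairs $(c,t)$. For fixed $c$, the group $\GL{Y_c}$ acts simultaneously on embeddings and on level data. Write $\gamma i=i\circ r_\gamma$ with $r_\gamma(x)=x\gamma$. Then the pairs $(i,t)$ and $(\gamma i,\,t\circ r_\gamma)$ give the same point $(\Xi_{Y_c}(i),t\circ i^{-1})$. Your orbit--stabiliser observation therefore yields
\[ \setst{(\Lambda,\alpha)\in\LL_N^r}{\pi(\Lambda)=c}\simeq\lquotient{\GL{Y_c}}{\parens{\Psi^r\times\Bij(Y_c)}}=\bigsqcup_{t\in T_c}\lquotient{\GL{Y_c}(N)}{\Psi^r}, \]
where $T_c$ is a set of representatives for the $\GL{Y_c}$-orbits on $\Bij(Y_c)$. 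So there is one piece per orbit, not one per $t$.

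In particular, there is no bijection between $\lrquotient{\GL[r]{F}}{\GL[r]{\finadele}}{K(N)}$ and the pairs $(c,t)$. By \cref{coro:DetDblQuot} and \cref{prop:Comps_ClF_AN*_Bijection}, the former has $\size\Cl(F)\cdot\sizep{(A/N)^\times/\FF_q^\times}$ elements. The latter has $\size\Cl(F)\cdot\size{\GL[r]{A/N}}$ elements. These differ for every $r\ge 2$ (with $N\neq A$).

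Moreover, no argument can give surjectivity onto the full product over all $t\in\Bij(Y_c)$. Write $f_{c,t}(i)=f(\Xi_{Y_c}(i),t\circ i^{-1})$. Then $f_{c,\,t\circ r_\gamma}(\gamma i)=f_{c,t}(i)$ for all $\gamma\in\GL{Y_c}$. Hence a tuple with a single nonzero cusp-form coordinate is never in the image.

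This is a defect of the statement as literally written, and the paper's sketch glosses over it too. The sketch asserts that $X_N^r$ is injective on the main stratum, which fails for the same reason once $t$ ranges over all of $\Bij(Y_c)$. What is provable is a bijection onto $\prod_c\prod_{t\in T_c}$, or onto the subspace of compatible tuples. With that correction your matching does work. Choosing $g$ with $F^r\cap g\hat A^r=Y_c$ gives $\Gamma_g=\GL{Y_c}(N)$ (up to the row/column convention), and the orbits in $T_c$ account for the double cosets lying over $c$.

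\textbf{Comparison with the paper's route.} Your injectivity argument is fine and is essentially the paper's (surjectivity of $X_N^r$ plus density). Beyond that the routes differ. The paper never passes through BBP. It uses \cref{prop:Gek:isotopology}: $\Xi_{Y_c}$ is a bijection from $\lquotient{\GL{Y_c}}{\overline{\Psi}^r}$ onto $C\cup\LL^{\le r-1}$, compatible with the topologies. It also checks that $t\circ i^{-1}$ is an $r$-inverse level structure for boundary embeddings $i:U\to\CCi$ with $U\subsetneq F^r$. So each coordinate is defined on all of $\overline{\Psi}^r$ and inherits continuity there from $f$.

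Your version has two soft spots relative to this.
\begin{itemize}
\item You only check boundedness at the cusps. Gekeler's forms must extend continuously to $\overline{\Psi}^r$, which needs exactly the boundary evaluation and the topology comparison.
\item Your surjectivity rests on the unproved identification of BBP cusp forms for $\Gamma_g$ with Gekeler cusp forms for $\GL{Y_c}(N)$. ``Via expansions at cusps'' is not an argument. The natural proof is the topology comparison of \cref{prop:Gek:isotopology} together with \cref{prop:BBP_verticalStrips}, so the detour through \cref{thm:cuspFormsToComps_Bijective} does not actually save work.
\end{itemize}
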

Here, $\StrongMF_N^r$ and $\CuspMF_N^r$ are our spaces of modular forms and cusp forms of level $N$ respectively, $\Cl(F)$ is the ideal class group of $F$, $\pi$ is the projection map $I_1b_1+\dotsb I_rb_r \mapsto [I_1\dotsm I_r]$ from the space of projective $A$-lattices to $\Cl(F)$, and $\mathbf{Mod}(\Gamma)$ and $\mathbf{Mod}^{cusp}(\Gamma)$ denote Gekeler's space of modular forms and cusp forms for a subgroup $\Gamma < \GL[r]{F}$.

In addition, we define actions of $\GL[r]{A/N}$ and the group $\cJ(A)$ of $A$-fractional ideals of $F$ on the spaces of lattices with and without level structure, which we then carry over to actions on the spaces of modular forms.

\subsection{Outline of the paper}

In \cref{sec:DModules_lattices}, we present an abridged introduction to Drinfeld modules and lattices (with and without level structure), presenting only the results necessary in later sections. We also introduce the exponential function associated to a lattice, and mention some of its analytic properties.
Finally we present the well-known equivalence between lattices and Drinfeld modules, both with their level structures.

In \cref{sec:lattices_Dmodules} we first present the realisation of the \emph{moduli space} of lattices of rank $r$ with level structure as a double quotient involving the Drinfeld period domain $\Omega^r$ and the ring $\finadele$ of finite adeles originally proven by Drinfeld; we use this to derive a similar realisation for the space of lattices with level structure \emph{itself}.
We then investigate the decomposition of these spaces into irreducible components as detailed by Hubschmid, extending some results about the identification of these components, especially the `identity component'.
In the final two subsections of this section we investigate the actions of the general linear group $\GL[r]{\hat{A}}$ of profinite integers and the group $\invertadele$ of invertible adeles on our spaces, which we specialise to their quotient actions of $\GL[r]{A/N}$, for $N$ an ideal of $A$, and $\cJ(A)$, the group of $A$-fractional ideals of $F$.

In \cref{sec:lattices_ii}, we first introduce metrics on the spaces of lattices with and without level structure and investigate their completions, as well as those of their irreducible components.
Then we characterise these completions as unions of similar spaces of smaller rank, and also extend the group actions of $\GL[r]{A/N}$ and $\cJ(A)$ defined earlier to these completions.

In \cref{sec:modular_forms}, we first define weak and strong modular forms and cusp forms as homogeneous functions on the spaces of lattices with and without level structure, as well as the graded algebras consisting of these functions, and carry over the aforementioned group actions to actions on these spaces of modular forms.
We then list some examples of modular forms and detail their behaviour under the above group actions.
Finally we investigate the relation between our modular forms and those defined by Basson, Breuer, and Pink, showing our modular forms to be a large subset of theirs, and similarly to those defined by Gekeler.


Finally, in \cref{sec:conclusion} we present some closing remarks, including possible extensions to this work.

%% file: text/1__DModules_lattices.tex
\section{Drinfeld modules and lattices} \label{sec:DModules_lattices}

Let $F$ be a fixed global function field with prime characteristic $p$ and field of constants $\FF_q$ where $q$ is a power of $p$.
Let $\infty$ be a fixed place of $F$ with degree $\delta$, let $\pi$ be a fixed uniformising parameter for $F$ at $\infty$, let $\deg$ be the degree function on $F$ determined by $\deg\pi = -\delta$, and let $\abs{\cdot}$ be the absolute value defined by $\abs{x} = q^{\deg{x}}$ on $F$.
Finally, let $F_\infty$ denote the metric completion of $F$ with respect to $\abs{\cdot}$, and let $\CCi$ denote the metric completion of an algebraic closure of $F_\infty$.

\input{text/1_1_Dmodules.tex}

\input{text/1_2_lattices.tex}

\input{text/1_3_lm_equiv.tex}

%% file: text/1_1_Dmodules.tex
\subsection{Drinfeld Modules}

\begin{definition} \label{def:End_Fq}
  We denote by $\End_{\FF_q}\parens{\CCi}$ the ring of $\FF_q$-linear endomorphisms of $\CCi$, with addition defined pointwise and multiplication defined as function composition.
  As a special element, we consider the \emph{Frobenius endomorphism}:
  \[ \tau \in \End_{\FF_q}\parens{\CCi},\quad X \mapsto X^q, \]
  and we also consider the subring of $\End_{\FF_q}\parens{\CCi}$ generated over $\CCi$ by $\tau$, which we denote as $\CCi\set{\tau}$, and a superring of that, the ring of formal power series in $\tau$ with coefficients in $\CCi$, which we denote by $\CCi\set{\set{\tau}}$.
  
  Each $f \in \CCi\set{\set{\tau}}$ can be uniquely written in the form $f = \sum_i l_i \tau^i$ (or equivalently $f(X) = \sum_i l_i X^{q^i}$); we then define $\D(f) = l_0$ (\ie the `constant term' of $f$) and $l(f) = l_{\deg f}$ for $f \in \CCi\set{\tau}$ (\ie the `leading coefficient' of $f$).
  This $D : \CCi\set{\set{\tau}} \to \CCi$ is a ring homomorphism.
\end{definition}

\begin{definition} \label{def:D_module}
  A Drinfeld module of integer rank $r \geq 0$ is a ring homomorphism
  \[ \phi : A \to \CCi\set{\tau},\quad a \mapsto \phi_a \]
  such that for each $a \in A$, both
  \begin{itemize}
  \item $\deg_\tau \phi_a = r \cdot \deg a$  (here $\deg_\tau \phi_a$ denotes the degree of $\phi_a$ in $\tau$), and
  \item $\D\parens{\phi_a} = a$.
  \end{itemize}

  For two Drinfeld modules $\phi$ and $\varphi$ of rank $r$, a morphism $u : \phi \to \varphi$ is an element of $\CCi\set{\tau}$ such that $u \phi_a = \varphi_a u$ for all $a \in A$; \ie the diagram
  \begin{cdiagram} \label{eq:Dmod_morph}
    \CCi
      \arrow{r}{\phi_a}
      \arrow{d}{u} &
    \CCi
      \arrow{d}{u} \\
    \CCi
      \arrow{r}{\varphi_a} &
    \CCi
  \end{cdiagram}
  commutes for each $a \in A$.
  A category of Drinfeld modules of rank $r$ is thus formed in the natural way.
\end{definition}

The only Drinfeld module of rank $r = 0$ is the `trivial' $\phi_a(X) = aX$.

Since $\tau$ is $\FF_q$-linear, so is each $\phi_a$ for a Drinfeld module $\phi$.
Hence each $f \in \FF_q^\times$ is an automorphism of any Drinfeld module $\phi$, considered as an element of $\CCi\set{\tau}$.
However, some Drinfeld modules have nontrivial automorphisms, so Drinfeld modules are augmented with \emph{level structure} to remove these nontrivial automorphisms.

\begin{definition} \label{def:N_division_points}
  Let $\phi$ be a Drinfeld module. For an element $a \in A$, we define the $a$-division points $\phi[a] = \ker \phi_a$, and for an ideal $N$ we define $\phi[N] = \bigcap_{a \in N} \phi[a]$.
\end{definition}

\begin{proposition} \label{prop:Dmodule_division_module}
  $\phi[N]$ has the structure of an $A/N$-module, defined by
  \[ a \cdot z = \phi_a(z) \lrsptext{for} a \in A \lrsptext{and} z \in \phi[N] \]
  and is isomorphic to $\parens{N^{-1}/A}^r$.
\end{proposition}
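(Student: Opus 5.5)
The plan has three parts: check that $\phi[N]$ is an $A$-module killed by $N$, count it locally at each prime of $A$, and then assemble the global structure via the structure theory of finitely generated torsion modules over the Dedekind domain $A$.

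\emph{The module structure.} For $a \in A$ and $z \in \phi[N]$, $\phi_a(z)$ lies in $\phi[N]$. Indeed, for $n \in N$ we have $\phi_n(\phi_a(z)) = \phi_{na}(z) = \phi_a(\phi_n(z)) = \phi_a(0) = 0$, using that $\phi$ is a ring homomorphism and $A$ is commutative. Each $\phi_a$ is $\FF_q$-linear, so in particular additive. Hence $a\cdot z = \phi_a(z)$ makes $\phi[N]$ an $A$-module, and $N$ acts as zero by definition, so it is an $A/N$-module.

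\emph{Local counting.} For nonzero $a\in A$, the polynomial $\phi_a(X) = aX + \dotsb + l(\phi_a)X^{q^{r\deg a}}$ is separable, since its derivative is the nonzero constant $a$ ($\D(\phi_a)=a$). So $\phi[a]$ has exactly $q^{r\deg a} = \abs{A/aA}^r$ elements, using $\#(A/aA) = q^{\deg a}$ for the normalised degree. Now write $N = \fp_1^{e_1}\dotsm\fp_k^{e_k}$. The finite torsion module $\phi[N]$ splits as the direct sum of its $\fp_i$-primary parts $\phi[\fp_i^{e_i}]$, and $(N^{-1}/A)^r$ splits compatibly, with $N^{-1}/A \cong A/N \cong \prod_i A/\fp_i^{e_i}$. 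So it suffices to treat $N = \fp^e$.

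\emph{The prime-power case.} Here I use the structure theorem over the discrete valuation ring $A_\fp$: the $\fp$-primary torsion $A$-module $M=\phi[\fp^\infty]$ restricted to $\fp^e$-torsion is $\bigoplus_j A/\fp^{m_j}$. To pin down the number of summands and their exponents, pick $a\in A$ with $aA = \fp^{e h}\mathfrak{b}$, where $\mathfrak b$ is coprime to $\fp$; such $a$ exists because the class group is finite, with $h$ the order of $[\fp]$. Then the $\fp$-part of $\phi[a]$ is $\phi[\fp^{eh}]$. Counting gives
\[ \#\phi[\fp^{m}] = \#(A/\fp^{m})^r \]
for all $m$ divisible by $h$. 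Indeed, the sizes are multiplicative over the coprime parts of $\phi[a]$, and the same factorisation holds for $\#(A/aA)^r$. An induction with a second choice of $a$ lets me match the $\mathfrak b$-parts separately.

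This count forces $M[\fp^m] \cong (A/\fp^m)^r$ for arbitrarily large $m$. A finite $A_\fp$-module $\bigoplus_j A/\fp^{m_j}$ with exactly $\#(A/\fp^m)^r$ elements of $\fp^m$-torsion for all large $m$ must have $r$ summands, each with $m_j \geq m$. Taking $\fp^e$-torsion then gives $\phi[\fp^e]\cong (A/\fp^e)^r$.

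\emph{Main obstacle.} The hard step is this prime-power counting, because $\fp$ need not be principal. I have to extract the order of $\phi[\fp^m]$ from orders of $\phi[a]$ for elements $a$, and argue that the individual primary parts, not merely their product, have the right size. The cleanest route I would try first is to apply the separability count to $a\in\fp^{m}\setminus\fp^{m+1}$ and to a second element $a'$ with $\gcd(aA,a'A)=\fp^m$. Then $\phi[\fp^m]=\phi[a]\cap\phi[a']$, and an inclusion–exclusion over primary components yields the count.
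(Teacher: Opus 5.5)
You take a genuinely different route from the paper. The paper states this proposition without proof. The route its set-up supports is uniformisation:
\begin{itemize}
\item by \cref{thm:Dmod_lattice_equiv} one has $\phi=\phi^\Lambda$;
\item \cref{prop:e_Lambda_latticeProps} and \cref{prop:Dmod_from_lattice} give $e_\Lambda(az)=\phi_a(e_\Lambda(z))$, so $e_\Lambda$ is an $A$-linear bijection $N^{-1}\Lambda/\Lambda\to\phi[N]$ (\cref{prop:Dmodule_division_lattice});
\item $N^{-1}\Lambda/\Lambda\cong(A/N)^r\cong(N^{-1}/A)^r$ because $\Lambda\cong A^{r-1}\oplus I$ (\cref{para:latticeAmodule}).
\end{itemize}
Your argument is purely algebraic: separability of $\phi_a$, primary decomposition, and structure theory over $A_\fp$. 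It avoids the analytic uniformisation theorem and works over any algebraically closed $A$-field of generic characteristic. The lattice route is shorter once uniformisation is granted, and it yields the explicit isomorphism $e_\Lambda$ that the paper later uses to transport level structures (\cref{prop:Dmod_lattice_level_equiv}). Your module structure and your reduction to $N=\fp^e$ are fine, but two later steps need repair.

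First, the ``main obstacle'' is not real, and the fix you propose for it would not work. Since $h$ is the order of $[\fp]$ in $\Cl(F)$, $\fp^h=\varpi A$ is principal. So take $a=\varpi^k$, i.e.\ $\mathfrak b=A$. Then $\phi[\fp^{kh}]=\phi[a]$, and separability gives $\#\phi[\fp^{kh}]=\#(A/\fp^{kh})^r$ for every $k\ge1$ at once, with no ``induction over $\mathfrak b$-parts''. Your inclusion--exclusion with $aA+a'A=\fp^m$ is circular. Indeed $\#(\phi[a]\cap\phi[a'])=\#\phi[a]\cdot\#\phi[a']/\#(\phi[a]+\phi[a'])$, and $\phi[a]+\phi[a']=\phi[aA\cap a'A]$. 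The ideal $aA\cap a'A$ lies in the class $[\fp^m]^{-1}$, so its order is exactly the kind of unknown you started with.

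Second, the passage from orders to the isomorphism type is wrong as stated. A single order does not determine the type: for $m\ge2$, $(A/\fp^m)^{r-1}\oplus A/\fp^{m-1}\oplus A/\fp$ and $(A/\fp^m)^r$ have the same order. Also, no finite module has $\#(A/\fp^m)^r$ points of $\fp^m$-torsion for all large $m$. Here is an argument that works. Let $M=\phi[\fp^\infty]$ and set $t_i=\dim_{A/\fp}M[\fp^i]/M[\fp^{i-1}]$. These layer dimensions are finite and non-increasing, since a uniformiser of $A_\fp$ maps the $(i+1)$-st layer injectively into the $i$-th. Hence $\sum_{i\le kh}t_i=rkh$ for all $k$ forces $t_i=r$ for all $i$. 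Thus $\phi[\fp^e]$ has $r$ cyclic summands, each of exponent at most $e$, and has order $\#(A/\fp^e)^r$, whence $\phi[\fp^e]\cong(A/\fp^e)^r$. Alternatively, surjectivity of each $\phi_a$ on $\CCi$ makes $M$ divisible. Then $M\cong(F_\fp/A_\fp)^s$ with $s=\dim_{A/\fp}\phi[\fp]$, and the single count at $\fp^h$ gives $s=r$.
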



\begin{definition} \label{def:level_N_structure}
  For an ideal $N$ of $A$, a \emph{level $N$ structure} for a Drinfeld module $\phi$ of rank $r$ is an $A/N$-module isomorphism
  \[ \beta : \parens{N^{-1}/A}^r \longisoto \phi\brackets{N}. \qedhere \]
\end{definition}

%% file: text/1_2_lattices.tex
\subsection{Lattices}

\begin{definition} \label{def:lattice}
  An $A$-submodule $\Lambda \subset \CCi$ is called a lattice if and only if
  \begin{enumerate}
    \item $\Lambda$ is finitely generated as an $A$-module, and
    \item $\Lambda$ is strongly discrete as a subset of $\CCi$ (\ie any finite ball in $\CCi$ has finite intersection with $\Lambda$.)
  \end{enumerate}

  The \emph{rank} of $\Lambda$ is its rank as a finitely generated torsion-free (or equivalently finitely generated projective) submodule of $\CCi$.
  
  The set of lattices of rank $r$ is denoted $\LL^r$, the set of lattices of rank $\leq r$ is denoted $\LL^{\leq r}$, and the set of \emph{all lattices} is denoted $\LL$.
\end{definition}

\begin{definition} \label{def:prelattice}
  A prelattice is a strongly discrete $\FF_q$-sub-vector space of $\CCi$.
\end{definition}

Since $A$ is an $\FF_q$-vector space, any lattice is a prelattice.

\begin{paragraph} \label{para:latticeAmodule}
  Since $A$ is a Dedekind domain, we have from \cite[Section~4.3]{goss2012basic} that if $\Lambda$ is a lattice of rank $r \geq 1$, then there is an $A$-module isomorphism $\Lambda \simeq A^{r-1} \oplus I$ where $I$ is a nonzero ideal of $A$.
  In other words, there are $\omega_1, \omega_2, \dotsc, \omega_r \in \CCi$ such that $\Lambda = A\omega_1 +A\omega_2 +\dotsb +A\omega_{r-1} +I\omega_r$ and the $\omega_i$ are $F_\infty$-linearly independent, since $F$ is the fraction field of $A$ and $\Lambda$ is strongly discrete.
\end{paragraph}

\begin{definition} \label{def:lattice_morph}
  A morphism $c : \Lambda_1 \to \Lambda_2$ between two lattices \emph{of the same rank} is an element $c \in \CCi$ such that $c\Lambda_1 \subseteq \Lambda_2$.
  The category of lattices is then defined in the natural way.
\end{definition}

\begin{paragraph} \label{para:lattice_morph_inv}
  Note that for a morphism $c : \Lambda_1 \to \Lambda_2$ to have an inverse morphism $c'$ in this category, we must have that $c c' = 1$ and $c\Lambda_1 \subseteq \Lambda_2$ and $c'\Lambda_2 \subseteq \Lambda_1$ (or equivalently $\Lambda_2 \subseteq c\Lambda_1$); thus $\Lambda_2 = c\Lambda_1$.
\end{paragraph}

Every lattice has $\FF_q^\times$ as a set of trivial automorphisms, but some special lattices have nontrivial automorphisms; for instance, if $f \in \FF_{q^2}-\FF_q \subset \CCi$, then $A +fA$ is a rank 2 lattice with $f$ as an automorphism, since $f$ satisfies a quadratic equation with coefficients in $\FF_q \subset A$.
So, similarly to Drinfeld modules, they are augmented with \emph{level structure} to remove these nontrivial automorphisms.

\begin{paragraph} \label{para:lattice_quot}
Note that if $c : \Lambda_1 \to \Lambda_2$ is a nonzero morphism of lattices, then since $c\Lambda_1$ and $\Lambda_2$ have the same rank, $\Lambda_2/c\Lambda_1$ is a finite $A$-module.
Also, for any nonzero $a \in A$ and lattice $\Lambda$, $a$ is a morphism from $\Lambda$ to itself.
\end{paragraph}

Moreover, if $\Lambda$ has rank $r$, then by \cite[p.~67]{goss2012basic} we have that
\[ \Lambda/a\Lambda \simeq a^{-1}\Lambda/\Lambda \simeq \bigoplus_{i = 1}^r A/(a) \]
is a finite $A/(a)$-module, and more generally if $N$ is a nonzero ideal of $A$ then
\[ \Lambda/N\Lambda \simeq N^{-1}\Lambda/\Lambda \simeq \bigoplus_{i = 1}^r A/N \]
is a free finite $A/N$-module.
We can thus make the following

\begin{definition} \label{def:lattice_level}
  For a nonzero proper ideal of $A$, a \emph{level $N$ structure} for a lattice $\Lambda$ of rank $r$ is an $A/N$-module isomorphism
  \[ \alpha : \parens{N^{-1}/A}^r \longisoto N^{-1}\Lambda/\Lambda. \qedhere \]
\end{definition}

\begin{proposition} \label{prop:lattice_levelN_GLrAN}
  Every rank $r$ lattice has exactly $\size{\GL[r]{A/N}}$ level $N$ structures.
\end{proposition}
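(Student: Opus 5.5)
The plan is to show that the set of level $N$ structures on a rank $r$ lattice $\Lambda$ is a torsor under $\GL[r]{A/N}$, once we know it is nonempty. The count then follows at once.

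Nonemptiness comes from the isomorphism recorded just before \cref{def:lattice_level}:
\[ N^{-1}\Lambda/\Lambda \simeq \bigoplus_{i=1}^r A/N \]
as $A/N$-modules. We also need $N^{-1}/A \simeq A/N$ as $A/N$-modules, which is the same statement for the rank $1$ lattice $A$ (equivalently, $N^{-1}/A \simeq N^{-1}\otimes_A A/N$, and $N^{-1}$ is locally free of rank $1$). Taking $r$ copies gives $\parens{N^{-1}/A}^r \simeq (A/N)^r$. Composing the two isomorphisms produces some $\alpha_0 : \parens{N^{-1}/A}^r \isoto N^{-1}\Lambda/\Lambda$. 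This is the only step with any content, and it is supplied by the cited facts from \cite{goss2012basic}. The one thing to check is that these isomorphisms respect the $A/N$-module structure and not merely the $A$-module structure, which is clear because both sides are killed by $N$.

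Next, the automorphism group $\operatorname{Aut}_{A/N}\parens{(N^{-1}/A)^r}$ acts on the set of level $N$ structures by precomposition, $\alpha \mapsto \alpha\circ\gamma$. This action is free and transitive: given any two structures $\alpha, \alpha'$, the map $\gamma = \alpha_0^{-1}\circ\alpha'$ is the unique automorphism with $\alpha' = \alpha_0\circ\gamma$. The map $\alpha \mapsto \alpha_0^{-1}\circ\alpha$ is therefore a bijection from the set of level $N$ structures onto $\operatorname{Aut}_{A/N}\parens{(N^{-1}/A)^r}$.

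Finally, fix the isomorphism $N^{-1}/A \simeq A/N$. It identifies $\operatorname{Aut}_{A/N}\parens{(N^{-1}/A)^r}$ with $\operatorname{Aut}_{A/N}\parens{(A/N)^r} = \GL[r]{A/N}$, since the automorphisms of a free module of rank $r$ are the invertible $r\times r$ matrices over $A/N$. This group is finite because $A/N$ is finite. Hence there are exactly $\size{\GL[r]{A/N}}$ level $N$ structures.
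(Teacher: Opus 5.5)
Your argument is correct and is essentially the paper's: the paper states this proposition without a separate proof, treating it as immediate from the cited isomorphism $N^{-1}\Lambda/\Lambda \simeq \bigoplus_{i=1}^r A/N$, and the free, transitive precomposition action of $\GL[r]{A/N}$ on level structures that you use is exactly what the paper later invokes (e.g.\ in the proof of \cref{prop:LLR_rigidAnalytic}). One small remark on your parenthetical: the route via $N^{-1}\otimes_A A/N$ also needs the fact that rank-one projective modules over the finite ring $A/N$ are free, but this does not matter, because your main justification (the rank-one case $\Lambda = A$ of the cited isomorphism) already suffices.
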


\begin{definition} \label{def:llattice_morph}
  A morphism $c : (\Lambda_1,\alpha_1) \to (\Lambda_2,\alpha_2)$ between two lattices of rank $r$ with associated level $N$ structures is an element $c \in \CCi$ such that both $c\Lambda_1 \subseteq \Lambda_2$ \emph{and} the following diagram commutes:
  \begin{cdiagram} \label{diag:llattice_morph}
    N^{-1}\Lambda_1/\Lambda_1
      \arrow[r, hook, two heads, "\times c"] &
    N^{-1}c\Lambda_1/c\Lambda_1
      \arrow[d, "\subseteq"] \\
    \parens{N^{-1}/A}^r
      \arrow[u, hook, two heads, "\alpha_1"]
      \arrow[r, hook, two heads, "\alpha_2"'] &
    N^{-1}\Lambda_2/\Lambda_2 \qedhere
  \end{cdiagram}
\end{definition}

Note that since the sets in the above diagram are finite, if we have such a morphism then the map on the right, induced from the inclusion of $c\Lambda_1$ in $\Lambda_2$, should also be a bijection.
For this to be the case, we must have that if $\lambda \in c\Lambda_1$ is not in $cN\Lambda_1$, then it is not in $N\Lambda_2$; \ie $N\Lambda_2 \cap c\Lambda_1 \subseteq cN\Lambda_1$.
The reverse inclusion is apparent, so we must in fact have equality.

\begin{proposition} \label{prop:llattice_isomorph}
  If $c : \parens{\Lambda_1,\alpha_1} \isoto \parens{\Lambda_2,\alpha_2}$ is an isomorphism of lattices with level $N$ structure, then $c\Lambda_1 = \Lambda_2$ and $c\alpha_1 = \alpha_2$, with $c$ considered as an element of $\CCi$.
\end{proposition}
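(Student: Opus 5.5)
Let $c$ be an isomorphism with inverse morphism $c'$; by \cref{para:lattice_morph_inv} we already have $cc' = 1$ and $c\Lambda_1 \subseteq \Lambda_2$, $c'\Lambda_2 \subseteq \Lambda_1$. I first want to verify that this remark applies in the category of lattices with level structure. A morphism $c : (\Lambda_1,\alpha_1) \to (\Lambda_2,\alpha_2)$ is in particular a morphism of the underlying lattices, and composition is multiplication in $\CCi$. So the identity of $(\Lambda,\alpha)$ is $1$, and an inverse $c'$ satisfies $c'c = cc' = 1$ together with both inclusions. Multiplying $c'\Lambda_2 \subseteq \Lambda_1$ by $c$ gives $\Lambda_2 \subseteq c\Lambda_1$, and hence $c\Lambda_1 = \Lambda_2$.

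For the level structures, I use \cref{diag:llattice_morph}. Because $c\Lambda_1 = \Lambda_2$, the right-hand vertical map $N^{-1}c\Lambda_1/c\Lambda_1 \to N^{-1}\Lambda_2/\Lambda_2$ induced by inclusion is the identity map of $N^{-1}\Lambda_2/\Lambda_2$. Commutativity of the diagram therefore says $\alpha_2 = (\times c)\circ\alpha_1$. Here $\times c$ is the map $N^{-1}\Lambda_1/\Lambda_1 \to N^{-1}\Lambda_2/\Lambda_2$, $x+\Lambda_1 \mapsto cx+\Lambda_2$. It is well defined and $A$-linear because $c\Lambda_1 = \Lambda_2$ and $cN^{-1}\Lambda_1 = N^{-1}\Lambda_2$. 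This is exactly what $c\alpha_1 = \alpha_2$ means when $c$ is regarded as an element of $\CCi$ acting by multiplication.

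The only delicate point is interpretive rather than technical: stating precisely what ``$c\alpha_1$'' means, and checking that multiplication by $c$ induces a well-defined map on the quotients. No real obstacle arises; once $c\Lambda_1=\Lambda_2$ is established, the rest follows directly from the definitions.
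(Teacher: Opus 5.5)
Your proof is correct and follows the same route as the paper, which gives no separate proof of this proposition and relies on the surrounding remarks. There, $c\Lambda_1 = \Lambda_2$ comes from the inverse-morphism argument of \cref{para:lattice_morph_inv}, and $c\alpha_1 = \alpha_2$ is then read off from \cref{diag:llattice_morph}, since the right-hand inclusion-induced map becomes the identity.
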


\begin{corollary} \label{coro:llattice_automorph}
  If $c : \parens{\Lambda,\alpha} \isoto \parens{\Lambda,\alpha}$ is an automorphism of lattices with level $N$ structure, then $c$ is the identity morphism.
\end{corollary}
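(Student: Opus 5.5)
By \cref{prop:llattice_isomorph}, an automorphism $c$ of $(\Lambda,\alpha)$ satisfies $c\Lambda = \Lambda$ and $c\alpha = \alpha$. So the plan is to show that an element $c \in \CCi$ with these two properties must equal $1$, since the identity morphism is the element $1 \in \CCi$.

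First, $c\Lambda = \Lambda$ makes multiplication by $c$ an $A$-module automorphism of $\Lambda$. Write $\Lambda = A\omega_1 + \dotsb + A\omega_{r-1} + I\omega_r$ as in \cref{para:latticeAmodule}. Because $\Lambda$ is finitely generated and $c\Lambda \subseteq \Lambda$, the Cayley–Hamilton (determinant) trick applied to multiplication by $c$ on $\Lambda$ shows that $c$ is integral over $A$. Indeed, $\Lambda$ is a faithful module over $A[c] \subset \CCi$, and $A[c]$ is contained in the finitely generated module $\End_A(\Lambda)$. Similarly $c^{-1}\Lambda = \Lambda$, so $c^{-1}$ is integral too.

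Second, consider $c-1$. The condition $c\alpha = \alpha$ means that $c$ acts as the identity on $N^{-1}\Lambda/\Lambda$. In other words, $(c-1)N^{-1}\Lambda \subseteq \Lambda$, which is equivalent to $(c-1)\Lambda \subseteq N\Lambda$. Iterating gives no new information directly, so I instead work inside the ring $R \defeq \End_A(\Lambda)$. This ring is an $A$-order, and the ideal $NR$ is proper since $N$ is proper. The element $u = c$ is a unit of $R$ with $u \equiv 1 \pmod{NR}$; more precisely $(u-1)\Lambda \subseteq N\Lambda$.

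The key point is that $u$ is also of finite order. The unit $c$ lies in the commutative ring $A[c] \subset \CCi$, which is integral over $A$. Its conjugates are also automorphisms of lattices, in the sense that their minimal polynomials have the same coefficients. Since $\Lambda$ is discrete, $\abs{c}=1$ must hold: if $\abs{c}\neq 1$, then either $c^n\omega$ or $c^{-n}\omega$ tends to $0$ in the discrete set $\Lambda$, which is impossible for $\omega \neq 0$. Hence the powers $c^n\omega_1$ all lie in the finite set $\Lambda \cap \set{\abs{z} = \abs{\omega_1}}$, where finiteness comes from strong discreteness. So $c^m = 1$ for some $m \geq 1$, and $c$ is a root of unity in $\CCi$. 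In characteristic $p$ this forces $c \in \overline{\FF}_q$, so $c$ generates a finite field $k = \FF_q(c)$.

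Finally, $k$ is a finite field, so the ring $A[c]$ is isomorphic to $A \otimes_{\FF_q} k$ and $\Lambda$ is a module over it. Suppose $c \neq 1$. Then $c-1$ is a nonzero element of the field $k$, so it is invertible and its inverse $(c-1)^{-1} \in k$ also preserves $\Lambda$. From $(c-1)\Lambda \subseteq N\Lambda$ we get
\[ \Lambda = (c-1)^{-1}(c-1)\Lambda \subseteq (c-1)^{-1}N\Lambda = N(c-1)^{-1}\Lambda \subseteq N\Lambda. \]
This contradicts $\Lambda/N\Lambda \simeq (A/N)^r \neq 0$, which holds because $N$ is proper. Hence $c=1$.

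The main obstacle is the finiteness-of-order step: it requires carefully using strong discreteness together with the ultrametric property that $\abs{c\lambda} = \abs{c}\abs{\lambda}$.
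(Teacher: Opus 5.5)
Your proof is correct, and it starts exactly where the paper does. The paper gives no separate argument: it presents the statement as an immediate consequence of \cref{prop:llattice_isomorph} and leaves unexplained why $c\Lambda = \Lambda$ and $c\alpha = \alpha$ force $c = 1$. That deduction is not a formality. The condition $c\alpha = \alpha$ only gives $(c-1)\Lambda \subseteq N\Lambda$, and it is discreteness in $\CCi$ that rules out units $c \equiv 1 \pmod N$ of infinite order.

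You supply this step correctly. Strong discreteness gives $\abs{c} = 1$ and then $c^m = 1$. Hence $c - 1 \in \FF_q[c] \subseteq A[c]$ is either $0$ or a unit preserving $\Lambda$. In the latter case $\Lambda = N\Lambda$, which is impossible.

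A few small points:

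The discussion of integrality, of the order $\End_A(\Lambda)$, of the ``conjugates'' of $c$, and of $A[c] \simeq A \otimes_{\FF_q} k$ is never used and can be cut. The sentence about conjugates in particular has no precise meaning. All you need is $c\Lambda \subseteq \Lambda$ and $\FF_q(c) = \FF_q[c]$.

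For $r = 1$ one has $\Lambda = I\omega_1$, and $\omega_1$ need not lie in $\Lambda$. Run the finiteness argument with an arbitrary nonzero $\lambda \in \Lambda$ instead of $\omega_1$.

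You use $r \geq 1$ twice: to have a nonzero element of $\Lambda$, and to get $(A/N)^r \neq 0$. This hypothesis is also implicit in the statement itself. For $r = 0$ the statement fails, since every $c \in \CCi^\times$ is an automorphism of $(0,0)$.
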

  

There is a special function in $\End_{\FF_q}\parens{\CCi}$ associated to each prelattice (\ie strongly discrete $\FF_q$-sub-vector space of $\CCi$; thus also to each lattice), as follows:
\begin{definition} \label{def:e_Lambda}
  For a prelattice $\Lambda$, the \emph{exponential function} is
  \[ e_\Lambda(z) = z \cdot \prodp_{\lambda \in \Lambda} \parens{1-\frac{z}{\lambda}}\,. \]
  This product converges, as $\abs{\lambda} \to \infty$ if $\Lambda$ is infinite since $\Lambda$ is strongly discrete.
\end{definition}

We collect the following properties of $e_\Lambda$, which we refrain from proving; for proofs, see \cite[Section~4.3]{goss2012basic}, \cite[Section~2.2]{gekeler2006dmc}, and \cite[Chapter~2]{BBPI}.
\begin{proposition}~ \label{prop:e_Lambda_props}  
  \begin{enumerate}
    \item $e_\Lambda$ is an entire function on $\CCi$ with simple zeroes at and only at $\Lambda$.
    \item $\dbd{z} e_\Lambda(z) = 1$.
    \item $\displaystyle \frac{1}{e_{\Lambda}(z)} = \sum_{\lambda \in \Lambda} \frac{1}{z-\lambda}$.
    \item $e_\Lambda$ is $\FF_q$-linear.
    \item For $c \in \CCi$, $e_{c\Lambda}(cz) = c\cdot e_\Lambda(z)$.
    \item $e_\Lambda$ has a power series of the following form, convergent for all $z \in \CCi$:
    \[ e_\Lambda(z) = \sum_{i \in \NNO} e_{\Lambda,i} z^{q^i}. \]
    \item If $\Lambda_1 \subseteq \Lambda_2$, then $e_{\Lambda_1}(\Lambda_2)$ is also a prelattice, and
    \[ e_{\Lambda_2}(z) = e_{e_{\Lambda_1}(\Lambda_2)}(e_{\Lambda_1}(z)). \qedhere \]
  \end{enumerate}
\end{proposition}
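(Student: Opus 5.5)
The approach is to build everything from finite approximations. For a prelattice $\Lambda$ and $R>0$, let $\Lambda_R = \setst{\lambda\in\Lambda}{\abs{\lambda}\le R}$. By the ultrametric inequality this is an $\FF_q$-subspace, and by strong discreteness it is finite. Set
\[ P_R(z) = z\prodp_{\lambda\in\Lambda_R}\parens{1-\frac{z}{\lambda}} .\]

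\emph{Step 1: convergence and (1).} On a ball $\abs{z}\le\rho$, the factors with $\abs{\lambda}>\rho$ satisfy $\abs{z/\lambda}<1$. Since $\abs{\lambda}\to\infty$, the product converges uniformly on the ball, so $P_R\to e_\Lambda$ uniformly on bounded sets. Hence $e_\Lambda$ is entire, with power series the limit of those of the $P_R$. The zeroes are visibly $\Lambda$. They are simple because only finitely many factors vanish near a given $\lambda$, and each vanishes to order one.

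\emph{Step 2: (4), (6), (2).} The key finite fact is that a polynomial $P$ with simple roots exactly a finite $\FF_q$-subspace $V$, and with linear coefficient $1$, is $\FF_q$-linear. To prove it, fix $w\in V$. Then $P(z+w)-P(z)$ has degree $<\size V$ and vanishes on all of $V$, so it is zero. Thus $P(z+w)=P(z)$, and $P(z+w)-P(z)-P(w)$ vanishes for every $w\in V$. Viewing this as a polynomial in $w$ of degree $<\size V$ with $\size V$ roots, it is identically zero. Scalar invariance $P(cz)=cP(z)$ for $c\in\FF_q^\times$ follows because $cV=V$ and the leading coefficients match. Hence $P_R$ is an additive polynomial $\sum_i e_{R,i}z^{q^i}$ with $e_{R,0}=1$. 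Passing to the limit gives (4) and (6). Differentiating termwise, every term $z^{q^i}$ with $i\ge1$ has derivative $0$ in characteristic $p$, which gives (2).

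\emph{Step 3: (3) and (5).} Since $e_\Lambda'=1$, we have $1/e_\Lambda = e_\Lambda'/e_\Lambda$. For $P_R$ the logarithmic derivative is exactly $\sum_{\lambda\in\Lambda_R}1/(z-\lambda)$. For $z\notin\Lambda$ fixed, the tail terms $1/(z-\lambda)$ tend to $0$, so the full series converges non-archimedeanly. Together with $1/P_R\to 1/e_\Lambda$ away from $\Lambda$, this gives (3). For (5), substituting $\lambda\mapsto c\lambda$ in the product gives $e_{c\Lambda}(cz)=cz\prodp(1-z/\lambda)$ directly.

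\emph{Step 4: (7), the main obstacle.} The set $M=e_{\Lambda_1}(\Lambda_2)$ is an $\FF_q$-subspace by (4). The difficulty is strong discreteness. I would choose an $\FF_q$-complement $W$ of $\Lambda_1$ in $\Lambda_2$, consisting of representatives of minimal absolute value in their cosets. Then $\abs{e_{\Lambda_1}(w)}$ is computed from the product: factors with $\abs{\lambda}>\abs{w}$ contribute absolute value $1$, and the others contribute $\abs{w}/\abs{\lambda}\ge 1$, using that $w$ is at distance $\abs{w}$ from $\Lambda_1$. This gives $\abs{e_{\Lambda_1}(w)}\ge\abs{w}$, so only finitely many $w$ land in any ball. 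When $\Lambda_1,\Lambda_2$ are lattices of the same rank, $\Lambda_2/\Lambda_1$ is finite and this step is trivial.

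Once $M$ is a prelattice, both $e_{\Lambda_2}$ and $e_M\circ e_{\Lambda_1}$ are entire. They have simple zeroes exactly at $\Lambda_2$: the composite vanishes iff $e_{\Lambda_1}(z)\in M$, iff $z\in\Lambda_1+W=\Lambda_2$, with simplicity following from the chain rule since both derivatives are $1$. By the non-archimedean Weierstrass factorisation, an entire function is determined by its zeroes with multiplicity up to a constant. Comparing derivatives at $0$ (both equal $1$) fixes the constant, which completes (7).
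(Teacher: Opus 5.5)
The paper does not prove this proposition itself but defers to \cite[Section~4.3]{goss2012basic}, \cite[Section~2.2]{gekeler2006dmc} and \cite[Chapter~2]{BBPI}. Your argument is the standard one from those sources and is correct: you approximate by the finite additive polynomials $P_R$, pass to the limit, and identify $e_{\Lambda_2}$ with $e_M\circ e_{\Lambda_1}$ via their simple zeroes and derivative at $0$.

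The only thing to tidy is Step~4, where you never use that $W$ is an $\FF_q$-subspace. A set of coset representatives of minimal absolute value suffices for both $\abs{e_{\Lambda_1}(w)}\geq\abs{w}$ and the zero count, and such representatives exist by strong discreteness of $\Lambda_2$. So drop the word ``complement'': the existence of a linear complement consisting entirely of minimal representatives is true but would need an extra successive-minima (orthogonal basis) argument that you have not given.
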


The following property of $e_\Lambda$ concerns its behaviour `at infinity':
\begin{proposition} \label{prop:e_Lambda_infinity}
  For variable $z \in \CCi$ and a prelattice $\Lambda$,
  \[ \dd(z,\Lambda) \to \infty \iff \abs{e_\Lambda(z)} \to \infty\,.\footnote{Here and elsewhere, $\displaystyle \dd(x,S) = \inf_{s\in S}\abs{x-s}$ denotes the distance from a point $x$ to the set $S$.} \qedhere \]
\end{proposition}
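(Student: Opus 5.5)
We prove the two implications separately. Both rest on the product formula of \cref{def:e_Lambda}, the $\FF_q$-linearity of $e_\Lambda$ (\cref{prop:e_Lambda_props}), and strong discreteness of $\Lambda$.

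\emph{Reduction to a translate.} By $\FF_q$-linearity, $e_\Lambda(z-\lambda)=e_\Lambda(z)$ for every $\lambda\in\Lambda$. So in both directions we may replace $z$ by $z-\lambda_0$, where $\lambda_0\in\Lambda$ realises $\dd(z,\Lambda)$. Such a $\lambda_0$ exists because $\Lambda$ is strongly discrete: the ball of radius $\abs{z}$ about $z$ meets $\Lambda$ in a finite set, and that set contains $0$. After this replacement we have $\abs{z}=\dd(z,\Lambda)$, so that $\abs{z-\lambda}\geq\abs{z}$ for all $\lambda\in\Lambda$.

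\emph{Factorisation of the absolute value.} For such $z$, write
\[ \abs{e_\Lambda(z)}=\abs{z}\prodp_{\lambda}\abs{1-z/\lambda}. \]
If $\abs{\lambda}\geq\abs{z}$, the factor $\abs{1-z/\lambda}=\abs{\lambda-z}/\abs{\lambda}$ has numerator at least $\abs{z}$. When $\abs{\lambda}>\abs{z}$, the ultrametric inequality makes the factor exactly $1$. If $\abs{\lambda}<\abs{z}$, the factor is $\abs{z}/\abs{\lambda}>1$. When $\abs{\lambda}=\abs{z}$, the factor is $\abs{\lambda-z}/\abs{z}\geq 1$ by the choice of $\lambda_0$. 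Hence
\[ \abs{e_\Lambda(z)}\;\geq\;\abs{z}\prod_{0<\abs{\lambda}<\abs{z}}\frac{\abs{z}}{\abs{\lambda}}\;\geq\;\abs{z}=\dd(z,\Lambda). \]
This inequality is valid for the original $z$ as well. It gives $(\Rightarrow)$ immediately: $\abs{e_\Lambda(z)}\geq\dd(z,\Lambda)\to\infty$.

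\emph{The converse $(\Leftarrow)$.} Here we need an upper bound on $\abs{e_\Lambda(z)}$ in terms of $R=\dd(z,\Lambda)=\abs{z}$, uniform in $z$ (again after the reduction). Split the factors. Those with $\abs{\lambda}>R$ equal $1$. Those with $\abs{\lambda}<R$ equal $R/\abs{\lambda}$. Those with $\abs{\lambda}=R$ are at most $1$, since $\abs{\lambda-z}\leq R$ by the ultrametric inequality. Thus
\[ \abs{e_\Lambda(z)}\leq R\prod_{0<\abs{\lambda}<R}R/\abs{\lambda}\eqdef M(R), \]
and $M(R)$ depends only on $R$ and $\Lambda$; it is finite by strong discreteness. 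Moreover $M$ is nondecreasing in $R$, since every factor $R/\abs{\lambda}$ grows with $R$ and new factors exceed $1$. So if $\dd(z,\Lambda)$ stays bounded by some $B$ along a subsequence, then $\abs{e_\Lambda(z)}\leq M(B)$ along that subsequence. By contraposition, $\abs{e_\Lambda(z)}\to\infty$ forces $\dd(z,\Lambda)\to\infty$.

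\emph{Main obstacle.} The delicate part is the bookkeeping of the boundary factors $\abs{\lambda}=\abs{z}$. They are handled by choosing $\lambda_0$ to be a genuine closest point, which existence via strong discreteness guarantees. The finite case of $\Lambda$ needs no separate treatment, since the same estimates hold there.
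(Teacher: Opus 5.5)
Your proof is correct and complete. The paper gives no proof of this proposition; it states it without argument, alongside the other properties of $e_\Lambda$ it quotes from the literature. So there is no in-paper route to compare with, and your argument is a valid self-contained justification. It translates $z$ by a closest lattice point, which exists by strong discreteness, and then evaluates the product factor by factor.

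Your two estimates actually coincide. After the translation $\abs{z}=\dd(z,\Lambda)$, every factor with $\abs{\lambda}=\abs{z}$ has absolute value exactly $1$. It is $\geq 1$ by the choice of $\lambda_0$ and $\leq 1$ by the ultrametric inequality. Hence you have proved the identity
\[ \abs{e_\Lambda(z)} = M\bigl(\dd(z,\Lambda)\bigr), \qquad M(R) = R\prod_{\substack{\lambda\in\Lambda\\ 0<\abs{\lambda}<R}}\frac{R}{\abs{\lambda}}, \]
valid for all $z\in\CCi$, with $M(0)=0$. Here $M$ is continuous, strictly increasing and satisfies $M(R)\geq R$.

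This yields both directions at once, in a uniform quantitative form: for every $B>0$, $\dd(z,\Lambda)>B$ implies $\abs{e_\Lambda(z)}>B$, and $\abs{e_\Lambda(z)}>M(B)$ implies $\dd(z,\Lambda)>B$. That uniform version is what is really needed where the proposition is used later. An example is the proof of \cref{prop:lattice_limit}, where largeness of distances must be converted into largeness of values of $e_\Lambda$ for whole families of points.
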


\begin{corollary} \label{coro:e_Lambda_infinity}
  Let $\Lambda_1 \subseteq \Lambda_2$ be prelattices.
  Then for a variable $z \in \CCi$,
  \[ \dd(z,\Lambda_2) \to \infty \iff \dd(e_{\Lambda_1}(z),e_{\Lambda_1}(\Lambda_2)) \to \infty. \qedhere \]
\end{corollary}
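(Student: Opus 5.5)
We must prove \cref{coro:e_Lambda_infinity}: for prelattices $\Lambda_1 \subseteq \Lambda_2$ and variable $z \in \CCi$, we have $\dd(z,\Lambda_2) \to \infty$ if and only if $\dd(e_{\Lambda_1}(z), e_{\Lambda_1}(\Lambda_2)) \to \infty$. The idea is to reduce both sides to the growth of $\abs{e_{\Lambda_2}(z)}$, using \cref{prop:e_Lambda_infinity} twice and the factorisation in \cref{prop:e_Lambda_props}(7).

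Write $\Lambda' \defeq e_{\Lambda_1}(\Lambda_2)$ and $w \defeq e_{\Lambda_1}(z)$. By \cref{prop:e_Lambda_props}(7), $\Lambda'$ is a prelattice and
\[ e_{\Lambda_2}(z) = e_{\Lambda'}(w). \]
The argument then runs in three steps.

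First, applying \cref{prop:e_Lambda_infinity} to the prelattice $\Lambda_2$ and the variable $z$ gives
\[ \dd(z,\Lambda_2) \to \infty \iff \abs{e_{\Lambda_2}(z)} \to \infty. \]

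Second, apply \cref{prop:e_Lambda_infinity} to the prelattice $\Lambda'$ with the variable $w$. This gives
\[ \dd(w,\Lambda') \to \infty \iff \abs{e_{\Lambda'}(w)} \to \infty. \]

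Third, combine the two equivalences. Since $e_{\Lambda'}(w) = e_{\Lambda_2}(z)$, the right-hand conditions in the two displays are the same condition, so the left-hand conditions are equivalent. This is exactly the claim.

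The only delicate point is in the second step. Here $w = e_{\Lambda_1}(z)$ is a function of the variable $z$ rather than a free variable, so we must check that \cref{prop:e_Lambda_infinity} can be applied along this family. It can, because that proposition is an equivalence of limiting statements: for every $M$ there is $M'$ such that $\dd(w,\Lambda') > M'$ implies $\abs{e_{\Lambda'}(w)} > M$, and conversely. Both implications hold pointwise for every $w \in \CCi$, so they hold in particular for each value $w = e_{\Lambda_1}(z)$, and hence along any net or sequence of $z$.

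I would state this uniform reading of \cref{prop:e_Lambda_infinity} explicitly. Under that reading, substituting $w = e_{\Lambda_1}(z)$ is harmless and the proof is complete.
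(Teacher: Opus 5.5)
Your proof is correct and is the same deduction the paper intends: it gives no separate proof, since the corollary follows at once from \cref{prop:e_Lambda_infinity} applied to $\Lambda_2$ and to $e_{\Lambda_1}(\Lambda_2)$, linked by the identity $e_{\Lambda_2}(z) = e_{e_{\Lambda_1}(\Lambda_2)}(e_{\Lambda_1}(z))$ from \cref{prop:e_Lambda_props}. The point you flag as delicate is automatic, because \cref{prop:e_Lambda_infinity} concerns arbitrary sequences (or nets) in $\CCi$ and $e_{\Lambda_1}(z_n)$ is just another such sequence, so your uniform reformulation is equivalent to that proposition rather than an extra assumption.
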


The following property instead concerns the behaviour of $e_\Lambda$ close to zero:
\begin{proposition} \label{prop:e_Lambda_leqR}
  Let $\Lambda$ be a prelattice with $\minp_{\lambda \in \Lambda} \abs{\lambda} = R$.
  Then for $\abs{z} < R$ we have that $\abs{e_\Lambda(z)-z} \leq \abs{z}^q R^{1-q}$.
\end{proposition}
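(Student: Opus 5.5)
Write $\Lambda' = \Lambda\setminus\{0\}$. I would work directly with the product expansion from \cref{def:e_Lambda},
\[ e_\Lambda(z) = z\prodp_{\lambda\in\Lambda}\parens{1-\frac{z}{\lambda}}. \]
The plan is to bound the deviation of the product from $1$ using the ultrametric inequality, and then use the $\FF_q$-linear structure of $e_\Lambda$ from \cref{prop:e_Lambda_props} to extract the factor $\abs{z}^q$ rather than just $\abs{z}^2$.

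First, fix $z$ with $\abs{z}<R$. Every nonzero $\lambda\in\Lambda$ has $\abs{\lambda}\geq R>\abs{z}$, so each factor satisfies $\abs{z/\lambda}<1$. Hence $\abs{1-z/\lambda}=1$, and each factor lies in the closed unit disc $1+\mathfrak m$, where $\mathfrak m$ is the open unit ball of $\CCi$. This means the expansion is well behaved. The cleanest route is the power series
\[ e_\Lambda(z)=\sum_{i\ge0} e_{\Lambda,i}z^{q^i} \]
from \cref{prop:e_Lambda_props}, which has $e_{\Lambda,0}=1$ because the derivative is $1$. It then suffices to show $\abs{e_{\Lambda,i}}\leq R^{1-q^i}$ for all $i\ge1$. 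Indeed, with $t=\abs{z}/R<1$ we would get
\[ \abs{e_{\Lambda,i}z^{q^i}}\le R\,t^{q^i}\le R\,t^q=\abs{z}^qR^{1-q}, \]
and the ultrametric inequality then bounds the whole tail $\sum_{i\geq 1} e_{\Lambda,i}z^{q^i}$ by $\abs{z}^qR^{1-q}$.

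To obtain the coefficient bound, I would expand the product formally. Grouping the nonzero lattice points into $\FF_q^\times$-orbits, the factors coming from one orbit multiply to
\[ \prod_{c\in\FF_q^\times}\parens{1-\frac{z}{c\lambda}}=1-\parens{\frac{z}{\lambda}}^{q-1}. \]
This shows that $e_\Lambda(z)/z$ is a power series in $z^{q-1}$. More precisely, the coefficient of $z^{n}$ in $e_\Lambda(z)/z$ is a sum of products $\prod_j\lambda_j^{-(q-1)}$ whose exponents total $n$. Each such product has absolute value at most $R^{-n}$, so the ultrametric inequality gives $\abs{\text{coefficient}}\leq R^{-n}$. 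Taking $n=q^i-1$ yields $\abs{e_{\Lambda,i}}\le R^{1-q^i}$.

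The main obstacle is justifying the formal expansion of an infinite product. When $\Lambda$ is infinite, I would handle this by truncating to the finite subspaces $\Lambda\cap B(0,M)$, which are $\FF_q$-subspaces because the absolute value is ultrametric. For each truncation the finite product is an $\FF_q$-linear polynomial whose coefficients satisfy the bound by the argument above. I would then pass to the limit $M\to\infty$: the coefficients converge because the product converges uniformly on bounded sets by strong discreteness, and a coefficientwise bound is preserved under such limits. If that limiting step gets messy, the alternative is to apply the bound directly to the finite truncations evaluated at $z$ and take the limit of the values $e_{\Lambda\cap B(0,M)}(z)\to e_\Lambda(z)$, since the inequality $\le\abs{z}^qR^{1-q}$ is closed under limits.
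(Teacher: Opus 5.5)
The paper gives no proof of this statement: like \cref{prop:e_Lambda_props} and \cref{prop:e_Lambda_infinity}, it is quoted as a standard property of $e_\Lambda$, so there is no argument in the paper to compare yours with. Your argument is correct.

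Of your two ways of passing to the limit, the second is the cleaner. Each truncation $\Lambda_M=\Lambda\cap B(0,M)$ is a finite $\FF_q$-subspace, hence itself a prelattice, so \cref{prop:e_Lambda_props} applies to $e_{\Lambda_M}$ directly. The finite coefficient bound then gives $\abs{e_{\Lambda_M}(z)-z}\le\abs{z}^qR^{1-q}$. Finally, $e_{\Lambda_M}(z)\to e_\Lambda(z)$ is just what the infinite product in \cref{def:e_Lambda} means.

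You use two ingredients where either one alone suffices.

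With the $\FF_q$-linear shape $\sum_i e_{\Lambda,i}z^{q^i}$, the orbit grouping is unnecessary. The coefficient of $z^n$ in $\prod_{0\neq\lambda\in\Lambda}(1-z/\lambda)$ is already a signed sum of products of $n$ distinct $\lambda^{-1}$, so it has absolute value at most $R^{-n}$.

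Conversely, the orbit identity alone gives the result with no power series at all. Write $e_\Lambda(z)/z=\prod_j(1-u_j)$, where $u_j=(z/\lambda_j)^{q-1}$ and the $\lambda_j$ are orbit representatives, so $\abs{u_j}\le(\abs{z}/R)^{q-1}<1$. The ultrametric estimate $\abs{\prod_j(1-u_j)-1}\le\max_j\abs{u_j}$ follows by induction on the number of factors from $P_m-1=(P_{m-1}-1)(1-u_m)-u_m$, and then by a limit. It yields $\abs{e_\Lambda(z)-z}\le\abs{z}\,(\abs{z}/R)^{q-1}=\abs{z}^qR^{1-q}$ at once.

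A minor slip: $1+\mathfrak m$ is the open disc of radius $1$ about $1$, not the closed unit disc. That sentence plays no role in the rest of your proof.
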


The following properties are valid for $\Lambda$ a lattice (\ie in addition to being a prelattice, also being an $A$-module of finite rank):
\begin{proposition} ~ \label{prop:e_Lambda_latticeProps}
  \begin{enumerate}
    \item For $a \in A$, $\displaystyle \quad e_\Lambda(az) = a \cdot e_\Lambda(z) \cdot \prodp_{\lambda \in a^{-1}\Lambda/\Lambda} \parens{1-\frac{e_\Lambda(z)}{e_\Lambda(\lambda)}}$.
    \item For an ideal $N$ of $A$, $\displaystyle \quad e_{N^{-1}\Lambda}(z) = e_\Lambda(z) \cdot \prodp_{\lambda \in N^{-1}\Lambda/\Lambda} \parens{1-\frac{e_\Lambda(z)}{e_\Lambda(\lambda)}}$. \qedhere
  \end{enumerate}
\end{proposition}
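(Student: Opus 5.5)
Both identities follow from the composition rule \cref{prop:e_Lambda_props}(7), applied to the inclusions $\Lambda \subseteq a^{-1}\Lambda$ and $\Lambda \subseteq N^{-1}\Lambda$, combined with the fact that the exponential of a \emph{finite} prelattice is a polynomial given by a finite product.

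I start with (2), which is the cleaner statement. First, $N^{-1}\Lambda$ is again a lattice: it is finitely generated, and it is strongly discrete because $N^{-1}\Lambda \subseteq b^{-1}\Lambda$ for any nonzero $b \in N$, and $b^{-1}\Lambda$ is a scaled copy of $\Lambda$. Put $\Lambda_1 = \Lambda$ and $\Lambda_2 = N^{-1}\Lambda$ in \cref{prop:e_Lambda_props}(7). This gives
\[ e_{N^{-1}\Lambda}(z) = e_{M}(e_\Lambda(z)), \qquad M \defeq e_\Lambda(N^{-1}\Lambda). \]

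Next I identify $M$. By \cref{prop:e_Lambda_props}(4), $e_\Lambda$ is additive; in fact it is $\FF_q$-linear, being a $q$-power series by \cref{prop:e_Lambda_props}(6). By \cref{prop:e_Lambda_props}(1) its kernel is exactly $\Lambda$. Restricting $e_\Lambda$ to $N^{-1}\Lambda$ therefore induces an $\FF_q$-linear bijection
\[ N^{-1}\Lambda/\Lambda \isoto M. \]
The left side is finite, since it is isomorphic to $(A/N)^r$ as noted before \cref{def:lattice_level}. So $M$ is a finite $\FF_q$-vector space, and in particular a prelattice.

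For a finite prelattice the product in \cref{def:e_Lambda} is a finite product, namely
\[ e_M(X) = X\prodp_{m\in M}\parens{1-\frac{X}{m}}. \]
Reindexing by the bijection above, each nonzero $m \in M$ is $e_\Lambda(\lambda)$ for a unique nonzero class $\lambda \in N^{-1}\Lambda/\Lambda$. The value $e_\Lambda(\lambda)$ is well defined on classes by additivity together with $e_\Lambda(\Lambda)=0$. Substituting $X = e_\Lambda(z)$ then yields exactly formula (2).

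For (1), with $a \neq 0$, the same argument applied to $\Lambda \subseteq a^{-1}\Lambda$ gives
\[ e_{a^{-1}\Lambda}(z) = e_\Lambda(z)\prodp_{\lambda\in a^{-1}\Lambda/\Lambda}\parens{1-\frac{e_\Lambda(z)}{e_\Lambda(\lambda)}}. \]
This is just (2) for the principal ideal $N=(a)$. To reach the stated form, I apply \cref{prop:e_Lambda_props}(5) with $c=a^{-1}$, evaluated at the point $az$:
\[ e_{a^{-1}\Lambda}(z) = e_{a^{-1}\Lambda}(a^{-1}\cdot az) = a^{-1}e_\Lambda(az). \]
Multiplying through by $a$ gives (1). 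The case $a=0$ is trivial, since both sides vanish.

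I expect no serious obstacle here. The only points needing care are checking that $e_\Lambda(N^{-1}\Lambda)$ really is the finite prelattice in bijection with $N^{-1}\Lambda/\Lambda$ (this uses that the kernel is exactly $\Lambda$ and that $e_\Lambda$ is additive), and that the indexing of the primed product over nonzero classes is well defined.
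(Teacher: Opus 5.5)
Your proof is correct. The paper gives no proof of this proposition: it is stated alongside the other standard properties of $e_\Lambda$, which are deferred to Goss, Gekeler and BBP. So your argument is a reconstruction rather than a variant of something in the text.

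A direct proof would treat both sides of (1) as entire $\FF_q$-linear functions of $z$ with the same simple zeros, namely $a^{-1}\Lambda$, and the same derivative $a$, and conclude that they agree. You instead derive both items formally from two properties the paper already states: the composition rule (7) of \cref{prop:e_Lambda_props} and the scaling rule (5). The divisor comparison is hidden inside (7). In the context of the paper this is the more economical route, since it uses nothing beyond what has already been listed. It also makes clear that (2) is the basic identity and (1) is just its case $N=(a)$, rescaled via (5).

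Your supporting checks are all sound: $N^{-1}\Lambda \subseteq b^{-1}\Lambda$ gives strong discreteness, $e_\Lambda$ has kernel exactly $\Lambda$, and the primed product is indexed by nonzero classes.

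One small correction. For $a=0$ the right-hand side of (1) is undefined, since there is no quotient $a^{-1}\Lambda/\Lambda$. So rather than saying that both sides vanish, you should state that (1) is meant for $a\neq 0$, just as (2) is meant for nonzero $N$.
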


\begin{paragraph}
  Since $e_\Lambda$, being entire, is surjective, and
  \[ e_\Lambda(a) = e_\Lambda(b) \iff e_\Lambda(a-b) = 0 \iff a-b \in \Lambda, \]
  we see that $e_\Lambda$ is a bijection between $\CCi/\Lambda$ and $\CCi$.
  We will thus use the same notation $e_\Lambda(\lambda)$ both for $\lambda \in \CCi$ and for $\lambda \in \CCi/\Lambda$.
\end{paragraph}

%% file: text/1_3_lm_equiv.tex
\subsection{Equivalence between lattices and Drinfeld modules}

In the previous two subsections, we introduced Drinfeld modules and lattices.
As it turns out, these two concepts are intimately connected.
Firstly, from each lattice we can construct an associated Drinfeld module:

\begin{proposition} \label{prop:Dmod_from_lattice}
  If $\Lambda$ is a lattice of rank $r$, then $\phi^\Lambda$ defined as follows is a Drinfeld module:
  \[ \phi^\Lambda_a (X) = a \cdot X \prodp_{\lambda \in a^{-1}\Lambda/\Lambda} \parens{1-\frac{X}{e_\Lambda(\lambda)}} = a \cdot X \prodp_{y \in e_\Lambda\parens{a^{-1}\Lambda}} \parens{1-\frac{X}{y}}. \qedhere \]
\end{proposition}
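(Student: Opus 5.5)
We need to show that $a \mapsto \phi^\Lambda_a$ is a ring homomorphism $A \to \CCi\set{\tau}$ with $\deg_\tau \phi^\Lambda_a = r\deg a$ and $\D(\phi^\Lambda_a) = a$. The plan is to derive everything from the functional equation
\[ e_\Lambda(az) = \phi^\Lambda_a\parens{e_\Lambda(z)}, \]
which is exactly \cref{prop:e_Lambda_latticeProps}(1) rewritten with the definition of $\phi^\Lambda_a$, together with the fact that $e_\Lambda$ is surjective on $\CCi$.

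First, fix $a \neq 0$. The set $e_\Lambda(a^{-1}\Lambda)$ is the image of $a^{-1}\Lambda/\Lambda$ under the bijection $e_\Lambda : \CCi/\Lambda \to \CCi$. It is therefore a finite $\FF_q$-vector space, since $e_\Lambda$ is $\FF_q$-linear by \cref{prop:e_Lambda_props}. Its cardinality is $\size(a^{-1}\Lambda/\Lambda) = \size(A/(a))^r = q^{r\deg a}$, using the isomorphism $a^{-1}\Lambda/\Lambda \simeq (A/(a))^r$ recalled before \cref{def:lattice_level} and the fact that $\size A/(a) = \abs{a} = q^{\deg a}$.

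Next, a polynomial of the form $X\prod'_{y \in V}(1 - X/y)$, with $V$ a finite $\FF_q$-subspace of $\CCi$, is additive and $\FF_q$-linear. This is the standard Ore/Moore lemma: the polynomial is separable with root set exactly the subspace $V$, so the polynomial $P(X+Y)-P(X)-P(Y)$ has degree less than $\size V$ in $X$ but vanishes on all of $V$, and similarly for scalars. Hence $\phi^\Lambda_a = \sum_i l_i\tau^i \in \CCi\set{\tau}$. Its $\tau$-degree is $\log_q \size V = r\deg a$, and its linear coefficient is $a$, so $\D(\phi^\Lambda_a) = a$. For $a = 0$ we set $\phi^\Lambda_0 = 0$, which is consistent with both conditions.

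Finally, we check the homomorphism property. From the functional equation, for $a,b \in A$,
\[ \phi^\Lambda_{ab}(e_\Lambda(z)) = e_\Lambda(abz) = \phi^\Lambda_a(e_\Lambda(bz)) = \phi^\Lambda_a\phi^\Lambda_b(e_\Lambda(z)). \]
Similarly $\phi^\Lambda_{a+b}(e_\Lambda(z)) = e_\Lambda(az)+e_\Lambda(bz) = (\phi^\Lambda_a+\phi^\Lambda_b)(e_\Lambda(z))$, using additivity of $e_\Lambda$. Since $e_\Lambda$ is surjective, these are identities of functions on all of $\CCi$. Two $\FF_q$-linear polynomials that agree on the infinite field $\CCi$ have the same coefficients, so the identities hold in $\CCi\set{\tau}$. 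Also $\phi^\Lambda_1 = X$, since $\Lambda/\Lambda$ is trivial.

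The main obstacle is the $\FF_q$-linearity (additivity) of the product polynomial. The cleanest route is the degree argument above. Alternatively, additivity can be deduced directly from the functional equation: the identity $\phi^\Lambda_a(e_\Lambda(z)+e_\Lambda(w)) = e_\Lambda(az+aw) = \phi^\Lambda_a(e_\Lambda(z))+\phi^\Lambda_a(e_\Lambda(w))$, combined with surjectivity of $e_\Lambda$, already forces additivity.
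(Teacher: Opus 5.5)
The paper gives no proof of this proposition. It states it as standard and defers the underlying properties of $e_\Lambda$, in particular the functional equation of \cref{prop:e_Lambda_latticeProps}(1), to Goss, Gekeler and BBP. Your argument is correct, and it is the standard derivation from those sources. The functional equation $e_\Lambda(az)=\phi^\Lambda_a(e_\Lambda(z))$, combined with surjectivity and $\FF_q$-linearity of $e_\Lambda$, gives the ring-homomorphism property. The count $\size(a^{-1}\Lambda/\Lambda)=q^{r\deg a}$ gives the $\tau$-degree, and the factor $a$ gives $\D(\phi^\Lambda_a)=a$.

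Two small points. First, in the Ore/Moore step you should say why $P(X+y)-P(X)-P(y)$ vanishes at $X=v\in V$. It does because $P(X+v)=P(X)$, since $w\mapsto w-v$ permutes $V$; the same holds for the scalar check, using $cV=V$.

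Second, your closing alternative makes that lemma unnecessary. You obtain $\FF_q$-linearity of $\phi^\Lambda_a$ directly from the functional equation and surjectivity of $e_\Lambda$. This is not circular, because in the references the functional equation is proved by comparing the zeros and linear terms of two entire functions, independently of any additivity of the product.
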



A fundamental result is that, in fact, \emph{every} Drinfeld module arises from a lattice in this way.
We state the following result; for proof, see \cite[Theorem 4.6.9]{goss2012basic}:
\begin{theorem} \label{thm:Dmod_lattice_equiv}
  Let $\phi$ be a Drinfeld module of rank $r$ over $\CCi$.
  Then there is a lattice $\Lambda = \Lambda_\phi$ of rank $r$ such that $\phi = \phi^\Lambda$.
  Moreover, the association $\phi \mapsto \Lambda_\phi$ gives rise to an equivalence of categories between Drinfeld modules of rank $r$ and lattices of rank $r$.
\end{theorem}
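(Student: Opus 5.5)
The plan is to construct the functor $\Lambda \mapsto \phi^\Lambda$ of \cref{prop:Dmod_from_lattice} on morphisms, prove it is fully faithful, and then prove essential surjectivity by building an exponential function directly from $\phi$. The first claim of the theorem is then exactly essential surjectivity.

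\textbf{Step 1: the functor on morphisms.} The key identity is the functional equation
\[ e_\Lambda(az) = \phi^\Lambda_a\bigl(e_\Lambda(z)\bigr), \]
which follows by comparing \cref{prop:Dmod_from_lattice} with \cref{prop:e_Lambda_latticeProps}(1).

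Let $c : \Lambda_1 \to \Lambda_2$ be a nonzero morphism, so $c\Lambda_1 \subseteq \Lambda_2$. By \cref{prop:e_Lambda_props}(5),(7), the map
\[ u_c(X) \defeq c\, X \prodp_{y \in e_{c\Lambda_1}(\Lambda_2)} \parens{1-\frac{X}{y}} \]
is an $\FF_q$-linear polynomial. This uses that $\Lambda_2/c\Lambda_1$ is finite (\cref{para:lattice_quot}). It satisfies $u_c \circ e_{\Lambda_1} = e_{\Lambda_2} \circ c$.

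Then for $a \in A$ we have
\[ u_c\,\phi^{\Lambda_1}_a\, e_{\Lambda_1} = e_{\Lambda_2}(ca\,\cdot) = \phi^{\Lambda_2}_a\, u_c\, e_{\Lambda_1}. \]
Since $e_{\Lambda_1}$ is surjective, $u_c\phi^{\Lambda_1}_a = \phi^{\Lambda_2}_a u_c$. Functoriality is immediate from the same identity.

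\textbf{Step 2: full faithfulness.} Let $u$ be a morphism $\phi^{\Lambda_1} \to \phi^{\Lambda_2}$ and set $c = \D(u)$. Both $u \circ e_{\Lambda_1}$ and $e_{\Lambda_2} \circ c$ are $\FF_q$-linear entire series $g$ with linear term $cz$. Both satisfy $g(az) = \phi^{\Lambda_2}_a(g(z))$.

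Fix a nonconstant $a$. Comparing coefficients of $z^{q^i}$ gives a recursion of the form
\[ (a^{q^i}-a)\, g_i = \text{(expression in } g_0,\dots,g_{i-1}), \]
and $a^{q^i} \neq a$ for $i \geq 1$. So such a series is determined by its linear term, and hence $u e_{\Lambda_1} = e_{\Lambda_2} c$.

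Evaluating on $\Lambda_1$ gives $e_{\Lambda_2}(c\Lambda_1) = 0$, so $c\Lambda_1 \subseteq \Lambda_2$ by \cref{prop:e_Lambda_props}(1), and $u = u_c$. Faithfulness holds because $c = \D(u_c)$.

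\textbf{Step 3: essential surjectivity.} Given $\phi$, fix a nonconstant $a \in A$. Solve $e\,a = \phi_a\, e$ in $\CCi\set{\set{\tau}}$ with $e_0 = 1$, using the same recursion as in Step 2. Since $A$ is commutative, the uniqueness argument shows that $e\,b = \phi_b\, e$ holds for every $b \in A$: indeed $\phi_b^{-1}$-type conjugates $\phi_b e b^{-1}$ also solve the $a$-equation.

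The \textbf{main obstacle} is showing that $e$ is entire, \ie $\abs{e_i}^{1/q^i} \to 0$. I would first bound the recursion coefficients, which involve the $(a^{q^i}-a)^{-1}$ and the fixed coefficients of $\phi_a$. The goal is an inductive estimate $\abs{e_i} \leq C^{q^i}\abs{a}^{-iq^i/\dots}$; this is the standard approach from Goss's Theorem 4.6.9.

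Granting that $e$ is entire, let $\Lambda = \ker e$. Then:
\begin{itemize}
  \item $\Lambda$ is an $A$-module by the functional equation.
  \item $\Lambda$ is strongly discrete, since a nonzero entire function has finitely many zeros in each ball.
  \item $e$ is surjective, being a nonpolynomial entire function on $\CCi$.
  \item The zeros of $e$ are simple, since $e' = 1$.
\end{itemize}

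Hence $\phi[a] = e(a^{-1}\Lambda)$, which is isomorphic to $a^{-1}\Lambda/\Lambda$. Separability of $\phi_a$ (its linear term $a$ is nonzero) gives $\size\phi[a] = q^{r\deg a}$. This forces $\Lambda$ to be finitely generated of rank $r$.

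Finally, the Weierstrass factorization over $\CCi$ gives $e = e_\Lambda$. Comparing zeros and linear terms of $\phi_a$ with \cref{prop:Dmod_from_lattice} then yields $\phi = \phi^\Lambda$.
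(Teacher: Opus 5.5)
Your proposal is correct and is the standard Drinfeld--Goss argument: solve $e\,a=\phi_a e$ for one nonconstant $a$, extend to all of $A$ by uniqueness and commutativity, prove $e$ entire via the estimate $|e_i|\le C^{q^i}|a|^{-iq^i/(r\deg a)}$ (which does follow from the recursion since $|a^{q^i}-a|=|a|^{q^i}$), take $\Lambda=\ker e$, and get full faithfulness from uniqueness of solutions of $g(az)=\phi_a(g(z))$. The paper gives no proof of its own and simply cites \cite[Theorem~4.6.9]{goss2012basic}, which proceeds along these lines; two small repairs are needed. First, your $u_c$ is mis-scaled and should read $u_c(X)=cX\prodp_{y\in e_{\Lambda_1}(c^{-1}\Lambda_2)}(1-X/y)=e_{e_{c\Lambda_1}(\Lambda_2)}(cX)$. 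Second, ``this forces $\Lambda$ to be finitely generated of rank $r$'' needs the usual saturation argument: discreteness makes $F$-independent $\lambda_1,\dots,\lambda_s\in\Lambda$ independent over $F_\infty$, so $\Lambda\cap\sum_i F_\infty\lambda_i$ is finitely generated of rank $s$ and saturated in $\Lambda$, hence injects modulo $a$ into $\Lambda/a\Lambda$, forcing $s\le r$.
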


There is also an equivalence in the definitions of level structure for Drinfeld modules and lattices, as shown in the following propositions:
\begin{proposition} \label{prop:Dmodule_division_lattice}
  If a Drinfeld module $\phi$ corresponds to a lattice $\Lambda$, then
  \begin{align*}
    \phi[a] &= \setst{e_\Lambda(\lambda)}{\lambda \in a^{-1}\Lambda/\Lambda} = e_\Lambda(a^{-1}\Lambda) \lsptext{and} \\
    \phi[N] &= \setst{e_\Lambda(\lambda)}{\lambda \in N^{-1}\Lambda/\Lambda} = e_\Lambda(N^{-1}\Lambda). \qedhere
  \end{align*}
\end{proposition}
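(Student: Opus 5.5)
The statement to prove is \cref{prop:Dmodule_division_lattice}: $\phi[a] = e_\Lambda(a^{-1}\Lambda)$ and $\phi[N] = e_\Lambda(N^{-1}\Lambda)$. The plan is to read the statement for $\phi[a]$ directly off the product formula of \cref{prop:Dmod_from_lattice}, and then deduce the statement for $\phi[N]$ by intersecting over $a \in N$.

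\textbf{Step 1: the statement for $a$.} By \cref{thm:Dmod_lattice_equiv} we have $\phi = \phi^\Lambda$. Fix a nonzero $a \in A$; for $a = 0$, $\phi_0 = 0$ and the statement degenerates, so we exclude it. By \cref{prop:Dmod_from_lattice},
\[ \phi_a(X) = a X \prodp_{y \in e_\Lambda(a^{-1}\Lambda)} \parens{1-\frac{X}{y}}. \]
This is a polynomial. Its roots are exactly $0$ together with the nonzero elements of $e_\Lambda(a^{-1}\Lambda)$. These nonzero elements are well defined, because by the paragraph after \cref{prop:e_Lambda_latticeProps}, $e_\Lambda$ is a bijection $\CCi/\Lambda \to \CCi$. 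In particular $e_\Lambda(\lambda) \neq 0$ for $\lambda \in a^{-1}\Lambda \setminus \Lambda$, so no factor divides by zero. Since $e_\Lambda(\Lambda) = \{0\}$, it follows that
\[ \ker\phi_a = \setst{e_\Lambda(\lambda)}{\lambda \in a^{-1}\Lambda/\Lambda} = e_\Lambda(a^{-1}\Lambda). \]
The same bijectivity shows that the indexing by $a^{-1}\Lambda/\Lambda$ involves no repetitions.

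\textbf{Step 2: the statement for $N$.} By definition $\phi[N] = \bigcap_{a \in N\setminus 0} \phi[a] = \bigcap_a e_\Lambda(a^{-1}\Lambda)$. Since $e_\Lambda$ induces a bijection on $\CCi/\Lambda$, and each $a^{-1}\Lambda$ contains $\Lambda$, images commute with intersections here. Hence
\[ \phi[N] = e_\Lambda\Bigl(\bigcap_{a\in N\setminus 0} a^{-1}\Lambda\Bigr). \]
It remains to prove the lattice identity $\bigcap_{a \in N\setminus 0} a^{-1}\Lambda = N^{-1}\Lambda$.
\begin{itemize}
  \item For $\supseteq$: if $z \in N^{-1}\Lambda$ and $a \in N$, then $az \in NN^{-1}\Lambda = \Lambda$, so $z \in a^{-1}\Lambda$.
  \item For $\subseteq$: suppose $az \in \Lambda$ for all $a \in N$, i.e.\ $Nz \subseteq \Lambda$. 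Then $z = 1\cdot z \in AZ = N^{-1}Nz \subseteq N^{-1}\Lambda$, using $N^{-1}N = A$ in the Dedekind domain $A$.
\end{itemize}

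\textbf{Expected obstacle.} The only real subtlety is in Step 2: justifying this lattice identity via invertibility of ideals, together with the remark that $e_\Lambda$ is injective modulo $\Lambda$, so that it commutes with the intersection. Everything else follows by inspecting the product formula.
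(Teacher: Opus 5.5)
Your proposal is correct and follows the paper's route: the paper's proof just says the statement follows from \cref{prop:Dmod_from_lattice}, i.e.\ one reads $\phi[a]$ off as the root set of the product formula. Your intersection argument for $\phi[N]$ fills in details the paper leaves implicit: it uses that $e_\Lambda$ is injective on $\CCi/\Lambda$ and that $N^{-1}N = A$ (and note the harmless typo $AZ$, which should read $Az$).
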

\begin{proof}
  Follows from \cref{prop:Dmod_from_lattice}.
\end{proof}

In \cref{prop:Dmod_from_lattice}, we see the Drinfeld module polynomial $\phi^\Lambda_a(X)$ associated to a lattice $\Lambda$ factorised as a product over $a^{-1}\Lambda/\Lambda$.
In the same way, we can define Drinfeld module-associated polynomials for each ideal $N \subseteq A$:
\begin{definition} \label{def:IdealDModule}
  If $\Lambda$ is a lattice of rank $r$ and $N$ an ideal of $A$, we define the polynomial
  \[ \phi^\Lambda_N(X) = X \prodp_{\lambda \in N^{-1}\Lambda/\Lambda} \parens{1-\frac{X}{e_\Lambda(\lambda)}} = X \prodp_{y \in e_\Lambda(N^{-1}\Lambda)} \parens{1-\frac{X}{y}}. \qedhere \]
\end{definition}

\begin{proposition} \label{prop:IdealDModuleEffect}
  For $z \in \CCi$, $\phi^\Lambda_N(e_\Lambda(z)) = e_{N^{-1}\Lambda}(z)$.
\end{proposition}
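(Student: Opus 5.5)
The plan is to compare the two sides as functions of $z$ by factoring both $\phi^\Lambda_N(e_\Lambda(z))$ and $e_{N^{-1}\Lambda}(z)$ into the same product. The quickest route is to invoke \cref{prop:e_Lambda_latticeProps}(2), which states
\[ e_{N^{-1}\Lambda}(z) = e_\Lambda(z) \cdot \prodp_{\lambda \in N^{-1}\Lambda/\Lambda} \parens{1-\frac{e_\Lambda(z)}{e_\Lambda(\lambda)}}. \]
Substituting $X = e_\Lambda(z)$ into \cref{def:IdealDModule} gives
\[ \phi^\Lambda_N(e_\Lambda(z)) = e_\Lambda(z) \prodp_{\lambda \in N^{-1}\Lambda/\Lambda} \parens{1-\frac{e_\Lambda(z)}{e_\Lambda(\lambda)}}, \]
which is literally the same expression. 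So, granting \cref{prop:e_Lambda_latticeProps}, the statement is immediate.

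Because that proposition was quoted without proof, I would also give a self-contained argument via \cref{prop:e_Lambda_props}(7). Applying it with $\Lambda_1 = \Lambda \subseteq \Lambda_2 = N^{-1}\Lambda$ yields
\[ e_{N^{-1}\Lambda}(z) = e_{e_\Lambda(N^{-1}\Lambda)}(e_\Lambda(z)). \]
Next I would check that $e_\Lambda(N^{-1}\Lambda)$ is a finite $\FF_q$-vector space. It has $\size(N^{-1}\Lambda/\Lambda)$ elements, which is finite by the isomorphism $N^{-1}\Lambda/\Lambda \simeq (A/N)^r$. By the bijection $e_\Lambda : \CCi/\Lambda \to \CCi$, its nonzero elements are exactly the values $e_\Lambda(\lambda)$ for nonzero $\lambda \in N^{-1}\Lambda/\Lambda$.

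For a finite prelattice $P$, \cref{def:e_Lambda} gives $e_P(X) = X\prodp_{y\in P}(1 - X/y)$, which is the polynomial in \cref{def:IdealDModule} with $P = e_\Lambda(N^{-1}\Lambda)$. Hence $e_{e_\Lambda(N^{-1}\Lambda)} = \phi^\Lambda_N$, and evaluating at $e_\Lambda(z)$ gives the claim.

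The only step needing care is identifying the index set: the map $\lambda \mapsto e_\Lambda(\lambda)$ must be a bijection from $N^{-1}\Lambda/\Lambda$ onto $e_\Lambda(N^{-1}\Lambda)$, so that no factors are repeated or lost. This follows from $e_\Lambda(a)=e_\Lambda(b) \iff a-b\in\Lambda$. The same bijection also justifies the two forms of the product written in \cref{def:IdealDModule}.
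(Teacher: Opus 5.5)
Your first argument is exactly the paper's proof: it substitutes $X = e_\Lambda(z)$ into \cref{def:IdealDModule} and applies \cref{prop:e_Lambda_latticeProps}(2). Your supplementary route via \cref{prop:e_Lambda_props}(7) is also correct, and it in effect explains why that quoted product formula holds, by identifying $\phi^\Lambda_N$ with the exponential of the finite prelattice $e_\Lambda(N^{-1}\Lambda)$.
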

\begin{proof}
  Apply \cref{prop:e_Lambda_latticeProps}.
\end{proof}

\begin{paragraph}
  Note that by \cref{prop:Dmod_from_lattice} and \cref{def:IdealDModule}, the series of polynomials $\phi^\Lambda_N(X)$ and $\phi^\Lambda_a(X)$ are related by $\phi^\Lambda_a(X) = a \cdot \phi^\Lambda_{(a)}(X)$.
\end{paragraph}

Note that the above equivalence between lattices and Drinfeld modules also extends to an equivalence between lattices with level structure and Drinfeld modules with level structure, as follows:
\begin{proposition} \label{prop:Dmod_lattice_level_equiv}
  If $(\Lambda,\alpha)$ is a lattice of rank $r$ with level $N$ structure, then $(\phi^\Lambda, e_\Lambda \circ \alpha)$ is a Drinfeld module of rank $r$ with level $N$ structure.
\end{proposition}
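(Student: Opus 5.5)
We must show that $\phi^\Lambda$ is a Drinfeld module of rank $r$, which is \cref{prop:Dmod_from_lattice}. What remains is to show that $\beta \defeq e_\Lambda \circ \alpha$ is an $A/N$-module isomorphism $\parens{N^{-1}/A}^r \isoto \phi^\Lambda[N]$. The plan is to factor $\beta$ as
\[ \parens{N^{-1}/A}^r \xrightarrow{\ \alpha\ } N^{-1}\Lambda/\Lambda \xrightarrow{\ e_\Lambda\ } \phi^\Lambda[N] \]
and show that each arrow is a bijection respecting the $A/N$-action.

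The first arrow $\alpha$ is an $A/N$-module isomorphism by \cref{def:lattice_level}, so everything reduces to the second arrow. It is well defined on the quotient: $e_\Lambda$ has kernel exactly $\Lambda$ by \cref{prop:e_Lambda_props}, so it descends to an injection $\CCi/\Lambda \to \CCi$. Its image on $N^{-1}\Lambda/\Lambda$ is precisely $\phi^\Lambda[N]$ by \cref{prop:Dmodule_division_lattice}. Hence $e_\Lambda : N^{-1}\Lambda/\Lambda \to \phi^\Lambda[N]$ is a bijection, and it is additive because $e_\Lambda$ is $\FF_q$-linear, in particular additive.

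It remains to check $A$-linearity, which is the only real content. Here the module structure on the target is $a\cdot y = \phi^\Lambda_a(y)$, as in \cref{prop:Dmodule_division_module}. We need the functional equation
\[ e_\Lambda(az) = \phi^\Lambda_a(e_\Lambda(z)) \qquad \text{for all } a \in A,\ z \in \CCi. \]
Comparing the formula for $\phi^\Lambda_a$ in \cref{prop:Dmod_from_lattice} with \cref{prop:e_Lambda_latticeProps}(1) shows that both sides equal
\[ a\, e_\Lambda(z) \prodp_{\lambda \in a^{-1}\Lambda/\Lambda} \parens{1-\frac{e_\Lambda(z)}{e_\Lambda(\lambda)}}. \]
Applying this with $z = \alpha(x)$ gives $\beta(ax) = e_\Lambda(a\alpha(x)) = \phi^\Lambda_a(\beta(x))$, using that $\alpha$ is $A$-linear. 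Elements of $N$ act as zero on both sides, so this is genuinely $A/N$-linearity.

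The main point of care is this compatibility step: the two $A$-actions must be matched through the functional equation, not just the sets. There is also a small subtlety when $a \in N$ is not a generator, but then both sides vanish on $N$-torsion, so nothing further is needed.
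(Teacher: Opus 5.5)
Your proof is correct, and it is the argument the paper intends. The paper states this proposition without a written proof, having just set up exactly your ingredients: \cref{prop:Dmod_from_lattice}, then \cref{prop:Dmodule_division_lattice} for $e_\Lambda(N^{-1}\Lambda/\Lambda)=\phi^\Lambda[N]$, and finally the identity $e_\Lambda(az)=\phi^\Lambda_a(e_\Lambda(z))$ read off by comparing \cref{prop:Dmod_from_lattice} with \cref{prop:e_Lambda_latticeProps}. The only case those product formulas do not literally cover is $a=0$, where both sides vanish trivially.
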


Thus $\GL[r]{A/N}$ acts from the right on the set of level $N$ structures of a given Drinfeld module in a similar way as in \cref{prop:lattice_levelN_GLrAN}.

%% file: text/2__lattice_modules.tex
\section{The space of Lattices/Drinfeld modules, and actions thereon} \label{sec:lattices_Dmodules}

\input{text/2_1_lm_moduli.tex}

\input{text/2_2_lm_irred.tex}

\input{text/2_3_lm_GLrAN.tex}

\input{text/2_4_lm_invadele.tex}

%% file: text/2_1_lm_moduli.tex
\subsection{The Drinfeld moduli space}

\begin{paragraph} \label{para:DblQuotIso}
  Let
  \[ K(N) = \ker\parens{\GL[r]{\hat{A}} \onto \GL[r]{A/N}} \]
  denote the principal congruence subgroup of level $N$, where
  \[ \hat{A} = \varprojlim_{J \in \cJ_{\geq 0}} A/J \]
  is the profinite completion of $A$, $\cJ_{\geq 0}$ is the set of ideals of $A$, and $N$ is a proper ideal of $A$.

  We consider the moduli space of isomorphism classes of Drinfeld modules with level $N$ structure, or equivalently of isomorphism classes of lattices with level $N$ structure.
  By \cite[Section~6]{drinfeld1974english} and \cite[p.~5]{Pink2013compactification}, there is an algebraic variety $M_{A,K(N)}^r$ defined over $F$ which acts as the aforementioned moduli space and an isomorphism
  \begin{equation} \label{eq:moduli_iso}
    \lquotient{\GL[r]{F}}{\parens{\Omega^r \times \rquotient{\GL[r]{\finadele}}{K(N)}}} \longisoto M_{A,K(N)}^r\parens{\CCi}
  \end{equation}
  which sends the equivalence class of $\parens{\omega,g} \in \Omega^r \times \GL[r]{\finadele}$ to the isomorphism class of Drinfeld modules associated to the lattice $\Lambda = \omega\parens{F^r \cap g\hat{A}^r}$\footnote{Here we consider $\omega \in \Omega^r$ as a map $\omega : \PP^r(F) \to \CCi$ in the obvious way.} and the level structure $\alpha$ which makes the following diagram commute:
  \begin{cdiagram*} \label{diag:wg_lattice_level}
    \parens{N^{-1}/A}^r
      \arrow[rr, hook, two heads, "\alpha"]
      \arrow[d, hook, two heads, "\subset"']  &  &
    N^{-1}\Lambda/\Lambda \\
    N^{-1}\hat{A}^r/\hat{A}^r
      \arrow[r, hook, two heads, "g"']  &
    N^{-1}g\hat{A}^r/g\hat{A}^r  &
    N^{-1}\parens{F^r \cap g\hat{A}^r}/\parens{F^r \cap g\hat{A}^r}
      \arrow[l, hook, two heads, "\subset"]
      \arrow[u, hook, two heads, "\omega"']
  \end{cdiagram*}

  Here, the left and right action of $f \in \GL[r]{F}$ and $k \in K(N)$ respectively on $(\omega,g) \in \Omega^r \times \GL[r]{\finadele}$ is as follows:
  \[ f (\omega,g) k = (\omega f^{-1}, f g k). \]

  Also, $\hat{A}^r$ and $F^r$ are considered as sets of column vectors.
\end{paragraph}

\begin{paragraph}
  In \cite{drinfeld1974english}, Drinfeld requires the level $N$ to lie in two distinct maximal ideals of $A$ for his more general setting. Since we consider here Drinfeld modules and lattices over $\CCi$, we only require $N$ to lie in one maximal ideal, as in \cite[3]{Pink2013compactification}. Hence we only require $N \neq A$.
\end{paragraph}

\begin{definition} \label{def:rigidAnalyticDiscontinuousAction}
  A group $\Gamma$ acts \emph{discontinuously} on a separable rigid analytic space $Y$ if there is an index set $I$, an action of $\Gamma$ on $I$, and an admissible covering $(Y_i)_{i \in I}$ of $Y$ such that the following conditions are satisfied:
  \begin{enumerate}
    \item $\gamma Y_i = Y_{\gamma i}$ for $i \in I$, $\gamma \in \Gamma$
    \item $\Gamma_i \defeq \setst{\gamma \in \Gamma}{\gamma i = i}$ is finite for each $i \in I$.
    \item If $\gamma \notin \Gamma_i$ then $Y_i \cap Y_{\gamma i} = \emptyset$. Moreover, if $i,j \in I$ then $Y_j \cap Y_{\gamma i} = \emptyset$ for all but finitely many $\gamma \in \Gamma$.
    \item For each $i \in I$, the covering $(Y_{\gamma i})_{\gamma \in \Gamma}$ of $\bigcup_{\gamma \in \Gamma} Y_{\gamma i}$ is admissible. \qedhere
  \end{enumerate}
\end{definition}

We cite the following proposition from \cite[582]{drinfeld1974english}:
\begin{proposition} \label{prop:rigidAnalyticQuotient}
  If a group $\Gamma$ acts discontinuously on a separable rigid analytic space $Y$, then $\lquotient{\Gamma}{Y}$ can be made into a separable rigid analytic space in such a way that the projection $\pi_{\Gamma,Y} : Y \onto \lquotient{\Gamma}{Y}$ is a morphism of rigid analytic spaces.
\end{proposition}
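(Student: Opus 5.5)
The plan is to construct $\lquotient{\Gamma}{Y}$ by gluing quotients of the pieces $Y_i$ by their finite stabilisers $\Gamma_i$. First, by condition (1) of \cref{def:rigidAnalyticDiscontinuousAction}, each $\gamma \in \Gamma_i$ satisfies $\gamma Y_i = Y_{\gamma i} = Y_i$. Hence the finite group $\Gamma_i$ acts on the rigid space $Y_i$.

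For a finite group acting on an affinoid $\mathrm{Sp}(B)$, I would take $\mathrm{Sp}(B^{\Gamma_i})$ as the quotient. The key input is the standard finiteness statement: $B^{\Gamma_i}$ is again an affinoid algebra and $B$ is finite over it. One proves this by choosing a Noether normalisation and taking norms of generators, exactly as in the algebraic case. The underlying set of $\mathrm{Sp}(B^{\Gamma_i})$ is the orbit space.

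To pass to a general $Y_i$, I would cover it admissibly by $\Gamma_i$-stable affinoids. For an affinoid $V \subseteq Y_i$, take $\bigcap_{\gamma \in \Gamma_i} \gamma V$, which is affinoid because $Y$ is separated. Then glue the affinoid quotients to obtain $\lquotient{\Gamma_i}{Y_i}$, with the projection a morphism.

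Next I would identify the local charts of the global quotient. Let $U_i = \bigcup_{\gamma \in \Gamma} Y_{\gamma i}$. By condition (3), the translates $Y_{\gamma i}$ for $\gamma$ running over $\Gamma/\Gamma_i$ are pairwise disjoint. By condition (4), the covering of $U_i$ by them is admissible, so $U_i$ is the disjoint union $\coprod_{\Gamma/\Gamma_i} Y_{\gamma i}$ as a rigid space. Consequently $\lquotient{\Gamma}{U_i} \cong \lquotient{\Gamma_i}{Y_i}$, and this chart is equipped with its rigid structure from the previous step.

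For the overlaps, $U_i \cap U_j$ is $\Gamma$-stable. By the finiteness part of condition (3), it is a union of only finitely many pieces $Y_{\gamma i} \cap Y_j$ modulo $\Gamma$. Therefore $\lquotient{\Gamma}{(U_i \cap U_j)}$ is an admissible open in both charts, namely the image of $\bigcup_{\gamma} \Gamma_j(Y_{\gamma i} \cap Y_j)$ in $\lquotient{\Gamma_j}{Y_j}$. The transition maps are induced by the identity on $Y$, so the cocycle condition is automatic, and the rigid gluing lemma produces $\lquotient{\Gamma}{Y}$. On each $U_i$ the projection $\pi_{\Gamma,Y}$ is the composite $U_i \to Y_i$-component $\to \lquotient{\Gamma_i}{Y_i}$, hence a morphism.

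The step I expect to be the main obstacle is checking that the glued space is genuinely a separated rigid space with the correct G-topology. Concretely, one must show that a covering of an open subset of $\lquotient{\Gamma}{Y}$ is admissible exactly when its preimage in $Y$ is admissible, and that the diagonal is closed. For admissibility I would pull coverings back to $Y$ and use admissibility of $(Y_i)$ together with condition (4). For separatedness I would use the second half of condition (3): for fixed $i, j$, only finitely many $\gamma$ have $Y_j \cap Y_{\gamma i} \neq \emptyset$. This reduces the image of the diagonal over $\lquotient{\Gamma_i}{Y_i} \times \lquotient{\Gamma_j}{Y_j}$ to the image of a finite union of graphs of the $\gamma$'s, which is closed since $Y$ is separated.
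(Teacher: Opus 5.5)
The paper gives no proof of this statement; it quotes it from Drinfeld. Your outline (quotients of the pieces by their finite stabilisers, glued along the overlaps) is the natural argument behind that citation. As written, however, it has a genuine gap at the reduction to affinoids.

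\textbf{The reduction to $\Gamma_i$-stable affinoids fails.} The sets $\bigcap_{\gamma\in\Gamma_i}\gamma V$ need not cover $Y_i$. A point $y$ lies in this intersection only if its whole orbit $\Gamma_i y$ lies in $V$. For example, take $\sigma(z)=1/z$ on $\mathbb{P}^1$ with the covering by $\{\lvert z\rvert\le 1\}$ and $\{\lvert z\rvert\ge 1\}$. Both intersections equal $\{\lvert z\rvert=1\}$, so $0$ and $\infty$ are missed.

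Separatedness does let you fit each finite orbit into an affinoid: surround the orbit points by pairwise disjoint affinoids, take the union, and intersect over $\Gamma_i$. But you must then show that the $\Gamma_i$-stable affinoids obtained this way form an \emph{admissible} covering of $Y_i$. Covering every point is not enough in rigid geometry: the closed unit disc covered by $\{\lvert z\rvert=1\}$ and the discs $\{\lvert z\rvert\le\rho\}$, $\rho<1$, is not admissible.

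This is not a side issue. The paper's definition does not require the $Y_i$ to be affinoid. So any finite group acting on a separated space acts discontinuously (take $I$ a singleton), and this is exactly how the paper uses the proposition in \cref{prop:LLR_rigidAnalytic}. Your step is therefore the whole existence problem for quotients of separated rigid spaces by finite groups, and your sketch does not settle it. The rest of your construction is routine only when the $Y_i$ are affinoid, in which case $\lquotient{\Gamma_i}{Y_i}=\mathrm{Sp}(\mathcal{O}(Y_i)^{\Gamma_i})$ directly.

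\textbf{Two further steps need more than you give, even with affinoid $Y_i$.}

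First, you use without proof that the image of a $\Gamma_i$-stable affinoid subdomain $Z\subseteq Y_i$ in $\mathrm{Sp}(\mathcal{O}(Y_i)^{\Gamma_i})$ is an affinoid subdomain with ring $\mathcal{O}(Z)^{\Gamma_i}$. This is what makes the overlaps admissible in the charts and the transition maps isomorphisms. Condition (3) helps here: the preimage in $Y_i$ of an overlap is the finite \emph{disjoint} union of the sets $Y_i\cap\gamma Y_j$, hence affinoid.

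Second, separatedness of a rigid space means the diagonal is a closed immersion, not merely that its image is closed. You cannot get the needed surjectivity onto the ring of the overlap by taking invariants of the corresponding surjection for $Y$. The reason is that $p$ may divide $\#\Gamma_i$ (e.g.\ $\GL[r]{A/N}$ for $r\ge 2$, as in the paper's own application), so invariants are not right exact.

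A workable route for separatedness:
\begin{enumerate}
\item Show that the diagonal is finite over $\lquotient{\Gamma_i}{Y_i}\times\lquotient{\Gamma_j}{Y_j}$. Its ring is a submodule of $\prod_\gamma\mathcal{O}(Y_i\cap\gamma Y_j)$. That product is finite over $\mathcal{O}(Y_i)^{\Gamma_i}\widehat{\otimes}\mathcal{O}(Y_j)^{\Gamma_j}$ because $Y$ is separated, and submodules of finite modules over this Noetherian ring are finite.
\item Conclude using that a finite monomorphism of affinoids is a closed immersion, since a finite ring epimorphism is surjective.
\end{enumerate}
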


\begin{paragraph} \label{para:rigidAnalyticQuotient}
  Drinfeld showed in \cite{drinfeld1974english}, as did Schneider and Stuhler in \cite[\S 1]{SchneiderStuhler1991cohomology}, that the space $\Omega^r$ can be endowed with the structure of a separable rigid analytic space.
  Moreover, Drinfeld showed that any subgroup of $\GL[r]{F}$ commensurable with $\GL[r]{A}$ acts discontinuously on $\Omega^r$; thus by \cref{prop:rigidAnalyticQuotient} the quotient $\lquotient{\GL[r]{F}}{\Omega^r}$ can be given a derived rigid analytic structure.
  Thus the above double quotient can be given a rigid analytic structure, with the quotient $\rquotient{\GL[r]{\finadele}}{K(N)}$ given the discrete topology.
\end{paragraph}

\begin{paragraph}
  The space $\LL_N^r$ of lattices with level $N$ structure has an action of $\CCi^\times$, given by scaling the lattice and the level structure, which is free by \cref{coro:llattice_automorph}; thus each fibre of the quotient $\LL_N^r \onto \rquotient{\LL_N^r}{\CCi^\times}$ is isomorphic to $\CCi^\times$.
  Moreover, by \cref{prop:llattice_isomorph} the space of isomorphism classes of lattices with level $N$ structure is the quotient $\rquotient{\LL_N^r}{\CCi^\times}$.
\end{paragraph}

We can extend the isomorphism in \cref{eq:moduli_iso} to an isomorphism between the set $\LL_N^r$ and a related double quotient; but we will first define a rigid analytic structure on the space $\Psi^r$ which will take the place of $\Omega^r$:

\begin{definition} \label{def:Psi,H}
  \newcommand*{\CH}{\mathcal{H}}
  We let $\CH^r$ be the set of all hyperplanes in $\CCi^r$ which can be defined with coefficients in $F_\infty$.
  Then we define the space
  \[ \Psi^r = \CCi^r - \bigcup_{H \in \CH^r} H \]
  of all points in $\CCi^r$ which do not lie on any $F_\infty$-rational hyperplane.
\end{definition}

There is an obvious analogy between this space $\Psi^r$ and the traditional $\Omega^r$, the latter being formed by deleting all $F_\infty$-rational hyperplanes from $\PP_r(\CCi)$.
In fact, we use this analogy to define the rigid analytical structure on $\Psi^r$:

\begin{proposition} \label{prop:PsiIsoRigid}
  There is a bijection
  \begin{align*}
    \kappa : \Omega^r \times \CCi^\times &\ionto \Psi^r \\
    \kappa \parens{\pcoord{\omega_1 : \omega_2 : \dotsc : \omega_r}, \psi_r} &\defeq \parens{\omega_1, \omega_2, \dotsc, \omega_r} \cdot \frac{\psi_r}{\omega_r} \\
    \kappa^{-1} \parens{\psi_1, \psi_2, \dotsc, \psi_r} &= \parens{\pcoord{\psi_1 : \psi_2 : \dotsc : \psi_r}, \psi_r} \qedhere
  \end{align*}
\end{proposition}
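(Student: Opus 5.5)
We have to prove that the map $\kappa$ of \cref{prop:PsiIsoRigid} is well defined, that the displayed formula gives its inverse, and hence that $\kappa$ is a bijection.

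\emph{Step 1: $\kappa$ is well defined and lands in $\Psi^r$.}
Take $\omega = \pcoord{\omega_1 : \dotsc : \omega_r} \in \Omega^r$. This point lies on no $F_\infty$-rational hyperplane of $\PP^{r-1}(\CCi)$, so in particular $\omega_r \neq 0$, since $\set{x_r = 0}$ is such a hyperplane. Hence $\psi_r/\omega_r$ makes sense.

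Replacing the representative $(\omega_i)_i$ by $(t\omega_i)_i$ with $t \in \CCi^\times$ leaves $\omega_i\psi_r/\omega_r$ unchanged. So $\kappa$ depends only on the projective point.

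Set $\psi = (\omega_1,\dotsc,\omega_r)\cdot\psi_r/\omega_r$. Then $\psi \neq 0$, and $\psi$ is a nonzero scalar multiple of $(\omega_1,\dotsc,\omega_r)$. Suppose some nonzero $F_\infty$-linear form $\ell$ had $\ell(\psi) = 0$. Then $\ell(\omega_1,\dotsc,\omega_r) = 0$ as well, so $\omega$ would lie on the projective hyperplane $\set{\ell = 0}$, which is impossible. Therefore $\psi \in \Psi^r$.

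\emph{Step 2: the proposed inverse is well defined.}
Take $\psi = (\psi_1,\dotsc,\psi_r) \in \Psi^r$.
\begin{itemize}
\item $\psi \neq 0$, because $0$ lies on every hyperplane through the origin.
\item $\psi_r \neq 0$, because $\set{x_r = 0}$ is $F_\infty$-rational. So $\psi_r \in \CCi^\times$.
\item The projective point $\pcoord{\psi_1 : \dotsc : \psi_r}$ avoids every $F_\infty$-rational hyperplane, by the same argument as in Step 1 run in reverse. So it lies in $\Omega^r$.
\end{itemize}

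\emph{Step 3: the two maps are mutually inverse.}
First, $\kappa(\kappa^{-1}(\psi)) = (\psi_1,\dotsc,\psi_r)\cdot\psi_r/\psi_r = \psi$.

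Conversely, start from $(\omega,\psi_r)$. The point $\kappa(\omega,\psi_r)$ has projective class $\omega$, being a nonzero multiple of a representative of $\omega$. Its last coordinate is $\omega_r\psi_r/\omega_r = \psi_r$. Hence $\kappa^{-1}(\kappa(\omega,\psi_r)) = (\omega,\psi_r)$.

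Both composites are the identity, so $\kappa$ is a bijection.

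The only step needing care is the hyperplane correspondence in Steps 1 and 2. Both use the same fact: a hyperplane in $\CCi^r$ defined over $F_\infty$ passes through the origin, and it is exactly the affine cone over an $F_\infty$-rational projective hyperplane. Membership in it is therefore invariant under scaling by $\CCi^\times$.

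This depends on reading $\CH^r$ as linear hyperplanes, i.e. through $0$, which is the intended meaning by analogy with $\Omega^r$. I would state that reading explicitly. With it fixed, every step is elementary.
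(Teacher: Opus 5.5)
Your proof is correct. The paper gives no argument for this statement beyond remarking that $\kappa$ amounts to normalising the last coordinate. Your verification (that $\omega_r \neq 0$, that the definition is independent of the representative, that the linear-hyperplane condition is invariant under $\CCi^\times$-scaling, and that both composites are the identity) is exactly the routine check that remark leaves to the reader. Your reading of $\mathcal{H}^r$ as linear hyperplanes is also forced, since with affine hyperplanes the point $\kappa(\omega,1)$ would lie on $\set{x_r = 1}$. It matches the paper's later use of the $F_\infty$-linear independence of the coordinates of a point of $\Psi^r$.
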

This bijection is equivalent to normalising $\Omega^r$ so that the last component $\omega_r = 1$.

\begin{paragraph}
  We thus define the rigid analytic structure on $\Psi^r$ as the structure of the product $\Omega^r \times \CCi^\times$, each of these being rigid analytic spaces.
\end{paragraph}


There is a left and right action of $f \in \GL[r]{F}$ and $k \in K(N)$ respectively on $(\psi,g) \in \Psi^r \times \GL[r]{\finadele}$, which extends that given in \cref{para:DblQuotIso}, as follows:
\[ f(\psi,g)k \defeq (\psi f^{-1}, fgk). \]

\begin{proposition} \label{prop:fkActionPsiOmega}
  The above action of $f \in \GL[r]{F}$ and $k \in K(N)$ respectively on $(\psi,g) \in \Psi^r \times \GL[r]{\finadele}$ translates to $\Omega^r \times \CCi^\times \simeq \Psi^r$ via \cref{prop:PsiIsoRigid} as follows:
  \[ f \bigl((\omega,\psi_r),g\bigr) k = \parens{\parens{\omega f^{-1}, \frac{(\omega f^{-1})_r}{\omega_r} \psi_r}, fgk}. \qedhere \]
\end{proposition}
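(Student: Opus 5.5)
This is a direct computation: I would push the action through the bijection $\kappa$ of \cref{prop:PsiIsoRigid}. The $\GL[r]{\finadele}$-component is untouched by $\kappa$, so it remains $fgk$, and only the first component needs checking.

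Write $(\omega,\psi_r) \in \Omega^r \times \CCi^\times$ with $\omega = \pcoord{\omega_1 : \dotsc : \omega_r}$, using any representative row vector. Let
\[ \psi = \kappa(\omega,\psi_r) = (\omega_1,\dotsc,\omega_r)\cdot \tfrac{\psi_r}{\omega_r}. \]
This expression does not depend on the chosen representative, since rescaling $\omega$ by $t \in \CCi^\times$ cancels in $\omega/\omega_r$.

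The action on $\Psi^r$ sends $\psi$ to the row vector
\[ \psi f^{-1} = \tfrac{\psi_r}{\omega_r}\,(\omega f^{-1}), \]
where $\omega f^{-1}$ denotes the row vector $(\omega_1,\dotsc,\omega_r) f^{-1}$. This uses linearity of right multiplication by the matrix $f^{-1}$. It lies in $\Psi^r$ again, because $f^{-1} \in \GL[r]{F}$ permutes the $F_\infty$-rational hyperplanes.

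Next apply $\kappa^{-1}$ to $\psi f^{-1}$. Its projective class is that of $\omega f^{-1}$, since the two differ by the nonzero scalar $\psi_r/\omega_r$. This class is exactly the action of $f$ on $\Omega^r$ from \cref{para:DblQuotIso}. The last coordinate of $\psi f^{-1}$ is $\frac{(\omega f^{-1})_r}{\omega_r}\psi_r$, which gives the claimed formula.

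The only point needing care is well-definedness. The quantity $\frac{(\omega f^{-1})_r}{\omega_r}$ is homogeneous of degree $0$ in the representative of $\omega$, so it is defined on $\Omega^r$. It is nonzero because $(\omega f^{-1})_r$ is the evaluation of $\omega$ on a nonzero $F$-rational vector (the last column of $f^{-1}$), and points of $\Omega^r$ avoid all rational hyperplanes. I expect no real obstacle here, only consistent use of row-vector conventions.
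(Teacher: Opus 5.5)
Your proposal is correct and follows essentially the same route as the paper: both take $\psi = \kappa(\omega,\psi_r)$, apply the action to get $\psi f^{-1}$ (a nonzero scalar multiple of $\omega f^{-1}$), and read off the last coordinate via $\kappa^{-1}$ using independence of the representative. Your additional checks are correct and fill in details the paper leaves implicit, namely that $\psi f^{-1}\in\Psi^r$ and that $(\omega f^{-1})_r \neq 0$ because $\omega$ avoids rational hyperplanes.
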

Here $(\omega f^{-1})_r$ denotes the last entry of $\omega f^{-1}$, and the fraction $\frac{(\omega f^{-1})_r}{\omega_r}$ is independent of the representative for $\omega$ chosen in $\CCi^r$.
\begin{proof}
  Let $\psi = \kappa(\omega,\psi_r)$ be a representative for $\omega$ in $\CCi^r$. Then
  \begin{align*}
    f \bigl((\omega,\psi_r),g\bigr) k &\overset{\mathmakebox[15pt]{\kappa}}{ = } f (\psi,g) k = (\psi f^{-1}, f g k) \\
    &\overset{\kappa^{-1}}{ = } \bigl(\kappa^{-1}(\psi f^{-1}), f g k\bigr) \\
    &\overset{\mathmakebox[15pt]{}}{ = } \bigl( (\psi f^{-1}, (\psi f^{-1})_r), f g k \bigr) = \parens{\parens{\psi f^{-1}, \frac{(\psi f^{-1})_r}{\psi_r} \psi_r}, f g k} \\
    &\overset{\mathmakebox[15pt]{}}{ = } \parens{\parens{\psi f^{-1}, \frac{(\omega f^{-1})_r}{\omega_r} \psi_r}, f g k} \qedhere
  \end{align*}
\end{proof}

Similarly to \cref{para:DblQuotIso} we then have a double quotient bijection for $\LL_N^r$:
\begin{theorem} \label{thm:PsiDoubleQuotientIso}
  There is a bijection
  \[ \Theta : \lquotient{\GL[r]{F}}{\parens{\Psi^r \times \rquotient{\GL[r]{\finadele}}{K(N)}}} \longisoto \LL_N^r \]
  which sends the equivalence class of a pair $(\psi,g) \in \Psi^r \times \GL[r]{\finadele}$ to the lattice $\Lambda = \psi\parens{F^r \cap g \hat{A}^r}$\footnote{Here we consider $\psi \in \Psi^r \subset \CCi^r$ as a map $F^r \to \CCi$ in the obvious way.} and the level structure $\alpha$ which makes the following diagram commute:
  \begin{cdiagram} \label{diag:psig_lattice_level}
    \parens{N^{-1}/A}^r
      \arrow[rr, hook, two heads, "\alpha"]
      \arrow[d, hook, two heads, "\subset"']  &  &
    N^{-1}\Lambda/\Lambda \\
    N^{-1}\hat{A}^r/\hat{A}^r
      \arrow[r, hook, two heads, "g"']  &
    N^{-1}g\hat{A}^r/g\hat{A}^r  &
    N^{-1}\parens{F^r \cap g\hat{A}^r}/\parens{F^r \cap g\hat{A}^r}
      \arrow[l, hook, two heads, "\subset"]
      \arrow[u, hook, two heads, "\psi"']
  \end{cdiagram}
  
  \vspace{-19pt} \qedhere
\end{theorem}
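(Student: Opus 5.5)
The plan is to deduce the bijection from the moduli isomorphism \cref{eq:moduli_iso} by comparing fibres over isomorphism classes. Before that, I need to check that $\Theta$ is well defined, and I expect this to be the main obstacle. Given $(\psi,g)$, the set $Y_g \defeq F^r \cap g\hat{A}^r$ is a projective $A$-lattice of rank $r$ in $F^r$. Since $\psi \in \Psi^r$ lies on no $F_\infty$-rational hyperplane, the map $\psi : F_\infty^r \to \CCi$ is injective. Because $Y_g$ is discrete and cocompact in $F_\infty^r$, its image $\Lambda = \psi(Y_g)$ is strongly discrete. This uses the standard fact, implicit in Drinfeld's construction, that $\omega \in \Omega^r$ gives a norm-equivalence $\abs{\omega(x)} \asymp \lVert x\rVert$ on $F_\infty^r$; it is the only genuinely analytic input. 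The level structure is then the composite in \cref{diag:psig_lattice_level}, which is a bijection because $N^{-1}Y_g/Y_g \to N^{-1}g\hat{A}^r/g\hat{A}^r$ is an isomorphism by strong approximation (localising at the primes dividing $N$).

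Next I check invariance under the two actions. Replacing $g$ by $gk$ with $k \in K(N)$ leaves $g\hat{A}^r$ unchanged, and since $k \equiv 1 \bmod N$, the map $g$ on $N^{-1}\hat{A}^r/\hat{A}^r$ is unchanged as well. Replacing $(\psi,g)$ by $(\psi f^{-1}, fg)$ with $f \in \GL[r]{F}$ gives $F^r \cap fg\hat{A}^r = f\,Y_g$, and $\psi f^{-1}(fY_g) = \psi(Y_g)$. The diagram transforms compatibly: the factor $f$ inserted on the bottom row is cancelled by $f^{-1}$ on the right edge. So $\Theta$ is well defined.

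Now $\Theta$ is $\CCi^\times$-equivariant, where $c$ acts by $\psi \mapsto c\psi$ on the left side and by scaling on $\LL_N^r$. The $\GL[r]{F}$-action commutes with this scaling. Under $\kappa$ from \cref{prop:PsiIsoRigid}, the quotient by $\CCi^\times$ recovers $\Omega^r$, and $\Theta$ modulo $\CCi^\times$ is exactly the map of \cref{eq:moduli_iso}. This uses \cref{prop:llattice_isomorph}, which identifies isomorphism classes with $\CCi^\times$-orbits in $\LL_N^r$.

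Surjectivity follows from surjectivity of \cref{eq:moduli_iso}. Any $(\Lambda,\alpha)$ is isomorphic to some $\Theta(\psi,g)$, so it equals $c\,\Theta(\psi,g) = \Theta(c\psi,g)$. For injectivity, suppose $\Theta(\psi,g) = \Theta(\psi',g')$. The induced classes in the moduli space then agree, so $(\psi',g') = f(c\psi,g)k$ for some $f$, $k$ and $c$. Applying $\Theta$ gives $\Theta(\psi,g) = c\,\Theta(\psi,g)$, and freeness of the scaling action (\cref{coro:llattice_automorph}) forces $c = 1$. Hence $(\psi',g')$ lies in the same double coset as $(\psi,g)$.
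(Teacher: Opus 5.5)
Your proposal follows the same route as the paper. You first check that $\Lambda$ and $\alpha$ are invariant under $\GL[r]{F}$ and $K(N)$, then use that $\Theta$ is $\CCi^\times$-equivariant and reduces modulo scaling to the moduli bijection \cref{eq:moduli_iso}, so that it lifts to a bijection at the level of $\Psi^r$. Your version is more careful than the paper's: the paper leaves implicit that $\psi(F^r \cap g\hat{A}^r)$ is a lattice with $\alpha$ bijective, and it compresses your explicit surjectivity and injectivity arguments (via \cref{prop:llattice_isomorph} and the freeness in \cref{coro:llattice_automorph}) into the remark that both fibres are copies of $\CCi^\times$.
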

\begin{proof}
  For $f \in \GL[r]{F}$ and $k \in K(N)$,
  \[ \parens{\psi f^{-1}} \parens{F^r \cap (fgk) \hat{A}^r} = \psi\parens{f^{-1}F^r \cap f^{-1}fg \hat{A}^r} = \psi\parens{F^r \cap g\hat{A}^r}, \]
  so acting by $\GL[r]{F}$ and $K(N)$ leaves the lattice $\Lambda = \psi\parens{F^r \cap g\hat{A}^r}$ unchanged.
  Following the above commutative diagram, we see that the actions of $\GL[r]{F}$ and $K(N)$ also leave the level structure $\alpha$ unchanged, since $k$ changes nothing modulo $N$ and the addition of $f^{-1}$ on the right-hand map and $f$ on the bottom left map cancel.
  Hence the above map is well defined.

  If we consider the actions of $\CCi^\times$ on $\Psi^r$ and $\LL_N^r$ by scaling, their quotients are $\Omega^r$ and $M^r_{A,K(N)}(\CCi)$ respectively, each fibre being isomorphic to $\CCi^\times$.
  Moreover, this scaling commutes with the group actions of $\GL[r]{F}$ and $\GL[r]{\finadele}$ in the above double quotient.
  Hence the bijection
  \begin{align*}
      \lquotient{\GL[r]{F}}{\parens{\Omega^r \times \rquotient{\GL[r]{\finadele}}{K(N)}}} &\longisoto M_{A,K(N)}^r\parens{\CCi}
      \shortintertext{extends to a bijection}
      \lquotient{\GL[r]{F}}{\parens{\Psi^r \times \rquotient{\GL[r]{\finadele}}{K(N)}}} &\longisoto \LL_N^r. \qedhere
  \end{align*}
\end{proof}

\begin{paragraph}
  Similarly to Drinfeld in \cite{drinfeld1974english} and Schneider and Stuhler in \cite{SchneiderStuhler1991cohomology}, we can show that $\Psi^r$ can be given rigid analytic structure, and by extension the same is true for
  \[ \lquotient{\GL[r]{F}}{\parens{\Psi^r \times \rquotient{\GL[r]{\finadele}}{K(N)}}}, \]
  and thus also for $\LL_N^r$, making the bijection in \cref{thm:PsiDoubleQuotientIso} a rigid analytic isomorphism.
\end{paragraph}

From the above, we see that the set $\LL_N^r$ of all lattices of rank $r$ with level $N$ structure can be given rigid analytic structure.
From this we can induce rigid analytic structure on the set $\LL^r$ of lattices of rank $r$ \emph{without} level structure, as follows:
\begin{proposition} \label{prop:LLR_rigidAnalytic}
  $\LL^r$ can be given rigid analytic structure induced from that of $\LL_N^r$.
\end{proposition}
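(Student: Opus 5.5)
The plan is to realise $\LL^r$ as the quotient of $\LL_N^r$ by the finite group $\GL[r]{A/N}$, which acts by changing the level structure, and then to give the quotient its rigid analytic structure via \cref{prop:rigidAnalyticQuotient}.

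First, the forgetful map $\LL_N^r \onto \LL^r$, $(\Lambda,\alpha) \mapsto \Lambda$, is surjective. By \cref{prop:lattice_levelN_GLrAN} its fibres are exactly the orbits of the right action $\alpha \mapsto \alpha \circ \gamma$ of $\GL[r]{A/N}$ on level structures. This action is free and transitive on each fibre, so $\LL^r \simeq \rquotient{\LL_N^r}{\GL[r]{A/N}}$ as sets.

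Second, I would make the action explicit through \cref{thm:PsiDoubleQuotientIso}. Pick a lift $\tilde\gamma \in \GL[r]{\hat{A}}$ of $\gamma \in \GL[r]{A/N}$; this is possible by strong approximation for $\mathrm{SL}_r$ together with surjectivity of $\hat{A}^\times \to (A/N)^\times$. Tracing \cref{diag:psig_lattice_level}, the class of $(\psi,g)$ is sent to the class of $(\psi, g\tilde\gamma)$. Since $K(N)$ is normal in $\GL[r]{\hat{A}}$, right multiplication by $\tilde\gamma$ is well defined on $\rquotient{\GL[r]{\finadele}}{K(N)}$. It also commutes with the left $\GL[r]{F}$-action and does not depend on the choice of lift. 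It is therefore an automorphism of the double quotient: it permutes the discrete factor and is the identity on $\Psi^r$. In particular it is a rigid analytic automorphism, because on each connected piece $\lquotient{\Gamma_g}{\Psi^r}$ it is just an identification with another such piece.

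Third, I would check that this finite group action is discontinuous in the sense of \cref{def:rigidAnalyticDiscontinuousAction}. Take the admissible covering by the pieces $\Gamma_g\backslash\Psi^r \times \{g\}$, indexed by the double coset set $I = \lrquotient{\GL[r]{F}}{\GL[r]{\finadele}}{K(N)}$, with the induced $\GL[r]{A/N}$-action on $I$. Conditions (1), (2) and (4) are then immediate from finiteness of the group. Condition (3) needs more care: a piece may be mapped to itself by a nontrivial stabiliser while the action within the piece is nontrivial, so distinct non-stabilising translates must be disjoint. I expect this to be the main obstacle. The cleaner route is to refine the covering using an admissible affinoid covering of $\Psi^r$ fine enough that each stabiliser element either fixes a member or moves it disjointly. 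Alternatively, one can invoke the standard fact that the quotient of a separable rigid space by a finite group acting by automorphisms exists, as a quotient of affinoids by finite groups. With either version, \cref{prop:rigidAnalyticQuotient} (or its finite-group analogue) gives $\LL^r$ a rigid analytic structure for which $\LL_N^r \to \LL^r$ is a morphism.

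Finally, I would verify independence of $N$. For $N' \subseteq N$, the forgetful maps $\LL_{N'}^r \to \LL_N^r \to \LL^r$ are compatible quotient maps by $\GL[r]{A/N'}$ and by its quotient $\GL[r]{A/N}$. Hence the structures induced from $N$ and from $N'$ agree, and comparing any two levels through $N \cap N'$ shows the structure does not depend on the choice of level.
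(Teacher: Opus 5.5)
Your proposal is correct and follows the paper's proof. The paper likewise lets $\GL[r]{A/N}$ act freely on level structures by $\gamma(\Lambda,\alpha) = (\Lambda,\alpha\circ\gamma^{-1})$, identifies the quotient with $\LL^r$, and applies \cref{prop:rigidAnalyticQuotient} on the grounds that a finite group acts discontinuously. Your extra steps are not needed: the lift to $\GL[r]{\hat{A}}$ needs no strong approximation, since over each local ring $A_\fp$ a matrix is invertible iff its reduction is; condition (3) holds trivially for the covering by irreducible components, because $\gamma\notin\Gamma_i$ already means $\gamma i\neq i$ and distinct components are disjoint; and independence of $N$ is not part of the statement.
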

\begin{proof}
  Consider the left action of $\GL[r]{A/N}$ (considered as automorphisms of $\parens{N^{-1}/A}^r$) on $\LL_N^r$ defined by $\gamma(\Lambda,\alpha) = (\Lambda, \alpha \circ \gamma^{-1})$ for $\gamma \in \GL[r]{A/N}$.
  This action is free since $\alpha$ and $\gamma$ are bijections, and is transitive on the second component of $(\Lambda,\alpha)$ while leaving the first unchanged; hence the quotient $\lquotient{\GL[r]{A/N}}{\LL_N^r}$ is bijective with $\LL^r$.
  Now, using \cref{prop:rigidAnalyticQuotient}, since $\GL[r]{A/N}$ is finite it acts discontinuously on $\LL_N^r$ and so its quotient $\LL^r$ has an induced rigid analytic structure.
\end{proof}

%% file: text/2_2_lm_irred.tex
\subsection[Irreducible components of the moduli space]{Irreducible components of $M_{A,K(N)}^r(\CCi)$ and $\LL_N^r$}

The rigid analytic space $M_{A,K(N)}^r(\CCi)$ decomposes into irreducible components as in \cite[Proposition 2.1.3]{hubschmid2013andre}, given below.
We call the corresponding partition of $\LL_N^r$, induced from its quotient map onto $M_{A,K(N)}^r(\CCi)$, the irreducible components of $\LL_N^r$.

\begin{proposition} \label{prop:moduli_components}
  Let $H$ be a set of representatives in $\GL[r]{\finadele}$ for the double quotient
  \[ \lrquotient{\GL[r]{F}}{\GL[r]{\finadele}}{K(N)}, \]
  and set $\Gamma_g = g K(N) g^{-1} \cap \GL[r]{F}$ for $g \in H$.
  Then the map
  \begin{align*}
    \bigsqcup_{g \in H} \lquotient{\Gamma_g}{\Omega^r} &\longrightarrow \lquotient{\GL[r]{F}}{\parens{\Omega^r \times \GL[r]{\finadele}/K(N)}} \\
    [\omega]_g &\longmapsto [(\omega,g)]
  \end{align*}
  is a rigid analytic isomorphism which maps for each $g \in H$ the quotient space $\lquotient{\Gamma_g}{\Omega^r}$ to an irreducible component of $M_{A,K(N)}^r(\CCi)$.
\end{proposition}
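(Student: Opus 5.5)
The argument splits into a set-theoretic bijection, a check that it is rigid analytic, and an irreducibility statement for each piece.

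\textbf{Bijectivity.} Write $X = \Omega^r \times \rquotient{\GL[r]{\finadele}}{K(N)}$. The projection to the second factor is $\GL[r]{F}$-equivariant, so every $\GL[r]{F}$-orbit in $X$ lies over a unique class in $\lrquotient{\GL[r]{F}}{\GL[r]{\finadele}}{K(N)}$. Hence the orbit space decomposes as a disjoint union, over $g \in H$, of the orbit spaces of the saturated pieces
\[ X_g = \Omega^r \times \GL[r]{F}\,gK(N)/K(N). \]
For fixed $g$, every point of $X_g$ is $\GL[r]{F}$-equivalent to a point of the form $(\omega,gK(N))$. Surjectivity of $[\omega]_g \mapsto [(\omega,g)]$ follows from this.

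For injectivity, suppose $f(\omega,g)k = (\omega',g)$ with $f \in \GL[r]{F}$ and $k \in K(N)$. Then $fgk = g$, so $f = gk^{-1}g^{-1} \in gK(N)g^{-1} \cap \GL[r]{F} = \Gamma_g$. Also $\omega' = \omega f^{-1}$, so $\omega$ and $\omega'$ are $\Gamma_g$-equivalent. Well-definedness is the same computation read backwards: for $\gamma = gkg^{-1} \in \Gamma_g$ we have $\gamma(\omega,g)k^{-1} = (\omega\gamma^{-1},g)$. Thus we get a bijection, and distinct $g \in H$ give disjoint images.

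\textbf{Rigid analytic isomorphism.} The map is induced on each piece by the analytic map $\Omega^r \to X$, $\omega \mapsto (\omega,g)$. Since $\rquotient{\GL[r]{\finadele}}{K(N)}$ is discrete (\cref{para:rigidAnalyticQuotient}), the saturated subsets $X_g$ are open and closed. By the bijectivity step, each $X_g$ is $\GL[r]{F}$-equivariantly identified with $\GL[r]{F} \times_{\Gamma_g} \Omega^r$. The quotient structure from \cref{prop:rigidAnalyticQuotient} on $\lquotient{\GL[r]{F}}{X_g}$ therefore coincides with that of $\lquotient{\Gamma_g}{\Omega^r}$; here $\Gamma_g$ is commensurable with $\GL[r]{A}$, because $g\hat A^r \cap F^r$ is an $A$-lattice. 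Concretely, an admissible covering of $\Omega^r$ witnessing the discontinuity of $\Gamma_g$ yields one of $X_g$ witnessing the discontinuity of $\GL[r]{F}$, by taking translates indexed by $\rquotient{\GL[r]{F}}{\Gamma_g}$. The two quotient rigid structures then agree by the universal property of the quotient, so the map and its inverse are morphisms.

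\textbf{Irreducibility.} It remains to show each $\lquotient{\Gamma_g}{\Omega^r}$ is irreducible. $\Omega^r$ is connected, indeed irreducible as a rigid space: it is the complement of a locally finite family of hyperplanes in $\PP^{r-1}$, as in Drinfeld and Schneider--Stuhler. The image of an irreducible space under the surjective morphism $\pi_{\Gamma_g,\Omega^r}$ is irreducible. Since the $\lquotient{\Gamma_g}{\Omega^r}$ are the open-closed pieces of a disjoint decomposition, they are exactly the irreducible components.

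The main obstacle is this last step. Irreducibility of $\Omega^r$ in the rigid analytic sense is not proved in the paper, so I would import it from \cite[Proposition 2.1.3]{hubschmid2013andre}, or from the connectedness results of Schneider--Stuhler, rather than reprove it.
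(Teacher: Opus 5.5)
Your argument is correct, but it does not follow the paper. The paper gives no proof of this proposition at all: it quotes it from \cite[Proposition 2.1.3]{hubschmid2013andre} and proves only the $\Psi^r$-version, \cref{prop:LLNR_components}, by passing along the $\CCi^\times$-fibration. You derive the set-theoretic and analytic parts yourself. The stabiliser of $gK(N)$ in $\GL[r]{F}$ is exactly $\Gamma_g$, so each saturated open--closed piece $X_g$ is induced from the action of $\Gamma_g$ on $\Omega^r$. Translating a $\Gamma_g$-discontinuity covering of $\Omega^r$ by $\rquotient{\GL[r]{F}}{\Gamma_g}$ gives a $\GL[r]{F}$-discontinuity covering of $X_g$ with the same local quotients $\lquotient{(\Gamma_g)_i}{Y_i}$, so the two quotient structures agree.

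This buys a self-contained proof whose only external input is a geometric fact about $\Omega^r$. It also makes explicit that \cref{prop:rigidAnalyticQuotient} is really applied to $\GL[r]{F}$ acting on $\Omega^r \times \rquotient{\GL[r]{\finadele}}{K(N)}$, whose component stabilisers are the arithmetic groups $f\Gamma_g f^{-1}$. The paper's \cref{para:rigidAnalyticQuotient} instead speaks of $\GL[r]{F}$ acting on $\Omega^r$ itself, and that action is not discontinuous. The citation buys brevity and the irreducibility statement outright. Note that importing the irreducibility from \cite[Proposition 2.1.3]{hubschmid2013andre}, as you suggest, amounts to importing the whole statement.

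Three corrections of detail:

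\textbf{The description of $\Omega^r$.} It is not the complement of a locally finite family of hyperplanes. The $F_\infty$-rational hyperplanes are parametrised by the compact, uncountable set $\PP^{r-1}(F_\infty)$, and every neighbourhood of an $F_\infty$-rational point meets infinitely many of them. The correct input is that $\Omega^r$ is an admissible open subset of $\PP^{r-1}$, admissibly exhausted by affinoids, each the complement of finitely many open tubes around rational hyperplanes. It is connected (Schneider--Stuhler), and being smooth and connected it is irreducible in Conrad's sense.

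\textbf{Irreducibility of the quotient.} You use ``the image of an irreducible space is irreducible''; this is true but rests on Conrad's theory of irreducible components of rigid spaces. It is cleaner to say that $\lquotient{\Gamma_g}{\Omega^r}$ is connected, being the image of a connected space, and normal, being locally the quotient of a smooth space by a finite group. Hence it is irreducible.

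\textbf{The universal property.} The universal property of the quotient is not part of \cref{prop:rigidAnalyticQuotient} as quoted. It does hold for Drinfeld's construction, whose structure sheaf consists of the invariant functions. In any case, your matching of the local quotients already identifies the two structures.
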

Each $\Gamma_g$ is an arithmetic subgroup of $\GL[r]{F}$.

Here is the corresponding result for $\LL_N^r$:
\begin{proposition} \label{prop:LLNR_components}
  For $H$ and $\Gamma_g$ as in \cref{prop:moduli_components}, the map
  \begin{align*}
    \bigsqcup_{g \in H}\ \lquotient{\Gamma_g}{\Psi^r} &\longrightarrow \lquotient{\GL[r]{F}}{\parens{\Psi^r \times \GL[r]{\finadele}/K(N)}} \\
    [\psi]_g &\longmapsto [(\psi,g)]
  \end{align*}
  is a rigid analytic isomorphism which maps for each $g \in H$ the space $\lquotient{\Gamma_g}{\Psi^r}$ to an irreducible component of $\LL_N^r$.
  Here the action of $f \in \Gamma_g \subseteq \GL[r]{F}$ on $\psi \in \Psi^r$ is as in \cref{thm:PsiDoubleQuotientIso}, \ie $f \cdot \psi = \psi f^{-1}$.
\end{proposition}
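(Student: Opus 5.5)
The plan is to mimic the proof of \cref{prop:moduli_components}, working directly with the double quotient and then transferring the rigid analytic statements along the bijection $\kappa$ of \cref{prop:PsiIsoRigid}. There are three steps.

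\emph{Well-definedness and bijectivity.} First I would check that the map $[\psi]_g \mapsto [(\psi,g)]$ is well defined. If $f \in \Gamma_g = gK(N)g^{-1} \cap \GL[r]{F}$, then $f = gkg^{-1}$ for some $k \in K(N)$, so
\[ f(\psi,g)k^{-1} = (\psi f^{-1}, fgk^{-1}) = (\psi f^{-1}, g). \]
Hence $[\psi f^{-1}]_g$ and $[\psi]_g$ have the same image.

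For surjectivity, take any $(\psi,h)$. Since $H$ represents $\lrquotient{\GL[r]{F}}{\GL[r]{\finadele}}{K(N)}$, we can write $h = f^{-1}gk$ with $g \in H$, $f \in \GL[r]{F}$ and $k \in K(N)$. Then
\[ f(\psi,h)k^{-1} = (\psi f^{-1}, g), \]
which is the image of $[\psi f^{-1}]_g$.

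For injectivity, suppose $(\psi,g)$ and $(\psi',g')$ are equivalent, that is, $(\psi',g') = (\psi f^{-1}, fgk)$. Then $g$ and $g'$ lie in the same double coset, so $g = g'$ because both are in $H$. It follows that $f = gk^{-1}g^{-1} \in \Gamma_g$, and therefore $[\psi']_g = [\psi]_g$.

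\emph{Rigid analytic structure.} This is the step I expect to need the most care. I would use \cref{prop:fkActionPsiOmega}: through $\kappa$, the action of $f \in \Gamma_g$ on $\Psi^r \simeq \Omega^r \times \CCi^\times$ is
\[ (\omega,\psi_r) \mapsto \Bigl(\omega f^{-1}, \tfrac{(\omega f^{-1})_r}{\omega_r}\psi_r\Bigr). \]
This is the $\Omega^r$-action twisted by a nowhere-vanishing analytic cocycle on the $\CCi^\times$ factor.

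I would then verify that $\Gamma_g$ acts discontinuously on $\Psi^r$ in the sense of \cref{def:rigidAnalyticDiscontinuousAction}. The natural choice is to pull back Drinfeld's admissible covering $(Y_i)$ of $\Omega^r$ to the covering $(Y_i \times \CCi^\times)$ of $\Psi^r$. Conditions (1)--(4) are then inherited from the $\Omega^r$ case, since the projection $\Psi^r \to \Omega^r$ is $\Gamma_g$-equivariant and stabilisers are unchanged. By \cref{prop:rigidAnalyticQuotient}, each $\lquotient{\Gamma_g}{\Psi^r}$ is therefore a rigid analytic space.

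Next I would check that the map is an isomorphism. The quotient of $\Psi^r \times \rquotient{\GL[r]{\finadele}}{K(N)}$, with the discrete topology on the second factor, decomposes as a disjoint union over the $\GL[r]{F}$-orbits of the second factor. On the orbit of $g$, the stabiliser of the point $gK(N)$ is exactly $\Gamma_g$, so that piece of the quotient is $\lquotient{\Gamma_g}{\Psi^r}$. This makes the map a rigid analytic isomorphism on each piece.

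\emph{Irreducible components.} By definition, the irreducible components of $\LL_N^r$ are the preimages of those of $M_{A,K(N)}^r(\CCi)$ under the quotient by $\CCi^\times$. That quotient sends $\lquotient{\Gamma_g}{\Psi^r}$ onto $\lquotient{\Gamma_g}{\Omega^r}$, because the scaling action commutes with the group actions, as noted in the proof of \cref{thm:PsiDoubleQuotientIso}. Moreover, $\lquotient{\Gamma_g}{\Psi^r}$ is exactly the full preimage of $\lquotient{\Gamma_g}{\Omega^r}$, since it is stable under scaling.

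So \cref{prop:moduli_components} shows that each $\lquotient{\Gamma_g}{\Psi^r}$ is the irreducible component of $\LL_N^r$ lying over the component $\lquotient{\Gamma_g}{\Omega^r}$, which finishes the proof.
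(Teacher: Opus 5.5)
Your proposal is correct. It reaches the result by a more explicit route than the paper, though the final step is the same.

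The paper's proof is a single observation. The scaling action of $\CCi^\times$ on $\Psi^r$ has quotient $\Omega^r$ with fibres $\CCi^\times$, and it commutes with the actions of $\GL[r]{F}$ and $\GL[r]{\finadele}$. So the isomorphism of \cref{prop:moduli_components} lifts from $\Omega^r$ to $\Psi^r$, giving the bijection, the analytic structure and the components all at once.

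You use this fibration only in the last step, where it is exactly what is needed. The irreducible components of $\LL_N^r$ are by definition pulled back from $M_{A,K(N)}^r(\CCi)$, and $\lquotient{\Gamma_g}{\Psi^r}$ is stable under scaling. For the bijection, you instead do the double-coset bookkeeping directly, with $\Gamma_g$ appearing as the stabiliser of $gK(N)$. For the analytic structure, you verify that $\Gamma_g$ acts discontinuously on $\Psi^r$ by pulling back Drinfeld's covering of $\Omega^r$ along the equivariant projection, so that \cref{prop:rigidAnalyticQuotient} applies.

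Your approach gains self-containedness. The paper's ``extends'' tacitly requires the $\CCi^\times$-fibres on the two sides to match, i.e.\ that the scaling stabilisers of $[\psi]_g$ and of $[(\psi,g)]$ agree; your direct argument makes this unnecessary. It also supplies the justification for the rigid analytic structure on $\lquotient{\Gamma_g}{\Psi^r}$, which the paper leaves to the reader. The paper's version, in exchange, is much shorter and makes plain that nothing beyond \cref{prop:moduli_components} is really involved.
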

\begin{proof}
  Consider the action of $\CCi^\times$ on $\Psi^r$, with quotient $\Omega^r$, each fibre of which is isomorphic to $\CCi^\times$; since this action of $\CCi^\times$ commutes with the actions of $\GL[r]{F}$ and $\GL[r]{\finadele}$, the bijection in \cref{prop:moduli_components} extends to the bijection given above.
\end{proof}

We include the following result from \cite[Definition 3.4.1, Proposition 3.4.2]{hubschmid2013andre}:
\begin{proposition} \label{prop:detSeperatesComps}
  The map $\det$ from $M_{A,K(N)}^r(\CCi)$ given by
  \begin{align*}
    \lquotient{\GL[r]{F}}{\parens{\Omega^r \times \GL[r]{\finadele}/K(N)}} &\longrightarrow \lrquotient{F^\times}{\parens{\finadele}^\times}{\det{K(N)}} \\
    [(\omega,g)] &\longmapsto [\det{g}]
  \end{align*}
  is surjective and the fibres are the irreducible components of $M_{A,K(N)}^r(\CCi)$.
\end{proposition}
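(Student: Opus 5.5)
The argument has three steps: well-definedness of $\det$, surjectivity, and identification of the fibres with the components of \cref{prop:moduli_components}.

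\emph{Well-definedness.} Under $f(\omega,g)k = (\omega f^{-1}, fgk)$ we have $\det(fgk) = \det f\,\det g\,\det k$. Here $\det f \in F^\times$ and $\det k \in \det K(N)$. Hence $[\det g]$ in $\lrquotient{F^\times}{\parens{\finadele}^\times}{\det K(N)}$ depends only on the double coset, and the map is well defined.

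\emph{Surjectivity.} Given $x \in \parens{\finadele}^\times$, take $g = \operatorname{diag}(x,1,\dotsc,1) \in \GL[r]{\finadele}$ and any $\omega \in \Omega^r$. Then $\det[(\omega,g)] = [x]$.

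\emph{Fibres.} By \cref{prop:moduli_components}, the irreducible components are the images of the $\lquotient{\Gamma_g}{\Omega^r}$, indexed by $H$, that is, by the double cosets $\lrquotient{\GL[r]{F}}{\GL[r]{\finadele}}{K(N)}$. The $\Omega^r$-coordinate does not affect $\det$, so $\det$ is constant on each component. It therefore suffices to show that the induced map
\[ \lrquotient{\GL[r]{F}}{\GL[r]{\finadele}}{K(N)} \longrightarrow \lrquotient{F^\times}{\parens{\finadele}^\times}{\det K(N)} \]
is injective. This is the key step.

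Suppose $\det g' = c\,\det g\,\det k$ with $c \in F^\times$ and $k \in K(N)$. Replace $g'$ by $\operatorname{diag}(c^{-1},1,\dotsc,1)\,g'\,k^{-1}$, which lies in the same double coset; then $\det g' = \det g$. Now $g' = g\cdot(g^{-1}g')$ with $g^{-1}g' \in \mathrm{SL}_r(\finadele)$. By strong approximation for $\mathrm{SL}_r$ with respect to the nonempty set $\{\infty\}$, $\mathrm{SL}_r(F)$ is dense in $\mathrm{SL}_r(\finadele)$. The set $gK(N)g^{-1}\cap \mathrm{SL}_r(\finadele)$ is open in $\mathrm{SL}_r(\finadele)$, so
\[ \mathrm{SL}_r(\finadele) = \mathrm{SL}_r(F)\cdot\bigl(gK(N)g^{-1}\cap\mathrm{SL}_r(\finadele)\bigr). \]
Write $g'g^{-1} = \gamma\, g k' g^{-1}$ with $\gamma \in \mathrm{SL}_r(F)$ and $k' \in K(N)$. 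Then $g' = \gamma g k'$, as required.

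The main obstacle is this injectivity step. Two points need care. First, one must apply strong approximation to $g'g^{-1}$ rather than $g^{-1}g'$; this is why the open subgroup used is the conjugate $gK(N)g^{-1}$. Second, strong approximation is valid here because $\mathrm{SL}_r$ is simply connected and $\mathrm{SL}_r(F_\infty)$ is noncompact. Since the statement is quoted from Hubschmid, I would cite the strong approximation theorem rather than reprove it.
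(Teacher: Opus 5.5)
Your proof is correct, but it does not follow the paper, because the paper gives no argument here. The paper imports this statement from \cite[Definition 3.4.1, Proposition 3.4.2]{hubschmid2013andre}. It then \emph{deduces} the double-coset bijection of \cref{coro:DetDblQuot} from it.

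You run the logic the other way. You prove that bijection directly and then read off the fibres using \cref{prop:moduli_components}. Each step of the direct proof is sound:
\begin{enumerate}
  \item normalising to $\det g' = \det g$ via $\operatorname{diag}(c^{-1},1,\dotsc,1)$ and $k^{-1}$;
  \item applying strong approximation for $\mathrm{SL}_r$ to $g'g^{-1}$;
  \item using the open subgroup $gK(N)g^{-1}\cap\mathrm{SL}_r(\finadele)$;
  \item concluding $g' = \gamma g k'$.
\end{enumerate}

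What your route buys is transparency. It shows that the group-theoretic half, and hence \cref{coro:DetDblQuot}, rests only on strong approximation and involves no geometry. What it does not buy is independence from Hubschmid. The irreducibility of each piece $\lquotient{\Gamma_g}{\Omega^r}$ is still taken from \cref{prop:moduli_components}, which is cited from the same source. So the geometric content of ``the fibres are \emph{irreducible} components'' is borrowed rather than reproved.

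One cosmetic point: justifying strong approximation by noncompactness of $\mathrm{SL}_r(F_\infty)$ only applies for $r \geq 2$. For $r = 1$ the group $\mathrm{SL}_1$ is trivial, the normalisation already forces $g' = g$, and the step is vacuous.
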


\begin{corollary} \label{coro:DetDblQuot}
  The map $\det$ above induces a bijection
  \begin{align*}
    \lrquotient{\GL[r]{F}}{\GL[r]{\finadele}}{K(N)} &\longionto \lrquotient{F^\times}{\parens{\finadele}^\times}{\det{K(N)}} \\
    [g] &\longmapsto [\det{g}]. \qedhere
  \end{align*}
\end{corollary}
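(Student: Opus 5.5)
The plan is to combine \cref{prop:moduli_components} with \cref{prop:detSeperatesComps}. Both propositions describe the irreducible components of $M_{A,K(N)}^r(\CCi)$, and comparing the two descriptions gives the bijection.

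First, I check that the map is well defined. If $g' = f g k$ with $f \in \GL[r]{F}$ and $k \in K(N)$, then $\det g' = \det f \cdot \det g \cdot \det k$. Here $\det f \in F^\times$ and $\det k \in \det K(N)$, so $[\det g'] = [\det g]$ in $\lrquotient{F^\times}{\parens{\finadele}^\times}{\det{K(N)}}$. Hence $[g] \mapsto [\det g]$ descends to the double quotient.

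Next, I show that the map is surjective and injective. Write $\det_M$ for the map of \cref{prop:detSeperatesComps}. It factors as
\[ [(\omega,g)] \longmapsto [g] \longmapsto [\det g], \]
where the first step is the projection of $M_{A,K(N)}^r(\CCi)$ onto $\lrquotient{\GL[r]{F}}{\GL[r]{\finadele}}{K(N)}$. The map $\det_M$ is surjective, and $\Omega^r$ is nonempty, so the induced map is also surjective.

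For injectivity, take $g_1, g_2 \in H$ with $[\det g_1] = [\det g_2]$, and pick any $\omega \in \Omega^r$. The points $[(\omega,g_1)]$ and $[(\omega,g_2)]$ lie in the same fibre of $\det_M$. By \cref{prop:detSeperatesComps}, they therefore lie in the same irreducible component. By \cref{prop:moduli_components}, the irreducible components are exactly the images of the disjoint pieces $\lquotient{\Gamma_g}{\Omega^r}$, one for each $g \in H$. The point $[(\omega,g_i)]$ is the image of $[\omega]_{g_i}$, and the decomposition is a disjoint union. So the two points lying in one component forces $g_1 = g_2$, that is, $[g_1] = [g_2]$.

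The main obstacle is to make sure that \cref{prop:moduli_components} gives distinct components for distinct $g \in H$. Two different pieces $\lquotient{\Gamma_g}{\Omega^r}$ must not be mapped into the same irreducible component. This holds because the map is an isomorphism from a disjoint union whose pieces are each irreducible, being quotients of the irreducible space $\Omega^r$. The images of the pieces are therefore pairwise disjoint irreducible components, and the irreducible components of $M_{A,K(N)}^r(\CCi)$ are in bijection with $H$. That is exactly what the injectivity step uses.
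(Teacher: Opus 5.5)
Your proposal is correct and follows the argument the paper leaves implicit. The paper states the corollary without proof immediately after \cref{prop:detSeperatesComps}, meaning it is read off by matching the fibres of $\det$ with the components $\lquotient{\Gamma_g}{\Omega^r}$, $g \in H$, of \cref{prop:moduli_components}, which is exactly what your well-definedness, surjectivity and injectivity steps do.
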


We know the number of irreducible components by \cite[Corollary 3.4.5]{hubschmid2013andre}:
\begin{proposition} \label{prop:comps_HubschmidNumber}
  The irreducible components of $\LL_N^r$ have number
  \[ \sizep{\lrquotient{F^\times}{\parens{\finadele}^\times}{\det K(N)}} = \size{\Cl(F)} \cdot \sizep{\hat{A}^\times/(\FF_q^\times \cdot \det K(N))}. \qedhere \]
\end{proposition}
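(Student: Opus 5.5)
The first equality comes from translating components into fibres of $\det$. The second is a computation in the abelian group $\invertadele$.

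\textbf{Step 1: components correspond to the double quotient.} By definition, the irreducible components of $\LL_N^r$ are the preimages of the irreducible components of $M_{A,K(N)}^r(\CCi)$ under the quotient map $\LL_N^r \onto \rquotient{\LL_N^r}{\CCi^\times}$. By \cref{prop:LLNR_components} and \cref{prop:moduli_components}, both sets of components are indexed by the same set $H$ of representatives. So they have the same number, namely $\size{\lrquotient{\GL[r]{F}}{\GL[r]{\finadele}}{K(N)}}$.

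By \cref{prop:detSeperatesComps}, or directly by \cref{coro:DetDblQuot}, this equals $\sizep{\lrquotient{F^\times}{\parens{\finadele}^\times}{\det K(N)}}$. This gives the first equality.

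\textbf{Step 2: the group-theoretic computation.} Since $\invertadele$ is abelian, the double quotient is the group $G = \invertadele/(F^\times \det K(N))$. Note that $\det K(N) \subseteq \hat{A}^\times$. Hence there is a surjection
\[ G \onto \invertadele/(F^\times \hat{A}^\times). \]
The target is isomorphic to $\Cl(F)$ via the usual map sending an idele to its fractional ideal. Here we use that $A$ is Dedekind and that the finite places of $F$ are exactly the maximal ideals of $A$. The kernel of this surjection is
\[ F^\times\hat{A}^\times/(F^\times\det K(N)) \;\cong\; \hat{A}^\times/\bigl(\hat{A}^\times \cap F^\times \det K(N)\bigr). \]

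\textbf{Step 3: identifying the kernel.} Since $\det K(N) \subseteq \hat{A}^\times$, the modular law gives
\[ \hat{A}^\times \cap F^\times\det K(N) = (\hat{A}^\times \cap F^\times)\,\det K(N). \]
Next, $\hat{A}^\times \cap F^\times = A^\times$, because an element of $F$ that is a unit at every finite place is a unit of $A$. Moreover $A^\times = \FF_q^\times$: such an element has no zeros or poles away from $\infty$, so by the product formula it has none at $\infty$ either, and is therefore constant. Hence the kernel is $\hat{A}^\times/(\FF_q^\times \det K(N))$.

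\textbf{Step 4: finiteness and the count.} The class group $\Cl(F)$ is finite. The kernel is also finite: $\det K(N)$ contains $\ker\bigl(\hat{A}^\times \to (A/N)^\times\bigr)$, as one sees by using diagonal matrices $\operatorname{diag}(u,1,\dotsc,1)$ with $u \equiv 1 \bmod N$. Since everything is finite, the order of $G$ is the product of the orders of the kernel and of the image. This gives
\[ \size{G} = \size{\Cl(F)} \cdot \sizep{\hat{A}^\times/(\FF_q^\times \cdot \det K(N))}. \]

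\textbf{Main obstacle.} The main care point is Step 3, specifically the intersection $\hat{A}^\times \cap F^\times\det K(N)$. It needs both the modular law and the identification $A^\times = \FF_q^\times$, and the latter uses that there is a single place at infinity.
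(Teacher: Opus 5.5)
Your proof is correct. It differs from the paper's, because the paper gives no argument for this statement: both equalities are quoted from Hubschmid, the first via \cref{prop:detSeperatesComps} and the whole count as his Corollary~3.4.5.

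For the first equality you use the same cited input, namely that the fibres of $\det$ are the components, which is really strong approximation. So nothing changes there. Your Steps~2--4, however, genuinely derive the second equality. You map $G=\invertadele/(F^\times\det K(N))$ onto $\invertadele/(F^\times\hat{A}^\times)\cong\Cl(F)$. You then compute the kernel with the second isomorphism theorem and the modular law, which pinpoints where $\hat{A}^\times\cap F^\times=A^\times=\FF_q^\times$ (and hence the single place $\infty$) is needed.

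This is the same exact sequence the paper proves right afterwards in \cref{prop:Comps_ClF_AN*_Bijection}. There, however, the kernel is identified concretely with $(A/N)^\times/\FF_q^\times$ through the map $i_N$, by element-chasing with ideles $(x)_{\fp\mid N}\cup(1)_{\fp\nmid N}$ and \cref{lem:detKNStructure}. Your structural version is shorter and makes the count self-contained. The paper's explicit version is what it later needs to recognise the identity component and the action of $\GL[r]{A/N}$ on components (\cref{prop:latticeN*Charac}, \cref{prop:GLrANComponents}).

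The two descriptions are reconciled in one line. By \cref{lem:detKNStructure}, $\hat{A}^\times/\det K(N)\cong(A/N)^\times$, and $\FF_q^\times$ injects into $(A/N)^\times$ because $N\neq A$. Hence your kernel $\hat{A}^\times/(\FF_q^\times\det K(N))$ is exactly $(A/N)^\times/\FF_q^\times$.
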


We can take the above count further as shown in \cref{prop:Comps_ClF_AN*_Bijection}, but first a

\begin{lemma} \label{lem:detKNStructure}
  $\displaystyle \quad \det{K(N)} = \hat{A}^\times \cap (1 + N\hat{A}) = \prod_{\fp \nmid N} A_\fp^\times \cdot \prod_{\fp \mid N} 1+(\fp A_\fp)^{v_\fp(N)}$.
\end{lemma}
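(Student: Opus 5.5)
Since $\hat{A}=\prod_{\fp}A_\fp$ is a product of local rings, I will reduce everything to one prime at a time. First I factor $K(N)$ through the isomorphism $\GL[r]{\hat{A}} = \prod_\fp \GL[r]{A_\fp}$. Writing $N=\prod_{\fp\mid N}\fp^{v_\fp(N)}$, the Chinese remainder theorem gives $A/N \simeq \prod_{\fp\mid N} A_\fp/(\fp A_\fp)^{v_\fp(N)}$. Hence
\[ K(N) = \prod_{\fp\nmid N}\GL[r]{A_\fp}\cdot\prod_{\fp\mid N}\ker\parens{\GL[r]{A_\fp}\to\GL[r]{A_\fp/(\fp A_\fp)^{v_\fp(N)}}}. \]
The determinant acts componentwise, so $\det K(N)$ is the product of the local images $\det$ of each factor. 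It therefore suffices to compute these local images.

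For $\fp\nmid N$ the local image is all of $A_\fp^\times$, since $\det\parens{\operatorname{diag}(u,1,\dotsc,1)}=u$. For $\fp\mid N$, put $e=v_\fp(N)$. The inclusion $\det(\text{kernel})\subseteq 1+(\fp A_\fp)^{e}$ holds because $k\equiv I$ modulo $\fp^e$ forces $\det k\equiv 1$, reducing the determinant polynomial modulo $(\fp A_\fp)^e$. For the reverse inclusion, any $u\in 1+(\fp A_\fp)^e$ equals $\det\parens{\operatorname{diag}(u,1,\dotsc,1)}$, and this diagonal matrix lies in the kernel. This gives the second expression.

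For the first expression $\hat{A}^\times\cap(1+N\hat{A})$, I again compute componentwise. Here $N\hat{A}=\prod_\fp (\fp A_\fp)^{v_\fp(N)}$, which is $A_\fp$ at primes $\fp\nmid N$ because $N A_\fp = A_\fp$ there. So $1+N\hat{A}$ has component $A_\fp$ at $\fp\nmid N$ and $1+(\fp A_\fp)^{e}$ at $\fp\mid N$. Intersecting with $\hat{A}^\times=\prod_\fp A_\fp^\times$ gives $A_\fp^\times$ at $\fp\nmid N$. At $\fp\mid N$ it gives $1+(\fp A_\fp)^e$, which already consists of units because $e\ge 1$.

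The only delicate step is justifying the product decompositions themselves, that is, $\hat{A}\simeq\prod_\fp A_\fp$ and the compatible decomposition $\GL[r]{\hat{A}}=\prod_\fp\GL[r]{A_\fp}$. I expect to obtain both by applying the Chinese remainder theorem to each finite quotient $A/J$ and passing to the inverse limit. The rest is routine.
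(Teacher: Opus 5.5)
Your proof is correct, and it uses the same key device as the paper (the matrix $\operatorname{diag}(u,1,\dotsc,1)$). The difference is where you localise.

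The paper proves the first equality globally. Since $\det$ commutes with reduction modulo $N$, we get $\det K(N) \subseteq \hat{A}^\times \cap (1+N\hat{A})$. Conversely, any $x$ in that set equals $\det \operatorname{diag}(x,1,\dotsc,1)$, and this matrix already lies in $K(N)$. Only afterwards does the paper pass to components, using that $x \equiv 1 \pmod{N\hat{A}}$ iff $x_\fp - 1 \in (\fp A_\fp)^{v_\fp(N)}$ for each $\fp \mid N$, which gives the second equality.

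You instead first decompose $K(N)$ itself as a product of local congruence kernels and run the diagonal-matrix argument prime by prime. You then compute $\hat{A}^\times \cap (1+N\hat{A})$ componentwise and check that both sides equal the same product.

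What your route costs is exactly the step you flag: identifying $\GL[r]{\hat{A}}$ with $\prod_\fp \GL[r]{A_\fp}$ compatibly with the reduction to $\GL[r]{A/N}$ via the Chinese remainder theorem. The paper's global argument avoids this for the first equality and needs only the componentwise description of $N\hat{A}$ for the second. In exchange, your version shows the product structure of $\det K(N)$ directly, and both expressions fall out of one uniform local computation.
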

Here the product is over prime ideals $\fp$.
\begin{proof}
  Each $x \in \det{K(N)}$ has $x \equiv_N 1$, and conversely if $x \in \hat{A}^\times \cap (1+N\hat{A})$ then $x' \in \GL[r]{\hat{A}}$, which viewed as an $r \times r$ matrix has $x$ in the first entry, $1$ along the rest of the diagonal and $0$ elsewhere, has $\det{x'} = x$ and is in $K(N)$.
  This proves the first equality.

  For the second equality, note that
  \begin{align*}
    x = (x_\fp)_\fp \equiv_N 1 &\iff x-1 \in N\hat{A} & & \\
    &\iff x-1 \in \fp^{v_\fp(N)} \hat{A} & & \lsptext{for each} \fp \mid N \\
    &\iff x_\fp-1 \in (\fp A_\fp)^{v_\fp(N)} & & \lsptext{for each} \fp \mid N \,. \qedhere
  \end{align*}
\end{proof}

\begin{proposition} \label{prop:Comps_ClF_AN*_Bijection}
  The injective map
  \begin{align*}
      i_N : \rquotient{\parens{A/N}^{\!\times}}{\FF_q^\times} &\longinto \lrquotient{F^\times}{\parens{\finadele}^{\!\times}}{\det K(N)} \\
        [x, x \in A] &\longmapsto [(x)_{\fp \mid N} \cup (1)_{\fp \nmid N}]
  \end{align*}
  and the surjective map
  \begin{align*}
       \pi_N : \lrquotient{F^\times}{\parens{\finadele}^{\!\times}}{\det K(N)} &\longonto \Cl(F) \\
        [x] &\longmapsto \brackets{\prod_\fp \fp^{v_\fp(x)}}_{\Cl(F)}
  \end{align*}
  together form a short exact sequence of abelian groups.
\end{proposition}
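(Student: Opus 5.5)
We have to show three things: $i_N$ is a well-defined injective homomorphism, $\pi_N$ is a well-defined surjective homomorphism, and $\Im i_N = \ker \pi_N$. Throughout, write $Q = \lrquotient{F^\times}{\parens{\finadele}^{\times}}{\det K(N)}$ and use \cref{lem:detKNStructure} to identify $\det K(N) = \hat{A}^\times \cap (1+N\hat{A})$. By the Chinese remainder theorem, $\hat{A}^\times/\det K(N) \simeq \prod_{\fp\mid N}\bigl(A_\fp/(\fp A_\fp)^{v_\fp(N)}\bigr)^\times \simeq (A/N)^\times$.

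\emph{Step 1: the map $\pi_N$.} Let $\pi_N$ be the map sending an idele to its divisor class. It is a homomorphism $\parens{\finadele}^\times \to \Cl(F)$. It kills $\det K(N) \subseteq \hat{A}^\times$, since units have valuation $0$. It also kills $F^\times$, since the divisor of a principal idele is a principal fractional ideal of $A$. Hence $\pi_N$ descends to $Q$. For surjectivity, given an ideal class $[I]$, pick a uniformiser $\varpi_\fp$ at each $\fp$ and take $x = (\varpi_\fp^{v_\fp(I)})_\fp$. Only finitely many components differ from $1$, so $x$ is a finite idele, and $\pi_N([x]) = [I]$.

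\emph{Step 2: the map $i_N$.} Send $x \in A$ coprime to $N$ to the idele $\tilde{x}$ with components $x$ at $\fp \mid N$ and $1$ elsewhere. This is a unit idele, and it is multiplicative in $x$. If $x \equiv x' \pmod N$, then $\tilde{x}/\tilde{x}' \in 1+N\hat{A}$ and is a unit, so it lies in $\det K(N)$. If $x \in \FF_q^\times$, then $\tilde{x}$ differs from the principal idele $x \in F^\times$ by the idele with components $1$ at $\fp\mid N$ and $x^{-1}$ at $\fp\nmid N$, which lies in $\det K(N)$. Hence $i_N$ is a well-defined homomorphism on $(A/N)^\times/\FF_q^\times$.

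\emph{Step 3: injectivity (the main point).} Suppose $\tilde{x} = f k$ with $f \in F^\times$ and $k \in \det K(N)$. Then $f = \tilde{x}k^{-1} \in \hat{A}^\times$, so $f$ is a unit at every finite place. Thus $f \in A^\times = \FF_q^\times$, because $A$ is the ring of functions regular away from $\infty$. Comparing components at $\fp \mid N$ then gives $x \equiv f \pmod{(\fp A_\fp)^{v_\fp(N)}}$ for each such $\fp$, hence $x \equiv f \pmod N$. So $[x]$ is trivial in $(A/N)^\times/\FF_q^\times$.

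\emph{Step 4: exactness in the middle.} The composite $\pi_N \circ i_N$ is trivial because $\tilde{x}$ is a unit idele. Conversely, let $[y] \in \ker \pi_N$. Its divisor is a principal ideal $(f)$ with $f \in F^\times$, so $f^{-1}y \in \hat{A}^\times$. Under $\hat{A}^\times/\det K(N) \simeq (A/N)^\times$, the class of $f^{-1}y$ is represented by some $x \in A$ coprime to $N$, using surjectivity of $A \to A/N$ and CRT. Then $f^{-1}y\tilde{x}^{-1}$ is a unit idele congruent to $1$ at every $\fp \mid N$, so it lies in $\det K(N)$, and $[y] = i_N([x])$.

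The only genuinely substantive point is Step 3, where $F^\times \cap \hat{A}^\times = \FF_q^\times$ is used, together with the care needed in Step 2 in checking that $\FF_q^\times$ lands in the trivial class. The rest is bookkeeping with CRT via \cref{lem:detKNStructure}.
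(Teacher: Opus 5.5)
Your proof is correct and follows essentially the same route as the paper. The shared steps are: $\pi_N$ descends because units have valuation zero and principal ideles give principal ideals; surjectivity comes from choosing uniformisers; and exactness in the middle follows by dividing a kernel element by a generator of its principal divisor and lifting the resulting unit modulo $N$. Your Step~3 is somewhat more careful than the paper's version. The paper only compares $\tilde{x}$ with $f\tilde{y}$ for $f \in \FF_q^\times$. It leaves implicit, through its remark that $v_\fp(k)=0$ for $k \in \det K(N)$, that any $f \in F^\times$ relating two such ideles must lie in $F^\times \cap \hat{A}^\times = \FF_q^\times$, whereas you state this explicitly.
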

\begin{proof}
  First we show that $i_N$ and $\pi_N$ are well defined and are injective and surjective respectively.
  Note that $v_\fp(k) = 0$ for all prime $\fp$ and $k \in \det{K(N)}$.
  
  $i_N$: Let $x, y \in A$ with $x+N, y+N \in \parens{A/N}^\times$, noting that $v_\fp(x) = v_\fp(y) = 0$ for all $\fp \mid N$.
  Then for $f \in \FF_q^\times$,
  \begin{align*}
    & x \equiv f y \pmod{N} \\
    \iff & x/fy \equiv 1 \pmod{N} \\
    \iff & x/fy \equiv 1 \pmod{\fp^{v_\fp(N)}} \lsptext{for all} \fp \mid N \\
    \iff & \rquotient{((x)_{\fp \mid N} \cup (1)_{\fp \nmid N})}{f((y)_{\fp \mid N} \cup (1)_{\fp \nmid N})} \in \det{K(N)};
  \end{align*}
  thus $i_N$ is well-defined and injective.
  
  $\pi_N$: Let $x = \parens{\finadele}^{\!\times}$, $f \in F^\times$, and $k \in \det{K(N)}$.
  Then $\prod_\fp \fp^{v_\fp(f)} = (f)$ is principal, so that $\brackets{\prod_\fp \fp^{v_\fp(x)}} = \brackets{\prod_\fp \fp^{v_\fp(f x k)}}$.
  Also, if $I$ is a fractional ideal then $v_\fp(I) \neq 0$ for only finitely many $\fp$; choosing uniformisers $u_\fp \in F_\fp$ for all such, we have that $\pi_N$ maps $\brackets{\parens{u_\fp^{v_\fp(I)}}_{v_\fp(I) \neq 0} \cup (1)_{v_\fp(I) = 0}}$ to $[I]$.

  Now to show that $\Im{i_N} = \ker{\pi_N}$, let $[x] \in \ker{\pi_N}$, where $x = (x_\fp)_\fp$.
  Then $\prod_\fp \fp^{v_\fp(x)} = (f)$ is principal, for some $f \in F^\times$.
  Thus $v_\fp(x/f) = 0$ for all $\fp$, so that $x/f \in \hat{A}^\times$ and so is invertible under the projection $\hat{A} \onto A/N$.
  So there is a $t \in \parens{A/N}^\times$ such that $x/f \equiv t \pmod{N}$, or equivalently $x/f \equiv (t)_{\fp \mid N} \cup (1)_{\fp \nmid N} \pmod{N}$, so we have that $k = \rquotient{(x/f)}{((t)_{\fp \mid N} \cup (1)_{\fp \nmid N})} \in \det{K(N)}$.
  Thus $[x] = [f \backslash x / k] \in \Im{i_N}$.
  Conversely, consider $x = (t)_{\fp \mid N} \cup (1)_{\fp \nmid N} \in \hat{A}^\times$ for $t \in A$ which is invertible modulo $N$, so that $[x] \in \Im{i_N}$.
  Then $v_\fp(x) = 0$ for all $\fp$, so that $\pi_N(x) = 0$, \ie $x \in \ker{\pi_N}$.
\end{proof}

So for each irreducible component $C$ of $\LL_N^r$, there is a corresponding class group element $\pi_N(C) \in \Cl(F)$, and those for which $\pi_N(C) = 1$ can be written as $C = i_N(x)$ for some $x \in \rquotient{\parens{A/N}^\times\!}{\FF_q^\times}$.
Note here the abuse of notation: we will use $\pi_N$ as a function from $\lrquotient{F^\times}{\parens{\finadele}^{\!\times}}{\det K(N)}$, from the double quotients $\lquotient{\GL[r]{F}}{\parens{\Omega^r \times \GL[r]{\finadele}/K(N)}}$ and $\lquotient{\GL[r]{F}}{\parens{\Psi^r \times \GL[r]{\finadele}/K(N)}}$, \\and from the set of irreducible components.
Similarly, $i_N$ could have as codomain any of the above spaces, with context dictating which is intended.

\begin{definition} \label{def:InitialComponent}
  The \emph{identity component} of $M_{A,K(N)}^r(\CCi)$ is the fibre of the identity element in the surjection of \cref{prop:detSeperatesComps}.
  We will use the notation $1_N^r$ for the corresponding identity component of $\LL_N^r$.
\end{definition}

So far we have looked at identifying the different components from the point of view of $\omega \in \Omega^r$ and $g \in \GL[r]{\finadele}$.
The following series of results carries through this identification to the point of view of a lattice $\Lambda$ with level structure $\alpha$.

\begin{proposition} \label{prop:latticeClassCharac}
  For fractional ideals $I_1, \dotsc, I_r$ of $F$ and $\psi = (\psi_1, \dotsc, \psi_r) \in \Psi^r$, and the resulting lattice $\Lambda = I_1\psi_1 +\dotsb +I_r\psi_r$ together with any associated level structure $\alpha$, we have that $\pi_N(\Lambda,\alpha) = [I_1 I_2 \dotsm I_r]_{\Cl(F)} \in \Cl(F)$.
\end{proposition}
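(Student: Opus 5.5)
To prove the statement I will exhibit an explicit preimage $(\psi',g) \in \Psi^r \times \GL[r]{\finadele}$ of $(\Lambda,\alpha)$ under the bijection $\Theta$ of \cref{thm:PsiDoubleQuotientIso}. Then $\pi_N(\Lambda,\alpha)$ is $\pi_N([\det g])$, computed via \cref{prop:detSeperatesComps} and \cref{prop:Comps_ClF_AN*_Bijection}. Since $\pi_N$ only depends on the component, and changing $\alpha$ keeps the lattice fixed, it suffices to find one pair giving the lattice $\Lambda$. I must still check that $\pi_N$ does not depend on $\alpha$; this will come out of the computation, because $\alpha$ is altered by right multiplication by $K$-type elements that do not affect valuations.

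\textbf{Choice of $g$.} For each prime $\fp$ and each $i$ choose $u_{i,\fp} \in F_\fp^\times$ with $v_\fp(u_{i,\fp}) = v_\fp(I_i)$, taking $u_{i,\fp} = 1$ for almost all $\fp$. Put $d_i = (u_{i,\fp})_\fp \in \invertadele$ and $g = \operatorname{diag}(d_1,\dotsc,d_r) \in \GL[r]{\finadele}$. Then
\[ F^r \cap g\hat{A}^r = \bigoplus_i \parens{F \cap d_i \hat{A}} = I_1 \oplus \dotsb \oplus I_r, \]
using the standard fact that $F \cap d\hat{A}$ is the fractional ideal $\prod_\fp \fp^{v_\fp(d)}$, which follows from $A = F \cap \hat{A}$ locally at each prime. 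With $\psi' = \psi$ we get $\psi(F^r \cap g\hat{A}^r) = I_1\psi_1 + \dotsb + I_r\psi_r = \Lambda$.

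\textbf{Matching the level structure.} Some level structure $\alpha_0$ on $\Lambda$ arises from $(\psi,g)$. Any other $\alpha$ differs from it by an element of $\GL[r]{A/N}$. I lift this to $\GL[r]{\hat{A}}$ and replace $g$ by $gk$ with $k \in \GL[r]{\hat{A}}$; this still gives the lattice $\Lambda$, since $\hat{A}^r k = \hat{A}^r$, and it realises $\alpha$ via diagram \cref{diag:psig_lattice_level}. The determinant changes only by $\det k \in \hat{A}^\times$, which has all valuations zero. This step, making precise that surjectivity of $\GL[r]{\hat{A}} \to \GL[r]{A/N}$ lets every $\alpha$ be realised with the same lattice, is where I expect the main bookkeeping, though it is standard.

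\textbf{Conclusion.} By \cref{prop:Comps_ClF_AN*_Bijection}, $\pi_N$ sends $[\det(gk)]$ to $\bigl[\prod_\fp \fp^{v_\fp(d_1\dotsm d_r)}\bigr] = [I_1 \dotsm I_r]$, since $v_\fp(\det g) = \sum_i v_\fp(I_i)$.
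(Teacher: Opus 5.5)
Your proposal is correct and follows the paper's proof essentially step for step. You take $\psi'=\psi$ and a diagonal adelic $g$ with $F^r\cap g\hat{A}^r = I_1\oplus\dotsb\oplus I_r$, correct it on the right by a lift in $\GL[r]{\hat{A}}$ of the discrepancy in $\GL[r]{A/N}$ so as to realise $\alpha$, and read off $\pi_N$ from the valuations of $\det g$, which the correction leaves unchanged. The slips are only notational: the fact you need is $k\hat{A}^r = \hat{A}^r$ (column vectors), not $\hat{A}^r k = \hat{A}^r$, and the correcting elements lie in $\GL[r]{\hat{A}}$ rather than being of ``$K$-type''.
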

\begin{proof}
  We will proceed by finding $\psi'$ and $g$ such that $\Theta([(\psi',g)]) = (\Lambda,\alpha)$, with $\Theta$ being the isomorphism
  \[ \Theta : \lquotient{\GL[r]{F}}{\parens{\Psi^r \times \rquotient{\GL[r]{\finadele}}{K(N)}}} \longisoto \LL_N^r \]
  from \cref{thm:PsiDoubleQuotientIso}.
  We choose $\psi' = \psi$, and proceed to constructing $g$.
  Each $I_i$ can be uniquely factorised as a product $\prod_\fp \fp^{a_{\fp,i}}$ for prime ideals $\fp$ and integers $a_{\fp,i}$ almost all zero, so choosing uniformisers $u_\fp$ for each localisation $A_\fp$ with $a_{\fp,i}$ not all zero we have that $g_i \defeq \parens{u_\fp^{a_{\fp,i}}}_\fp$ satisfies $g_i \hat{A} = I_i \hat{A}$.
  Let $g'$ be the matrix with the $g_i$ on the diagonal and zeroes elsewhere.
  We have that
  \begin{align*}
    g'\hat{A}^r &= \parens{g_1\hat{A}, \dotsc, g_r\hat{A}}^T = \parens{I_1\hat{A}, \dotsc, I_r\hat{A}}^T,
    \shortintertext{so that}
    \psi\parens{F^r \cap g'\hat{A}^r} &= \psi(I_1,\dotsc, I_r) = I_1\psi_1 +\dotsb +I_r\psi_r = \Lambda
  \end{align*}
  as desired.
  Now this $g'$ induces a level structure $\alpha' : \parens{N^{-1}/A}^r \ionto N^{-1}\Lambda/\Lambda$ as in \cref{thm:PsiDoubleQuotientIso}, which may not be the desired $\alpha$.
  However, since $\GL[r]{\hat{A}}$ surjects onto $\GL[r]{A/N}$, there is a lift $\gamma \in \GL[r]{\hat{A}}$ of $\alpha'^{-1} \circ \alpha \in \GL[r]{A/N}$.
  Then defining $g = g' \circ \gamma$, we have that $g\hat{A}^r = g'\hat{A}^r$, so that $\Lambda = \psi\parens{F^r \cap g\hat{A}^r}$, and by following \cref{diag:psig_lattice_level} that $g$ induces the level structure $\alpha$.

  Now $\det{\gamma} \in \hat{A}^\times$, so $\det{g} = \det{g'} \det{\gamma} \in g_1 \dotsm g_r \hat{A}^\times$.
  Thus
  \begin{align*}
    \pi_N(\Lambda,\alpha) &= \pi_N(\det{g}) \\
    &= \brackets{\prod_\fp \fp^{v_\fp(g_1 \dotsm g_r)}} = \brackets{\prod_\fp \fp^{v_\fp(g_1)} \dotsm \prod_\fp \fp^{v_\fp(g_r)}} \\
    &= \brackets{I_1 \dotsm I_r}. \qedhere
  \end{align*}
\end{proof}

\begin{corollary} \label{coro:latticeClass1Charac}
  $\pi_N(\Lambda,\alpha) = [A]$ if and only if $\Lambda = A\psi_1 +\dotsb A\psi_r = \psi A^r$ for some row vector $\psi = (\psi_1, \dotsc, \psi_r) \in \Psi^r$.
\end{corollary}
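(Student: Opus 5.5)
The plan is to reduce both directions to \cref{prop:latticeClassCharac} together with the structure theory of projective modules over the Dedekind domain $A$ from \cref{para:latticeAmodule}.

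For the "if" direction, suppose $\Lambda = A\psi_1 + \dotsb + A\psi_r$ with $\psi \in \Psi^r$. Apply \cref{prop:latticeClassCharac} with $I_1 = \dotsb = I_r = A$. This gives $\pi_N(\Lambda,\alpha) = [A \dotsm A] = [A]$ for any level structure $\alpha$, which is immediate.

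For the "only if" direction, first write an arbitrary lattice of rank $r$ as
\[ \Lambda = A\omega_1 + \dotsb + A\omega_{r-1} + I\omega_r \]
using \cref{para:latticeAmodule}, with the $\omega_i$ $F_\infty$-linearly independent. Two points need checking.

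\emph{Step one.} The vector $\omega = (\omega_1,\dotsc,\omega_r)$ lies in $\Psi^r$, i.e.\ it lies on no $F_\infty$-rational hyperplane. This is exactly the statement that $\sum c_i\omega_i \neq 0$ for every nonzero $(c_i) \in F_\infty^r$, which is the $F_\infty$-linear independence. So \cref{prop:latticeClassCharac} applies and gives $\pi_N(\Lambda,\alpha) = [I]$.

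\emph{Step two.} If $\pi_N(\Lambda,\alpha) = [A]$, then $I$ is principal, say $I = xA$ with $x \in F^\times$. Set $\psi = (\omega_1,\dotsc,\omega_{r-1}, x\omega_r)$. Scaling one coordinate by $x \in F^\times \subset F_\infty^\times$ preserves $F_\infty$-linear independence, so $\psi \in \Psi^r$, and $\Lambda = \psi A^r$.

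The main obstacle is making sure the class $[I]$ is well defined, since the decomposition $A^{r-1} \oplus I$ is not unique. This is resolved by the fact that \cref{prop:latticeClassCharac} computes $\pi_N$ intrinsically from $(\Lambda,\alpha)$. Any two such presentations must therefore give the same class, which is the Steinitz class. Hence no separate well-definedness argument is needed; we simply use the existence of one presentation of the given form.

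A minor point is that $I$ is an ideal rather than a general fractional ideal. This causes no trouble, because \cref{prop:latticeClassCharac} is stated for fractional ideals.
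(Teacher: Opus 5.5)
Your proposal is correct and follows essentially the same route as the paper. You write $\Lambda = A\omega_1 + \dotsb + A\omega_{r-1} + I\omega_r$, use \cref{prop:latticeClassCharac} to see that $I$ is principal, and rescale the last coordinate; the converse is a direct application of the same proposition, and your explicit check that $F_\infty$-linear independence (preserved by rescaling with an element of $F^\times$) places the vector in $\Psi^r$ fills in a detail the paper leaves implicit.
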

\begin{proof}
  For the forward direction, by \cref{para:latticeAmodule} there are an ideal $I$ of $A$ and a tuple $\psi = (\psi_1, \dotsc, \psi_r) \in \Psi^r$ such that $\Lambda = A\psi_1 +\dotsb +A\psi_{r-1} +I\psi_r$.
  Now by \cref{prop:latticeClassCharac} we have $[I] = [A]$ in $\Cl(F)$, \ie $I = (n)$ is principal for some $n \in A$.
  Hence replacing $\psi'_r$ by $n\psi_r$, we have that $\Lambda = A\psi_1 +\dotsb +A\psi_r = \psi A^r$.

  The converse is a direct application of \cref{prop:latticeClassCharac}.
\end{proof}

\begin{paragraph} \label{para:piN_noN}
  Note that the value of $\pi_N(\Lambda,\alpha)$ does not depend at all on the level structure $\alpha$ or even the ideal $N$; hence we may make use of the notation $\pi(\Lambda)$ instead, and in fact will also use $\pi(C)$ to denote $\pi(\Lambda)$ for a lattice in the irreducible component $C$.
\end{paragraph}

\begin{definition} \label{def:canonicalLevel}
  For a lattice $\Lambda$ with $\pi(\Lambda) = [A]$, to each choice of a generating vector $\psi \in \Psi^r$ satisfying $\Lambda = \psi A^r$ there is an associated canonical level $N$ structure $\alpha_\psi : \parens{N^{-1}/A}^r \ionto N^{-1}\Lambda/\Lambda$, given by $\alpha_\psi(l) = \psi l$\footnote{Here $\psi$ is considered as a row vector and $l$ as a column vector.} for $l_i \in \parens{N^{-1}/A}^r$.
\end{definition}

The choice of a canonical level $N$ structure $\alpha_\psi$ for a lattice $\Lambda = \psi A^r$ is equivalent to having $(\Lambda,\alpha_\psi) = \Theta([(\psi,\mathrm{Id})])$ where $\mathrm{Id} \in \GL[r]{\finadele}$ is the identity matrix.

\begin{paragraph} \label{para:canonicalLevelEquiv}
  Note that for any two choices $\psi_1, \psi_2 \in \Psi^r$ of generating vectors for a lattice $\Lambda$ with $\pi(\Lambda) = [A]$, since $\psi_1 A^r = \psi_2 A^r$ we have that $\psi_2 = \psi_1 \gamma$ for some $\gamma \in \GL[r]{A}$, and hence $\alpha_{\psi_2}(l) = \psi_2 l = \psi_1 \gamma l = \alpha_{\psi_1}(\gamma l)$ for all $l \in \parens{N^{-1}/A}^r$, \ie $\alpha_{\psi_2} = \alpha_{\psi_1} \circ \gamma$.
  Thus $\det{\alpha_{\psi_1}^{-1} \circ \alpha_{\psi_2}} \in A^\times = \FF_q^\times$.
\end{paragraph}

\begin{proposition} \label{prop:latticeN*Charac}
  Let $(\Lambda,\alpha) = \Theta([(\psi,g)])$.
  Then if $\pi(\Lambda) = [A]$, we have that
  \[ i_N([\det{\alpha_\psi^{-1} \circ \alpha}]) = [\det{g}]. \qedhere \]
\end{proposition}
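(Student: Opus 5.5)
The plan is to compare $(\Lambda,\alpha)=\Theta([(\psi,g)])$ with the canonical pair $(\Lambda,\alpha_\psi')$ coming from a generating vector, and to track determinants through \cref{diag:psig_lattice_level}.

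Since $\pi(\Lambda)=[A]$, \cref{coro:latticeClass1Charac} gives some $\psi_0\in\Psi^r$ with $\Lambda=\psi_0A^r$. By the remark after \cref{def:canonicalLevel}, $\Theta([(\psi_0,\mathrm{Id})])=(\Lambda,\alpha_{\psi_0})$. By \cref{para:canonicalLevelEquiv}, changing the generating vector changes $\det\alpha_{\psi_0}^{-1}\circ\alpha$ only by an element of $\FF_q^\times$, which dies in $\rquotient{(A/N)^\times}{\FF_q^\times}$. So the left side $i_N([\det{\alpha_{\psi_0}^{-1}\circ\alpha}])$ is independent of the chosen generating vector. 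I read the statement's $\alpha_\psi$ in this sense: $\psi$ itself need only satisfy $\Lambda=\psi(F^r\cap g\hat A^r)$, not $\Lambda=\psi A^r$.

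The core step is to move $(\psi,g)$ into the form $(\psi_0,g')$ in the same $\GL[r]{F}$-orbit. Both $\psi(F^r\cap g\hat A^r)$ and $\psi_0A^r$ equal $\Lambda$, and both $\psi,\psi_0$ are $F$-linearly injective on $F^r$ (they lie in $\Psi^r$). Both also span $F\Lambda$, so there is a unique $f\in\GL[r]{F}$ with $\psi f^{-1}=\psi_0$. Replacing $(\psi,g)$ by $f(\psi,g)=(\psi_0,fg)$ changes neither $\Theta$-value (\cref{thm:PsiDoubleQuotientIso}) nor $[\det g]$ in $\lrquotient{F^\times}{(\finadele)^\times}{\det K(N)}$. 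Now $\psi_0(F^r\cap fg\hat A^r)=\psi_0A^r$ forces $F^r\cap fg\hat A^r=A^r$, hence $fg\hat A^r=\hat A^r$ by density, i.e.\ $h\defeq fg\in\GL[r]{\hat A}$.

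Next I would read off $\alpha$ from \cref{diag:psig_lattice_level} with $(\psi_0,h)$. For $l\in(N^{-1}/A)^r$, $\alpha(l)=\psi_0(\bar h l)$, where $\bar h\in\GL[r]{A/N}$ is the reduction of $h$. Here I identify $N^{-1}\hat A^r/\hat A^r=(N^{-1}/A)^r$ and use $F^r\cap h\hat A^r=A^r$. Hence $\alpha=\alpha_{\psi_0}\circ\bar h$, so $\det\alpha_{\psi_0}^{-1}\circ\alpha=\det\bar h$, which is the reduction of $\det h\in\hat A^\times$.

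It remains to show that $i_N([\overline{\det h}])=[\det h]$. Pick $t\in A$ with $t\equiv\det h\pmod N$. Then $i_N([t])$ is the class of $x=(t)_{\fp\mid N}\cup(1)_{\fp\nmid N}$. Also $\det h/x\in\hat A^\times$ is congruent to $1$ modulo $N$, because the local conditions hold at each $\fp\mid N$ as in the proof of \cref{lem:detKNStructure}. By \cref{lem:detKNStructure}, $\det h/x\in\det K(N)$, so $[x]=[\det h]=[\det(fg)]=[\det g]$ using $\det f\in F^\times$.

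The main obstacle is the careful identification of the level structure in the diagram: conventions for column vectors and for which side $\bar h$ acts on could give $\bar h^{-1}$ instead of $\bar h$. If that happens, the identity would become $i_N([\det\alpha_\psi^{-1}\circ\alpha])=[\det g]^{-1}$. I would check this against the definition of $\alpha_\psi$ (\cref{def:canonicalLevel}, $\alpha_\psi(l)=\psi l$) and adjust the direction of the comparison map to match.
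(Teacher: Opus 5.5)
Your proof is correct. It reaches the same normal form as the paper: a generating vector $\psi_0$ paired with an element of $\GL[r]{\hat{A}}$ that reduces to $\alpha_{\psi_0}^{-1}\circ\alpha$. However, the argument runs in the opposite direction. Your reading of $\alpha_\psi$ as $\alpha_{\psi_0}$ for an arbitrary generating vector, justified by \cref{para:canonicalLevelEquiv}, is also how the paper reads the statement.

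The paper starts from $\alpha$. It takes a lift $\gamma\in\GL[r]{\hat{A}}$ of $\alpha_{\psi_0}^{-1}\circ\alpha$, observes that $\Theta([(\psi_0,\gamma)])=(\Lambda,\alpha)$, and concludes $[\det g]=[\det\gamma]$ from the remark that $[\det g]$ does not depend on the chosen representative of $(\Lambda,\alpha)$. That remark silently uses the \emph{injectivity} of $\Theta$ from \cref{thm:PsiDoubleQuotientIso}, which the paper inherits from Drinfeld's isomorphism.

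You instead start from the given $(\psi,g)$ and write down the explicit $f\in\GL[r]{F}$ with $\psi f^{-1}=\psi_0$. You then use the lattice--adele correspondence to get $h=fg\in\GL[r]{\hat{A}}$: the identity $F^r\cap h\hat{A}^r=A^r$ forces $h\hat{A}^r=\hat{A}^r$, because $h\hat{A}^r$ is the closure of $F^r\cap h\hat{A}^r$. After that, $\alpha$ is read off \cref{diag:psig_lattice_level}.

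What each route buys:
\begin{itemize}
\item Your route needs only that $\Theta$ is well defined, not that it is injective.
\item It also makes explicit the final identification $i_N([\overline{\det h}])=[\det h]$ via \cref{lem:detKNStructure}, which the paper leaves implicit.
\item The paper's route is shorter.
\end{itemize}

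Your closing worry about $\bar h$ versus $\bar h^{-1}$ can be dropped. With $g=h$ the diagram gives $\alpha(l)=\psi_0\bar h l$, i.e.\ $\alpha=\alpha_{\psi_0}\circ\bar h$, exactly as you computed. This is also the orientation the paper uses in \cref{prop:latticeClassCharac} and \cref{thm:IdentCompCharac}.
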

\begin{proof}
  We may choose different representatives $\psi$ and $g$ for $(\Lambda,\alpha)$, since $[\det{g}]$ is invariant under such a change and
  \[ [\det{\alpha_{\psi_2}^{-1} \circ \alpha}] = [\det{\alpha_{\psi_2}^{-1} \circ \alpha_{\psi_1}^{-1}}] [\det{\alpha_{\psi_1}^{-1} \circ \alpha}] = [\det{\alpha_{\psi_1}^{-1} \circ \alpha}] \]
  for any two generating vectors $\psi_1, \psi_2$ for $\Lambda$.

  Now since $\pi(\Lambda) = [A]$, there is a generating vector $\psi \in \Psi^r$ for $\Lambda$.
  Choosing $g' = \mathrm{Id}$ to be the identity matrix, $\Theta([(\psi,g')]) = (\Lambda,\alpha_\psi)$.
  Letting $\gamma \in \GL[r]{\hat{A}}$ be a lift of $\alpha_\psi^{-1} \circ \alpha \in \GL[r]{A/N}$, we can define $g = g' \gamma = \gamma$ which by following \cref{diag:psig_lattice_level} we see induces our level structure $\alpha$.
  So $\alpha_\psi^{-1} \circ \alpha = \gamma \bmod{N}$, so that
  \[ [\det{g}] = [\det{\gamma}] = i_N([\det{\alpha_\psi^{-1} \circ \alpha}]). \qedhere \]
\end{proof}

\begin{theorem} \label{thm:IdentCompCharac}
  $(\Lambda,\alpha) \in 1_N^r$ if and only if there is a generating vector $\psi \in \Psi^r$ for $\Lambda$ such that $\alpha = \alpha_\psi$.
\end{theorem}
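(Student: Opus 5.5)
The plan is to reduce membership in $1_N^r$ to a statement about the determinant class, and then read that class off from the level structure via \cref{prop:latticeN*Charac}.

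By \cref{def:InitialComponent} and \cref{prop:detSeperatesComps}, $(\Lambda,\alpha)=\Theta([(\psi,g)])$ lies in $1_N^r$ exactly when $[\det g]$ is the identity class in $\lrquotient{F^\times}{\parens{\finadele}^\times}{\det K(N)}$. In particular, $\pi_N(\Lambda,\alpha)=[A]$ in that case. So both directions of the statement concern lattices with $\pi(\Lambda)=[A]$. For these, \cref{coro:latticeClass1Charac} provides a generating vector, and \cref{prop:latticeN*Charac} gives
\[ i_N([\det \alpha_\psi^{-1}\circ\alpha]) = [\det g]. \]

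For the reverse direction, suppose $\alpha=\alpha_\psi$ for some generating vector $\psi$. By the remark after \cref{def:canonicalLevel}, $(\Lambda,\alpha_\psi)=\Theta([(\psi,\mathrm{Id})])$. Since $\det \mathrm{Id}=1$, the point lies in the identity component. Equivalently, $\pi(\Lambda)=[A]$ by \cref{coro:latticeClass1Charac}, and \cref{prop:latticeN*Charac} gives $[\det g]=i_N([1])=1$.

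For the forward direction, suppose $(\Lambda,\alpha)\in 1_N^r$. Then $\pi_N(\Lambda,\alpha)=\pi_N(1)=[A]$, so \cref{coro:latticeClass1Charac} gives a generating vector $\psi_0$. By \cref{prop:latticeN*Charac}, $i_N([\det \alpha_{\psi_0}^{-1}\circ\alpha])$ is the identity class. Since $i_N$ is injective (\cref{prop:Comps_ClF_AN*_Bijection}), this means $\det(\alpha_{\psi_0}^{-1}\circ\alpha)\equiv f \pmod N$ for some $f\in\FF_q^\times$.

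The key remaining step, and the main obstacle, is to modify $\psi_0$ so that the canonical level structure becomes exactly $\alpha$ rather than merely agreeing with it up to determinant. Write $\delta=\alpha_{\psi_0}^{-1}\circ\alpha\in\GL[r]{A/N}$. By \cref{para:canonicalLevelEquiv}, replacing $\psi_0$ by $\psi_0\gamma$ with $\gamma\in\GL[r]{A}$ replaces $\alpha_{\psi_0}$ by $\alpha_{\psi_0}\circ\gamma$. So we need $\gamma\in\GL[r]{A}$ with $\gamma\equiv\delta\pmod N$.

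I would do this in two steps. First, multiply by $\mathrm{diag}(f,1,\dotsc,1)\in\GL[r]{A}$ to reduce to the case $\det\delta=1$. Second, invoke the surjectivity of reduction $\mathrm{SL}_r(A)\to\mathrm{SL}_r(A/N)$. This is the strong approximation theorem for $\mathrm{SL}_r$ over the Dedekind domain $A$; for $r=1$ it is trivial. Alternatively, one can argue elementarily: $\mathrm{SL}_r(A/N)$ is generated by elementary matrices, since $A/N$ is a finite product of local rings, and elementary matrices lift to elementary matrices over $A$.

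With such a $\gamma$, the vector $\psi=\psi_0\gamma$ still generates $\Lambda$ and satisfies $\alpha_\psi=\alpha_{\psi_0}\circ\gamma=\alpha_{\psi_0}\circ\delta=\alpha$.
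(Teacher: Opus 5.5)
Your proof is correct and follows the same route as the paper. Both directions go through \cref{coro:latticeClass1Charac} and \cref{prop:latticeN*Charac}, and in the forward direction you replace $\psi_0$ by $\psi_0\gamma$ for a lift $\gamma \in \GL[r]{A}$ of $\alpha_{\psi_0}^{-1}\circ\alpha$; the paper only asserts that such a lift exists once the determinant lies in $\FF_q^\times$, whereas your reduction to determinant $1$ and use of the surjectivity of $\mathrm{SL}_r(A) \to \mathrm{SL}_r(A/N)$ (via elementary matrices over the semilocal ring $A/N$) supplies the justification the paper omits.
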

\begin{proof}
  For the forward direction, let $(\Lambda,\alpha)$ be in the identity component $1_N^r$.
  Then by \cref{coro:latticeClass1Charac} there is a $\psi' = (\psi'_1, \dotsc, \psi'_r) \in \Psi^r$ such that $\Lambda = \psi' A^r$.

  Now by \cref{prop:latticeN*Charac}, since $(\Lambda,\alpha)$ is in $1_N^r$ we have that $\det{\alpha_{\psi'}^{-1} \circ \alpha} \in \FF_q^\times$, and so there is a lift $\gamma \in \GL[r]{A}$ for $\alpha_{\psi'}^{-1} \circ \alpha \in \GL[r]{A/N}$.
  So define $\psi = \psi' \gamma$; then $\psi A^r = \psi' \gamma A^r = \psi' A^r = \Lambda$, and for $l \in \parens{N^{-1}/A}^r$ we have that $\alpha(l) = \alpha_{\psi'}(\gamma l) = \psi' \gamma l = \psi l$.

  For the reverse direction, note that $(\Lambda,\alpha) = \Theta([(\psi,\mathrm{Id})])$ where $\mathrm{Id}$ is the identity in $\GL[r]{\finadele}$, and hence $(\Lambda,\alpha)$ lies in $1_N^r$.
\end{proof}
In other words, $(\Lambda,\alpha)$ is in the identity component if and only if $\pi(\Lambda) = [A]$ and a canonical level structure is used.

%% file: text/2_3_lm_GLrAN.tex
\subsection[The action of GLr(A/N) on the moduli space]{The action of $\GL[r]{A/N}$ on $M_{A,K(N)}^r(\CCi)$ and $\LL_N^r$}

\begin{definition} \label{def:GLrAhatAction}
  We define a left action of $\GL[r]{\hat{A}}$ on $M_{A,K(N)}^r(\CCi)$ and $\LL_N^r$ by
  \[ \gamma [(\omega,g)] = [(\omega, g \circ \gamma^{-1})] \lrsptext{and} \gamma [(\psi,g)] = [(\psi, g \circ \gamma^{-1})] \lsptext{for} \gamma \in \GL[r]{\hat{A}}. \qedhere \]
\end{definition}

\begin{proposition} \label{prop:GLrAhatActionVerify}
  The above action is well defined.
\end{proposition}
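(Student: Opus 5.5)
To show the action of \cref{def:GLrAhatAction} is well defined, I will check two things. First, the formula $\gamma[(\psi,g)] = [(\psi, g\gamma^{-1})]$ must not depend on the representative $(\psi,g)$ of the double coset class. Second, it must actually define a left action. The arguments for $M_{A,K(N)}^r(\CCi)$ and for $\LL_N^r$ are identical, with $\omega$ in place of $\psi$, so I treat only the $\Psi^r$ case.

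\textbf{Independence of the left representative.} Replacing $(\psi,g)$ by $f(\psi,g) = (\psi f^{-1}, fg)$ with $f \in \GL[r]{F}$ gives
\[ (\psi f^{-1}, fg\gamma^{-1}) = f\,(\psi, g\gamma^{-1}). \]
This lies in the same $\GL[r]{F}$-orbit as $(\psi, g\gamma^{-1})$, since left multiplication by $f$ and right multiplication by $\gamma^{-1}$ on $\GL[r]{\finadele}$ commute.

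\textbf{Independence of the right representative.} This is the main point. Replacing $g$ by $gk$ with $k \in K(N)$ gives
\[ gk\gamma^{-1} = g\gamma^{-1}\,(\gamma k \gamma^{-1}). \]
Since $K(N)$ is the kernel of the reduction homomorphism $\GL[r]{\hat{A}} \onto \GL[r]{A/N}$, it is a normal subgroup of $\GL[r]{\hat{A}}$. Hence $\gamma k\gamma^{-1} \in K(N)$, and $g\gamma^{-1}$ and $gk\gamma^{-1}$ define the same class in $\rquotient{\GL[r]{\finadele}}{K(N)}$. This normality is exactly why the action is by $\GL[r]{\hat{A}}$ rather than by all of $\GL[r]{\finadele}$. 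I note that $\GL[r]{\hat{A}} \subset \GL[r]{\finadele}$, so the product $g\gamma^{-1}$ makes sense.

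\textbf{Action axioms.} The identity acts trivially. For compatibility,
\[ \gamma_1(\gamma_2[(\psi,g)]) = [(\psi, g\gamma_2^{-1}\gamma_1^{-1})] = [(\psi, g(\gamma_1\gamma_2)^{-1})] = (\gamma_1\gamma_2)[(\psi,g)], \]
so the formula gives a left action.

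Finally, because $K(N)$ acts trivially, the action factors through $\rquotient{\GL[r]{\hat{A}}}{K(N)} \simeq \GL[r]{A/N}$. This justifies the section title, and via the bijection $\Theta$ of \cref{thm:PsiDoubleQuotientIso} it transports to an action on $\LL_N^r$. No real obstacle is expected; the only nontrivial input is the normality of $K(N)$.
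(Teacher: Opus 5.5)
Your proof is correct and is exactly the routine verification the paper leaves to the reader; the paper's proof consists only of ``Left to the reader''. You establish $\GL[r]{F}$-invariance because left multiplication by $f$ commutes with right multiplication by $\gamma^{-1}$, $K(N)$-invariance from $gk\gamma^{-1} = g\gamma^{-1}(\gamma k\gamma^{-1})$ together with normality of $K(N)$ in $\GL[r]{\hat{A}}$, and the left-action axioms from $(\gamma_1\gamma_2)^{-1} = \gamma_2^{-1}\gamma_1^{-1}$. One small quibble concerns your side remark that normality is ``exactly why'' the action is by $\GL[r]{\hat{A}}$: what is actually needed is that $\gamma$ normalises $K(N)$, which also holds for the central invertible adeles used in the next subsection.
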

\begin{proof}
  Left to the reader.
\end{proof}

\begin{paragraph}
  Note that since the above action leaves the components of $\Omega^r$ and $\Psi^r$ unchanged, and the topology on the quotient $\rquotient{\GL[r]{\finadele}}{K(N)}$ is discrete, the above actions are rigid analytic automorphisms of the relevant spaces.
\end{paragraph}
  
\begin{proposition} \label{prop:GLrAhatAN}
  The kernel of the above action on $\LL_N^r$ is the normal subgroup $K(N)$ of $\GL[r]{\hat{A}}$; it thus induces an action of $\GL[r]{A/N} \simeq \rquotient{\GL[r]{\hat{A}}}{K(N)}$.
\end{proposition}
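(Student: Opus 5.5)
We need to show two things: every element of $K(N)$ acts trivially on $\LL_N^r$, and every $\gamma \in \GL[r]{\hat{A}}$ acting trivially lies in $K(N)$. The quotient statement then follows from the surjection $\GL[r]{\hat{A}} \onto \GL[r]{A/N}$, whose kernel is $K(N)$ by definition (\cref{para:DblQuotIso}); in particular $K(N)$ is normal.

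\emph{Step 1: $K(N)$ acts trivially.} For $\gamma \in K(N)$ we have $\gamma^{-1} \in K(N)$. Hence $[(\psi, g\gamma^{-1})] = [(\psi,g)]$ in the double quotient of \cref{thm:PsiDoubleQuotientIso}, since the right action of $K(N)$ is quotiented out. So $K(N)$ lies in the kernel.

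\emph{Step 2: a description of the action on lattices.} Take $(\Lambda,\alpha) = \Theta([(\psi,g)])$ and $\gamma \in \GL[r]{\hat{A}}$. Then $g\gamma^{-1}\hat{A}^r = g\hat{A}^r$, so the lattice $\psi(F^r \cap g\gamma^{-1}\hat{A}^r) = \Lambda$ is unchanged. Following \cref{diag:psig_lattice_level}, the bottom-left map $g$ is replaced by $g\gamma^{-1}$. The new level structure is therefore $\alpha \circ \bar\gamma^{-1}$, where $\bar\gamma \in \GL[r]{A/N}$ is the reduction of $\gamma$ acting on $(N^{-1}/A)^r \simeq N^{-1}\hat{A}^r/\hat{A}^r$. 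So the action of $\gamma$ on $\LL_N^r$ is
\[ \gamma(\Lambda,\alpha) = (\Lambda, \alpha \circ \bar\gamma^{-1}), \]
which agrees with the action used in \cref{prop:LLR_rigidAnalytic}. The points to check here are that reduction mod $N$ commutes with the identifications in the diagram, and that $g\gamma^{-1}$ induces multiplication by $\bar\gamma^{-1}$ on $N^{-1}\hat{A}^r/\hat{A}^r$. This is where the real content lies, though it is routine bookkeeping.

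\emph{Step 3: the kernel is exactly $K(N)$.} Suppose $\gamma$ acts trivially. Then for some (indeed any) $(\Lambda,\alpha)$ we have $(\Lambda,\alpha\circ\bar\gamma^{-1}) = (\Lambda,\alpha)$ as points of $\LL_N^r$. Points of $\LL_N^r$ are pairs, not isomorphism classes; the scaling $\CCi^\times$-action is separate. So $\alpha \circ \bar\gamma^{-1} = \alpha$. Since $\alpha$ is a bijection, $\bar\gamma = 1$, that is, $\gamma \in K(N)$.

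If one instead worries that $\Theta$ identifies points only up to isomorphism, \cref{prop:llattice_isomorph} and \cref{coro:llattice_automorph} settle it: an isomorphism $c$ with $c\Lambda=\Lambda$ and $c\alpha = \alpha\circ\bar\gamma^{-1}$ must be a scalar automorphism of $\Lambda$. Step 3 is only needed for a single point, so we may pick a lattice with $\mathrm{Aut}(\Lambda) = \FF_q^\times$ (e.g.\ a generic $\psi$). Then $c \in \FF_q^\times$, and $c\alpha = \alpha \circ \bar\gamma^{-1}$ forces $\bar\gamma = c^{-1}$ to be scalar. Since $\Theta$ is a genuine bijection onto pairs, though, the direct argument $\bar\gamma = 1$ suffices, and this is the route I will take.
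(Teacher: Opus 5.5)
Your proof is correct, but it takes a different route from the paper's. The paper never passes to pairs $(\Lambda,\alpha)$. It stays in the double quotient and first shows that $\GL[r]{F}$ acts freely on $\Psi^r$: if $\psi f^{-1} = \psi$ then $f = 1$, because the entries of $\psi$ are $F_\infty$-linearly independent. From $(\psi, g\gamma^{-1}) = (\psi f^{-1}, fgk)$ it then reads off $f = 1$ and $\gamma^{-1} = k \in K(N)$.

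You instead transport the action through $\Theta$ and compute $\gamma(\Lambda,\alpha) = (\Lambda,\alpha\circ\bar\gamma^{-1})$. This is exactly the computation the paper only carries out afterwards, in \cref{prop:GLrANactLa}. You then get the kernel from injectivity of $\alpha$.

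The paper's version is shorter and purely group-theoretic. Yours costs the diagram chase of Step~2, but it yields the explicit formula for the action at the same time. It also shows what the result depends on: freeness of $\GL[r]{F}$ on $\Psi^r$ is the double-quotient counterpart of the fact that points of $\LL_N^r$ are pairs rather than isomorphism classes.

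Your side remark in Step~3 shows why this matters. On isomorphism classes the argument only gives $\bar\gamma \in \FF_q^\times$. Indeed, $c\,\mathrm{Id}$ with $c \in \FF_q^\times$ acts trivially on $M^r_{A,K(N)}(\CCi)$: take $f = c^{-1}\mathrm{Id}$, which fixes every point of $\Omega^r$. So the kernel there is $\FF_q^\times K(N)$, and the paper's closing remark that the case of $M^r_{A,K(N)}(\CCi)$ is `similar' should not be read as giving kernel exactly $K(N)$.
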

\begin{proof}
  \newcommand*{\ohmega}{\overline{\omega}}
  Firstly, let $\Omega^r \times \CCi^\times \ni (\omega,\psi_r) = \kappa^{-1}(\psi)$ for $\psi \in \Psi^r$.
  Then for any $f \in \GL[r]{F}$ and a representative $\ohmega \in \Psi^r$ for $\omega \in \Omega^r$,
  \begin{align*}
    & \psi = f \psi \\
    \iff & (\omega,\psi_r) = f(\omega,\psi_r) = \parens{\omega f^{-1}, \frac{(\omega f^{-1})_r}{\omega_r} \psi_r} \\
    \iff & \omega = \omega f^{-1} \lrsptext{and} \psi_r = \frac{(\ohmega f^{-1})_r}{\ohmega_r} \psi_r \\
    \iff & \omega = \omega f^{-1} \lrsptext{and} (\ohmega f^{-1})_r = \ohmega_r \\
    \iff & \ohmega = \ohmega f^{-1} \\
    \iff & (f-1)\ohmega = 0 \\
    \iff & f = 1
  \end{align*}
  since the $\ohmega_i$ are $F_\infty$-linearly independent.
  Thus
  \begin{align*}
    &\gamma \in \GL[r]{\hat{A}} \sptext{is in the kernel of the action} \\
    \iff &(\forall [(\psi,g)] \in \LL_N^r)\ [(\psi,g)] = \gamma [(\psi,g)] = [(\psi, g \gamma^{-1})] \\
    \iff &(\forall [(\psi,g)] \in \LL_N^r)\ (\exists f \in \GL[r]{F}, k \in K(N))\ \psi = f \psi \mathrel{\wedge} g \gamma^{-1} = f g k \\
    \iff &(\forall [(\psi,g)] \in \LL_N^r)\ (\exists k \in K(N))\ g \gamma^{-1} = g k \\
    \iff &(\exists k \in K(N))\ \gamma = k^{-1} \\
    \iff & \gamma \in K(N).
  \end{align*}
  The proof for $M^r_{A,K(N)}(\CCi)$ is similar.
\end{proof}

We will thus view the above as actions of $\GL[r]{A/N}$ on $M_{A,K(N)}^r(\CCi)$ and $\LL_N^r$, writing elements of $\GL[r]{A/N}$ as $[\gamma]$ for $\gamma \in \GL[r]{\hat{A}}$ where necessary.

We now consider this action's effect on the irreducible components:
\begin{proposition} \label{prop:GLrANComponents}
  The above action of $\GL[r]{A/N}$ induces an action of $\parens{A/N}^\times$ on the irreducible components of $M_{A,K(N)}^r(\CCi)$ and $\LL_N^r$ via $[\gamma] \mapsto [\det{\gamma^{-1}}]$ with kernel $\FF_q^\times$ which leaves the ideal class $\pi_N(C)$ of each component $C$ unchanged.
\end{proposition}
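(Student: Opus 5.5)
The plan is to work through the determinant map of \cref{prop:detSeperatesComps}, which identifies the set of irreducible components with $\lrquotient{F^\times}{\parens{\finadele}^\times}{\det K(N)}$ via $[(\omega,g)] \mapsto [\det g]$. The same identification holds for $\LL_N^r$ by \cref{prop:LLNR_components}, since the components there are the preimages of those of $M_{A,K(N)}^r(\CCi)$.

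First I check that $\gamma \in \GL[r]{\hat{A}}$ permutes components. The point $\gamma[(\psi,g)] = [(\psi, g\gamma^{-1})]$ has determinant class $[\det g \cdot \det\gamma^{-1}]$, which depends only on $[\det g]$ and on $\det\gamma$. This uses that the groups are abelian: the ambient group $\parens{\finadele}^\times$ is commutative, so the double coset of $\det g$ times $\det\gamma^{-1}$ is well defined. Hence each $\gamma$ maps the fibre over $[x]$ onto the fibre over $[x\det\gamma^{-1}]$. The action on components is therefore multiplication by the class of $\det\gamma^{-1}$ in $\lrquotient{F^\times}{\parens{\finadele}^\times}{\det K(N)}$. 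The element $\det\gamma$ lies in $\hat{A}^\times$, and only its class modulo $\det K(N) = \hat{A}^\times \cap (1+N\hat{A})$ matters, by \cref{lem:detKNStructure}. Since $\hat{A}^\times/\det K(N) \simeq (A/N)^\times$, the action factors through $[\gamma] \mapsto \det[\gamma]^{-1} \in (A/N)^\times$. It remains to identify this multiplication with multiplication by $i_N$ of that class.

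To do this, I would note that for $t \in A$ invertible mod $N$, the idele $(t)_{\fp\mid N}\cup(1)_{\fp\nmid N}$ and any $u \in \hat{A}^\times$ with $u \equiv t \pmod N$ differ by an element of $\det K(N)$. So the image of $\det\gamma^{-1}$ is precisely $i_N(\det[\gamma]^{-1})$, and $\det\colon \GL[r]{A/N} \to (A/N)^\times$ is surjective (use diagonal matrices). This yields an action of $(A/N)^\times$ on components by multiplication by $i_N(\cdot)$, with $[\gamma]$ acting through $[\det\gamma^{-1}]$ as claimed.

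For the kernel, I use \cref{prop:Comps_ClF_AN*_Bijection}: the map $i_N$ is injective on $\rquotient{(A/N)^\times}{\FF_q^\times}$. Multiplication by an element of a group acts trivially on the whole group exactly when that element is the identity. So $t \in (A/N)^\times$ acts trivially iff $i_N([t])$ is trivial, iff $t \in \FF_q^\times$. Finally, $\pi_N$ is a homomorphism with $\pi_N \circ i_N$ trivial by exactness, so $\pi_N(i_N(t)\cdot[x]) = \pi_N([x])$; thus the ideal class of each component is unchanged.

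The main obstacle I anticipate is bookkeeping rather than substance. I need to make sure the action on components is genuinely well defined, which requires $\det\gamma \in \hat{A}^\times$ to have a class independent of representatives, and $\det K(N)$ to absorb the lift ambiguity. I also need to be careful with the inverse and left/right conventions so that $[\gamma] \mapsto [\det\gamma^{-1}]$ gives a left action; commutativity of the target makes this harmless. The statement for $M_{A,K(N)}^r(\CCi)$ is proved identically.
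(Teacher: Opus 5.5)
Your proposal is correct and follows essentially the paper's route: you push the action through the determinant map of \cref{prop:detSeperatesComps}, so that $[\gamma]$ multiplies the class $[\det g]$ of a component by $[\det\gamma^{-1}]$, and then read off the kernel and the invariance of $\pi_N$. The only difference is cosmetic. You obtain the kernel and the invariance of $\pi_N(C)$ from the injectivity of $i_N$ and the exactness in \cref{prop:Comps_ClF_AN*_Bijection}, whereas the paper redoes the underlying valuation argument directly ($v_\fp(\det\gamma^{-1}) = 0$ for all $\fp$, which forces $f \in \FF_q^\times$).
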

\begin{proof}
  Under the determinant map of \cref{prop:detSeperatesComps}, $\GL[r]{A/N}$ acts on the components as follows:
  \[ \det\parens{[\gamma][(\psi,g)]} = \det{[(\psi,g\gamma^{-1})]} = [\det{g\gamma^{-1}}] = [\det{g}] \cdot [\det{\gamma^{-1}}]\,. \]
  and similarly for $M^r_{A,K(N)}(\CCi)$.
  From this one can see that if two points are in the same component, then they are still in the same component after acting with $\gamma$.
  Now for $\gamma \in \GL[r]{\hat{A}}$, $\det{\gamma^{-1}} \in \hat{A}^\times$ so that $v_\fp(\det{\gamma^{-1}}) = 0$ for all prime $\fp$ and hence for an irreducible component $C$ of $\LL_N^r$ we have that $\pi_N(\det{C}\det{\gamma^{-1}}) = \pi_N(\det{C})$.
  In particular, $\pi_N(\det{1_N^r} \det{\gamma^{-1}}) = 1$, so that $\det{1_N^r \gamma^{-1}} \in \Im{i_N}$.

  Finally, $\gamma$ is in the kernel of this action if and only if $[\det{\gamma^{-1}}] = [1]$, \ie if $\det{\gamma^{-1}} = f k$ for some $f \in F^\times$, $k \in \det{K(N)}$.
  Since $v_\fp(\det{\gamma^{-1}}) = v_\fp(k) = 0$ for all prime $\fp$, we have that $v_\fp(f) = 0$ for all prime $\fp$; hence $f \in \FF_q^\times$ and so $\det{\gamma^{-1}} \in \hat{A}^\times \cap \parens{\FF_q^\times +N\hat{A}}$; the converse can be shown easily.
  Thus
  \[ i_N^{-1}(\det{1_N^r\gamma^{-1}}) = \FF_q^\times \subseteq \GL[r]{\hat{A}/N\hat{A}} \simeq \GL[r]{A/N}. \qedhere \]
\end{proof}

So when $\gamma \in \GL[r]{A/N}$ acts on $M_{A,K(N)}^r(\CCi)$ and $\LL_N^r$, it permutes the irreducible components, leaving them fixed if and only if $\det{\gamma} \in \FF_q^\times$.

We now translate this action of $\GL[r]{A/N}$ to actions on $M_{A,K(N)}^r(\CCi)$ and $\LL_N^r$ considered as the set of (equivalence classes of) pairs $(\Lambda,\alpha)$ of a lattice of rank $r$ with level structure $\alpha$:

\begin{proposition} \label{prop:GLrANactLa}
  The action of $\GL[r]{A/N} \simeq \rquotient{\GL[r]{\hat{A}}}{K(N)}$ described as above acts on the spaces $M_{A,K(N)}^r(\CCi) = \set{[(\Lambda,\alpha)]}$ and $\LL_N^r = \set{(\Lambda,\alpha)}$ as follows:
  \[ \gamma[(\Lambda,\alpha)] = [(\Lambda, \alpha \circ \gamma^{-1})]; \qquad \gamma(\Lambda,\alpha) = (\Lambda, \alpha \circ \gamma^{-1}). \qedhere \]
\end{proposition}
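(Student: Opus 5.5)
The plan is to compute the action through the bijection $\Theta$ of \cref{thm:PsiDoubleQuotientIso}. Fix $(\Lambda,\alpha) = \Theta([(\psi,g)])$ and $\gamma \in \GL[r]{\hat{A}}$. By \cref{def:GLrAhatAction}, $\gamma(\Lambda,\alpha) = \Theta([(\psi, g\gamma^{-1})])$. So we must identify the lattice and the level structure attached to the pair $(\psi, g\gamma^{-1})$.

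For the lattice, note that $\gamma^{-1} \in \GL[r]{\hat{A}}$ preserves $\hat{A}^r$. Hence $g\gamma^{-1}\hat{A}^r = g\hat{A}^r$, and therefore
\[ \psi\parens{F^r \cap g\gamma^{-1}\hat{A}^r} = \psi\parens{F^r \cap g\hat{A}^r} = \Lambda. \]
The underlying lattice is unchanged.

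For the level structure, I would trace \cref{diag:psig_lattice_level} with $g$ replaced by $g\gamma^{-1}$. The bottom-left map becomes
\[ N^{-1}\hat{A}^r/\hat{A}^r \xrightarrow{\gamma^{-1}} N^{-1}\hat{A}^r/\hat{A}^r \xrightarrow{g} N^{-1}g\hat{A}^r/g\hat{A}^r. \]
The other arrows are the same as before, because the target module $N^{-1}g\hat{A}^r/g\hat{A}^r$ and the lattice $F^r\cap g\hat{A}^r$ did not change. Under the identification $N^{-1}\hat{A}^r/\hat{A}^r = \parens{N^{-1}/A}^r$, the map $\gamma^{-1}$ acts only through its reduction mod $N$. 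That reduction is $[\gamma]^{-1} \in \GL[r]{A/N}$, which is consistent with \cref{prop:GLrAhatAN}. Commutativity of the new diagram therefore forces the new level structure to be $\alpha \circ \gamma^{-1}$, regarded as an automorphism of $\parens{N^{-1}/A}^r$. This gives the formula for $\LL_N^r$.

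For $M_{A,K(N)}^r(\CCi)$ the same argument works with $\omega$ and the diagram of \cref{para:DblQuotIso}. Alternatively, one can pass to $\CCi^\times$-orbits: the scaling action commutes with the $\GL[r]{\hat{A}}$-action, so the formula descends to isomorphism classes, giving $\gamma[(\Lambda,\alpha)] = [(\Lambda,\alpha\circ\gamma^{-1})]$.

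The main point needing care is the identification of $N^{-1}\hat{A}^r/\hat{A}^r$ with $\parens{N^{-1}/A}^r$, together with the claim that $\hat{A}$ acts on it through $A/N$. This follows from $N^{-1}\hat{A}/\hat{A} \simeq N^{-1}/A$, which holds because $N^{-1}/A$ is $N$-torsion and $A$ is dense in $\hat{A}$. The remaining check is bookkeeping: the left/right conventions must make the composite $\alpha\circ\gamma^{-1}$ rather than $\alpha\circ\gamma$, and this is read off from the direction of the arrows in the diagram.
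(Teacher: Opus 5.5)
Your proposal is correct and follows the paper's proof essentially step for step. The lattice is unchanged because $\hat{\gamma}^{-1}\hat{A}^r = \hat{A}^r$, and the level structure is read off from the diagram in \cref{thm:PsiDoubleQuotientIso} via the compatibility of $\hat{\gamma}$ with the inclusion $(N^{-1}/A)^r \hookrightarrow N^{-1}\hat{A}^r/\hat{A}^r$ (your ``acts only through its reduction mod $N$''), which gives $\alpha' = \alpha\circ\gamma^{-1}$; your extra remark that the formula descends to $M_{A,K(N)}^r(\CCi)$ via the commuting $\CCi^\times$-scaling covers a case the paper leaves implicit.
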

\begin{proof}
  \newcommand*{\subbset}{\mathord{\subset}} 
  We will need to revisit the isomorphism $\Theta$ in \cref{thm:PsiDoubleQuotientIso}:
  \[ \Theta : \lquotient{\GL[r]{F}}{\parens{\Psi^r \times \GL[r]{\finadele}/K(N)}} \simeq \LL_N^r\,. \]
  
  Suppose that $\gamma(\Lambda,\alpha) = \Theta([(\psi,g)])$; here $\gamma[(\psi,g)] = [(\psi, g\hat{\gamma}^{-1})]$ for $\hat{\gamma} \in \GL[r]{\hat{A}}$ a lift of $\gamma \in \GL[r]{A/N}$.
  Then since $\hat{\gamma}^{-1} \hat{A}^r = \hat{A}^r$,
  \[ \Lambda' = \psi\parens{F^r \cap g \hat{\gamma}^{-1}\hat{A}^r} = \psi\parens{F^r \cap g \hat{A}^r} = \Lambda\,. \]
  To determine $\alpha'$, note that
  \[ \alpha \circ \subbset^{-1} \circ g^{-1} = \alpha' \circ \subbset^{-1} \circ (g\hat{\gamma}^{-1})^{-1} = \alpha' \circ \subbset^{-1} \circ \hat{\gamma} \circ g^{-1}, \]
  where $\subbset : \parens{N^{-1}/A}^r \ionto \rquotient{N^{-1}\hat{A}^r}{\hat{A}^r}$ is induced by the inclusion $N^{-1} \subset N^{-1}\hat{A}$.
  Now for $x = (x_1, \dotsc, x_r) \in (N^{-1}/A)^r$,
  \[ \subbset(\gamma(x)) = ((\gamma(x)_1)_\fp, \dotsc, (\gamma(x)_r)_\fp) = (\hat{\gamma}((x)_\fp)_1, \dotsc, \hat{\gamma}((x)_\fp)_r) = \hat{\gamma}(\subbset(x)); \]
  hence $\subbset \circ \gamma = \hat{\gamma} \circ \subbset$, so that
  \[ \alpha \circ \subbset^{-1} = \alpha' \circ \subbset^{-1} \circ \hat{\gamma} = \alpha' \circ \gamma \circ \subbset^{-1} \iff \alpha' = \alpha \circ \gamma^{-1}. \qedhere \]
\end{proof}

Note that the above action of $\GL[r]{A/N}$ on $\LL_N^r$ coincides with that defined in the proof of \cref{prop:LLR_rigidAnalytic}.

%% file: text/2_4_lm_invadele.tex
\subsection[The action of the adeles on the moduli space]{The action of $\invertadele$ on $M_{A,K(N)}^r(\CCi)$ and $\LL_N^r$}

\begin{definition} \label{def:invAdele_Action}
  We define a left action of $\invertadele$ on $M_{A,K(N)}^r(\CCi)$ and $\LL_N^r$ by
  \[ x[(\omega,g)] = [(\omega,g x^{-1})] \lrsptext{and} x[(\psi,g)] = [(\psi,g x^{-1})] \lsptext{for} x \in \invertadele. \qedhere \]
\end{definition}
Since the multiplicative group $\invertadele$ of invertible finite adeles is abelian, the distinction between a left and a right action is not very important here, but we call it a left action for harmony with the previous subsection.

\begin{proposition} \label{prop:invertadeleActionVerify}
  The above action is well defined.
\end{proposition}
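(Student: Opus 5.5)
To prove \cref{prop:invertadeleActionVerify}, I will check two things: that $x[(\psi,g)] = [(\psi, g x^{-1})]$ depends only on the class of $(\psi,g)$ in the double quotient, and that it satisfies the axioms of a group action. The same computation, with $\omega$ in place of $\psi$, handles $M_{A,K(N)}^r(\CCi)$.

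Throughout, I identify $x \in \invertadele$ with the scalar matrix $x\cdot\mathrm{Id} \in \GL[r]{\finadele}$. This scalar matrix is central, so it commutes with every $f \in \GL[r]{F}$ and every $k \in K(N)$.

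\emph{Independence of representative.} Replace $(\psi,g)$ by another representative $f(\psi,g)k = (\psi f^{-1}, fgk)$, where $f \in \GL[r]{F}$ and $k \in K(N)$. Acting by $x$ on this representative gives
\[ (\psi f^{-1}, fgkx^{-1}) = (\psi f^{-1}, f(gx^{-1})k) = f(\psi, gx^{-1})k. \]
The first equality uses $kx^{-1} = x^{-1}k$. The right-hand side has the same class as $(\psi, gx^{-1})$, so the definition does not depend on the representative.

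\emph{Action axioms.} The identity adele acts trivially. For compatibility, compute
\[ x\bigl(y[(\psi,g)]\bigr) = [(\psi, g y^{-1} x^{-1})] = [(\psi, g (xy)^{-1})] = (xy)[(\psi,g)]. \]
This uses only the associativity of matrix multiplication. Commutativity of $\invertadele$ is not even needed here, since $(xy)^{-1} = y^{-1}x^{-1}$ holds in any group. Finally, $gx^{-1}$ again lies in $\GL[r]{\finadele}$, so the action lands in the correct space.

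No step is a real obstacle. The one point that needs stating explicitly is that scalar adeles lie in the centre of $\GL[r]{\finadele}$, so they commute with $K(N)$ in particular. Without this the right action of $x^{-1}$ would not pass through the quotient by $K(N)$. For instance, $K(N)$ is not normalised by a general element of $\GL[r]{\finadele}$, which is why an arbitrary $h \in \GL[r]{\finadele}$ does not act in this way.
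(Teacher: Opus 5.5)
Your proof is correct and follows the route the paper intends. The paper leaves the verification to the reader, and just after the statement it records the key fact you isolate: scalar adeles lie in the centre of $\GL[r]{\finadele}$, so right multiplication by $x^{-1}$ passes through the quotient by $K(N)$ and commutes with the left $\GL[r]{F}$-action. The action axioms then follow formally, as you show.
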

\begin{proof}
  Left to the reader.
\end{proof}

\begin{paragraph}
  Note that since the above action leaves the components of $\Omega^r$ and $\Psi^r$ in the double quotients
  \begin{align*}
      \lquotient{\GL[r]{F}}{\parens{\Omega^r \times \rquotient{\GL[r]{\finadele}}{K(N)}}} &\longisoto M_{A,K(N)}^r\parens{\CCi} \lsptext{and} \\
      \lquotient{\GL[r]{F}}{\parens{\Psi^r \times \rquotient{\GL[r]{\finadele}}{K(N)}}} &\longisoto \LL_N^r
  \end{align*}
  respectively unchanged, and the topology on $\rquotient{\GL[r]{\finadele}}{K(N)}$ is discrete, the above actions are rigid analytic automorphisms of the relevant spaces.
\end{paragraph}
  
\begin{paragraph}
  Also note that since $\invertadele \subset \GL[r]{\finadele}$ consists of scalar matrices and thus is in the centre of $\GL[r]{\finadele}$, the action of $\invertadele$ commutes with the action of any other subgroup of $\GL[r]{\finadele}$, and in particular with the actions of $\GL[r]{\hat{A}}$ and $\GL[r]{A/N}$ defined in the previous subsection.
\end{paragraph}
  
\begin{proposition} \label{prop:invAdele_Kernel}
  The kernel of the above action on $\LL_N^r$ is $\hat{A}^\times \cap (1+N\hat{A})$.
\end{proposition}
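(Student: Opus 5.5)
We must show that $x \in \invertadele$ acts trivially on $\LL_N^r$ if and only if $x \in \hat{A}^\times \cap (1+N\hat{A})$. We follow the template of \cref{prop:GLrAhatAN}. By definition, $x$ is in the kernel if and only if for every $[(\psi,g)]$ there exist $f \in \GL[r]{F}$ and $k \in K(N)$ with $\psi = f\psi$ and $gx^{-1} = fgk$. The computation in the proof of \cref{prop:GLrAhatAN} shows that $\psi = f\psi$ forces $f = 1$, using the $F_\infty$-linear independence of the entries of $\psi$. So the kernel condition becomes: for all $g \in \GL[r]{\finadele}$, $g x^{-1} g^{-1} \in K(N)$. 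Since $x$ is scalar, hence central, this is simply $x^{-1} \in K(N)$, i.e.\ the scalar matrix $x\cdot\mathrm{Id}$ lies in $K(N)$.

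It remains to identify the scalar matrices in $K(N)$. A scalar matrix $x\cdot\mathrm{Id}$ lies in $\GL[r]{\hat{A}}$ exactly when $x \in \hat{A}$ with $x^r \in \hat{A}^\times$, which is equivalent to $x \in \hat{A}^\times$. It lies in the kernel of reduction to $\GL[r]{A/N}$ exactly when $x \equiv 1 \pmod{N\hat{A}}$. Hence the scalar part of $K(N)$ is $\hat{A}^\times \cap (1+N\hat{A})$. This agrees with the set $\det K(N)$ of \cref{lem:detKNStructure} as a set of adeles, though the two arise differently.

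For the converse, take $x \in \hat{A}^\times \cap (1+N\hat{A})$. Then $x^{-1}$ lies in the same group, so $[(\psi,gx^{-1})] = [(\psi,g\cdot x^{-1})]$ with $x^{-1} \in K(N)$, which equals $[(\psi,g)]$; thus $x$ acts trivially. The only delicate step is the reduction to $f = 1$, and it is already handled in \cref{prop:GLrAhatAN}. One might worry that some $f \ne 1$ in $\GL[r]{F}$ could compensate in the second coordinate, but $f$ is pinned down by the first coordinate alone, so it cannot.
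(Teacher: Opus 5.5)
Your proof is correct and follows essentially the same route as the paper: the linear-independence argument from \cref{prop:GLrAhatAN} forces $f = 1$, reducing the kernel condition to the scalar matrix $x^{-1}\cdot\mathrm{Id}$ lying in $K(N)$, which you then identify with $\hat{A}^\times \cap (1+N\hat{A})$ just as the paper does. One small slip: since $K(N)$ acts on the right, $gx^{-1} = gk$ gives $k = x^{-1}$ directly, so the conjugate $gx^{-1}g^{-1}$ and the appeal to centrality are unnecessary (though harmless, since they lead to the same condition).
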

\begin{proof}
  \newcommand*{\Idr}{\mathrm{Id}_r}
  As in the proof of \cref{prop:GLrAhatAN}, if $\psi = f \psi$ for $\psi \in \Psi^r$ and $f \in \GL[r]{F}$ then $f = 1$.
  Thus
  \begin{align*}
    & x \in \invertadele \sptext{is in the kernel of the action} \\
    \iff &(\forall [(\psi,g)] \in \LL_N^r)\ [(\psi,g)] = x [(\psi,g)] = [(\psi, g x^{-1})] \\
    \iff &(\forall [(\psi,g)] \in \LL_N^r)\ (\exists f \in \GL[r]{F}, k \in K(N))\ \psi = f \psi \mathrel{\wedge} g x^{-1} = f g k \\
    \iff &(\forall [(\psi,g)] \in \LL_N^r)\ (\exists k \in K(N))\ g x^{-1} = g k \\
    \iff &(\exists k \in K(N))\ x^{-1}\Idr = k \\
    \iff & x \Idr \in K(N).
  \end{align*}
  For $x \Idr$ to be in $K(N)$, we must have that $x^r = \det(x \Idr) \in \hat{A}^\times$, so that $x \in \hat{A}^\times$, and that $(x-1)\Idr = x\Idr-\Idr \in N M_{r\times r}(\hat{A})$, so that $x \equiv_N 1$.
\end{proof}
In other words, $x \in \invertadele$ is in the kernel of this action if and only if it is an invertible profinite integer with $x-1 \in N\hat{A}$.

Recall that we denote by $\cJ(A)$ the set of $A$-fractional ideals in $F$.
\begin{proposition} \label{prop:invAdele_FracANstar}
  There is an abelian group isomorphism
  \[ \rquotient{\invertadele}{\parens{\hat{A}^\times \cap (1+N\hat{A})}} \longisoto \cJ(A) \times (A/N)^\times. \qedhere \]
\end{proposition}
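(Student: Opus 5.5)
We want $\invertadele/(\hat{A}^\times \cap (1+N\hat{A})) \cong \cJ(A) \times (A/N)^\times$. The plan is to use the local structure of the finite ideles, $\invertadele = \prod'_\fp F_\fp^\times$ (restricted with respect to the $A_\fp^\times$), together with the description $\hat{A}^\times \cap (1+N\hat{A}) = \det K(N)$ from \cref{lem:detKNStructure}.

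First I will use the short exact sequence
\[ 1 \to \hat{A}^\times \to \invertadele \xrightarrow{v} \cJ(A) \to 1, \qquad x \mapsto \prod_\fp \fp^{v_\fp(x_\fp)}. \]
The map $v$ is a homomorphism with kernel exactly the ideles of valuation zero everywhere, namely $\hat{A}^\times = \prod_\fp A_\fp^\times$. It is surjective by the construction already used in the proof of \cref{prop:Comps_ClF_AN*_Bijection}.

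Next I split $v$ explicitly. Fix uniformisers $u_\fp \in A_\fp$, and for each $\fp \mid N$ choose $u_\fp$ to be a global element: any $u_\fp \in \fp - \fp^2$ lying in no other prime dividing $N$, which exists by CRT. Define $s(I) = (u_\fp^{v_\fp(I)})_\fp$. Then $s$ is a homomorphism $\cJ(A) \to \invertadele$ with $v \circ s = \mathrm{id}$, because $\cJ(A)$ is free abelian on the primes and $s$ is multiplicative coordinatewise. Hence $\invertadele = s(\cJ(A)) \times \hat{A}^\times$ as abelian groups, via $x \mapsto (s(v(x)), x/s(v(x)))$.

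Quotienting by $\hat{A}^\times \cap (1+N\hat{A})$, which lies in the $\hat{A}^\times$ factor, gives
\[ \rquotient{\invertadele}{\parens{\hat{A}^\times \cap (1+N\hat{A})}} \cong \cJ(A) \times \rquotient{\hat{A}^\times}{\parens{\hat{A}^\times \cap (1+N\hat{A})}}. \]
The reduction map $\hat{A}^\times \to (\hat{A}/N\hat{A})^\times = (A/N)^\times$ has kernel exactly $\hat{A}^\times \cap (1+N\hat{A})$. It is surjective: any $t \in A$ invertible mod $N$ lifts to the idele equal to $t$ at $\fp \mid N$ and $1$ elsewhere, which is the lift used in $i_N$. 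Composing, $x \mapsto \bigl(v(x),\ [x/s(v(x))] \bmod N\bigr)$ is the desired isomorphism.

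The only point needing care is the splitting. A good choice of $s$ makes the $(A/N)^\times$ component natural, but no choice affects the abstract isomorphism. Since the statement only asks for some abelian group isomorphism, any uniformisers work, and I will not expect a serious obstacle. The rest is checking the kernel and surjectivity, which reduce to \cref{lem:detKNStructure} and the surjectivity of $\hat{A}^\times \to (A/N)^\times$.
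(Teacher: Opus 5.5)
Your proposal is correct and is essentially the paper's proof. The paper also fixes uniformisers $u_\fp$ and uses the map $[x] \mapsto \bigl(\prod_\fp \fp^{v_\fp(x)}, ([x u_\fp^{-v_\fp(x)}])_{\fp \mid N}\bigr)$, which under the Chinese remainder identification $(A/N)^\times \cong \prod_{\fp \mid N}(A/\fp^{v_\fp(N)})^\times$ is exactly your $x \mapsto \bigl(v(x), [x/s(v(x))] \bmod N\bigr)$, with your section $s$ playing the role of the paper's explicit inverse. Your split-exact-sequence framing simply packages the well-definedness and inverse checks more structurally, and the CRT choice of global uniformisers and the appeal to \cref{lem:detKNStructure} are unnecessary.
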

\begin{proof}
  $(A/N)^\times \simeq \prod_{\fp \mid N} \parens{A/\fp^{v_\fp(N)}}^{\!\times}$, so if we choose a uniformiser $u_\fp \in A_\fp$ for each prime $\fp \mid N$ we can define the forward map
  \[ \brackets{x, x \in \invertadele} \longmapsto \parens{\prod_\fp \fp^{v_\fp(x)}, \parens{\brackets{x u_\fp^{-v_\fp(x)}}}_{\fp \mid N}}\,. \]
  We show that this map is well defined: if $[x_1] = [x_2]$ for $x_1, x_2 \in \invertadele$, then $x_2/x_1 \in \hat{A}^\times \cap (1+N\hat{A})$; hence $v_\fp(x_2/x_1) = 0 \implies v_\fp(x_1) = v_\fp(x_2)$ for each prime $\fp$, so that $\prod_\fp \fp^{v_\fp(x_1)} = \prod_\fp \fp^{v_\fp(x_2)}$.
  Moreover, $x_2/x_1 \equiv 1 \pmod{N\hat{A}}$, so that for each prime $\fp \mid N$ we have that $x_1 u_\fp^{-v_\fp(x_1)} \equiv x_2 u_\fp^{-v_\fp(x_2)} \pmod{N\hat{A}}$.

  Now we define the inverse map, after choosing a uniformiser $u_\fp$ for \emph{every} prime $\fp$ (although the map defined actually only depends on the choice of uniformiser for $\fp \mid N$).
  The map is:
  \[ (J, [n, n \in A]) \longmapsto \brackets{\parens{u_\fp^{v_\fp(J)}}_{\fp \nmid N} \cup \parens{n u_\fp^{v_\fp(J)}}_{\fp \mid N}}\,. \]
  By composing this inverse map with the described forward map, we see that they are actually inverses, which establishes the isomorphism.
\end{proof}

Note that although the above isomorphism between the quotient of the group $\invertadele$ by the kernel of its action on $\LL_N^r$ and the product $\cJ(A) \times (A/N)^\times$ is explicit, it is not canonical since it depends on the choice of uniformisers $u_\fp$ for $\fp \mid N$.
The following, although a weaker result, is canonical:
\begin{proposition} \label{prop:invAdele_ExactSeq}
  There is a short exact sequence
  \[ 0 \into \parens{A/N}^\times \xinto{x_N} \rquotient{\invertadele}{\parens{\hat{A}^\times \cap (1+N\hat{A})}} \xonto{J} \cJ(A) \onto 0, \]
  the maps $x_N$ and $J$ given by
  \[
    \parens{A/N}^\times \ni [n, n \in A] \xmapsto{\mathmakebox[1em]{x_N}} \brackets{(1)_{\fp \nmid N} \cup (n)_{\fp \mid N}} \lrsptext{and}
    [x] \xmapsto{\mathmakebox[1em]{J}} \prod_\fp \fp^{v_\fp(x)} \in \cJ(A). \qedhere
  \]
\end{proposition}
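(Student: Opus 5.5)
The plan is to check directly that the two maps are well-defined homomorphisms, that $x_N$ is injective and $J$ surjective, and that $\Im{x_N} = \ker{J}$. Throughout, write $U_N \defeq \hat{A}^\times \cap (1+N\hat{A})$; by \cref{lem:detKNStructure} this is $\prod_{\fp \nmid N} A_\fp^\times \cdot \prod_{\fp \mid N} 1+(\fp A_\fp)^{v_\fp(N)}$. Both $x_N$ and $J$ are visibly multiplicative on representatives, so being homomorphisms reduces to being well defined.

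For $J$: every $k \in U_N$ has $v_\fp(k) = 0$ for all $\fp$. Hence $\prod_\fp \fp^{v_\fp(x)}$ depends only on the class $[x]$. Surjectivity is shown exactly as in the proof of \cref{prop:Comps_ClF_AN*_Bijection}: given $I \in \cJ(A)$, choose uniformisers $u_\fp$ at the finitely many $\fp$ with $v_\fp(I) \neq 0$, and take the adele $\parens{u_\fp^{v_\fp(I)}}_\fp$.

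For $x_N$: take $n, n' \in A$ invertible mod $N$ with $n \equiv n' \pmod N$. The quotient adele $(1)_{\fp\nmid N} \cup (n/n')_{\fp \mid N}$ has components units at every $\fp$. At each $\fp \mid N$ we have $n/n' \equiv 1$ modulo $(\fp A_\fp)^{v_\fp(N)}$, because $n' \in A_\fp^\times$ and $n - n' \in N \subseteq \fp^{v_\fp(N)}A_\fp$. So the quotient lies in $U_N$. The same computation run backwards gives injectivity: if the adele built from $n$ lies in $U_N$, then $n \equiv 1 \pmod{\fp^{v_\fp(N)}}$ for every $\fp \mid N$, and the Chinese remainder theorem then gives $n \equiv 1 \pmod N$. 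Note that, unlike in \cref{prop:Comps_ClF_AN*_Bijection}, there is no $F^\times$ to quotient by, so no factor $\FF_q^\times$ appears in the kernel.

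For exactness in the middle: an element $x_N([n])$ is a unit at every place, so $J$ kills it. Conversely, suppose $J([x]) = A$. Then $v_\fp(x) = 0$ for all $\fp$, so $x \in \hat{A}^\times$. Its image in $\hat{A}/N\hat{A} \simeq A/N$ is a unit, represented by some $n \in A$. The adele $x_n \defeq (1)_{\fp\nmid N} \cup (n)_{\fp\mid N}$ is also congruent to $n$ modulo $N\hat{A}$, since its components at $\fp \nmid N$ are irrelevant mod $N$. Hence $x/x_n \in \hat{A}^\times \cap (1+N\hat{A}) = U_N$, i.e.\ $[x] = x_N([n])$.

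The only step needing care is this last identification of $\hat{A}/N\hat{A}$ with $\prod_{\fp\mid N} A_\fp/\fp^{v_\fp(N)}A_\fp \simeq A/N$, which is the point that makes the sequence canonical: no uniformisers enter. The rest is bookkeeping parallel to \cref{prop:Comps_ClF_AN*_Bijection} and \cref{prop:invAdele_FracANstar}. One could alternatively deduce everything from \cref{prop:invAdele_FracANstar} by checking that $x_N$ and $J$ correspond to the inclusion and projection of the product, but the direct argument avoids the choice of uniformisers.
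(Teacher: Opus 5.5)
Your argument is correct, but it takes a different route from the paper. The paper's proof is one line: the sequence is read off from the explicit isomorphism $\rquotient{\invertadele}{\parens{\hat{A}^\times \cap (1+N\hat{A})}} \isoto \cJ(A) \times (A/N)^\times$ of \cref{prop:invAdele_FracANstar}. Under that isomorphism $J$ becomes the projection onto $\cJ(A)$ and $x_N$ becomes the inclusion of $(A/N)^\times$ as the second factor, because $v_\fp(n)=0$ for $\fp \mid N$ and so the uniformisers drop out. Exactness is then inherited from the trivially exact $0 \to (A/N)^\times \to \cJ(A)\times(A/N)^\times \to \cJ(A) \to 0$.

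You instead check well-definedness, injectivity of $x_N$, surjectivity of $J$, and $\Im{x_N} = \ker{J}$ directly, in the style of \cref{prop:Comps_ClF_AN*_Bijection}, and no splitting is ever chosen. The paper's route is shorter and also shows that the sequence splits. However, it rests on an isomorphism that depends on choosing uniformisers, and the paper only sketches its proof: the two maps are simply asserted to be mutually inverse. One also still has to confirm that $x_N$ and $J$ match the inclusion and projection.

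Your route is self-contained and shows directly why no choices enter the maps, which is the sense in which the paper calls this sequence canonical. Your middle-exactness step, that a unit adele is determined modulo $\hat{A}^\times \cap (1+N\hat{A})$ by its image in $\hat{A}/N\hat{A} \simeq A/N$, is essentially the verification the paper leaves implicit.
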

\begin{proof}
  These maps are extracted from the proof of \cref{prop:invAdele_FracANstar}.
\end{proof}
Note that $J(x) = x\hat{A} \cap F$ is the unique fractional ideal such that $x \hat{A} = J(x) \hat{A}$.
There is a related short exact sequence; for its proof, keep in mind the identification
\[ \invertadele \supset F^\times (A/N)^{\!\times} \simeq \rquotient{F^\times \!\times\! (A/N)^{\!\times}}{(F^\times \!\cap\! (A/N)^{\!\times})} \simeq (\!\rquotient{F^\times\!}{\FF_q^\times}\!) \times (A/N)^{\!\times}\,. \]

\begin{proposition} \label{prop:invAdele_ExactSeq2}
  There is a short exact sequence
  \[ 0 \into F^\times \parens{A/N}^\times \xinto{x_{N,F}} \rquotient{\invertadele}{\parens{\hat{A}^\times \cap (1+N\hat{A})}} \xonto{[J]} \Cl(F) \onto 0, \]
  with the maps $x_{N,F}$ and $[J]$ given by
  \begin{align*}
    F^\times \parens{A/N}^\times \ni \parens{[f],[n,n \in A]} &\xmapsto{\mathmakebox[2em]{x_{N,F}}} \brackets{(f)_{\fp \nmid N} \cup (fn)_{\fp \mid N}}
    \shortintertext{and}
    [x] &\xmapsto{\mathmakebox[2em]{[J]}} \brackets{\prod_\fp \fp^{v_\fp(x)}}_{\Cl(F)}. \qedhere
  \end{align*}
\end{proposition}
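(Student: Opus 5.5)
We obtain this sequence from \cref{prop:invAdele_ExactSeq} by composing its surjection $J$ with the class map $\cJ(A) \onto \Cl(F)$, $I \mapsto [I]$, and then identifying the kernel of the composite. Concretely, $[J]$ is the composite
\[ \rquotient{\invertadele}{\parens{\hat{A}^\times \cap (1+N\hat{A})}} \xonto{J} \cJ(A) \longonto \Cl(F). \]
It is well defined and surjective because both factors are: the first by \cref{prop:invAdele_ExactSeq}, and the second trivially. The image of $x_{N,F}$ is meant to be the subgroup $F^\times \cdot x_N\parens{(A/N)^\times}$, where $F^\times$ is embedded diagonally. So the work lies in three checks: that $x_{N,F}$ is a well-defined injective homomorphism, that its image lies in $\ker [J]$, and that it exhausts $\ker [J]$.

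First, well-definedness and injectivity of $x_{N,F}$. By the identification stated just before the proposition, the source is $(\rquotient{F^\times}{\FF_q^\times}) \times (A/N)^\times$, and $x_{N,F}([f],[n])$ is the class of $f \cdot x_N([n])$. Since $\FF_q^\times$ acts on both factors, well-definedness amounts to the fact that $(\epsilon,\epsilon^{-1})$ maps to the class of $1$ for $\epsilon \in \FF_q^\times$. Changing the representative $n$ modulo $N$ is already handled by \cref{prop:invAdele_ExactSeq}.

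For injectivity, suppose $f \cdot x_N(n) \in \hat{A}^\times \cap (1+N\hat{A})$. Then $v_\fp(f) = 0$ for every $\fp$, so $f \in A^\times = \FF_q^\times$. Also $f \equiv 1$ at each $\fp \nmid N$, which is automatic, and $fn \equiv 1$ modulo $\fp^{v_\fp(N)}$ for $\fp \mid N$. Hence $n \equiv f^{-1} \pmod N$, so $([f],[n])$ is the class of $(f, f^{-1})$, which is trivial in the quotient. This is the one delicate point: the identification $F^\times (A/N)^\times \simeq (\rquotient{F^\times}{\FF_q^\times}) \times (A/N)^\times$ must be read as the quotient of $F^\times \times (A/N)^\times$ by the antidiagonal copy of $\FF_q^\times$. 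I would make this explicit so that the kernel computation closes.

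Next, exactness in the middle. For the inclusion $\Im x_{N,F} \subseteq \ker [J]$, note that $J(f\, x_N(n)) = (f)\cdot J(x_N(n)) = (f)$, since $x_N(n)$ is a unit at every place. Thus $[J]$ sends it to the trivial class.

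For the reverse inclusion, let $[J](x) = 1$. Then $\prod_\fp \fp^{v_\fp(x)} = (f)$ for some $f \in F^\times$, so $J(x/f) = A$. By exactness of \cref{prop:invAdele_ExactSeq}, $[x/f] = x_N([n])$ for some $n$. Hence $[x] = x_{N,F}([f],[n])$, mirroring the argument in the proof of \cref{prop:Comps_ClF_AN*_Bijection}.

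I expect no real obstacle beyond the bookkeeping of the $\FF_q^\times$ overlap in the injectivity step.
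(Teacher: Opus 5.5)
Your proof is correct and follows the route the paper intends: the paper only says the argument is similar to that of \cref{prop:invAdele_ExactSeq}, and you fill this in by composing $J$ with $\cJ(A) \onto \Cl(F)$ and pulling back principal ideals as in the proof of \cref{prop:Comps_ClF_AN*_Bijection}. Your reading of the source as $F^\times \times (A/N)^\times$ modulo the antidiagonal $\setst{(\epsilon,\epsilon^{-1})}{\epsilon \in \FF_q^\times}$ is what makes the stated formula for $x_{N,F}$ well defined and injective, since with $[f] \in \rquotient{F^\times}{\FF_q^\times}$ taken literally the formula would in general depend on the choice of representative $f$.
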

\begin{proof}
  The proof is similar to that of \cref{prop:invAdele_ExactSeq}.
\end{proof}

We now investigate this action of $\invertadele$ on $\LL_N^r$ considered as the set of pairs $(\Lambda,\alpha)$ of a lattice $\Lambda$ with level $N$ structure $\alpha$.
But first, an observation on the action of $x \in \invertadele$ on ideal quotients:
\begin{lemma} \label{lem:invAdele_IdealAction}
  If $I \in \cJ(A)$ is a fractional ideal and $x \in \invertadele$ with corresponding $J = J(x)$, then there is a natural $A/N$-module isomorphism $x^{-1} : N^{-1}I/I \ionto N^{-1}J^{-1}I/J^{-1}I$ which makes the following diagram commute:
  \begin{cdiagram*} \label{diag:invAdele_IdealAction}
    N^{-1}I / I
      \arrow[d, hook, two heads, "\subset"']
      \arrow[r, hook, two heads, dotted, "x^{-1}"] &
    N^{-1}J^{-1}I / J^{-1}I = N^{-1} (x^{-1}I\hat{A} \cap F) / (x^{-1}I\hat{A} \cap F)
      \arrow[d, hook, two heads, "\subset"] \\
    N^{-1}I\hat{A} / I\hat{A}
      \arrow[r, hook, two heads, "x^{-1}"'] &
    N^{-1}x^{-1}I\hat{A}/ x^{-1}I\hat{A}
  \end{cdiagram*}
\end{lemma}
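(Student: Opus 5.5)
The plan is to reduce everything to local statements at each prime and to the standard identity $M\hat{A}\cap F = M$ for a fractional ideal $M$. I will use the equality $J = x\hat{A}\cap F$, i.e.\ $x\hat{A} = J\hat{A}$, noted after \cref{prop:invAdele_ExactSeq}.

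The first step is to identify the right-hand vertical map. Since $x^{-1}\hat{A} = J^{-1}\hat{A}$, we get $x^{-1}I\hat{A} = J^{-1}I\hat{A}$. Applying $M\hat{A}\cap F = M$ with $M = J^{-1}I$ gives $x^{-1}I\hat{A}\cap F = J^{-1}I$, which is the equality written in the top-right corner of the diagram.

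The second step is to show that both vertical maps, induced by $M\subset M\hat{A}$ for $M = I$ and $M = J^{-1}I$, are isomorphisms $N^{-1}M/M \to N^{-1}M\hat{A}/M\hat{A}$. For this I will use the decomposition $\hat{A} = \prod_\fp A_\fp$. Both sides are torsion modules killed by $N$, so both decompose into their $\fp$-primary parts for $\fp\mid N$. At each such $\fp$ the map reads $N^{-1}M_\fp/M_\fp$ on both sides, since $M/NM$ localises and completes to the same finite module. So the map is an isomorphism of $A/N$-modules.

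The bottom arrow, multiplication by $x^{-1}$, is an $\hat{A}$-linear bijection $N^{-1}I\hat{A}/I\hat{A}\to N^{-1}x^{-1}I\hat{A}/x^{-1}I\hat{A}$, with inverse given by multiplication by $x$. I then define the dotted arrow as the composite of the left vertical isomorphism, the bottom arrow, and the inverse of the right vertical isomorphism. This makes the diagram commute by construction, and the arrow is an $A/N$-module isomorphism because each factor is one.

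For naturality, I will check that the result does not depend on choices: $x$ modulo $\hat{A}^\times\cap(1+N\hat{A})$ acts trivially modulo $N$, and the construction is compatible with inclusions $I\subseteq I'$.

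I expect the main obstacle to be the second step, namely verifying carefully that $M/NM \to M\hat{A}/NM\hat{A}$ is bijective. This comes down to $A/N \cong \hat{A}/N\hat{A}$ together with the flatness of $\hat{A}$ over $A$. I would first try to prove it for $M$ an ideal, using the fact that $M$ is locally principal, and then reduce from fractional ideals to ideals by scaling with an element of $A$.
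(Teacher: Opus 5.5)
The paper states this lemma without proof; it is used only in the proof of \cref{prop:invAdele_L}. So there is no argument of the author's to compare with. Your proposal is correct and fills the gap in the natural way.

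Your first step is exactly what justifies the equality written in the top-right corner. Since the right-hand vertical map is injective, the dotted arrow is \emph{forced} to be your composite, and that is all ``natural'' needs to mean here. Your extra check is worth keeping: replacing $x$ by $xk$ with $k \in \hat{A}^\times \cap (1+N\hat{A})$ gives the same map. This is precisely what makes the description of $\alpha'$ in \cref{prop:invAdele_L} compatible with the kernel computed in \cref{prop:invAdele_Kernel}.

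For the step you flag as the main obstacle, you can bypass flatness and the primary decomposition.
\begin{itemize}
\item Injectivity of $N^{-1}M/M \to N^{-1}M\hat{A}/M\hat{A}$ is immediate from $N^{-1}M \cap M\hat{A} \subseteq F \cap M\hat{A} = M$, the identity you already use in your first step.
\item Surjectivity then follows by counting. Choosing uniformisers, $M\hat{A} = y\hat{A}$ for the idele $y = \parens{u_\fp^{v_\fp(M)}}_\fp$. Multiplication by $y^{-1}$ identifies $N^{-1}M\hat{A}/M\hat{A}$ with $N^{-1}\hat{A}/\hat{A} \cong \hat{A}/N\hat{A} \cong A/N$, which has the same finite cardinality as $N^{-1}M/M$.
\end{itemize}
Your local argument is equally valid; this version just uses less machinery.
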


\begin{proposition} \label{prop:invAdele_L}
  If $x(\Lambda,\alpha) = (\Lambda',\alpha')$ for $x \in \invertadele$ with corresponding fractional ideal $J = J(x) \in \cJ(A)$, then $\Lambda' = J^{-1}\Lambda$ and $\alpha'$ is given as the composite
  \[ \parens{N^{-1}/A}^r \xionto{\ \alpha\ } N^{-1}\Lambda/\Lambda \xionto{x^{-1}} N^{-1}J^{-1}\Lambda/J^{-1}\Lambda. \qedhere \]
\end{proposition}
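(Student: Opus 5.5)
We work through the bijection $\Theta$ of \cref{thm:PsiDoubleQuotientIso}. Write $(\Lambda,\alpha) = \Theta([(\psi,g)])$, so that $\Lambda = \psi\parens{F^r \cap g\hat{A}^r}$ and $\alpha$ is determined by \cref{diag:psig_lattice_level}. By \cref{def:invAdele_Action}, $x(\Lambda,\alpha) = \Theta([(\psi, gx^{-1})])$. Hence $\Lambda' = \psi\parens{F^r \cap gx^{-1}\hat{A}^r}$, and $\alpha'$ is read off from \cref{diag:psig_lattice_level} with $g$ replaced by $gx^{-1}$.

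\emph{The lattice.} Since $x$ is a scalar, it commutes with $g$, so $gx^{-1}\hat{A}^r = x^{-1}g\hat{A}^r$. The intersection $L = F^r \cap g\hat{A}^r$ is an $A$-lattice in $F^r$ with $L\hat{A} = g\hat{A}^r$; this is the standard local--global correspondence for lattices, already used in the proof of \cref{prop:latticeClassCharac}. Also $x^{-1}\hat{A} = J^{-1}\hat{A}$ by the remark after \cref{prop:invAdele_ExactSeq}. Therefore
\[ F^r \cap x^{-1}g\hat{A}^r = F^r \cap J^{-1}L\hat{A} = J^{-1}L. \]
This last equality holds because both sides are $A$-lattices in $F^r$ with the same completion at every finite prime $\fp$. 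Applying the $F$-linear map $\psi$ then gives $\Lambda' = J^{-1}\psi(L) = J^{-1}\Lambda$.

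\emph{The level structure.} By \cref{diag:psig_lattice_level} for $(\psi,gx^{-1})$, we have $\alpha' = \psi \circ \subset^{-1} \circ (gx^{-1}) \circ \subset$. Using commutativity of scalars, $gx^{-1} = x^{-1}\circ g$ as maps $N^{-1}\hat{A}^r/\hat{A}^r \to N^{-1}x^{-1}g\hat{A}^r/x^{-1}g\hat{A}^r$. So $\alpha'$ factors as
\[ \parens{N^{-1}/A}^r \xrightarrow{\;g\circ\subset\;} N^{-1}L\hat{A}/L\hat{A} \xrightarrow{\;x^{-1}\;} N^{-1}x^{-1}L\hat{A}/x^{-1}L\hat{A}, \]
followed by the inverse of the inclusion from $N^{-1}J^{-1}L/J^{-1}L$ and then by $\psi$.

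We now apply \cref{lem:invAdele_IdealAction}. That lemma is stated for fractional ideals $I$; here $L$ is a rank-$r$ projective module, so we either decompose $L = I_1b_1\oplus\dots\oplus I_rb_r$ and apply the lemma coordinatewise, or simply note that its proof works verbatim for lattices in $F^r$. The lemma gives the commutative square with the dotted map $x^{-1} : N^{-1}L/L \to N^{-1}J^{-1}L/J^{-1}L$. Substituting it, $\alpha'$ becomes $\psi \circ x^{-1} \circ \subset^{-1}\circ g \circ \subset$. Finally, $\psi$ is $F$-linear and so intertwines the map $x^{-1}$ on $L$-quotients with the map $x^{-1}$ on $\Lambda$-quotients; the latter is defined as the transport via $\psi$. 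This yields $\alpha' = x^{-1}\circ\alpha$.

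The main obstacle is the identity $F^r \cap J^{-1}L\hat{A} = J^{-1}L$, together with extending \cref{lem:invAdele_IdealAction} from ideals to rank-$r$ lattices. Both reduce to the fact that an $A$-lattice is recovered from its $\fp$-adic completions, applied prime by prime.
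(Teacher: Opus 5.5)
Your proposal is correct and is essentially the paper's argument. Both compute $\Lambda'$ from $\Theta([(\psi,gx^{-1})])$ using $x^{-1}\hat{A} = J^{-1}\hat{A}$, and both read off $\alpha'$ from \cref{lem:invAdele_IdealAction} transported by $\psi$; the paper just first normalises the representative to $g = g'\gamma$ with $F^r \cap g\hat{A}^r = (I_1,\dotsc,I_r)^T$, so the lemma applies coordinatewise (your first option), and it asserts the identity $F^r \cap J^{-1}L\hat{A} = J^{-1}L$ without the local-completion justification you supply.
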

\begin{proof}
  Let $\Lambda = I_1 \psi_1 +\dotsb + I_r \psi_r$ for fractional ideals $I_i$ and $\psi \in \Psi^r$.
  Then choosing the rows $g'_i$ of $g' = (g'_1, \dotsc, g'_r)^T$ such that $F \cap g_i \hat{A}^r = I_i$ for each $i$, we have that $\psi\parens{F^r \cap g' \hat{A}^r} = \psi \cdot (I_1, \dotsc, I_r)^T = \Lambda$.
  Then since $\GL[r]{\hat{A}}$ acts transitively on the set of level structures for any given lattice, for a suitable $\gamma \in \GL[r]{\hat{A}}$ we will have that $(\Lambda,\alpha) = \Theta([(\psi,g)])$ for $g = g'\gamma$, since $g \hat{A}^r = g' \gamma \hat{A}^r = g' \hat{A}^r$.

  Now $x[(\psi,g)] = [(\psi, g x^{-1})]$, and so
  \begin{align*}
    \Lambda' &= \psi \parens{F^r \cap (g x^{-1}) \hat{A}^r} = \psi \parens{F^r \cap g (x^{-1}\hat{A})^r} \\
    &= \psi \parens{(J^{-1}F)^r \cap g (J^{-1}\hat{A})^r} = \psi \parens{J^{-1} \parens{F^r \cap g \hat{A}^r}} \\
    &= J^{-1} \psi \parens{F^r \cap g \hat{A}^r} = J(x)^{-1} \Lambda.
  \end{align*}

  Comparing the two corresponding versions of \cref{diag:psig_lattice_level} for $\alpha$ and $\alpha'$, note that $\psi$ is common in both.
  Hence for $\Lambda = I_1 \psi_1 +\dotsb + I_r \psi_r$ we have the following commutative diagram, making use of \cref{lem:invAdele_IdealAction}:
  \begin{cdiagram*}
    \parens{N^{-1}/A}^r
      \arrow[r, hook, two heads, "\alpha"]
      \arrow[rd, hook, two heads, dotted, "\alpha'"'] &
    N^{-1}\Lambda/\Lambda
      \arrow[r, hook, two heads, "\psi^{-1}"]
      \arrow[d, hook, two heads, dotted, "x^{-1}"'] &
    \bigoplus_{i = 1}^r N^{-1}I_i/I_i
      \arrow[d, hook, two heads, "x^{-1}"] \\
    & N^{-1}J^{-1}\Lambda/J^{-1}\Lambda &
    \bigoplus_{i = 1}^r N^{-1}J^{-1}I_i/J^{-1}I_i
      \arrow[l, hook, two heads, "\psi"]
  \end{cdiagram*}
  the dotted arrows defined so as to make the diagram commute.
\end{proof}

Hence the action of $\invertadele$ on $\LL_N^r$ induces a rigid analytic action of the set of fractional ideals on $\LL^r$ by $\Lambda \xmapsto{J} J^{-1} \Lambda$.


%% file: text/3__lattices_2.tex
\section{Lattices with metric structure} \label{sec:lattices_ii}

Some of the proofs in this section are lengthy and technical.
For these, sketches of proofs are provided; please see \cite{baker2020lattice} for full proofs.

\input{text/3_1_llattices.tex}

\input{text/3_2_ll_level.tex}

\input{text/3_3_ll_strata.tex}

\input{text/3_4_ll_GLrAN.tex}

\input{text/3_5_ll_invadele.tex}

%% file: text/3_1_llattices.tex
\subsection{A metric on the space of lattices}

\subsubsection{Metric definitions}

The space of all prelattices can be equipped with the structure of a metric space using the associated exponential functions:

\begin{definition} \label{def:dd_LL}
  The metric $\dd_V$ is defined on the space $V$ of all prelattices as follows:
  \[ \dd_V(\Lambda_1,\Lambda_2) = \sup_{\abs{z} \leq 1} \abs{e_{\Lambda_1}(z) -e_{\Lambda_2}(z)} \lsptext{for} \Lambda_1, \Lambda_2 \in V. \]
  If in addition $\Lambda_1, \Lambda_2$ are lattices, we may use the notation $\dd_\LL$ instead.
\end{definition}
It is not hard to see that the above is in fact a metric.



The restriction to $\abs{z} \leq 1$ in the above metric definition is largely superfluous in that the same local topology is generated if we replace it with $\abs{z} \leq R$, as shown by the following proposition, the proof of which will be postponed until \cpageref{proof:ddLLradius} in \cref{sec:modular_forms}:
\begin{restatable}{proposition}{ddLLradius} \label{prop:dd_LL_radius}
  Let $R > 0$ and let $\Lambda$ and $\Lambda'$ be lattices of rank $\leq r$, with $\Lambda'$ being variable. If $\Lambda' \to \Lambda$, then $\sup_{\abs{z} \leq R} \abs{e_{\Lambda'}(z)-e_{\Lambda}(z)} \to 0$.
\end{restatable}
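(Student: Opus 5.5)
The idea is to reduce the radius $R$ to the unit disc using the $A$-module structure: inside $\CCi$, multiplication by an element of $A$ of large degree carries the unit disc onto a disc of radius at least $R$. For $R \leq 1$ there is nothing to prove, since then $\sup_{\abs{z}\le R}\abs{e_{\Lambda'}(z)-e_\Lambda(z)} \leq \dd_\LL(\Lambda',\Lambda)$. So assume $R>1$ and fix a nonzero $a \in A$ with $\abs{a} \geq R$. Every $z$ with $\abs{z}\le R$ can be written as $z = aw$ with $\abs{w} \leq 1$. By \cref{prop:e_Lambda_latticeProps} and \cref{prop:Dmod_from_lattice} we have
\[ e_{\Lambda}(aw) = \phi^{\Lambda}_a\bigl(e_\Lambda(w)\bigr) = \sum_{j} \phi^\Lambda_{a,j}\, e_\Lambda(w)^{q^j}, \]
and the same holds for $\Lambda'$. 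Here $\phi^\Lambda_a = \sum_j \phi^\Lambda_{a,j}\tau^j$ has $\tau$-degree $\operatorname{rk}(\Lambda)\deg a \leq r\deg a$. So it suffices to prove two things as $\Lambda' \to \Lambda$:
\begin{enumerate}
  \item the finitely many coefficients $\phi^{\Lambda'}_{a,j}$, for $0 \le j \le r\deg a$, converge to $\phi^\Lambda_{a,j}$;
  \item $e_{\Lambda'}(w) \to e_\Lambda(w)$ uniformly on $\abs{w}\le1$.
\end{enumerate}
Claim (2) is the hypothesis itself.

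To get (1), I first pass from uniform convergence on the unit disc to convergence of the power series coefficients in \cref{prop:e_Lambda_props}(6). For an entire $\FF_q$-linear series $f(z)=\sum_i c_i z^{q^i}$ over the algebraically closed field $\CCi$, whose residue field is infinite, the maximum principle (Gauss norm) gives $\sup_{\abs{z}\le1}\abs{f(z)} = \max_i \abs{c_i}$. Applied to $f = e_{\Lambda'}-e_\Lambda$, this yields $\abs{e_{\Lambda',i}-e_{\Lambda,i}} \leq \dd_\LL(\Lambda',\Lambda)$ for every $i$. Next I compare coefficients of $z^{q^k}$ in $e_\Lambda(az)=\phi^\Lambda_a(e_\Lambda(z))$. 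Since $e_{\Lambda,0}=1$, this gives the triangular recursion
\[ \phi^\Lambda_{a,k} = a^{q^k} e_{\Lambda,k} - \sum_{j<k}\phi^\Lambda_{a,j}\, e_{\Lambda,k-j}^{q^j}, \]
which holds identically for all lattices, with $\phi^\Lambda_{a,k}=0$ beyond the $\tau$-degree. Hence each $\phi^\Lambda_{a,k}$ is a fixed polynomial, with coefficients in $A$, in $e_{\Lambda,1},\dots,e_{\Lambda,k}$. It therefore depends continuously on them, and (1) follows for all $k \le r\deg a$ at once. This also handles the possibility that $\Lambda'$ has smaller rank than $\Lambda$, since the recursion does not refer to the rank.

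Finally I estimate, for $\abs{w}\le 1$, writing $x=e_{\Lambda'}(w)$ and $y=e_\Lambda(w)$:
\[ \abs{e_{\Lambda'}(aw)-e_\Lambda(aw)} \le \max_{j\le r\deg a} \max\Bigl(\abs{\phi^{\Lambda'}_{a,j}-\phi^\Lambda_{a,j}}\,\abs{x}^{q^j},\ \abs{\phi^\Lambda_{a,j}}\,\abs{x-y}^{q^j}\Bigr), \]
using that $\abs{x^{q^j}-y^{q^j}}=\abs{x-y}^{q^j}$ in characteristic $p$. Here $\abs{y}\le \max_i\abs{e_{\Lambda,i}}=:M<\infty$, and $\abs{x}\le\max(M,\dd_\LL(\Lambda',\Lambda))$ is uniformly bounded, so both terms tend to $0$ uniformly in $w$.

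I expect the main obstacle to be the coefficient step: justifying the Gauss-norm identity for these entire series and ensuring that $\sup_i\abs{e_{\Lambda,i}}$ is finite. The latter holds because $e_\Lambda$ converges everywhere, so $\abs{e_{\Lambda,i}}\to0$. The rest is bookkeeping with finitely many coefficients.
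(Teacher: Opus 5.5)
Your proof is correct, and its skeleton is the paper's: reduce to $R=\abs{a}$, use $e_\Lambda(aw)=\phi^\Lambda_a(e_\Lambda(w))$, and split the difference into two terms. One is controlled by $\phi^{\Lambda'}_a-\phi^\Lambda_a$, the other by $e_{\Lambda'}-e_\Lambda$ on the unit disc. You evaluate the coefficient difference at $e_{\Lambda'}(w)$ where the paper evaluates it at $e_\Lambda(w)$; that is immaterial.

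The genuine difference is how you obtain $\phi^{\Lambda'}_{a,j}\to\phi^\Lambda_{a,j}$. The paper simply invokes continuity of the Drinfeld module coefficient forms on $\LL^{\leq r}$ from \cref{prop:DmoduleCoeffsModular}. That result rests on continuity proofs for partial Eisenstein series on $\LLi{(a)}{r}$ that are omitted, plus a descent to $\LL^{\leq r}$; this is why the paper defers the proof to \cref{sec:modular_forms}.

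You derive the same fact from the hypothesis itself. The Gauss-norm identity gives $\dd_\LL(\Lambda',\Lambda)=\max_i\abs{e_{\Lambda',i}-e_{\Lambda,i}}$; the inequality you actually need also follows from \cref{lem:funcCoeffsSmall}. The triangular recursion then makes each $\phi^\Lambda_{a,k}$ a universal polynomial over $A$ in $e_{\Lambda,1},\dots,e_{\Lambda,k}$, independent of rank.

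What your route buys: the proposition becomes provable immediately after \cref{def:dd_LL}. As a by-product it establishes the continuity of the $e_{\Lambda,i}$ and the $\phi^\Lambda_{a,i}$ on $\LL^{\leq r}$, which the paper only asserts. It also avoids having to explain why a $\GL[r]{A/(a)}$-invariant function that is continuous on $\LLi{(a)}{r}$ is continuous for $\dd_\LL$. The paper's version is shorter only because it leans on that machinery.
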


With this metric, we can consider the `size' of a prelattice to be its distance to the zero lattice, which by an abuse of notation we denote as $0$.
As it turns out, prelattices all of whose nonzero elements are large are `small':
\begin{proposition} \label{prop:lattice_large}
  If a prelattice $\Lambda$ has $\minp_{\lambda \in \Lambda} \abs{\lambda} = R > 1$, then $\dd_V (\Lambda, 0) \leq R^{1-q}$.
\end{proposition}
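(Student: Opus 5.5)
The plan is to compute $\dd_V(\Lambda,0)$ directly and bound it with \cref{prop:e_Lambda_leqR}. The first step is to identify the exponential of the zero prelattice: by \cref{def:e_Lambda} the product over nonzero elements is empty, so $e_0(z) = z$. Hence
\[ \dd_V(\Lambda,0) = \sup_{\abs{z} \leq 1} \abs{e_\Lambda(z) - z}. \]

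Next, fix $z$ with $\abs{z} \leq 1$. Since $R > 1$, we have $\abs{z} \leq 1 < R$, so \cref{prop:e_Lambda_leqR} applies and gives
\[ \abs{e_\Lambda(z) - z} \leq \abs{z}^q R^{1-q} \leq R^{1-q}, \]
using $\abs{z}^q \leq 1$. Taking the supremum over $\abs{z} \leq 1$ yields $\dd_V(\Lambda,0) \leq R^{1-q}$.

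There is no real obstacle here; the only points to check are that the hypothesis $\abs{z} < R$ of \cref{prop:e_Lambda_leqR} holds on the whole closed unit disc, which is exactly where $R > 1$ is used, and the convention $e_0(z) = z$ for the zero prelattice. If one wanted to avoid citing \cref{prop:e_Lambda_leqR}, the fallback would be to expand $\log$ of the product termwise: each factor satisfies $\abs{1 - z/\lambda} = 1$ and differs from $1$ by at most $\abs{z}/\abs{\lambda} < 1$. One would then bound $e_\Lambda(z) - z$ via the power series of \cref{prop:e_Lambda_props}, using that the lowest nonlinear coefficient has size at most $R^{1-q}$. That route is more work, so I would rely on \cref{prop:e_Lambda_leqR}.
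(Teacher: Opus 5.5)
Your proposal is correct and is exactly the paper's argument: the paper's proof is the one-line instruction to apply \cref{prop:e_Lambda_leqR}, which is what you do after noting $e_0(z) = z$ and using $\abs{z} \leq 1 < R$ with $\abs{z}^q \leq 1$. The fallback route is unnecessary.
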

\begin{proof}
  Apply \cref{prop:e_Lambda_leqR}.
\end{proof}
\begin{corollary}
  If $\minp_{\lambda \in \Lambda} \abs{\lambda} \to \infty$, then $\Lambda \to 0$.
\end{corollary}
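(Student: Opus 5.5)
The corollary follows from \cref{prop:lattice_large}; the only input I need beyond it is that $q \geq 2$. For a prelattice $\Lambda$, write $R(\Lambda) = \minp_{\lambda \in \Lambda} \abs{\lambda}$. Here the ``zero lattice'' $0$ is the prelattice $\set{0}$, whose exponential is $e_0(z) = z$, so the claim to prove is that $\dd_V(\Lambda, 0) = \sup_{\abs{z} \leq 1} \abs{e_\Lambda(z) - z}$ tends to $0$ as $R(\Lambda) \to \infty$.

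First, fix $\varepsilon > 0$. By hypothesis, for $\Lambda$ far enough along (in whatever directed family or sequence the limit is taken over), we have $R(\Lambda) > 1$. For such $\Lambda$, \cref{prop:lattice_large} applies directly and gives
\[ \dd_V(\Lambda, 0) \leq R(\Lambda)^{1-q}. \]
Since $q$ is a power of the prime $p$, we have $q \geq 2$, so the exponent $1-q$ is at most $-1$. Hence $R(\Lambda)^{1-q} \leq R(\Lambda)^{-1}$, and the right-hand side tends to $0$ as $R(\Lambda) \to \infty$.

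Concretely, choose a threshold $R_0 > \max\set{1, \varepsilon^{-1/(q-1)}}$. Then every $\Lambda$ with $R(\Lambda) > R_0$ satisfies $\dd_V(\Lambda, 0) < \varepsilon$, which is exactly the statement $\Lambda \to 0$ in the metric $\dd_V$ of \cref{def:dd_LL}.

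There is no real obstacle: all the analytic work is contained in \cref{prop:e_Lambda_leqR}, through \cref{prop:lattice_large}. The one point to check is that the hypothesis $\abs{z} < R$ of \cref{prop:e_Lambda_leqR} covers the whole unit ball $\abs{z} \leq 1$ used in the metric. This holds precisely because $R > 1$, which is why I first restrict to $\Lambda$ with $R(\Lambda) > 1$.
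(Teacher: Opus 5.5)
Your proof is correct and follows the paper's route exactly: the corollary is stated as an immediate consequence of \cref{prop:lattice_large}, whose bound $R^{1-q}$ (coming from \cref{prop:e_Lambda_leqR}) tends to $0$ because $q \geq 2$. Your explicit threshold, and your check that the unit ball $\abs{z} \leq 1$ lies inside the region $\abs{z} < R$ where \cref{prop:e_Lambda_leqR} applies, simply spell out details the paper leaves implicit.
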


More particularly, elements of a lattice which are `large' make a `small' difference in the topology induced by this metric as shown by \cref{prop:lattice_limit}.
But first, a technical lemma:
\begin{lemma} \label{lem:idealBoundedReps}
  For any nonzero fractional ideal $J \in \cJ(A)$, there is a bound $B_J > 0$ such that for every $f \in F$ there is a $n \in J$ with $\abs{f-n} < B_J$.
\end{lemma}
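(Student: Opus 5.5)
The plan is to reduce to the case $J = A$ and then use the fact that $A$ is cocompact in $F_\infty$. In other words, $F_\infty/A$ is compact, because $A$ is a lattice of rank one in $F_\infty$ (the Riemann–Roch theorem). For general $J$, pick a nonzero $a \in A$ with $aA \subseteq J$; for instance, any nonzero element of $J \cap A$ works, since $J$ contains a nonzero ideal of $A$. If $B_A$ is a bound for $A$, then $B_J \defeq \abs{a} B_A$ works for $J$. Indeed, given $f \in F$, choose $m \in A$ with $\abs{f/a - m} < B_A$, and put $n = am \in J$. Then
\[ \abs{f-n} = \abs{a}\,\abs{f/a-m} < \abs{a} B_A. \]
So it suffices to treat $J = A$.

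For $J = A$, I would argue with Riemann–Roch on the curve $X$ attached to $F$. Given $f \in F$, write $f$ as its polar part at $\infty$ plus a "small" part. Concretely, consider the divisor $D_m = \mathrm{div}_0$-type divisor $m\cdot\infty + \sum_{\fp} (\text{poles of } f \text{ at finite } \fp)$. The quotient $F_\infty / (A + \pi^{k}O_\infty)$ is finite-dimensional over $\FF_q$, and by Riemann–Roch it vanishes once $k$ is large, namely $k \ge$ something like $(2g-1)/\delta + 1$. In adelic form, $\AA_F = F + \prod_v O_v \cdot \pi^{-k}$ at $\infty$, because $H^1(X, \mathcal{O}(k\infty)) = 0$ for $k\delta > 2g-2$.

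Apply this to the adele that equals $f$ at all finite places and $0$ at $\infty$. The finite parts must be handled correctly: we want $n \in A$ with $f - n$ integral at $\infty$ up to bounded order. We do not need $f - n$ integral at finite places, since $A$ is only required to absorb the finite poles of $f$. So the cleaner route is strong approximation, which is equivalent to the vanishing $H^1(X,\mathcal{O}(k\infty)) = 0$ above. Writing $F_\infty = A + \pi^{-k}O_\infty$, the element $f \in F \subset F_\infty$ decomposes as $f = n + y$ with $n \in A$ and $\abs{y} \le q^{k\delta}$. This gives $B_A = q^{k\delta}+1$.

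The main step requiring care is justifying $F_\infty = A + \pi^{-k}O_\infty$, which is exactly the statement that $A$ is discrete and cocompact in $F_\infty$. I would cite it as a standard consequence of Riemann–Roch: $\dim_{\FF_q}$ of $F_\infty/(A+\pi^{-k}O_\infty)$ equals $\dim H^1(X,\mathcal O(k\infty))$, which is $0$ for large $k$. Alternatively, one can give a direct argument via a finite set of coset representatives of the compact group $F_\infty/A$ modulo the open subgroup $O_\infty$. Either way, everything else is the elementary scaling above.
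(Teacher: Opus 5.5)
Your argument is correct, but it takes a different route from the paper's. You reduce to $J=A$ by scaling with a nonzero $a\in J\cap A$, so that $aA\subseteq J$ and $B_J=\abs{a}B_A$ works. You then obtain $B_A$ from the cocompactness statement $F_\infty=A+\pi^{-k}O_\infty$, which you deduce from Riemann--Roch via $H^1(X,\mathcal{O}(k\infty))=0$ for $k\delta>2g-2$.

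The paper stays elementary. It fixes $T\in A$ with $\abs{T}>1$ and writes $J=j_1\FF_q[T]+\dotsb+j_m\FF_q[T]$ as a free $\FF_q[T]$-module whose basis is also an $\FF_q(T)$-basis of $F$. It then applies Euclidean division in $\FF_q[T]$ to each coordinate of $f=\sum_i j_i f'_i$. The ultrametric inequality gives $\abs{f-n}<\max_i\abs{j_i}$ for every $J$ at once, with no reduction step.

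What each buys: the paper's route avoids curves, adeles and cohomology, in line with its stated low-tech aim, but its constant depends on the auxiliary choices of $T$ and basis. Yours needs Riemann--Roch but gives an intrinsic bound $\abs{f-n}\le q^{k\delta}$ for $A$ whenever $k\delta>2g-2$.

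Two small corrections:
\begin{enumerate}
  \item The quotient should be $F_\infty/(A+\pi^{-k}O_\infty)$, not with $\pi^{k}$.
  \item You need neither strong approximation nor an equality of dimensions. Let $\mathbb{A}_F(k\infty)$ denote the adeles that are integral at all finite places and lie in $\pi^{-k}O_\infty$ at $\infty$. The natural map $F_\infty/(A+\pi^{-k}O_\infty)\to\mathbb{A}_F/(F+\mathbb{A}_F(k\infty))\cong H^1(X,\mathcal{O}(k\infty))$ is injective by a direct kernel computation, so vanishing of $H^1$ already suffices.
\end{enumerate}

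Your abandoned first idea also works as it stands. Write the adele equal to $f$ at the finite places and $0$ at $\infty$ as $g+a$ with $g\in F$ and $a\in\mathbb{A}_F(k\infty)$. Comparing components gives $\abs{g}\le q^{k\delta}$ and $f-g\in A$, so $n=f-g$ does the job.
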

\begin{proof}
  Let $T \in A$ with $\abs{T} > 1$.
  Then $\FF_q[T]$ is a principal ideal domain and a subring of $A$.
  Thus $J$, as a torsion-free $\FF_q[T]$-module, is free:
  \[ J = j_1\FF_q[T] +\dotsb +j_m\FF_q[T] \lsptext{for} \FF_q(T)-\text{independent}\ j_i \in J. \]
  The field $F$ can be written similarly $F = j_1\FF_q(T) +\dotsb +j_m\FF_q(T)$.

  Now $\FF_q[T]$ is a Euclidean domain, and so for any $f' \in \FF_q(T)$ there is a $t \in \FF_q[T]$ with $\abs{f'-t} < 1$.
  Thus for any $f = j_1f'_1 +\dotsb +j_mf'_m \in F$ with $f'_i \in \FF_q(T)$, we can find corresponding $t_i \in \FF_q[T]$ with $\abs{f'_i-t_i} < 1$ for each $i$, so that with $n = j_1t_1 +\dotsb j_mt_m \in I$ we have
  \[ \abs{f-n} \leq \max_{i = 1}^m \ \parens{\abs{j_i}\cdot\abs{f'_i-t_i}} < \max_{i = 1}^m \abs{j_i} \eqdef B_J. \qedhere \]
\end{proof}

\begin{proposition} \label{prop:lattice_limit}
  Let $\Lambda$ be a fixed lattice of rank $r$ and $I \subseteq A$ a fixed ideal of $A$.
  Then for variable $\omega \in \CCi$ with $\dd(\omega,\Lambda) = \min_{\lambda\in\Lambda} \abs{\omega-\lambda} \to \infty$, we have that $\Lambda+I\omega \to \Lambda$ with respect to $\dd_V$.
\end{proposition}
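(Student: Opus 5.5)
We use the composition rule for exponentials in \cref{prop:e_Lambda_props}: since $\Lambda \subseteq \Lambda' \defeq \Lambda + I\omega$, we have
\[ e_{\Lambda'}(z) = e_{M}\bigl(e_\Lambda(z)\bigr), \qquad M \defeq e_\Lambda(\Lambda') = e_\Lambda(I\omega), \]
and $M$ is a prelattice. Consequently
\[ \dd_V(\Lambda',\Lambda) = \sup_{\abs{z}\le 1} \abs{e_M(e_\Lambda(z)) - e_\Lambda(z)}. \]
On the disc $\abs{z}\le 1$ the entire function $e_\Lambda$ is bounded, say $\abs{e_\Lambda(z)} \le C$ with $C$ depending only on $\Lambda$. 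It therefore suffices to show that $R_\omega \defeq \minp_{\mu\in M}\abs{\mu} \to \infty$. Once $R_\omega > C$, \cref{prop:e_Lambda_leqR} gives, for $w = e_\Lambda(z)$,
\[ \abs{e_M(w) - w} \le C^q R_\omega^{1-q}, \]
which tends to $0$. (If $\Lambda + I\omega$ is not strongly discrete, the statement is vacuous; we may assume $\omega$ is such that $\Lambda'$ is a lattice, or at least a prelattice. In fact strong discreteness of $M$ will follow from the bound below.)

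The key step is the lower bound on $R_\omega$. A nonzero element of $M$ has the form $e_\Lambda(n\omega)$ with $n \in I$ and $n\omega \notin \Lambda$. By \cref{prop:e_Lambda_infinity}, $\abs{e_\Lambda(y)}$ is large exactly when $\dd(y,\Lambda)$ is large. So it is enough to show
\[ \inf_{0\ne n\in I} \dd(n\omega,\Lambda) \to \infty \quad\text{as } \dd(\omega,\Lambda)\to\infty. \]
We also need this uniformly in $n$, so the ``$\Leftarrow$'' direction of \cref{prop:e_Lambda_infinity} should be used in the quantitative form: for every $B$ there is $D$ with $\dd(y,\Lambda) > D \Rightarrow \abs{e_\Lambda(y)} > B$. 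That is exactly what the limit statement means for a variable $y$.

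For the lower bound, take $0 \ne n \in I \subseteq A$, so $\abs{n} \ge 1$. We have $\dd(n\omega,\Lambda) \ge \dd(n\omega, n\Lambda')$ for any lattice containing $\Lambda$ after rescaling, but the cleaner route is the following. Suppose $\abs{n\omega - \lambda}$ is small for some $\lambda \in \Lambda$. Then $\omega$ is close to $\lambda/n \in n^{-1}\Lambda$. Since $n^{-1}\Lambda/\Lambda$ is finite, this alone does not bound $\dd(\omega,\Lambda)$ uniformly in $n$. This is the main obstacle: $n$ ranges over infinitely many elements, and $n^{-1}\Lambda$ becomes dense.

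To handle it, write $\Lambda = \psi(I_1,\dots,I_r)^T$ and use the coordinates of $\omega$. Writing $\omega = \sum x_i\psi_i + \omega_0$ would require $\omega$ to lie in the $F_\infty$-span, which it need not. Instead, I would use the ``successive minima''/orthogonality structure of lattices over $\CCi$: choose a basis $\psi$ of $F_\infty\Lambda$ that is orthogonal, meaning $\abs{\sum a_i\psi_i} = \max\abs{a_i\psi_i}$ for $a_i \in F_\infty$. Then decompose $\omega = \sum x_i\psi_i + \omega_\perp$, where $\omega_\perp$ realises the distance from $\omega$ to $F_\infty\Lambda$ (this works via a best approximation in the nonarchimedean setting). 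In this decomposition,
\[ \dd(\omega,\Lambda) \asymp \max\Bigl(\abs{\omega_\perp},\ \max_i \dd(x_i, I_i)\abs{\psi_i}\Bigr), \]
with constants coming from \cref{lem:idealBoundedReps}. The same formula holds for $n\omega$, whose components are $n x_i$ and $n\omega_\perp$. Since $\abs{n}\ge 1$, we get $\abs{n\omega_\perp} \ge \abs{\omega_\perp}$. The delicate case is when $\dd(\omega,\Lambda)$ is large because $\dd(x_i,I_i)$ is large while $\omega_\perp$ stays bounded. In that case $\omega$ is far from $\Lambda$ but lies near $F_\infty\Lambda$, and $n x_i$ could come close to $I_i$. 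If this case genuinely breaks the claim, I would expect the intended statement to quantify over $\omega$ with $\Lambda + I\omega$ of rank $r+1$, and I would bound $\dd(n x_i, I_i)$ from below using $\abs{x_i}\to\infty$ together with the fact that $\dd(x_i,I_i) \le B_{I_i}$ by \cref{lem:idealBoundedReps}. This forces the large distance to come from $\omega_\perp$, which resolves the difficulty.
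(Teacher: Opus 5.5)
Your reduction matches the paper's: write $e_{\Lambda+I\omega}=e_M\circ e_\Lambda$ with $M=e_\Lambda(I\omega)$, bound $e_\Lambda$ on the unit disc, and conclude with \cref{prop:e_Lambda_leqR} once $\min_{0\neq\mu\in M}\abs{\mu}\to\infty$. The two arguments differ in how they obtain that divergence uniformly in $n\in I$.

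The paper stays on the exponential side. It uses the factorisation $e_\Lambda(a\omega)=a\,e_\Lambda(\omega)\prod\bigl(1-e_\Lambda(\omega)/e_\Lambda(\lambda)\bigr)$ over nonzero $\lambda\in a^{-1}\Lambda/\Lambda$. It notes via \cref{lem:idealBoundedReps} that all torsion values $e_\Lambda(\lambda)$ with $\lambda\in F\Lambda$ lie in one disc of radius $L$. This gives $\abs{e_\Lambda(a\omega)}\ge\abs{a}\,\abs{e_\Lambda(\omega)}$ as soon as $\abs{e_\Lambda(\omega)}>L$, so \cref{prop:e_Lambda_infinity} is applied only to $\omega$ itself.

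You argue metrically in $\CCi$. That works, but your write-up is more tangled than it needs to be:
\begin{itemize}
\item The ``delicate case'' cannot occur, so no reinterpretation of the statement is needed.
\item An orthogonal basis simultaneously adapted to $\Lambda=\sum I_i\psi_i$ is not justified for general $A$, and you do not need it.
\item Bounding $\dd(nx_i,I_i)$ from below via $\abs{x_i}\to\infty$ is a red herring.
\end{itemize}

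The clean version goes as follows. By \cref{lem:idealBoundedReps} and density of $F$ in $F_\infty$, there is $B$ with $\dd(x,\Lambda)\le B$ for all $x\in F_\infty\Lambda$. Hence $\dd(\omega,\Lambda)\le\max\bigl(\dd(\omega,F_\infty\Lambda),B\bigr)$, so for $\dd(\omega,\Lambda)>B$ and $0\neq n\in I$,
\[ \dd(n\omega,\Lambda)\ \ge\ \dd(n\omega,F_\infty\Lambda)\ =\ \abs{n}\,\dd(\omega,F_\infty\Lambda)\ \ge\ \dd(\omega,\Lambda). \]
The quantitative form of \cref{prop:e_Lambda_infinity} then gives the uniform divergence of $\abs{e_\Lambda(n\omega)}$.

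Each route has its advantage. Yours gives a transparent metric lemma, and as a by-product shows $\omega\notin F_\infty\Lambda$, so $\Lambda+I\omega$ really is a lattice of rank $r+1$; the paper passes over this point. The paper's route gives an explicit growth bound in terms of $\abs{e_\Lambda(\omega)}$ and uses \cref{prop:e_Lambda_infinity} only non-uniformly.
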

\begin{proof}
  $\Lambda$ is a sublattice of $\Lambda+I\omega$, so from \cref{prop:e_Lambda_props}, we have that
  \[ e_{\Lambda+I\omega}(z) = e_{e_\Lambda(\Lambda+I\omega)}(e_\Lambda(z)) = e_{e_\Lambda(I\omega)}(e_\Lambda(z)). \]
  Now let $ R_1 = \dd(\omega,\Lambda) = \min_{\lambda\in\Lambda} \abs{\omega-\lambda}$ be large, so that $R_2 = \abs{e_\Lambda(\omega)}$ is large by \cref{prop:e_Lambda_infinity}.
  Then for any nonzero $a \in I$,
  \[ \abs{e_\Lambda(a\omega)} = \abs{a} \abs{e_\Lambda(\omega)} \prodp_{\lambda \in a^{-1}\Lambda/\Lambda} \frac{\abs{e_\Lambda(\omega)-e_\Lambda(\lambda)}}{\abs{e_\Lambda(\lambda)}}\,. \]
  Now by \cref{lem:idealBoundedReps}, there is a bounded set of representatives of $F/I$; hence since $e_\Lambda$ is entire the $e_\Lambda(\lambda)$ above are bounded independently of $a$, say by $L > 0$.
  Hence for large enough $R_2 > L$ independent of $a$,
  \[ \abs{e_\Lambda(a\omega)} = \abs{a} \frac{\abs{e_\Lambda(\omega)}^{\abs{a}^r}}{\prodp_{\lambda \in \Lambda} \abs{e_\Lambda(\lambda)}} \geq \abs{a} \brackets{\frac{\abs{e_\Lambda(\omega)}}{L}}^{\abs{a}^r} \geq R_2, \]
  and so $\dd(I\omega-\set{0},\Lambda)$ is large; thus $R_3 = \minp_{\lambda \in e_\Lambda(A\omega)} \abs{\lambda}$ is large.

  Now let $\sup_{\abs{z} \leq 1} \abs{e_\Lambda(z)} = m$ which is independent of $\omega$; since $R_3$ is large, we can also suppose that $R_3 > m$.
  Then
  \begin{align*}
    \dd_{\LL}(\Lambda+I\omega,\Lambda) &= \sup_{\abs{z} \leq 1} \abs{e_{e_\Lambda(I\omega)}(e_\Lambda(z))-e_\Lambda(z)} \\
    &\leq \sup_{\abs{z} \leq m} \abs{e_{e_\Lambda(I\omega)}(z)-z} \\
    &\leq \sup_{\abs{z} \leq m} \abs{z}^q R_3^{1-q} = m^q R_3^{1-q} \lsptext{by \cref{prop:e_Lambda_leqR}}\\
    &\to 0 \lsptext{as} R_3 \to \infty.
  \end{align*}
  Thus $\Lambda+I\omega \to \Lambda$ as desired.
\end{proof}

\subsubsection{Metric completeness}

We have just seen an example where a variable lattice of rank $r+1$ tended to a lattice of rank $r$.
In general, we have the result of \cref{coro:lattice_limit_rank}; but first, a proposition:
\begin{proposition} \label{prop:LLleqR_complete}
  $\LL^{\leq r}$ is complete.
\end{proposition}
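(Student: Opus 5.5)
Take a Cauchy sequence $(\Lambda_n)$ in $\LL^{\leq r}$ with respect to $\dd_\LL$. The plan is to construct a candidate limit from the exponential functions, show it is a lattice of rank at most $r$, and show it is the $\dd_\LL$-limit. The route is through the Taylor coefficients: by \cref{prop:e_Lambda_props} each $e_{\Lambda_n}(z)=\sum_i e_{\Lambda_n,i}z^{q^i}$. Since the metric is a supremum over the closed unit ball, the maximum principle (non-archimedean Cauchy estimates) gives
\[ \abs{e_{\Lambda_n,i}-e_{\Lambda_m,i}} \leq \dd_\LL(\Lambda_n,\Lambda_m). \]
So each coefficient sequence converges, say to $e_i$. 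The Cauchy property also makes $\sup_{\abs{z}\le1}\abs{e_{\Lambda_n}(z)}$ uniformly bounded. Hence $e(z)\defeq\sum_i e_iz^{q^i}$ converges on the closed unit ball, and $e_{\Lambda_n}\to e$ uniformly there. Note that $e_0=1$.

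The next step is to extend $e$ to an entire function. For this I would use the functional equations to propagate control from the unit ball to every ball, in the spirit of \cref{prop:dd_LL_radius}. Fix $a\in A$ with $\abs{a}>1$. By \cref{prop:Dmod_from_lattice}, $e_{\Lambda_n}(az)=\phi^{\Lambda_n}_a(e_{\Lambda_n}(z))$, where $\phi^{\Lambda_n}_a$ is an $\FF_q$-linear polynomial of $\tau$-degree $\leq r\deg a$ with constant term $a$. From this I aim to show three things:
\begin{enumerate}
\item The coefficients of $\phi^{\Lambda_n}_a$ are determined by, and continuous in, finitely many of the coefficients $e_{\Lambda_n,i}$. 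This follows by comparing coefficients in $e(az)=\phi_a(e(z))$, an upper-triangular system with invertible diagonal $a^{q^i}-a$.
\item Hence these coefficients converge to those of some $\FF_q$-linear polynomial $\phi_a$ of degree $\le r\deg a$, which satisfies $e(az)=\phi_a(e(z))$ as an identity of power series.
\item Iterating $z\mapsto az$ then lets me define $e$ on all of $\CCi$ as an entire $\FF_q$-linear function, with $e_{\Lambda_n}\to e$ uniformly on every ball.
\end{enumerate}
The main obstacle is this step: turning convergence on the unit ball into locally uniform convergence on $\CCi$, uniformly in $n$. My first attempt would be to bound the coefficients of $\phi^{\Lambda_n}_a$ uniformly in $n$ and use the recursion $\sup_{\abs z\le \abs a^{k+1}}\abs{e_{\Lambda_n}}\le C(\sup_{\abs z\le\abs a^k}\abs{e_{\Lambda_n}})^{q^{r\deg a}}$. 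Alternatively, I could simply prove \cref{prop:dd_LL_radius} at this point by that argument.

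Now let $\Lambda\defeq\ker e$. Since $e$ is a nonconstant entire $\FF_q$-linear function, $\Lambda$ is a strongly discrete $\FF_q$-subspace. The relation $e(az)=\phi_a(e(z))$ makes $\Lambda$ stable under $A$, since $a$ was arbitrary once the argument is run for every $a$, or by using generators of $A$. Moreover $e(a^{-1}\Lambda)\subseteq\ker\phi_a$, which has at most $q^{r\deg a}$ elements, so $a^{-1}\Lambda/\Lambda$ is bounded in size by $\abs a^r$. This bounds the $A$-rank of $\Lambda$ by $r$ and gives finite generation, so $\Lambda$ is a lattice of rank $\leq r$. It remains to show $e=e_\Lambda$. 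Both functions have derivative $1$ and simple zeros exactly at $\Lambda$, so their quotient is entire and zero-free, hence constant, and equal to $1$ by comparing linear terms. Uniform convergence on the unit ball then gives $\dd_\LL(\Lambda_n,\Lambda)\to0$, proving that $\LL^{\leq r}$ is complete.
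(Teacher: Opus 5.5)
Your proof is correct, and its core idea is the paper's: the Drinfeld module polynomials $\phi^{\Lambda_n}_a$ have bounded $\tau$-degree and converge, and this controls the exponentials globally. The two proofs differ in how they get there and how they finish.

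\begin{itemize}
\item \emph{Getting convergence of $\phi^{\Lambda_n}_a$.} The paper's sketch uses convergence of the local inverse functions $e_{\Lambda_n}^{-1}$. You use the Gauss-norm estimate on the coefficients of $e_{\Lambda_n}$ together with the triangular relations $a^{q^k}e_k=\sum_j\phi_{a,j}e_{k-j}^{q^j}$.
\item \emph{Producing the limit lattice.} The paper applies the uniformization theorem (\cref{thm:Dmod_lattice_equiv}) to the limiting Drinfeld module $\phi$ to get $\Lambda$ with $\phi=\phi^\Lambda$, and then checks $\Lambda_n\to\Lambda$. You use $e(az)=\phi_a(e(z))$ to upgrade to locally uniform convergence on all of $\CCi$. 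You then set $\Lambda=\ker e$ and identify $e=e_\Lambda$ by non-archimedean Weierstrass factorization.
\end{itemize}

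What each buys:
\begin{itemize}
\item Your route is more self-contained. It reproves the half of uniformization that is actually needed, and it yields \cref{prop:dd_LL_radius} along the way without the paper's forward reference.
\item Your route also avoids checking that the limit $\phi$ meets the exact degree condition $\deg_\tau\phi_a=s\deg a$ of \cref{def:D_module} (a priori the degree could drop). You get the rank by counting kernels instead.
\item The paper's route outsources the construction of $\Lambda$ to a standard theorem. The price is that degree verification plus an identification $e_\Lambda=\lim e_{\Lambda_n}$, for instance via uniqueness of the $\FF_q$-linear solution of $e(az)=\phi_a(e(z))$ with linear term $z$.
\end{itemize}

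One step needs more than the single line you give it. The bound $\#(a^{-1}\Lambda/\Lambda)\le\abs{a}^r$ does not by itself imply rank $\le r$ or finite generation: $F^m$ satisfies it for every $m$. You must use strong discreteness, as follows.
\begin{itemize}
\item Take $F$-independent $\lambda_1,\dots,\lambda_m\in\Lambda$ and set $V=\sum F\lambda_i$.
\item Discreteness forces the $\lambda_i$ to be $F_\infty$-independent. Otherwise \cref{lem:idealBoundedReps} puts infinitely many points of $\sum A\lambda_i$ in a bounded ball.
\item Hence $\Lambda\cap V$ is a discrete $A$-module in a copy of $F_\infty^m$ containing $A^m$, so it contains $A^m$ with finite index and is projective of rank $m$.
\item Since $(\Lambda\cap V)\cap a\Lambda=a(\Lambda\cap V)$, this gives $\abs{a}^m\le\#(\Lambda/a\Lambda)\le\abs{a}^r$.
\item Taking $V=F\Lambda$ then gives finite generation.
\end{itemize}

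A minor point: convergence of $e$ on the closed unit ball comes from the uniform-in-$i$ convergence of the coefficients, which forces $e_i\to 0$. Boundedness alone only gives the open ball.
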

\begin{proof}[Sketch of proof]
  Given a Cauchy sequence $\Lambda_n$ of lattices, we can show that the sequence of associated inverse functions $e_{\Lambda_n}^{-1}$ converge, and similarly the Drinfeld modules $\phi^n$ converge to another Drinfeld module $\phi$.
  This Drinfeld module corresponds to another lattice $\Lambda$ which we show is the limit of the $\Lambda_n$.
\end{proof}

\begin{corollary} \label{coro:lattice_limit_rank}
  If $\parens{\Lambda_n}_{n = 1}^\infty$ is a sequence of lattices of rank $\leq R$ and $\Lambda$ a lattice of rank $r$ such that $\Lambda_n \to \Lambda$, then $r \leq R$.
\end{corollary}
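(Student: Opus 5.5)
The corollary follows directly from \cref{prop:LLleqR_complete} once we observe that completeness forces the limit to lie in $\LL^{\leq R}$, together with the uniqueness of limits in a metric space.

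First, since $\Lambda_n \to \Lambda$ in $(V,\dd_V)$, the sequence $(\Lambda_n)$ is Cauchy with respect to $\dd_V$. All $\Lambda_n$ lie in $\LL^{\leq R}$, and the metric $\dd_\LL$ on $\LL^{\leq R}$ is the restriction of $\dd_V$. So $(\Lambda_n)$ is also a Cauchy sequence in $\LL^{\leq R}$.

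Next, by \cref{prop:LLleqR_complete} (applied with $R$ in place of $r$), there is a lattice $\Lambda' \in \LL^{\leq R}$ with $\Lambda_n \to \Lambda'$. Both $\Lambda$ and $\Lambda'$ are then limits of the same sequence in the metric space $V$ of all prelattices. Since $\dd_V$ is a genuine metric, limits are unique, so $\Lambda = \Lambda'$. Concretely, $\dd_V(\Lambda,\Lambda')\le \dd_V(\Lambda,\Lambda_n)+\dd_V(\Lambda_n,\Lambda')\to 0$.

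Finally, $\Lambda = \Lambda'$ has rank $\leq R$, that is, $r \leq R$.

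The only point needing care is that $\dd_V(\Lambda,\Lambda')=0$ implies $\Lambda=\Lambda'$. Equality of $e_\Lambda$ and $e_{\Lambda'}$ on the closed unit disc does not immediately give equality of the functions, so I would argue as follows. Both are entire power series of the form in \cref{prop:e_Lambda_props}(6), so they agree on $\CCi$ by the identity principle for power series. Their zero sets, which are exactly $\Lambda$ and $\Lambda'$ by \cref{prop:e_Lambda_props}(1), therefore coincide. This is the step the paper asserts when it says $\dd_V$ is a metric. I expect no real obstacle beyond it, since all the analytic work is contained in \cref{prop:LLleqR_complete}.
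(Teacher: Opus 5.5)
Your proof is correct and follows the paper's argument: the convergent sequence is Cauchy in the complete space $\LL^{\leq R}$, so it has a limit there, and uniqueness of limits forces that limit to be $\Lambda$. Your extra check that $\dd_V(\Lambda,\Lambda')=0$ implies $\Lambda=\Lambda'$ (identity principle for entire power series, then comparing zero sets via \cref{prop:e_Lambda_props}) correctly fills in a step the paper takes for granted when it calls $\dd_V$ a metric.
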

\begin{proof}
  The lattices $\Lambda_n$ are elements of $\LL^{\leq R}$, which is complete by \cref{prop:LLleqR_complete}.
  Since the $\Lambda_n$ form a Cauchy sequence, they hence converge to a lattice $\Lambda_R \in \LL^{\leq R}$.
  But then $\Lambda_R = \Lambda$, and so $\Lambda$ is of rank at most $R$.
\end{proof}

\begin{proposition} \label{prop:LLR_dense}
  $\LL^r$ is a dense subset of $\LL^{\leq r}$.
\end{proposition}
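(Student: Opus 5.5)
We need to show that every lattice of rank $s \le r$ is a limit of lattices of rank exactly $r$. The plan is to induct using \cref{prop:lattice_limit}. Lattices of rank $r$ lie in $\LL^r$ trivially, so fix $\Lambda \in \LL^{\leq r}$ of rank $s < r$. It suffices to produce, for each such $\Lambda$, a sequence of lattices of rank $s+1$ converging to $\Lambda$ in $\dd_\LL$. Iterating then gives rank $r$ lattices arbitrarily close to $\Lambda$: at each stage we choose the approximant within $\varepsilon/2^{k}$ of the previous one, and the ultrametric (or ordinary) triangle inequality bounds the total distance by $\varepsilon$. The same bookkeeping works if one prefers to induct on $r-s$ and use density of $\LL^{s+1}$ in $\LL^{\leq s+1}$ at each step.

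For the one-step construction, write $\Lambda = A\omega_1 + \dotsb + I\omega_s$ as in \cref{para:latticeAmodule} (for $s=0$, $\Lambda = 0$). Choose $\omega \in \CCi$ with $\omega$ not in the $F_\infty$-span $W$ of $\Lambda$ and with $\dd(\omega,\Lambda) \to \infty$; concretely take $\omega_n = c^n \omega_0$ for some fixed $\omega_0 \notin W$ and $c \in F_\infty$ with $\abs{c} > 1$. Then $\Lambda_n = \Lambda + A\omega_n$ is a finitely generated $A$-module of rank $s+1$.

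We must check that $\Lambda_n$ is strongly discrete, hence a lattice. This holds because $\omega_1,\dotsc,\omega_s,\omega_n$ are $F_\infty$-linearly independent. The availability of such $\omega_0$ follows from $\CCi$ being infinite dimensional over $F_\infty$. Strong discreteness of a finitely generated $A$-module spanned by $F_\infty$-independent vectors is the standard fact implicitly used in \cref{para:latticeAmodule}; one reduces to the free $\FF_q[T]$-module case and uses that a finite dimensional $F_\infty$-subspace of $\CCi$ has a norm equivalent to the sup norm in a basis. The main point to verify is that $\dd(\omega_n,\Lambda)\to\infty$. Since $W$ is a finite dimensional, hence closed, $F_\infty$-subspace, $\dd(\omega_0, W) = \delta > 0$, and by scaling $\dd(c^n\omega_0, W) = \abs{c}^n\delta \to \infty$. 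Since $\Lambda \subseteq W$, we get $\dd(\omega_n,\Lambda) \geq \dd(\omega_n,W) \to \infty$.

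Applying \cref{prop:lattice_limit} with $I = A$ then gives $\Lambda_n \to \Lambda$ in $\dd_V$, completing the step. For $s = 0$ the statement reduces to \cref{prop:lattice_large}: the lattice $A\omega_n$ has minimal nonzero element of size at least $\abs{\omega_n} \to \infty$, so $A\omega_n \to 0$. I expect the only delicate point to be the justification that $\Lambda+A\omega_n$ is strongly discrete and that $W$ is closed with positive distance from $\omega_0$. Both follow from the equivalence of norms on finite dimensional spaces over the complete field $F_\infty$.
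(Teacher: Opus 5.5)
Your proof is correct. It rests on the same idea as the paper's sketch, namely adjoining generators that are pushed to infinity by powers of a scalar of absolute value $>1$, but it organises the argument differently.

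\textbf{The paper's route.} The paper adds all $r-s$ generators at once: $\Lambda_n = \Lambda + Af^n\omega_{s+1} + \dotsb + Af^n\omega_r$ with $f \in F$, $\abs{f}>1$. It asserts convergence because the added lattice points move arbitrarily far from $\Lambda$. Making that rigorous needs a multi-generator analogue of \cref{prop:lattice_limit}, which the paper defers to the thesis.

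\textbf{Your route.} You raise the rank one step at a time, so each step is literally \cref{prop:lattice_limit} with $I=A$, or \cref{prop:lattice_large} when $s=0$. You then chain the $r-s$ steps by the triangle inequality. For $\dd_V$ this inequality is even ultrametric, so your $\varepsilon/2^k$ bookkeeping is more than enough. You also supply verifications the sketch leaves implicit:
\begin{itemize}
\item $\Lambda+A\omega_n$ is strongly discrete;
\item $\dd(\omega_n,\Lambda) \geq \abs{c}^n\,\dd(\omega_0,W) \to \infty$, because the $F_\infty$-span $W$ of $\Lambda$ is a closed subspace.
\end{itemize}

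\textbf{What each approach buys.} Yours avoids proving anything new beyond the single-generator result. The paper's one-shot construction yields one explicit sequence $\Lambda + f^n\Lambda' \to \Lambda$. It reuses that sequence directly in \cref{prop:LLR_irredComponentDense}, choosing $\Lambda'$ to land in a prescribed ideal class. It reuses it again in \cref{prop:LLNR_dense,prop:LLNR_irredComponentDense}, where level structures must be carried along the sequence. With your scheme, those results would be obtained by rerunning the induction, using \cref{lem:lmLimitLevels} in place of \cref{prop:lattice_limit} at each stage when level structures are present.
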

\begin{proof}[Sketch of proof]
  For a lattice $\Lambda$ of rank $s \leq r$, we consider the rank $r$ lattices $\Lambda_n = \Lambda +Af^n \omega_{s+1} +\dotsb +Af^n\omega_r$ where $f \in F$ has $\abs{f} > 1$ and the $\omega_i$ are $F_\infty$-linearly independent with $\Lambda$.
  Since the lattice points added to $\Lambda_n$ become arbitrarily far from $\Lambda$, it follows that $\Lambda_n \to \Lambda$.
\end{proof}

By \cref{prop:lattice_limit}, we see that the space $\LL^r$ of lattices of rank $r$ is not complete with respect to $\dd_\LL$.
However, combining the two previous propositions we have:
\begin{theorem} \label{prop:LLleqR_completion}
  The space $\LL^{\leq r}$ is the completion of $\LL^r$.
\end{theorem}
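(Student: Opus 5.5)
The plan is to combine \cref{prop:LLleqR_complete} and \cref{prop:LLR_dense} with the general characterisation of the completion of a metric space. Recall that a metric space $Y$ is \emph{the} completion of a subspace $X$ (up to unique isometry fixing $X$) exactly when $Y$ is complete and $X$ is dense in $Y$. The universal property then identifies $Y$ with the abstract completion of $X$ by Cauchy sequences. So it suffices to check three things: that $\LL^r \subseteq \LL^{\leq r}$ with the metric $\dd_\LL$ restricted; that $\LL^{\leq r}$ is complete; and that $\LL^r$ is dense in it.

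First I note that $\dd_\LL$ on $\LL^r$ is by definition the restriction of $\dd_V$ (equivalently $\dd_\LL$) on $\LL^{\leq r}$. The inclusion is therefore an isometric embedding, and no compatibility of metrics needs checking. Completeness of $\LL^{\leq r}$ is exactly \cref{prop:LLleqR_complete}, and density is \cref{prop:LLR_dense}.

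Finally I would spell out the identification. Let $\widehat{\LL^r}$ be the abstract completion, i.e.\ Cauchy sequences in $\LL^r$ modulo the null-distance relation. Send a Cauchy sequence $(\Lambda_n)$ to its limit in $\LL^{\leq r}$; this limit exists by completeness. The map is well defined and isometric because limits of sequences at distance $0$ coincide and distances pass to limits. It is surjective because every $\Lambda \in \LL^{\leq r}$ is, by density, the limit of a sequence in $\LL^r$, and such a sequence is Cauchy.

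There is essentially no obstacle here, since the substantive content lies in the two preceding propositions. The only point requiring a moment's care is that the limit produced in \cref{prop:LLleqR_complete} genuinely lies in $\LL^{\leq r}$ rather than in the larger space $V$ of prelattices. This is already part of that proposition's statement, and it is corroborated by \cref{coro:lattice_limit_rank}.
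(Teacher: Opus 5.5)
Your proposal is correct and follows the paper's proof, which likewise just combines completeness of $\LL^{\leq r}$ (\cref{prop:LLleqR_complete}) with density of $\LL^r$ (\cref{prop:LLR_dense}). Your explicit identification with the abstract Cauchy-sequence completion is a standard elaboration that the paper leaves implicit.
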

\begin{proof}
  By \cref{prop:LLleqR_complete,prop:LLR_dense}, $\LL^{\leq r}$ is complete with dense subset $\LL^r$.
\end{proof}

We also note the following trivial proposition without proof for explicitness:
\begin{proposition} \label{prop:LLleqR_union} ~ \vspace{-12pt}
  \[ \LL^{\leq r} = \bigsqcup_{0 \leq s \leq r} \LL^s. \qedhere \]
\end{proposition}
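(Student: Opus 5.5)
The statement is $\LL^{\leq r} = \bigsqcup_{0 \leq s \leq r} \LL^s$. I would prove it directly from \cref{def:lattice}, by showing that the two sides contain the same elements and that the union is disjoint.

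First, the rank is well defined. By \cref{def:lattice}, every lattice $\Lambda$ is a finitely generated $A$-submodule of the field $\CCi$, so it is torsion-free. Since $A$ is a Dedekind domain, $\Lambda$ is projective, and it has a well-defined rank $\dim_F(F \otimes_A \Lambda)$, a nonnegative integer. For rank $\geq 1$ this is also visible from \cref{para:latticeAmodule}: there $\Lambda \simeq A^{r-1} \oplus I$, and the $r$ is intrinsic because it equals $\dim_F (F\Lambda)$. Rank $0$ corresponds exactly to the zero lattice.

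With this, the equality of sets is immediate from the definitions. $\LL^{\leq r}$ is by definition the set of lattices whose rank $s$ satisfies $0 \leq s \leq r$. Each such lattice lies in $\LL^s$ for exactly one $s$, namely its rank. So the sets $\LL^s$ are pairwise disjoint, and their union over $0 \leq s \leq r$ is $\LL^{\leq r}$.

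If the statement is meant to include the metric structure, a little more is needed: by \cref{coro:lattice_limit_rank} each $\LL^{\leq s}$ is closed in $\LL^{\leq r}$, so the decomposition is one into locally closed strata. That point is not required for the set-theoretic statement itself.

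I expect no real obstacle here. The only point needing care is that the rank is well defined, and that follows from its definition as $\dim_F(F\Lambda)$.
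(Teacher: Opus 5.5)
Your argument is correct and is exactly the reasoning the paper has in mind: the paper states this proposition as trivial and gives no proof, since each lattice has a well-defined rank $s$ with $0 \leq s \leq r$ and so lies in exactly one $\LL^s$. Your extra remark, that $\LL^{\leq s}$ is closed by \cref{coro:lattice_limit_rank} (compare \cref{prop:LLR_bdClosed}), is also correct but not needed for the set-theoretic statement.
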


\begin{definition} \label{def:LLR_strata}
  In the above decomposition of the space $\LL^{\leq r}$ into a disjoint union of $\LL^s$ for $0 \leq s \leq r$, each of the uniands\footnote{Uniands are to unions as summands are to sums.} $\LL^s$ is called a \emph{stratum} of \emph{dimension} $s$ and \emph{codimension} $r-s$; if $s < r$, the $\LL^s$ are also called \emph{boundary strata}.
\end{definition}

\subsubsection{Topology of the irreducible components}
\label{subsubsec:irredCompo}

In this subsubsection we let $C$ denote an irreducible component of $\LL^r \simeq \LL_A^r$, these components defined by the values of $\pi(\Lambda) \in \Cl(F)$ for $\Lambda \in \LL^r$. (Recall that $\pi$ is defined in \cref{prop:Comps_ClF_AN*_Bijection}, with $N$ being irrelevant due to \cref{para:piN_noN}.)

So we see that the space $\LL^r$ has boundary $\bd{\LL^r} = \bd{\LL^{\leq r}} = \LL^{\leq r-1}$ in the complete metric space $\LL^{\leq r}$ consisting of the $\Lambda$ with $\Lambda$ of rank strictly less than $r$.
We will see that each of the irreducible components of $\LL^r$ shares the same boundary.

First, let us look closer at $\LL^{\leq r-1}$:
\begin{proposition} \label{prop:LLR_bdClosed}
  $\LL^{\leq r-1}$ is a closed subset of $\LL^{\leq r}$, so $\LL^r = \LL^{\leq r}-\LL^{\leq r-1}$ is open.
\end{proposition}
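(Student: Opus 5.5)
The claim is that $\LL^{\leq r-1}$ is closed in $\LL^{\leq r}$. I will show this by taking a sequence $(\Lambda_n)$ in $\LL^{\leq r-1}$ converging in $\dd_\LL$ to some $\Lambda \in \LL^{\leq r}$, and proving that $\Lambda$ has rank at most $r-1$.

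This is essentially \cref{coro:lattice_limit_rank} applied with $R = r-1$. Since the $\Lambda_n$ all have rank $\leq r-1$, that corollary says that any lattice they converge to has rank $\leq r-1$. To be careful, I will repeat the underlying argument:
\begin{enumerate}
  \item The sequence $(\Lambda_n)$ converges in $\LL^{\leq r}$, so it is Cauchy for $\dd_\LL$.
  \item The restriction of $\dd_\LL$ to $\LL^{\leq r-1}$ is the same metric $\dd_V$, so $(\Lambda_n)$ is also Cauchy in $\LL^{\leq r-1}$.
  \item By \cref{prop:LLleqR_complete} with $r-1$ in place of $r$, the sequence converges to some $\Lambda' \in \LL^{\leq r-1}$.
  \item Limits in a metric space are unique, so $\Lambda = \Lambda'$, and hence $\Lambda$ has rank $\leq r-1$.
\end{enumerate}
This shows that $\LL^{\leq r-1}$ is sequentially closed. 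In a metric space sequential closedness is the same as closedness, so $\LL^{\leq r-1}$ is closed in $\LL^{\leq r}$.

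For the second assertion, \cref{prop:LLleqR_union} gives the disjoint union $\LL^{\leq r} = \bigsqcup_{0\leq s\leq r}\LL^s$. Hence $\LL^r = \LL^{\leq r} - \LL^{\leq r-1}$, which is the complement of a closed set and therefore open.

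The only point that needs real care is step 3: it relies on the completeness of $\LL^{\leq s}$ for every $s$, not only for the fixed $r$ of the statement. This holds because \cref{prop:LLleqR_complete} is stated for arbitrary $r$. The degenerate case $r-1 = 0$, where $\LL^{\leq 0}$ consists of the single point $0$, is automatically closed.
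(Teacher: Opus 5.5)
Your proof is correct and is essentially the paper's argument. Both proofs reduce to \cref{coro:lattice_limit_rank}, i.e.\ completeness of $\LL^{\leq r-1}$ together with uniqueness of limits. The paper starts from a Cauchy sequence in $\LL^{\leq r-1}$ and uses completeness of $\LL^{\leq r}$ to produce the limit, whereas you start directly from a sequence convergent in $\LL^{\leq r}$, which is the more standard way to test closedness.
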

\begin{proof}
  Let $(\Lambda_n)_{n = 0}^\infty$ be a Cauchy sequence in $\LL^{\leq r-1}$, so that each $\Lambda_n$ has rank strictly less than $r$.
  Then since $\LL^{\leq r}$ is complete, this sequence has a limit $\Lambda \in \LL^{\leq r}$.
  Hence by \cref{coro:lattice_limit_rank} we have that $\Lambda$ has rank less than $r$, so that $\Lambda \in \LL^{\leq r-1}$.
\end{proof}

\begin{proposition} \label{prop:LLR_irredComponentDense}
  $C$ is a dense subset of $C \cup \LL^{\leq r-1}$.
\end{proposition}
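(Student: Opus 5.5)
Given a lattice $\Lambda \in \LL^{\leq r-1}$ of rank $s < r$, I will produce a sequence of lattices in $C$ converging to $\Lambda$ with respect to $\dd_\LL$. Points of $C$ are already in the closure of $C$, so this shows $C$ is dense in $C \cup \LL^{\leq r-1}$. The construction follows the sketch of \cref{prop:LLR_dense}, with one extra ingredient: the ideal class of the approximating lattices must be adjusted so that they land in $C$.

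First I write $\Lambda = I_1\psi_1 + \dotsb + I_s\psi_s$ as in \cref{para:latticeAmodule}, with $\psi_1,\dotsc,\psi_s$ linearly independent over $F_\infty$. If $s = 0$ then $\Lambda = 0$ and there are no $\psi_i$. Since $\CCi$ is infinite-dimensional over $F_\infty$, I can choose $\omega_{s+1},\dotsc,\omega_r \in \CCi$ so that $(\psi_1,\dotsc,\psi_s,\omega_{s+1},\dotsc,\omega_r)$ is $F_\infty$-linearly independent. Let $c = \pi(C) \in \Cl(F)$. I pick a nonzero ideal $I \subseteq A$ in the class $c\,[I_1\dotsm I_s]^{-1}$; this is possible because every ideal class contains an integral ideal. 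Then I fix $f \in F$ with $\abs{f} > 1$ and define
\[ \Lambda_n = \Lambda + A f^n\omega_{s+1} + \dotsb + A f^n\omega_{r-1} + I f^n\omega_r . \]
When $s = r-1$, only the last summand $I f^n \omega_r$ is present. Each $\Lambda_n$ is a lattice of rank $r$, and by \cref{prop:latticeClassCharac} we get $\pi(\Lambda_n) = [I_1\dotsm I_s I] = c$. Hence $\Lambda_n \in C$, because the components of $\LL^r$ are exactly the fibres of $\pi$.

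It remains to show $\Lambda_n \to \Lambda$. I do this by adding one summand at a time, using \cref{prop:lattice_limit} together with the triangle inequality. Set $M_k^{(n)} = \Lambda + \sum_{j=s+1}^{k} J_j f^n\omega_j$, where $J_j = A$ for $j < r$ and $J_r = I$. Then I want $\dd_\LL(M_k^{(n)}, M_{k-1}^{(n)}) \to 0$ as $n \to \infty$. The difficulty is that \cref{prop:lattice_limit} is stated for a \emph{fixed} base lattice, whereas here the base $M_{k-1}^{(n)}$ moves with $n$.

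I expect this to be the main obstacle, and I would handle it in one of two ways. The first is to apply \cref{prop:lattice_limit} directly to the full lattice $\Lambda$ of rank $s$, with the whole rank-$(r-s)$ block added at once, after re-proving that argument for a finitely generated module $\sum_j J_j f^n\omega_j$ in place of $I\omega$. What that argument needs is that every nonzero element of $\sum_j J_j f^n\omega_j$ has distance to $\Lambda$ tending to infinity. Write such an element as $f^n\sum_j a_j\omega_j$ with $a_j \in J_j$ not all zero. Since the $a_j$ lie in the discrete set $A$, they are bounded away from $0$ in absolute value. By $F_\infty$-linear independence, the distance from $\sum_j a_j\omega_j$ to the $F_\infty$-span of the $\psi_i$ is bounded below by a positive constant times $\max_j \abs{a_j}$. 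Multiplying by $f^n$ then gives distance at least a constant times $\abs{f}^n \to \infty$. The rest of the proof of \cref{prop:lattice_limit} goes through unchanged: $e_\Lambda$ maps the added module to a prelattice whose minimal nonzero absolute value tends to infinity, and \cref{prop:e_Lambda_leqR} bounds the sup-norm difference on $\abs{z} \leq 1$. The second way, if needed, is to iterate: let the $k$-th summand escape with $f^{n_k}$, where $n_k$ is chosen much larger than $n_{k-1}$, so that at each step the previous lattice is effectively fixed. Either way we get $\Lambda_n \to \Lambda$, which proves density.
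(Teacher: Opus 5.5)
Your proposal is correct and follows the paper's proof essentially verbatim. The paper also sets $\Lambda_n = \Lambda + f^n\Lambda'$ with $\Lambda' = A\psi_{s+1}+\dotsb+A\psi_{r-1}+I\psi_r$, chooses $I$ so that $[I_1\dotsm I_s I] = \pi(C)$ via \cref{prop:latticeClassCharac}, and defers the convergence $\Lambda_n \to \Lambda$ to the argument of \cref{prop:LLR_dense}. Your first way of proving that convergence is a sound filling-in of the step the paper only sketches: a uniform lower bound $\dd(f^n\sum_j a_j\omega_j,\Lambda) \geq c\abs{f}^n$ from norm equivalence on $F_\infty^r$, then the factorisation $e_{e_\Lambda(\Lambda_n)}\circ e_\Lambda$ together with \cref{prop:e_Lambda_leqR}.
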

\begin{proof}
  Let $\Lambda \in \LL^{\leq r-1}$ with $\Lambda = I_1\psi_1 +\dotsb +I_s\psi_s$ of rank $s < r$, let $f \in F$ with $\abs{f} > 1$, and let $\psi_{s+1}, \dotsc, \psi_r \in \CCi$ be $F_\infty$-linearly independent with $\Lambda$.
  Also let $I \in \cJ(A)$ such that $[I_1 \dotsm I_s I]_{\Cl(F)} = \pi(C)$, let
  \[ \Lambda' = A\psi_{s+1} +\dotsb +A\psi_{r-1} +I\psi_r \]
  be a lattice of rank $r-s$ and let $\Lambda_n = \Lambda+f^n\Lambda'$ be lattices of rank $r$ for $n \in \NN$.
  Then each $\Lambda_n$ has
  \[ \pi(\Lambda_n) = [I_1 \dotsm I_s A \dotsm A I]_{\Cl(F)} = \pi(C), \]
  so that each $\Lambda_n$ also lies in the component $C$, and similarly to the proof of \cref{prop:LLR_dense} we have that $\Lambda_n \to \Lambda$.
\end{proof}

\begin{proposition} \label{prop:LLR_irredComponentOpen}
  $C$ is open in $\LL^{\leq r}$.
\end{proposition}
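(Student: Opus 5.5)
Since $\LL^r$ is open in $\LL^{\leq r}$ by \cref{prop:LLR_bdClosed}, it suffices to show that $C$ is open in $\LL^r$. Equivalently, I would show that the class map $\pi : \LL^r \to \Cl(F)$ is locally constant for $\dd_\LL$. Concretely: if $\Lambda_n \to \Lambda$ with all $\Lambda_n, \Lambda \in \LL^r$, then $\pi(\Lambda_n) = \pi(\Lambda)$ for all large $n$. The strategy is to show that, for large $n$, $\Lambda_n$ has the shape $\psi^{(n)}\parens{F^r \cap g\hat{A}^r}$ for the \emph{same} $g \in \GL[r]{\finadele}$ as $\Lambda$, with $\psi^{(n)} \to \psi$. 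Then the proof of \cref{prop:latticeClassCharac} gives $\pi(\Lambda_n) = \pi_N(\det g) = \pi(\Lambda)$.

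\emph{Step 1 (zeros converge).} Fix $R$ large and use \cref{prop:dd_LL_radius}: $e_{\Lambda_n} \to e_\Lambda$ uniformly on $\abs{z} \leq R$. For entire functions on $\CCi$, the number of zeros in a closed disc is read off from the Newton polygon, i.e.\ from the dominant coefficient of the power series. A small uniform perturbation does not change this dominant index, provided $R$ is chosen so that no point of $\Lambda$ has absolute value exactly $R$. The same holds for small discs around each $\lambda \in \Lambda$ with $\abs{\lambda} \leq R$. Since the zeros are simple (\cref{prop:e_Lambda_props}), for large $n$ this yields a bijection
\[ \Lambda \cap \set{\abs{z} \leq R} \to \Lambda_n \cap \set{\abs{z} \leq R},\qquad \lambda \mapsto \lambda^{(n)}, \]
with $\abs{\lambda^{(n)} - \lambda} \to 0$. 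This bijection respects addition and multiplication by elements of $A$ of bounded degree, as long as everything stays in the disc: it sends an element to the unique nearby point, and both lattices are closed under these operations.

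\emph{Step 2 (transport a presentation).} Write $\Lambda = I_1\psi_1 + \dotsb + I_r\psi_r$ as in \cref{para:latticeAmodule}, and fix finite generating sets $\set{a_{ij}}_j$ of the ideals $I_i$. Choose $R$ so large that all $a_{ij}\psi_i$ lie in the disc. Put $\psi^{(n)}_i \defeq (a_{i1}\psi_i)^{(n)}/a_{i1}$. Additivity and $A$-compatibility from Step 1 give $(a_{ij}\psi_i)^{(n)} = a_{ij}\psi^{(n)}_i$, so $\Lambda'_n \defeq I_1\psi^{(n)}_1 + \dotsb + I_r\psi^{(n)}_r \subseteq \Lambda_n$. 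Also $\psi^{(n)} \to \psi$, and $\psi \in \Psi^r$ is an open condition (an $F_\infty$-rational hyperplane condition with bounded coefficients near $\psi$), so $\psi^{(n)} \in \Psi^r$ for large $n$. Thus $\Lambda'_n$ is a rank $r$ sublattice with $\pi(\Lambda'_n) = [I_1 \dotsm I_r] = \pi(\Lambda)$ by \cref{prop:latticeClassCharac}.

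\emph{Step 3 (main obstacle: equality $\Lambda'_n = \Lambda_n$).} The quotient $\Lambda_n/\Lambda'_n$ is finite by \cref{para:lattice_quot}, and I must show it is trivial. The plan is a covolume-type argument. By \cref{lem:idealBoundedReps} there is a bound $B$, depending only on $\Lambda$, such that every point of $F_\infty\psi_1 + \dotsb + F_\infty\psi_r$ lies within $B$ of $\Lambda$, uniformly in the basis. The same then holds for $\Lambda'_n$ with $\psi^{(n)}$ close to $\psi$. Suppose some $\mu \in \Lambda_n \setminus \Lambda'_n$ exists. Subtracting an element of $\Lambda'_n$, we may take $\abs{\mu} \leq B'$ for a fixed $B'$; this needs $\mu$ to lie in the $F_\infty$-span of $\psi^{(n)}$, which holds because $\Lambda_n$ has rank $r$ and contains $\Lambda'_n$ of finite index. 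Choosing $R > B'$ at the outset, $\mu = \lambda^{(n)}$ for some $\lambda \in \Lambda$ by the bijection of Step 1. Since $\lambda \in \Lambda$ is an $I$-combination of the $\psi_i$ with bounded coefficients, compatibility gives $\lambda^{(n)} \in \Lambda'_n$, a contradiction. Hence $\Lambda_n = \Lambda'_n$ and $\pi(\Lambda_n) = \pi(\Lambda)$.

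The delicate points are making the uniformity in Step 1 precise and ensuring that $R$ can be fixed before $n$. I expect Step 3 to require the most care.
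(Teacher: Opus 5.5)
Your argument is correct. The paper does not prove this proposition; it defers the proof to \cite{baker2020lattice}. Your mechanism, however, is the same one the paper uses in the proof of \cref{prop:Gek:isotopology}(3): an $A$-compatible bijection between the short vectors of $\Lambda$ and those of a nearby lattice, used to transport a presentation $I_1\psi_1+\dotsb+I_r\psi_r$. So the real differences lie in the tools.

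The paper builds the short-vector bijection elementarily. Item (3) of \cref{prop:e_Lambda_props} gives $\abs{e_\Lambda(z)}\geq\dd(z,\Lambda)$, so a point of $\Lambda'$ where $e_\Lambda$ is small lies close to $\Lambda$, and symmetrically with the roles swapped. Uniqueness comes from multiplying a small difference by powers of some $a\in A$ with $\abs{a}>1$. Your Newton-polygon zero count gives existence, uniqueness and the full count in one stroke. The cost is that it relies on the Weierstrass/Newton-polygon theory of entire functions on $\CCi$, whereas the paper's route needs only the partial-fraction expansion of $1/e_\Lambda$.

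Your Step 3 is essential. $A$-compatibility alone only yields $\Lambda'_n\subseteq\Lambda_n$, and the covering-radius argument via \cref{lem:idealBoundedReps} is exactly what forces equality, and hence equality of classes.

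When you write it up, make two points explicit.
\begin{enumerate}
\item Both the openness of $\Psi^r$ and the coefficient bounds in Step 3 come from
\[ m=\min\setst{\abs{\textstyle\sum_i x_i\psi_i}}{x\in F_\infty^r,\ \max_i\abs{x_i}=1}>0, \]
which is positive by compactness of the unit sphere of $F_\infty^r$. It gives $\abs{\sum_i x_i\psi'_i}=\abs{\sum_i x_i\psi_i}\geq m\max_i\abs{x_i}$ for all $x$, whenever $\max_i\abs{\psi'_i-\psi_i}<m$.
\item The radius on which you invoke additivity and $A$-compatibility must dominate the individual terms $c_{ij}a_{ij}\psi_i$ in the expansions of all $\lambda\in\Lambda$ with $\abs{\lambda}\leq R$. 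These terms are bounded via $m$ and the strong discreteness of the $I_i$: there are finitely many possible $x_i$, each with a fixed choice of the $c_{ij}$. Hence this larger radius depends only on $\Lambda$ and can be fixed before $n$, which resolves the ordering issue you flagged.
\end{enumerate}
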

This proposition's proof is lengthy and technical; for the full proof, see \cite{baker2020lattice}.

\begin{corollary} \label{coro:LLR_irredComponentBdComplete}
  $C \cup \LL^{\leq r-1}$ is closed in $\LL^{\leq r}$.
\end{corollary}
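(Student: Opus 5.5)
The plan is to show that the complement of $C \cup \LL^{\leq r-1}$ in $\LL^{\leq r}$ is open. By \cref{prop:LLleqR_union} and the decomposition of $\LL^r$ into irreducible components, which are indexed by the values of $\pi$, this complement is
\[ \LL^{\leq r} - \parens{C \cup \LL^{\leq r-1}} = \LL^r - C = \bigcup_{C' \neq C} C', \]
the union running over the irreducible components $C'$ of $\LL^r$ other than $C$. Components with distinct values of $\pi$ are disjoint, since $\pi$ is a well-defined function on $\LL^r$ by \cref{para:piN_noN}. Hence the complement is exactly this union, with no contribution from the boundary strata.

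The key step is then to apply \cref{prop:LLR_irredComponentOpen} to each $C'$. Each $C'$ is open in $\LL^{\leq r}$, so the union $\bigcup_{C' \neq C} C'$ is open in $\LL^{\leq r}$. Consequently its complement $C \cup \LL^{\leq r-1}$ is closed, which is the claim. There are only finitely many components, namely $\size{\Cl(F)}$ of them, but finiteness is not even needed, since an arbitrary union of open sets is open.

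The only point needing care is the set-theoretic identity in the first paragraph. It rests on the fact that $\LL^{\leq r}$ is the disjoint union of $\LL^r$ and $\LL^{\leq r-1}$, and that $\LL^r$ is partitioned by its components. The real difficulty, the openness of a single component, is entirely absorbed by \cref{prop:LLR_irredComponentOpen}, so I expect no genuine obstacle here.

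As a consistency check, $C \cup \LL^{\leq r-1}$ being closed in the complete space $\LL^{\leq r}$ means it is itself complete. Together with \cref{prop:LLR_irredComponentDense}, this identifies $C \cup \LL^{\leq r-1}$ as the completion of $C$.
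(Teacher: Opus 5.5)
Your proposal is correct and follows the paper's own argument: the complement of $C \cup \LL^{\leq r-1}$ in $\LL^{\leq r}$ is $\LL^r - C$, which is the union of the remaining irreducible components, and each of these is open by \cref{prop:LLR_irredComponentOpen}.
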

\begin{proof}
  \newcommand*{\md}{\mathcal{D}}
  If $\md$ denotes the set of irreducible components of $\LL^r$, then
  \[ \LL^{\leq r} -\parens{C \cup \LL^{\leq r-1}} = \LL^r -C = \bigcup_{C' \in \md-\set{C}} C' \lsptext{is open.} \qedhere \]
\end{proof}

\begin{theorem} \label{thm:LLR_irredComponent_completion}
  $C \cup \LL^{\leq r-1}$ is the completion of $C$, \ie the closure of $C$ in $\LL^{\leq r}$.
\end{theorem}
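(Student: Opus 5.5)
The theorem follows by combining \cref{prop:LLR_irredComponentDense}, \cref{coro:LLR_irredComponentBdComplete} and \cref{prop:LLleqR_complete}; the plan is to show that the closure of $C$ in $\LL^{\leq r}$ is exactly $C \cup \LL^{\leq r-1}$, and that this closure is complete.

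\emph{Completeness.} $\LL^{\leq r}$ is a complete metric space by \cref{prop:LLleqR_complete}. A closed subset of a complete metric space is complete. So once we identify $\overline{C}$ with $C \cup \LL^{\leq r-1}$, the latter is a complete space containing $C$ as a dense subset, hence it is the completion of $C$.

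\emph{Inclusion $\overline{C} \subseteq C \cup \LL^{\leq r-1}$.} By \cref{coro:LLR_irredComponentBdComplete}, $C \cup \LL^{\leq r-1}$ is closed in $\LL^{\leq r}$ and contains $C$. Hence it contains the closure $\overline{C}$.

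\emph{Inclusion $C \cup \LL^{\leq r-1} \subseteq \overline{C}$.} Trivially $C \subseteq \overline{C}$. By \cref{prop:LLR_irredComponentDense}, $C$ is dense in $C \cup \LL^{\leq r-1}$. Explicitly, each $\Lambda = I_1\psi_1 + \dotsb + I_s\psi_s$ of rank $s<r$ is the limit of the lattices $\Lambda_n = \Lambda + f^n\Lambda' \in C$ constructed there. So every point of $\LL^{\leq r-1}$ is a limit of points of $C$ and lies in $\overline{C}$.

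\emph{Where the difficulty lies.} None of the steps above is hard; the substantive input is \cref{prop:LLR_irredComponentOpen}, which underlies \cref{coro:LLR_irredComponentBdComplete} and whose proof is deferred to \cite{baker2020lattice}. It is what rules out a limit of lattices in $C$ landing in a different component $C'$ of the same rank $r$. The one small point to check is the convergence $\Lambda_n \to \Lambda$ in \cref{prop:LLR_irredComponentDense}. This follows by iterating \cref{prop:lattice_limit}: since $\abs{f}>1$ and the $\psi_i$ are $F_\infty$-independent of $\Lambda$, the added lattice points move away from $\Lambda$, so $\dd(f^n\psi_i, \Lambda)\to\infty$. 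That argument is already carried out in the proof of \cref{prop:LLR_dense}, so I take it as given.
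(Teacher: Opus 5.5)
Your argument is correct and is essentially the paper's proof: density of $C$ in $C \cup \LL^{\leq r-1}$ comes from \cref{prop:LLR_irredComponentDense}, closedness from \cref{coro:LLR_irredComponentBdComplete}, and completeness from being a closed subset of the complete space $\LL^{\leq r}$. One minor aside: \cref{prop:lattice_limit} has a fixed base lattice, so it cannot literally be iterated to give $\Lambda + f^n\Lambda' \to \Lambda$, but since you defer to the argument of \cref{prop:LLR_dense} and the theorem only needs \cref{prop:LLR_irredComponentDense} as stated, this does not affect your proof.
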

\begin{proof}
  By \cref{prop:LLR_irredComponentDense,coro:LLR_irredComponentBdComplete}, $C$ is dense in $C \cup \LL^{\leq r-1}$, which is closed in $\LL^{\leq r}$ and hence complete.
\end{proof}

%% file: text/3_2_ll_level.tex
\subsection{A metric on the space of lattices with level structure}

\subsubsection{Metric definition}

\begin{definition} \label{def:LLNR}
  We define the space $\LLNRi$ as the space of all pairs $(\Lambda,\iota)$ of a lattice $\Lambda$ of rank $\leq r$ and an $A/N$-module injection $\iota : N^{-1}\Lambda/\Lambda \into \parens{N^{-1}/A}^r$.
  Such an $\iota$ is called an \emph{$r$-inverse level $N$ structure}, or simply an inverse level $N$ structure if the value of $r$ is understood.

  The notation above with a backwards arrow is used to remind the reader that $\LLNRi$ is the collection of lattices with \emph{inverse} level $N$ structure.
\end{definition}

\begin{paragraph} \label{para:LLNRimainCompoIdentLLNR}
  If in particular $\Lambda$ in the above definition is of rank \emph{equal to} $r$, then considering the sizes of $\parens{N^{-1}/A}^s$ and $N^{-1}\Lambda/\Lambda$, the injection $\iota$ must in fact be an isomorphism, and so the subset of $\LLNRi$ with $\Lambda$ of rank $r$ is isomorphic to $\LL_N^r$, with $\iota^{-1}$ playing the role of the relevant level $N$ structure (hence calling $\iota$ an \emph{inverse} level $N$ structure).
\end{paragraph}

\begin{definition} \label{def:LLNR_recipRoot}
  For $(\Lambda,\iota) \in \LLNRi$, we define
  \[ \mu_{\Lambda,\iota} : \parens{N^{-1}/A}^r \longto \CCi, \quad l \longmapsto \begin{cases} e_\Lambda(\iota^{-1}(l))^{-1} & \sptext{if} l \in \Im{\iota}-\set{0} \\ 0 & \sptext{otherwise} \end{cases}. \qedhere \]
\end{definition}
Note that $\mu_{\Lambda,\iota}(l) \neq 0 \iff l \in \Im{\iota}-\set{0}$.

\begin{paragraph}
  Note that in \cref{def:LLNR_recipRoot}, $e_{\Lambda} \circ \iota^{-1}$ is an $A/N$-module bijection from $\Im{\iota} \subseteq \parens{N^{-1}/A}^r$ to the $N$-division points $\phi^\Lambda[N]$ of the Drinfeld module $\phi^\Lambda$ associated to $\Lambda$; in particular, if $\Lambda$ is of rank $r$, then $e_\Lambda \circ \iota^{-1}$ is a Drinfeld module level $N$ structure for $\phi^\Lambda$.
\end{paragraph}
So we have the following proposition:

\begin{proposition} \label{prop:LLNR_muModuleFactorisation}
  We can factorise the Drinfeld module-associated polynomial $\phi^\Lambda_N$ from \cref{def:IdealDModule} as
  \[ \phi^\Lambda_N(X) = X \prodp_{l \in \Im{\iota}} \parens{1-\frac{X}{e_\Lambda(\iota^{-1}(l))}} = X \prod_{l \in (N^{-1}/A)^r} \parens{1-\mu_{\Lambda,\iota}(l)X}. \qedhere \]
\end{proposition}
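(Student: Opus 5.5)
The first equality in \cref{prop:LLNR_muModuleFactorisation} is a reindexing of the product in \cref{def:IdealDModule}. The second equality then follows by rewriting each factor in terms of $\mu_{\Lambda,\iota}$ and checking that the extra factors are all $1$.

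For the first equality, start from
\[ \phi^\Lambda_N(X) = X \prodp_{\lambda \in N^{-1}\Lambda/\Lambda} \parens{1-\frac{X}{e_\Lambda(\lambda)}}. \]
By \cref{def:LLNR}, $\iota : N^{-1}\Lambda/\Lambda \into \parens{N^{-1}/A}^r$ is an injective $A/N$-module homomorphism. It is therefore a bijection of $N^{-1}\Lambda/\Lambda$ onto $\Im{\iota}$, and it sends $0$ to $0$. Substituting $\lambda = \iota^{-1}(l)$ for $l \in \Im{\iota}$ turns the primed product over nonzero $\lambda$ into the primed product over nonzero $l \in \Im{\iota}$. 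This gives the middle expression.

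The factors are well defined. For nonzero $\lambda \in N^{-1}\Lambda/\Lambda$ we have $e_\Lambda(\lambda) \neq 0$, because by \cref{prop:e_Lambda_props} the zeroes of $e_\Lambda$ are exactly $\Lambda$. We also use the convention that $e_\Lambda$ is evaluated on classes in $\CCi/\Lambda$.

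For the second equality, split the product over all $l \in \parens{N^{-1}/A}^r$ into two parts: $l \in \Im{\iota}-\set{0}$, and $l$ outside this set. By \cref{def:LLNR_recipRoot}:
\begin{itemize}
  \item for $l \in \Im{\iota}-\set{0}$, $\mu_{\Lambda,\iota}(l) = e_\Lambda(\iota^{-1}(l))^{-1}$, so $1-\mu_{\Lambda,\iota}(l)X$ equals $1-X/e_\Lambda(\iota^{-1}(l))$;
  \item for all other $l$, including $l = 0$ and every $l \notin \Im{\iota}$, $\mu_{\Lambda,\iota}(l) = 0$, so the factor is $1$.
\end{itemize}
Hence the full product over $\parens{N^{-1}/A}^r$ equals the primed product over $\Im{\iota}$, and the second equality follows.

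There is no real obstacle. The only point needing care is that the reindexing matches the nonzero elements on both sides, which holds because $\iota$ is injective and $\iota(0) = 0$. The case where $\Lambda$ has rank $< r$, so that $\iota$ is not surjective, is exactly what the zero convention for $\mu_{\Lambda,\iota}$ handles.
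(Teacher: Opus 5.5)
Your proof is correct and follows essentially the same route as the paper: reindex the product in \cref{def:IdealDModule} via the bijection $\iota : N^{-1}\Lambda/\Lambda \to \Im{\iota}$, then note that the extra factors are $1$ because $\mu_{\Lambda,\iota}$ vanishes at $0$ and off $\Im{\iota}$. You also check that each factor is well defined, namely $e_\Lambda(\lambda) \neq 0$ for nonzero classes $\lambda$, which the paper leaves implicit.
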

\begin{proof}
  $\Im{\iota^{-1}} = N^{-1}\Lambda/\Lambda$; this proves the first equality.
  The second follows from the definition of $\mu_{\Lambda,\iota}(l)$, which is $0$ for $l \in \parens{\parens{N^{-1}/A}^r -\Im{\iota}} \cup \set{0}$.
\end{proof}

As in the case of lattices without level structure, we can define a metric on the space of lattices with level structure and find its completion.
Here is our metric, related to $\dd_\LL$, and defined on the space $\overleftarrow{\LL_{N}^r}$:
\begin{definition} \label{def:dd_LLNR}
  The metric $\dd_{\LLNRi}$ on $\LLNRi$ is defined by
  \[ \dd_{\LLNRi} \bigl((\Lambda_1,\iota_1),(\Lambda_2,\iota_2)\bigr) = \dd_\LL(\Lambda_1,\Lambda_2) +\sum_{l \in (N^{-1}/A)^r} \abs{\mu_{\Lambda_1,\iota_1}(l)-\mu_{\Lambda_2,\iota_2}(l)}. \qedhere \]
\end{definition}

In \cref{def:dd_LLNR}, the sets $\Im{\iota_1}$ and $\Im{\iota_2}$ may have nonempty set difference, both for $\Im{\iota_1}-\Im{\iota_2}$ and $\Im{\iota_2}-\Im{\iota_1}$; so as special cases of the above definition we have the following:
\begin{corollary} ~ \begin{itemize}
  \item If $\Lambda_2 = 0$ is the zero lattice, then
  \[ \dd_{\LLNRi} \bigl((\Lambda,\iota),(0,0)\bigr) = \dd_\LL(\Lambda,0) +\sump_{\lambda \in N^{-1}\Lambda/\Lambda} \abs{\frac{1}{e_\Lambda(\lambda)}}\,. \]
  \item If $\Im{\iota_2} \subseteq \Im{\iota_1}$, then
  \begin{multline*}
      \dd_{\LLNRi} \bigl((\Lambda_1,\iota_1),(\Lambda_2,\iota_2)\bigr) = \dd_\LL(\Lambda_1,\Lambda_2) +\sum_{\substack{l \in \Im{\iota_1} \\ l \notin \Im{\iota_2}}} \abs{\frac{1}{e_{\Lambda_1}(\iota_1^{-1}(l))}} \\ +\sump_{l \in \Im{\iota_2}} \abs{\frac{1}{e_{\Lambda_1}(\iota_1^{-1}(l))}-\frac{1}{e_{\Lambda_2}(\iota_2^{-1}(l))}}\,. \qedhere
  \end{multline*}
\end{itemize} \end{corollary}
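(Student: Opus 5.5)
The statement has two parts, and both are direct specialisations of \cref{def:dd_LLNR}, so I will simply split its sum over $l \in \parens{N^{-1}/A}^r$ according to where $l$ lies relative to $\Im{\iota_1}$ and $\Im{\iota_2}$. The only facts I need are from \cref{def:LLNR_recipRoot}: $\mu_{\Lambda,\iota}(0) = 0$ always, $\mu_{\Lambda,\iota}(l) = 0$ for $l \notin \Im{\iota}$, and $\mu_{\Lambda,\iota}(l) = e_\Lambda(\iota^{-1}(l))^{-1}$ for $l \in \Im{\iota}-\set{0}$.

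For the first bullet, take $(\Lambda_2,\iota_2) = (0,0)$. Then $N^{-1}\cdot 0/0 = 0$, so $\Im{\iota_2} = \set{0}$ and $\mu_{0,0}$ vanishes identically. The sum therefore reduces to $\sum_{l} \abs{\mu_{\Lambda,\iota}(l)}$. Its nonzero terms are indexed by $l \in \Im{\iota}-\set{0}$. Since $\iota$ is injective, I reindex these terms by $\lambda = \iota^{-1}(l)$, which runs over the nonzero elements of $N^{-1}\Lambda/\Lambda$. This gives $\sump_{\lambda \in N^{-1}\Lambda/\Lambda} \abs{e_\Lambda(\lambda)^{-1}}$, as claimed.

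For the second bullet, assume $\Im{\iota_2} \subseteq \Im{\iota_1}$ and partition $\parens{N^{-1}/A}^r$ into three pieces.
\begin{itemize}
  \item On the complement of $\Im{\iota_1}$, both $\mu$'s are $0$, so these terms vanish.
  \item On $\Im{\iota_1}-\Im{\iota_2}$, only $\mu_{\Lambda_1,\iota_1}$ survives. Such $l$ are automatically nonzero, because $0 \in \Im{\iota_2}$. These terms give the middle sum.
  \item On $\Im{\iota_2}$, the term $l = 0$ contributes $0$, and the remaining terms give the primed sum of differences.
\end{itemize}
Adding back $\dd_\LL(\Lambda_1,\Lambda_2)$ completes the second bullet.

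None of these steps is a real obstacle. The only points needing care are checking that $0$ lies in every image, so that the primed sums exclude it correctly, and noting that reindexing through $\iota^{-1}$ is legitimate because $\iota$ is injective.
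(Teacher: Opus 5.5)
Your proof is correct and takes the same approach as the paper, which presents this corollary without further argument as a special case of \cref{def:dd_LLNR}, obtained by splitting the sum over $(N^{-1}/A)^r$ according to membership in $\Im{\iota_1}$ and $\Im{\iota_2}$. Your remarks that $0$ lies in every image and that reindexing through $\iota^{-1}$ is valid because $\iota$ is injective supply exactly the details the paper leaves implicit.
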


This metric space structure on $\LLNRi$ induces a topology in which lattices tending to zero behave similarly to the case without level structure:
\begin{proposition} \label{prop:LLNRi_zeroNeighbour}
  Let $\Lambda$ be a variable lattice of rank at most $r$ with a variable $r$-inverse level $N$ structure $\iota$. If $\dd_L(\Lambda,0) \to 0$ then $\dd_{\LLNRi}\bigl((\Lambda,\iota),0\bigr) \to 0$.
\end{proposition}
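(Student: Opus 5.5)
The plan is to reduce the statement to showing that every term $\abs{e_\Lambda(\lambda)}^{-1}$ with $\lambda \in N^{-1}\Lambda/\Lambda - \set{0}$ tends to $0$, uniformly in $\lambda$. The zero lattice carries the zero inverse level structure, so $\mu_{0,0} \equiv 0$. By the first special case of the corollary following \cref{def:dd_LLNR},
\[ \dd_{\LLNRi}\bigl((\Lambda,\iota),(0,0)\bigr) = \dd_\LL(\Lambda,0) + \sump_{\lambda \in N^{-1}\Lambda/\Lambda} \abs{\frac{1}{e_\Lambda(\lambda)}}. \]
The first summand tends to $0$ by hypothesis. The sum has at most $\size(N^{-1}/A)^r$ terms, a bound independent of $\Lambda$ and $\iota$, because $\iota$ is injective. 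So it suffices to show that $\min\abs{e_\Lambda(\lambda)}$ over these $\lambda$ tends to $\infty$.

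\emph{Step 1: the shortest nonzero vector of $\Lambda$ goes to infinity.} Let $R_\Lambda = \minp_{\lambda \in \Lambda}\abs{\lambda}$. By \cref{prop:dd_LL_radius}, applied with the fixed lattice $0$ (note $e_0(z) = z$), for every $R > 0$ we get $\sup_{\abs{z}\le R}\abs{e_\Lambda(z) - z} \to 0$. Suppose some nonzero $\lambda \in \Lambda$ had $\abs{\lambda} \le R$. Since $e_\Lambda(\lambda) = 0$, this would give $\abs{\lambda} = \abs{e_\Lambda(\lambda) - \lambda}$, which is arbitrarily small. For $\Lambda$ close enough to $0$ this contradicts discreteness only if we also control small vectors, so I would argue as follows: when the supremum is below $1$ with $R \geq 1$, any such $\lambda$ would satisfy $\abs{\lambda} < 1$. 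I would then use $A$-stability: multiplying by a fixed $a \in A$ with $\abs{a} > 1$ enough times moves $\lambda$ into the annulus $1 \le \abs{\cdot} \le \abs{a}$, where the same estimate gives a contradiction. Hence $R_\Lambda \to \infty$.

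\emph{Step 2: transfer to $N^{-1}\Lambda$.} Fix once and for all a nonzero $a \in N$. If $\mu \in N^{-1}\Lambda$ is nonzero, then $a\mu \in \Lambda - \set{0}$, so $\abs{\mu} \ge R_\Lambda/\abs{a}$. For each nonzero class in $N^{-1}\Lambda/\Lambda$, pick a representative $\mu$ of minimal absolute value; this exists by strong discreteness. Then $\abs{\mu - \lambda} \ge \abs{\mu}$ for all $\lambda \in \Lambda$.

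\emph{Step 3: a lower bound for $\abs{e_\Lambda(\mu)}$ from the product formula.} In $e_\Lambda(\mu) = \mu \prodp_{\lambda \in \Lambda}(1 - \mu/\lambda)$, each factor has absolute value $\abs{\mu - \lambda}/\abs{\lambda}$. If $\abs{\lambda} > \abs{\mu}$ this equals $1$ by the ultrametric inequality. If $\abs{\lambda} \le \abs{\mu}$ it is at least $\abs{\mu}/\abs{\lambda} \ge 1$, by the minimality of $\mu$. Hence $\abs{e_\Lambda(\mu)} \ge \abs{\mu} \ge R_\Lambda/\abs{a} \to \infty$, uniformly over all nonzero classes. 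Combined with the bounded number of terms, this finishes the proof.

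The main obstacle is Step 1, namely extracting $R_\Lambda \to \infty$ from closeness to $0$ in $\dd_\LL$. The paper defines $\dd_\LL$ only via $\abs{z} \le 1$, so I lean on \cref{prop:dd_LL_radius}. I would take care with the rescaling argument for vectors of size below $1$; an alternative is \cref{prop:e_Lambda_leqR}-type estimates on the coefficients of $e_\Lambda$.
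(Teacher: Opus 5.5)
Your proof is correct and has the same skeleton as the paper's. Both fix $a \in N-\set{0}$, bound $\abs{e_\Lambda(\lambda')}$ below by $R/\abs{a}$ using $a\lambda'+a\lambda \in \Lambda-\set{0}$, and sum over at most $\sizep{A/N}^r$ terms. In Step~3 the paper gets the bound from $1/e_\Lambda(\lambda') = \sum_{\lambda\in\Lambda}(\lambda'+\lambda)^{-1}$ and the ultrametric inequality, which works for any representative $\lambda'$. Your product-formula argument proves the same inequality $\abs{e_\Lambda(\mu)}\geq\dd(\mu,\Lambda)$, at the small cost of choosing a minimal representative.

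The real difference is Step~1, which the paper does not prove; it only says that $\dd_\LL(\Lambda,0)\to 0$ makes $R$ large. Your $A$-stability argument supplies this, and it is genuinely needed, because the implication fails for prelattices. The roots of $z+cz^{q^m}$ with $\abs{c}=q^{-m}$ form a prelattice $\Lambda$ with $e_\Lambda(z)=z+cz^{q^m}$. Thus $\dd_V(\Lambda,0)=q^{-m}\to 0$, while every nonzero element has absolute value $q^{m/(q^m-1)}\to 1$.

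Two corrections to Step~1. First, the annulus step needs $R\geq\abs{a}$, not merely $R\geq 1$. If $\sup_{\abs{z}\leq R}\abs{e_\Lambda(z)-z}<1$ and $0\neq\lambda\in\Lambda$ has $\abs{\lambda}\leq R$, then $\abs{\lambda}<1$. The least $n$ with $\abs{a^n\lambda}\geq 1$ then gives $1\leq\abs{a^n\lambda}<\abs{a}\leq R$, a contradiction. Since $R$ is arbitrary, this costs nothing.

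Second, citing \cref{prop:dd_LL_radius} with the fixed lattice $0$ is circular relative to the paper. The paper proves it from continuity of the $\phi^\Lambda_{a,i}$ on $\LL^{\leq r}$ (\cref{prop:DmoduleCoeffsModular}), obtained by descending from $\LLi{(a)}{r}$. At the zero lattice, that descent is essentially the statement you are proving. You can avoid this as follows. By \cref{lem:funcCoeffsSmall}, $\abs{e_{\Lambda,i}}\leq\dd_\LL(\Lambda,0)$ for $i\geq 1$. Comparing coefficients of $z^{q^n}$ in $e_\Lambda(az)=\phi^\Lambda_a(e_\Lambda(z))$ gives
\[ \phi^\Lambda_{a,n} = a^{q^n}e_{\Lambda,n} - \sum_{k=0}^{n-1}\phi^\Lambda_{a,k}\,e_{\Lambda,n-k}^{q^k}, \]
so $\phi^\Lambda_{a,n}\to 0$ for $1\leq n\leq r\deg a$, and $\phi^\Lambda_{a,n}=0$ beyond that since the rank is at most $r$. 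For $\abs{z}\leq 1$ write $e_\Lambda(az)-az=a\parens{e_\Lambda(z)-z}+\sum_{n\geq 1}\phi^\Lambda_{a,n}e_\Lambda(z)^{q^n}$. This gives $\sup_{\abs{w}\leq\abs{a}}\abs{e_\Lambda(w)-w}\to 0$, and replacing $a$ by $a^k$ handles every radius. This is also where the rank bound enters your argument.
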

\begin{proof}
  Since $\dd_L(\Lambda,0) \to 0$, we have that $R = \minp_{\lambda \in \Lambda} \abs{\lambda}$ is large.
  Now let $l \in \Im{\iota}-\set{0}$ and $a \in N-\set{0}$ where $a$ is fixed.
  Then for a representative $\lambda' \in \iota^{-1}(l)$ we have that $e_\Lambda(\iota^{-1}(l))^{-1} = \sum_{\lambda \in \Lambda} (\lambda'+\lambda)^{-1}$, so that
  \[ \abs{\frac{1}{a \cdot e_\Lambda(\iota^{-1}(l))}} \leq \max_{\lambda \in \Lambda} \frac{1}{\abs{a\lambda'+a\lambda}} \leq \max_{\lambda \in \Lambda} \frac{1}{R} = \frac{1}{R}, \]
  since each $a\lambda'+a\lambda \in \Lambda-\set{0}$.
  Hence
  \[ \sump_{l \in \Im{\iota}} \abs{\frac{1}{e_\Lambda(\iota^{-1}(l))}} \leq \frac{\abs{a}\parens{\size{\Im{\iota}}-1}}{R} < \frac{\abs{a}\sizep{A/N}^r}{R} \longto 0. \qedhere \]
\end{proof}

For rank $s \leq r$, the space $\LLi{N}{s}$ can be considered in multiple ways as a subspace of $\LLi{N}{r}$; we now compare the metrics on this subspace both as $\LLi{N}{s}$ and as a sub-metric space of $\LLi{N}{r}$:
\begin{proposition} \label{prop:dd_LLNsrEquality}
  Any $A/N$-module injection $\delta : \parens{N^{-1}/A}^s \into \parens{N^{-1}/A}^r$ induces an injection $\LLi{N}{s} \into \LLi{N}{r},\ (\Lambda,\iota) \mapsto (\Lambda, \delta \circ \iota)$.
  The metrics $\dd_{\LLi{N}{s}}$ and $\dd_{\LLi{N}{r}}$ agree on $\LLi{N}{s} \subseteq \LLi{N}{r}$.
\end{proposition}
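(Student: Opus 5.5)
The plan is to verify the two claims separately, both straight from the definitions. For the first claim, take $(\Lambda,\iota)\in\LLi{N}{s}$, so $\Lambda$ has rank $\leq s\leq r$ and $\iota : N^{-1}\Lambda/\Lambda \into \parens{N^{-1}/A}^s$ is an $A/N$-module injection. Then $\delta\circ\iota$ is a composite of $A/N$-module injections, hence an $r$-inverse level $N$ structure, and $(\Lambda,\delta\circ\iota)\in\LLi{N}{r}$. Injectivity of the induced map is immediate: if $(\Lambda,\delta\circ\iota)=(\Lambda',\delta\circ\iota')$, then $\Lambda=\Lambda'$, and since $\delta$ is injective, $\delta\circ\iota=\delta\circ\iota'$ forces $\iota=\iota'$.

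For the equality of metrics, I would first compare the functions $\mu$. The key identity to establish is
\[ \mu_{\Lambda,\delta\circ\iota}(\delta(l)) = \mu_{\Lambda,\iota}(l) \lsptext{for} l \in \parens{N^{-1}/A}^s, \qquad \mu_{\Lambda,\delta\circ\iota}(m) = 0 \lsptext{for} m \notin \Im{\delta}. \]
This follows from \cref{def:LLNR_recipRoot}. First, $\Im(\delta\circ\iota)=\delta(\Im{\iota})$. Second, $l\in\Im{\iota}-\set{0}$ if and only if $\delta(l)\in\Im(\delta\circ\iota)-\set{0}$, using injectivity of $\delta$ (so that $\delta(l)\neq 0$ if and only if $l\neq0$). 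Third, $(\delta\circ\iota)^{-1}(\delta(l))=\iota^{-1}(l)$, so the nonzero values agree. Finally, any $m\notin\Im{\delta}$ is not in $\Im(\delta\circ\iota)$, so $\mu$ vanishes there.

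Now take two points $(\Lambda_1,\iota_1),(\Lambda_2,\iota_2)\in\LLi{N}{s}$. The $\dd_\LL$ term in \cref{def:dd_LLNR} is the same on both sides, since the lattices are unchanged. For the sum over $m\in\parens{N^{-1}/A}^r$, split it into $m\in\Im{\delta}$ and $m\notin\Im{\delta}$. Terms with $m\notin\Im{\delta}$ are $\abs{0-0}=0$. Terms with $m\in\Im{\delta}$ are reindexed as $m=\delta(l)$, which is a bijection with $l\in\parens{N^{-1}/A}^s$ because $\delta$ is injective. By the identity above, these terms give exactly $\sum_{l}\abs{\mu_{\Lambda_1,\iota_1}(l)-\mu_{\Lambda_2,\iota_2}(l)}$. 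Hence $\dd_{\LLi{N}{r}}$ restricted to the image equals $\dd_{\LLi{N}{s}}$.

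There is no real obstacle here. The only point needing care is the bookkeeping that $0\in\parens{N^{-1}/A}^s$ maps to $0$ and that nonzero elements stay nonzero under $\delta$, which is where injectivity of $\delta$ (and not merely $A/N$-linearity) is used.
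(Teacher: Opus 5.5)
Your proof is correct and complete. The paper leaves this proposition to the reader, and your direct check from the definitions is the intended verification: the identity $\mu_{\Lambda,\delta\circ\iota}(\delta(l)) = \mu_{\Lambda,\iota}(l)$, the vanishing of $\mu_{\Lambda,\delta\circ\iota}$ off $\Im{\delta}$, and the rank-independence of $\dd_\LL$ are all that is needed.
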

\begin{proof}
  Left to the reader.
\end{proof}
The metric $\dd_{\LLi{N}{r}}$ is thus independent of the rank $r$ in a sense.

\subsubsection{Metric Completeness}

As in the case of level-less lattices, $\LLNRi$ is the metric completion of $\LL_N^r$ as shown by the following results:

\begin{proposition} \label{prop:LLNRi_complete}
  $\LLNRi$ is complete.
\end{proposition}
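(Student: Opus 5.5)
Take a Cauchy sequence $(\Lambda_n,\iota_n)_{n}$ in $\LLNRi$. Since both summands in \cref{def:dd_LLNR} are nonnegative, $(\Lambda_n)_n$ is Cauchy for $\dd_\LL$, and for each fixed $l \in (N^{-1}/A)^r$ the sequence $\mu_{\Lambda_n,\iota_n}(l)$ is Cauchy in $\CCi$. By \cref{prop:LLleqR_complete} the lattices converge, $\Lambda_n \to \Lambda \in \LL^{\leq r}$. Because $\CCi$ is complete, $\mu_{\Lambda_n,\iota_n}(l) \to m(l)$ for each $l$. The sum in \cref{def:dd_LLNR} is finite, so it then suffices to find an inverse level structure $\iota$ on $\Lambda$ with $\mu_{\Lambda,\iota} = m$.

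To construct $\iota$, I would first use that $\dd_\LL$-convergence together with \cref{prop:dd_LL_radius} gives locally uniform convergence $e_{\Lambda_n} \to e_\Lambda$ on every ball. This should yield convergence of the Drinfeld module polynomials $\phi^{\Lambda_n}_N \to \phi^\Lambda_N$ coefficientwise, since the proof of \cref{prop:LLleqR_complete} shows that the Drinfeld modules converge. By \cref{prop:LLNR_muModuleFactorisation},
\[ \phi^{\Lambda_n}_N(X) = X \prod_{l} \parens{1-\mu_{\Lambda_n,\iota_n}(l)X}. \]
Passing to the limit gives $\phi^\Lambda_N(X) = X\prod_l (1-m(l)X)$. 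Comparing roots, the nonzero values $m(l)$ are exactly the reciprocals $e_\Lambda(\lambda)^{-1}$ for $\lambda \in N^{-1}\Lambda/\Lambda - \set{0}$, with multiplicity one since $\phi^\Lambda_N$ is separable. Let $S = \setst{l}{m(l) \neq 0} \cup \set{0}$. Define $\iota^{-1}: S \to N^{-1}\Lambda/\Lambda$ by sending $l \mapsto e_\Lambda^{-1}(m(l)^{-1})$ on $S - \set{0}$ and $0 \mapsto 0$. This is a bijection by the root comparison.

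It remains to show that $S$ is an $A/N$-submodule and that $\iota^{-1}$ is $A/N$-linear. For $l,l' \in \Im\iota_n$, $\FF_q$-linearity of $e_{\Lambda_n}$ gives $1/\mu_n(l+l') = 1/\mu_n(l)+1/\mu_n(l')$. Likewise $1/\mu_n(al) = \phi^{\Lambda_n}_a(1/\mu_n(l))$, using \cref{prop:Dmodule_division_lattice}. One writes these relations polynomially, for instance $\mu(l+l')(\mu(l)+\mu(l')) = \mu(l)\mu(l')$, and passes to the limit. For $l,l' \in S$ with $m(l),m(l')$ and the relevant sums nonzero, the relations persist and show that $S$ is closed and $\iota^{-1}$ is additive and $A$-linear.

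\textbf{Main obstacle.} The degenerate cases are the hard part: $m(l)\ne0$, $m(l')\ne0$ but $m(l+l')=0$, or the reverse. Here $1/\mu_n(l)+1/\mu_n(l') \to \infty$ while each term stays bounded, which is impossible. Conversely, if $m(l)$ and $m(l')$ are nonzero, then $1/\mu_n(l+l')$ converges to the finite value $1/m(l)+1/m(l')$. If this value is zero, then $l+l'$ should lie outside $S$, and I need it to correspond to $0$ in $N^{-1}\Lambda/\Lambda$. This forces care with $\Im\iota_n$ versus $S$, since points of $\Im\iota_n$ with $1/\mu_n \to \infty$ drop out.

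I would handle this with the count from the root comparison. $\size (S - \set{0})$ equals $\size(N^{-1}\Lambda/\Lambda) - 1$, and by the additivity relations $S$ is closed under addition wherever the limit is finite. Hence $S$ is a subgroup of $\Im\iota_n$ for $n$ large, namely the set of $l$ with $1/\mu_n(l)$ bounded. It is also closed under $A$ by the same argument with $\phi_a$.

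Injectivity of $\iota$ comes from the bijectivity of $\iota^{-1}$ onto $N^{-1}\Lambda/\Lambda$. Then $(\Lambda,\iota) \in \LLNRi$ and $\dd_{\LLNRi}((\Lambda_n,\iota_n),(\Lambda,\iota)) \to 0$.
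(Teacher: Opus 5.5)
Your proof is correct and follows the same route as the paper's sketch. The lattices converge in $\LL^{\leq r}$, the values $\mu_{\Lambda_n,\iota_n}(l)$ converge to limits $m(l)$, the nonzero limits are reciprocals of $N$-division points of $\phi^\Lambda$, and $\iota$ is defined as $e_\Lambda^{-1}\circ(1/m)$; your root comparison in the factorisation of $\phi^{\Lambda_n}_N$ supplies the bijectivity the sketch leaves implicit. The degenerate case needs no counting: for $l,l'\in S$ and large $n$ with $l+l'\neq 0$, the quantity $1/\mu_n(l+l') = 1/\mu_n(l)+1/\mu_n(l')$ stays bounded, so $\abs{\mu_n(l+l')}$ is bounded below and hence $m(l+l')\neq 0$.
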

\begin{proof}[Sketch of proof]
  For a Cauchy sequence $(\Lambda_n,\iota_n)$, we have that $\Lambda_n$ is a Cauchy sequence in $\LL^{\leq r}$ and hence converges to $\Lambda \in \LL^{leq r}$.
  For each $l \in \parens{N^{-1}/A}^r$ we have that $(\mu_{\Lambda_n,\iota_n}(l))_{n = 0}^\infty$ is a Cauchy sequence in $\CCi$ and hence converges; we call the limit $\mu(l)$, where $\mu : \parens{N^{-1}/A}^r \to \CCi$.
  We then show that each $\mu(l) \in \phi^\Lambda[N]$, after which we can define $\iota = e_\Lambda^{-1} \circ (1/\mu)$ as the needed inverse level structure.
\end{proof}

\begin{proposition} \label{prop:LLNR_dense}
  $\LL_N^r$ is a dense subset of $\LLNRi$.
\end{proposition}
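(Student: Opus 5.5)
Given $(\Lambda,\iota)\in\LLNRi$ with $\Lambda$ of rank $s<r$ (rank $r$ is trivial by \cref{para:LLNRimainCompoIdentLLNR}), I will build rank-$r$ lattices with level structure converging to it, in the spirit of \cref{prop:LLR_dense}. Write $W=\Im\iota\subseteq(N^{-1}/A)^r$. It is a free $A/N$-submodule of rank $s$, being isomorphic to $N^{-1}\Lambda/\Lambda\simeq(A/N)^s$. The first step is a small piece of module theory: $W$ has a complement, i.e.\ there is an $A/N$-submodule $W'$ with $(N^{-1}/A)^r=W\oplus W'$ and $W'\simeq(A/N)^{r-s}$. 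To see this, note that $A/N$ is a finite product of local Artinian rings. On each local factor $R$, the image $W_R$ of $W$ is free with a basis whose reductions mod the maximal ideal are independent, since freeness of rank $s$ with $\iota$ injective means the induced map on $(W_R)\otimes k\to k^r$ is injective via socle considerations. If this injectivity argument becomes fiddly, I will instead extend a basis of $W_R$ directly using Nakayama.

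Next I choose a rank-$(r-s)$ lattice $\Lambda'$ with $\Lambda+\Lambda'$ direct and $F_\infty$-independent, as in \cref{prop:LLR_dense}. Take $\Lambda'=A\psi_{s+1}+\dots+A\psi_r$, and set $\Lambda_n=\Lambda+f^n\Lambda'$ with $f\in F$, $\abs f>1$. Then $N^{-1}\Lambda_n/\Lambda_n=N^{-1}\Lambda/\Lambda\oplus N^{-1}f^n\Lambda'/f^n\Lambda'$. Fix an isomorphism $\beta:N^{-1}\Lambda'/\Lambda'\isoto W'$, and define $\iota_n=\iota\oplus(\beta\circ f^{-n})$. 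This is a bijection onto $(N^{-1}/A)^r$, so $(\Lambda_n,\iota_n)\in\LL_N^r$. Here $f^{-n}$ means division by $f^n$, which is a bijection $N^{-1}f^n\Lambda'/f^n\Lambda'\to N^{-1}\Lambda'/\Lambda'$.

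It remains to show $\dd_{\LLNRi}((\Lambda_n,\iota_n),(\Lambda,\iota))\to0$. The first term $\dd_\LL(\Lambda_n,\Lambda)\to0$ holds as in \cref{prop:LLR_dense}, by iterating \cref{prop:lattice_limit}; the added points $f^n\Lambda'-\{0\}$ have distance to $\Lambda$ tending to infinity by $F_\infty$-independence. For $l\notin W$ we must show $\mu_{\Lambda_n,\iota_n}(l)\to0$, i.e.\ $\abs{e_{\Lambda_n}(\lambda)}\to\infty$ for $\lambda=\lambda_1+f^n\lambda_2$ with $\lambda_2\in N^{-1}\Lambda'\setminus\Lambda'$ a fixed (bounded) representative. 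The term $\dd(\lambda,\Lambda_n)\ge\dd(f^n\lambda_2,\Lambda+f^n\Lambda')$ grows like $\abs f^n$, again by independence. By \cref{prop:e_Lambda_infinity}, $\abs{e_\Lambda(\lambda)}\to\infty$ relative to the prelattice $e_\Lambda(f^n\Lambda')$. Since $e_{\Lambda_n}=e_{e_\Lambda(f^n\Lambda')}\circ e_\Lambda$, \cref{coro:e_Lambda_infinity} or \cref{prop:e_Lambda_leqR}-type bounds then give $\abs{e_{\Lambda_n}(\lambda)}\to\infty$.

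For $l\in W\setminus\{0\}$, take $\lambda=\iota^{-1}(l)\in N^{-1}\Lambda$. Then $e_{\Lambda_n}(\lambda)=e_{e_\Lambda(f^n\Lambda')}(e_\Lambda(\lambda))$. The minimum modulus $R_n$ of $e_\Lambda(f^n\Lambda')\setminus\{0\}$ tends to infinity, as in the proof of \cref{prop:lattice_limit}. So \cref{prop:e_Lambda_leqR} gives $e_{\Lambda_n}(\lambda)-e_\Lambda(\lambda)\to0$, with $e_\Lambda(\lambda)\ne0$ fixed, and hence the reciprocals converge. The main obstacle I expect is the uniform growth estimate in the $l\notin W$ case, where the point itself moves with $n$. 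There I plan to bound $\abs{e_{\Lambda_n}(\lambda)}$ below using the product formula of \cref{prop:e_Lambda_latticeProps} applied with $a\in N$, exactly mirroring the estimate in \cref{prop:lattice_limit}.
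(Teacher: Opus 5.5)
Your construction and the split into the cases $l\in\Im\iota$ and $l\notin\Im\iota$ match the paper's sketched proof. The complement $W'$, the $\dd_\LL$ term and the case $l\in\Im\iota\setminus\set{0}$ are fine. The case $l\notin\Im\iota$, however, does not go through as written.

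Here both the lattice $\Lambda_n$ (equivalently $P_n=e_\Lambda(f^n\Lambda')$) and the point $\lambda=\lambda_1+f^n\lambda_2$ move. \cref{prop:e_Lambda_infinity} and \cref{coro:e_Lambda_infinity} are statements about a \emph{fixed} prelattice, so they do not apply. \cref{prop:e_Lambda_leqR} needs $\abs{y_n}<R_n$ for $y_n=e_\Lambda(\lambda)$, and this fails in general: $\lambda$ is an $N$-division point of $\Lambda_n$, so $y_n$ lives on the scale of $P_n$ and can exceed $R_n$ (take $\lambda_2$ a division point of a long vector of $\Lambda'$). Your fallback via \cref{prop:e_Lambda_latticeProps} with $a\in N$ does not obviously help either. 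Applied to $\Lambda_n$ it is vacuous, because $a\lambda\in\Lambda_n$. Applied to $\Lambda$ it only controls $e_\Lambda(\lambda)$, not the passage through the varying $e_{P_n}$.

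The missing ingredient is the bound $\abs{e_P(z)}\geq\dd(z,P)$, valid uniformly for every prelattice $P$. It follows from \cref{prop:e_Lambda_props}(3) and the ultrametric inequality, $\abs{1/e_P(z)}\leq\max_{p\in P}\abs{z-p}^{-1}$; this is the estimate the paper uses in \cref{prop:LLNRi_zeroNeighbour}. It gives $\abs{\mu_{\Lambda_n,\iota_n}(l)}\leq\dd(\lambda,\Lambda_n)^{-1}$ directly, without passing through $e_\Lambda$.

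It remains to show $\dd(\lambda,\Lambda_n)\to\infty$. Your inequality $\dd(\lambda,\Lambda_n)\geq\dd(f^n\lambda_2,\Lambda_n)$ need not hold, but the growth does. Set $V=F_\infty\Lambda$ and $V'=F_\infty\Lambda'$. For $\mu\in\Lambda$ and $\mu'\in\Lambda'$, write $\lambda-\mu-f^n\mu'=(\lambda_1-\mu)+f^n(\lambda_2-\mu')\in V\oplus V'$. Since $\abs{\cdot}$ is a norm on the finite-dimensional $F_\infty$-space $V\oplus V'$, there is $c>0$ with $\abs{v+v'}\geq c\abs{v'}$. Also $\lambda_2-\mu'$ is a nonzero element of $N^{-1}\Lambda'$. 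Hence $\dd(\lambda,\Lambda_n)\geq c\,m\,\abs{f}^n$, where $m$ is the least modulus of a nonzero element of $N^{-1}\Lambda'$.

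The same two facts give $R_n\geq c\,m'\abs{f}^n$, with $m'$ the least nonzero modulus on $\Lambda'$, and hence the $\dd_\LL$ term directly. This is cleaner than ``iterating'' \cref{prop:lattice_limit}, which keeps its base lattice fixed.
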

\begin{proof}[Sketch of proof]
  As in the proof of \cref{prop:LLR_dense}, for $(\Lambda,\iota) \in \LLNRi$ of rank $s$ with $0 \leq s \leq r$ we define a sequence $(\Lambda_n,\iota_n)$ of lattices with level structure where $\Lambda \subset \Lambda_n$ and the added lattice points grow far from $\Lambda$.
  We then show that $\mu_{\Lambda_n,\iota_n}(l) \to \mu_{\Lambda,\iota}(l)$ separately for $l \in \Im{\iota}-\set{0}$ and for $l \in \Im{\iota_n}-\Im{\iota}$.
\end{proof}

\begin{theorem} \label{prop:LLNR_completion}
  The space $\LLNRi$ is the completion of $\LL_N^r$.
\end{theorem}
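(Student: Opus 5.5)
This follows the pattern of \cref{prop:LLleqR_completion}. By \cref{prop:LLNRi_complete}, $\LLNRi$ is a complete metric space. By \cref{prop:LLNR_dense}, $\LL_N^r$ is a dense subset of it. A complete metric space containing a given metric space as a dense subspace, with the induced metric, is that space's completion, unique up to unique isometry. So the theorem follows once the metric on $\LL_N^r$ is confirmed to be the restriction of $\dd_{\LLNRi}$.

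That confirmation is the only step not already stated, and I would make it explicit. By \cref{para:LLNRimainCompoIdentLLNR}, the pairs $(\Lambda,\iota)\in\LLNRi$ with $\Lambda$ of rank exactly $r$ are exactly those with $\iota$ an isomorphism. Hence $(\Lambda,\alpha)\mapsto(\Lambda,\alpha^{-1})$ is a bijection from $\LL_N^r$ onto this subset. We take the metric on $\LL_N^r$ to be the one transported along this identification, namely
\[ \dd\bigl((\Lambda_1,\alpha_1),(\Lambda_2,\alpha_2)\bigr)=\dd_\LL(\Lambda_1,\Lambda_2)+\sum_{l}\abs{e_{\Lambda_1}(\alpha_1(l))^{-1}-e_{\Lambda_2}(\alpha_2(l))^{-1}}, \]
where the sum runs over nonzero $l\in(N^{-1}/A)^r$. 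With this metric the embedding is isometric by definition.

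It is worth noting that this formula really is a metric on $\LL_N^r$. If the distance is zero, then $\Lambda_1=\Lambda_2$, because $\dd_\LL$ is a metric. Then $e_{\Lambda_1}\circ\alpha_1=e_{\Lambda_1}\circ\alpha_2$, and since $e_{\Lambda_1}$ is injective on $\CCi/\Lambda_1$, we get $\alpha_1=\alpha_2$. This point is already implicit in $\dd_{\LLNRi}$ being a metric.

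There is no real obstacle left: the substantive analytic work sits in \cref{prop:LLNRi_complete} and \cref{prop:LLNR_dense}. The one subtlety is the identification of $\LL_N^r$ with a subset of $\LLNRi$ through inverse level structures, and that is handled above.
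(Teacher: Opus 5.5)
Your proposal is correct and is the same argument as the paper's: completeness of $\LLNRi$ (\cref{prop:LLNRi_complete}) together with density of $\LL_N^r$ (\cref{prop:LLNR_dense}). Your explicit check that the metric on $\LL_N^r$ is the one transported along $(\Lambda,\alpha)\mapsto(\Lambda,\alpha^{-1})$ spells out the identification that the paper takes for granted in \cref{para:LLNRimainCompoIdentLLNR}.
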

\begin{proof}
  By \cref{prop:LLNRi_complete,prop:LLNR_dense}, $\LL_N^r$ is dense in the complete $\LLNRi$.
\end{proof}

\subsubsection{Topology of the irreducible components}

So we see that the space $\LL_N^r$ has boundary $\bd{\LL_N^r} = \bd{\LLNRi}$ in the complete metric space $\LLNRi$ consisting of the $(\Lambda,\iota)$ with $\Lambda$ of rank strictly less than $r$.
Denoting this boundary by $\bd_N^r$, we will see that each of the irreducible components of $\LL_N^r$ possesses the same boundary.

First, let us look closer at $\bd_N^r$:
\begin{proposition} \label{prop:LLNR_bdClosed}
  $\bd_N^r$ is closed as a subset of $\LLNRi$, so that $\LL_N^r = \LLNRi-\bd_N^r$ is open.
\end{proposition}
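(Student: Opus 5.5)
The argument mirrors the proof of \cref{prop:LLR_bdClosed}, with the level structures carried along. Since $\LLNRi$ is complete by \cref{prop:LLNRi_complete}, a subset is closed if and only if it contains the limits of all its convergent sequences. So I take a sequence $(\Lambda_n,\iota_n)$ in $\bd_N^r$ converging in $\LLNRi$ to some $(\Lambda,\iota)$, and I must show that $\Lambda$ has rank strictly less than $r$.

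First I reduce to the level-free setting. By \cref{def:dd_LLNR}, the level-free part of the metric is bounded by the whole:
\[ \dd_\LL(\Lambda_n,\Lambda) \leq \dd_{\LLNRi}\bigl((\Lambda_n,\iota_n),(\Lambda,\iota)\bigr). \]
Hence $\Lambda_n \to \Lambda$ in $\LL^{\leq r}$. Each $\Lambda_n$ has rank at most $r-1$, so \cref{coro:lattice_limit_rank} with $R = r-1$ shows that $\Lambda$ has rank at most $r-1$. Therefore $(\Lambda,\iota) \in \bd_N^r$, and $\bd_N^r$ is closed.

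The second assertion is then immediate. By \cref{para:LLNRimainCompoIdentLLNR}, the pairs $(\Lambda,\iota)$ with $\Lambda$ of rank exactly $r$ are identified with $\LL_N^r$. So $\LL_N^r = \LLNRi - \bd_N^r$ is the complement of a closed set, and hence open.

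The only step needing care is applying \cref{coro:lattice_limit_rank}. As stated, it presupposes that the limit is a lattice, which holds here because the limit lies in $\LLNRi$. I expect no real obstacle beyond this.
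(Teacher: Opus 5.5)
Your proof is correct and is essentially the paper's argument: you bound $\dd_\LL$ by $\dd_{\LLNRi}$ to get $\Lambda_n \to \Lambda$, then apply \cref{coro:lattice_limit_rank} to conclude that the limit has rank less than $r$. The only cosmetic difference is that the paper starts from a Cauchy sequence in $\bd_N^r$ and uses completeness of $\LLNRi$ to produce its limit, whereas you start from a sequence already convergent in $\LLNRi$, so your appeal to \cref{prop:LLNRi_complete} is unnecessary (in any metric space, a subset is closed if and only if it contains the limits of its convergent sequences).
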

\begin{proof}
  Let $(\Lambda_n,\iota_n)_{n = 0}^\infty$ be a Cauchy sequence in $\bd_N^r$, so that each $\Lambda_n$ has rank strictly less than $r$.
  Then since $\LLNRi$ is complete, this sequence has a limit $(\Lambda,\iota) \in \LLNRi$.
  Now by the definitions of $\dd_{\LLNRi}$ and $\dd_\LL$, we also have that $\Lambda_n \to \Lambda$; hence by \cref{coro:lattice_limit_rank} we have that $\Lambda$ has rank less than $r$, so that $(\Lambda,\iota) \in \bd_N^r$ as desired.
\end{proof}

For the rest of the results in this subsubsection, we let $C$ denote a fixed irreducible component of $\LL_N^r$.
\begin{proposition} \label{prop:LLNR_irredComponentDense}
  $C$ is a dense subset of $C \cup \bd_N^r$.
\end{proposition}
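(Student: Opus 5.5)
Given $(\Lambda,\iota)\in\bd_N^r$ with $\Lambda$ of rank $s<r$, I will build a sequence $(\Lambda_n,\alpha_n)\in C$ converging to it in $\dd_{\LLNRi}$. The sequence combines the construction in the proof of \cref{prop:LLR_irredComponentDense} with the level-structure bookkeeping from the sketch of \cref{prop:LLNR_dense}.

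First I fix the lattices. Write $\Lambda=I_1\psi_1+\dotsb+I_s\psi_s$. Choose $\psi_{s+1},\dotsc,\psi_r$ that are $F_\infty$-linearly independent with $\Lambda$. Choose $I\in\cJ(A)$ with $[I_1\dotsm I_sI]=\pi(C)$, and $f\in F$ with $\abs f>1$. Set $\Lambda'=A\psi_{s+1}+\dotsb+A\psi_{r-1}+I\psi_r$ and $\Lambda_n=\Lambda+f^n\Lambda'$. Then $\pi(\Lambda_n)=\pi(C)$ by \cref{prop:latticeClassCharac}, and $\Lambda_n\to\Lambda$ in $\dd_\LL$ as in \cref{prop:LLR_irredComponentDense}.

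Next I fix the level structures. Since $\Lambda\cap f^n\Lambda'=0$, we have $N^{-1}\Lambda_n/\Lambda_n=N^{-1}\Lambda/\Lambda\oplus N^{-1}f^n\Lambda'/f^n\Lambda'$. The image $\Im\iota$ is free of rank $s$ over $A/N$ (being isomorphic to $(A/N)^s$), so it has a free complement $W$ in $(N^{-1}/A)^r$. Pick an isomorphism $\beta:W\isoto N^{-1}\Lambda'/\Lambda'$, and define
\[ \iota_n^{-1}=\iota^{-1}\oplus(f^n\cdot\beta):\ \Im\iota\oplus W\longrightarrow N^{-1}\Lambda_n/\Lambda_n. \]
This is a level structure for $\Lambda_n$, and $\iota_n$ restricts to $\iota$ on $N^{-1}\Lambda/\Lambda$.

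The key step is ensuring $(\Lambda_n,\iota_n^{-1})\in C$ and not merely in a component with the right ideal class. Components over $\pi(C)$ are permuted transitively by the $\det$-action of \cref{prop:GLrANComponents}, and precomposing $\beta$ with $\gamma\in\GL{W}$ changes $\det(\iota_n^{-1})$ by $\det\gamma$, which can be any unit in $(A/N)^\times$. So I will choose $\beta$ once so that the determinant class matches that of $C$. To make this precise I will compute the component via \cref{prop:latticeN*Charac}: either in the general class, using $g$ built as in the proof of \cref{prop:latticeClassCharac} (the diagonal $g'$ is independent of $n$ up to the scalar $f^n$ on the last $r-s$ coordinates), or by reducing to $\pi(C)=[A]$ via the $\cJ(A)$-action. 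Since $f^n\in F^\times$, the class $[\det g_n]$ in $\lrquotient{F^\times}{\invertadele}{\det K(N)}$ is independent of $n$, so a single choice of $\beta$ works for all $n$. I expect this determinant bookkeeping to be the main obstacle.

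Finally, I prove convergence of the $\mu$-terms in $\dd_{\LLNRi}$. There are two cases.

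\emph{Case $l\in W-\{0\}$.} Here $\mu_{\Lambda,\iota}(l)=0$, so I need $\mu_{\Lambda_n,\iota_n}(l)\to0$. Now $e_{\Lambda_n}(\iota_n^{-1}(l))=e_{\Lambda_n}(f^n\beta(l))$. By \cref{prop:e_Lambda_props}(7), $e_{\Lambda_n}=e_{e_\Lambda(f^n\Lambda')}\circ e_\Lambda$. The point $f^n\beta(l)$ has distance to $\Lambda_n$ at least $c\abs f^n$, since it lies in $N^{-1}f^n\Lambda'$ but not in $f^n\Lambda'$. By \cref{prop:e_Lambda_infinity} this forces $\abs{e_{\Lambda_n}(f^n\beta(l))}\to\infty$, uniformly enough because only finitely many $l$ occur. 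I would verify this via the expansion $1/e_{\Lambda_n}(z)=\sum_{\lambda}1/(z-\lambda)$.

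\emph{Case $l\in\Im\iota-\{0\}$.} Here $\lambda'=\iota^{-1}(l)$ is fixed, and I need $e_{\Lambda_n}(\lambda')\to e_\Lambda(\lambda')$, which is nonzero. This follows from \cref{prop:dd_LL_radius} with $R=\abs{\lambda'}$ (taking a fixed representative of $\lambda'$), together with continuity of inversion away from $0$.

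Summing over the finitely many $l$ gives $\dd_{\LLNRi}\bigl((\Lambda_n,\iota_n),(\Lambda,\iota)\bigr)\to0$.
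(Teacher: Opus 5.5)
Your construction is the one the paper sketches. You take the same $\Lambda_n=\Lambda+f^n\Lambda'$ and a level structure given by $\iota$ on the $\Lambda$-part and by a separately chosen isomorphism on the $\Lambda'$-part, twisted by $\GL{W}$ so that it lands in $C$. You then run the convergence argument of \cref{prop:LLNR_dense}. Your component bookkeeping is sound and more explicit than the paper's. If $(\Lambda_0,\iota_0^{-1})=\Theta([(\psi,g)])$ with $g$ built as in \cref{prop:latticeClassCharac}, then $(\Lambda_n,\iota_n^{-1})=\Theta([(\psi D_n,g)])=\Theta([(\psi,D_ng)])$ with $D_n=\mathrm{diag}(1,\dotsc,1,f^n,\dotsc,f^n)\in\GL[r]{F}$. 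So a single choice of $\beta$ puts every $(\Lambda_n,\iota_n^{-1})$ in $C$.

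The gap is in the last step. Your two cases, $l\in W-\set{0}$ and $l\in\Im\iota-\set{0}$, do not exhaust $(N^{-1}/A)^r=\Im\iota\oplus W$ once $1\le s<r$. The mixed elements $l=u+w$ with $u\in\Im\iota-\set{0}$ and $w\in W-\set{0}$ lie in neither set, but they do occur in the sum defining $\dd_{\LLNRi}$. So as written you have not shown $\dd_{\LLNRi}\bigl((\Lambda_n,\iota_n),(\Lambda,\iota)\bigr)\to0$.

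The fix is to split, as the paper does, into $l\in\Im\iota-\set{0}$ and $l\notin\Im\iota$. For $l\notin\Im\iota$ the target value $\mu_{\Lambda,\iota}(l)$ is $0$. Any representative of $\iota_n^{-1}(l)$ has the form $z_n=v_u+f^nv_w$ with $v_u\in N^{-1}\Lambda$ and $v_w\in N^{-1}\Lambda'-\Lambda'$. Since $\psi_1,\dotsc,\psi_r$ are $F_\infty$-linearly independent, equivalence of norms on $F_\infty\psi_1+\dotsb+F_\infty\psi_r$ gives a constant $c>0$ with $\abs{(v_u-\lambda)+f^n(v_w-\lambda')}\ge c\abs{f}^n$ for all $\lambda\in\Lambda$ and $\lambda'\in\Lambda'$. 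That is, $\dd(z_n,\Lambda_n)\ge c\abs{f}^n$. This norm-equivalence step is also what justifies your distance bound in the pure case $l\in W$, since the distance is to $\Lambda+f^n\Lambda'$ and not only to $f^n\Lambda'$.

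Then finish with the ultrametric estimate $\abs{1/e_{\Lambda_n}(z)}\le 1/\dd(z,\Lambda_n)$ from \cref{prop:e_Lambda_props}(3), exactly as in \cref{prop:LLNRi_zeroNeighbour}. \cref{prop:e_Lambda_infinity} is not the right citation here: it concerns a fixed prelattice and gives nothing uniform for the moving $\Lambda_n$. With this change your proof is complete.
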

\begin{proof}[Sketch of proof]
  Similarly to the proof of \cref{prop:LLR_dense}, for a lattice $\Lambda$ of rank $s < r$ we can extend it to lattices $\Lambda_n = \Lambda +f^n\Lambda'$ where $\abs{f} > 1$, the lattice $\Lambda' = A\psi_{s+1} +\dotsb +A\psi_{r-1} +I\psi_r$, and the ideal $I$ is such that $\Lambda_n \in \pi_N(C)$.
  For the level structures $\iota_n$, we essentially define them separately on $\Lambda$ using $\iota$ and on $\Lambda'$ in such a way that $(\Lambda_n,\iota_n) \in C$ for each $n$.
  The rest of the proof proceeds as in the proof of \cref{prop:LLNR_dense} to show that $(\Lambda_n,\iota_n) \to (\Lambda,\iota)$.
\end{proof}

\begin{proposition} \label{prop:LLNR_irredComponentOpen}
  $C$ is open in $\LLNRi$.
\end{proposition}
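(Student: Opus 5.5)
We reduce to the level-less statement, \cref{prop:LLR_irredComponentOpen}, and then control the level structure through the values $\mu_{\Lambda,\iota}$.

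First, by \cref{prop:LLNR_bdClosed} the set $\LL_N^r$ is open in $\LLNRi$. It therefore suffices to fix $(\Lambda,\alpha) \in C$ and show that every $(\Lambda',\alpha') \in \LL_N^r$ sufficiently close to it lies in $C$. By \cref{prop:detSeperatesComps} and \cref{prop:Comps_ClF_AN*_Bijection}, the component of $(\Lambda',\alpha')$ is determined by two pieces of data: the class $\pi(\Lambda') \in \Cl(F)$, and a refinement in $\rquotient{(A/N)^\times}{\FF_q^\times}$ of the kind appearing in \cref{prop:latticeN*Charac}. The metric $\dd_{\LLNRi}$ dominates $\dd_\LL$. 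So if $(\Lambda',\alpha')$ is close to $(\Lambda,\alpha)$, then $\Lambda'$ is close to $\Lambda$ in $\LL^{\leq r}$. By \cref{prop:LLR_irredComponentOpen}, $\pi(\Lambda') = \pi(\Lambda)$ once the distance is small. This settles the ideal-class part of the invariant.

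For the finer invariant, we choose a fixed fractional ideal $I$ with $[I] = \pi(\Lambda)^{-1}$. Replacing every lattice $\Lambda''$ by $I\Lambda''$ (equivalently, acting by a suitable adele via \cref{prop:invAdele_L}), we may reduce to the case $\pi = [A]$. This reduction requires checking that the action is continuous for $\dd_{\LLNRi}$ and permutes components. Then $\Lambda = \psi A^r$ and $\Lambda' = \psi' A^r$ for generating vectors $\psi,\psi'$. By \cref{prop:latticeN*Charac}, the component is read off from $[\det(\alpha_{\psi}^{-1}\circ\alpha)]$, and similarly for $\Lambda'$.

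The key step, and the one I expect to be the main obstacle, is the following. For $\Lambda'$ close enough to $\Lambda$, we want to choose $\psi'$ so that each $\psi'_i$ lies close to $\psi_i$ modulo suitable refinements; more precisely, we want the $A/N$-isomorphism $N^{-1}\Lambda/\Lambda \to N^{-1}\Lambda'/\Lambda'$, $\psi l \mapsto \psi' l$, to send $e_\Lambda(\psi l)$ to a point $e_{\Lambda'}(\psi' l)$ close to it. I would first try to get this from the Drinfeld modules. Closeness in $\dd_\LL$ gives closeness of the coefficients of $\phi^{\Lambda'}_N$ to those of $\phi^\Lambda_N$; this is essentially the content of the sketch of \cref{prop:LLleqR_complete}. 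Hence the roots of $\phi^{\Lambda'}_N$, namely $e_{\Lambda'}(N^{-1}\Lambda')$, lie close to the distinct roots $e_\Lambda(N^{-1}\Lambda)$ of $\phi^\Lambda_N$. Once they are close enough, the root-matching is a bijection, and it is additive and $A$-linear, because $\phi^{\Lambda'}_a$ is near $\phi^\Lambda_a$ and the roots are separated. This gives a canonical $A/N$-isomorphism $m : N^{-1}\Lambda/\Lambda \to N^{-1}\Lambda'/\Lambda'$. We must then check that $m$ maps a canonical structure $\alpha_\psi$ to a canonical structure $\alpha_{\psi'}$ up to $\GL[r]{A}$. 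For this I would lift $e_\Lambda(\psi_i/n)$, for $n \in N$, through the local inverse $e_{\Lambda'}^{-1}$ near $0$ given by \cref{prop:e_Lambda_leqR}, and deform the basis $\psi$ continuously. Should this be delicate, the fallback is to argue via $\Theta$ and the rigid-analytic, hence continuous, parametrisation $\Psi^r \to 1_N^r$, $\psi \mapsto \Theta([(\psi,\mathrm{Id})])$: if its image is locally open, nearby lattices of class $[A]$ come from nearby $\psi'$ with the same $g$.

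Finally, closeness of $\mu_{\Lambda',\alpha'^{-1}}$ to $\mu_{\Lambda,\alpha^{-1}}$ in the sum term of \cref{def:dd_LLNR} forces $\alpha' = m \circ \alpha$, once the distance is less than half the minimal separation of the finitely many distinct nonzero values $|\mu_{\Lambda,\alpha^{-1}}(l)|$-differences. Combined with $m \circ \alpha_\psi = \alpha_{\psi'}$, this gives $\det(\alpha_{\psi'}^{-1}\circ\alpha') = \det(\alpha_\psi^{-1}\circ\alpha)$. By \cref{prop:latticeN*Charac}, $(\Lambda',\alpha')$ then lies in the same component $C$.
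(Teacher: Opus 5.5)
There is a genuine gap, and it sits exactly at the step you flag as the main obstacle. Your outer framework is fine: $\LL_N^r$ is open, and your final separation argument is correct in substance (distinct $N$-torsion points force the level structure once the $\mu$-terms are close). The missing step is producing a generating vector $\psi'$ of $\Lambda'$ with $m\circ\alpha_\psi = \alpha_{\psi'}$. That step is the whole content of the proposition, and neither of your routes establishes it.

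\emph{Root matching.} The matching $m$ obtained from $\phi^{\Lambda'}_N \approx \phi^\Lambda_N$ only sees $N$-torsion. By \cref{prop:latticeN*Charac}, however, the component is the global invariant $\det(\alpha_{\psi'}^{-1}\circ\alpha')$ modulo $\FF_q^\times$. It compares $\alpha'$ with reductions mod $N$ of $A$-bases of $\Lambda'$, and closeness of torsion points alone says nothing about which torsion bases come from global bases.

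\emph{Lifting near $0$.} Lifting through the inverse of $e_{\Lambda'}$ near $0$ does not help either. The points $e_\Lambda(\psi_i/n)$ need not be small, and lifting (approximate) torsion points only ever determines elements of $N^{-1}\Lambda'$ modulo $\Lambda'$, never a basis of $\Lambda'$.

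\emph{The fallback.} This is circular. By \cref{thm:IdentCompCharac} the image of $\psi \mapsto \Theta([(\psi,\mathrm{Id})])$ is $1_N^r$, so `its image is locally open' is precisely the statement to be proved for the identity component. Rigid-analyticity of this map says nothing by itself about the metric topology of $\LLNRi$.

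\emph{The reduction to $\pi = [A]$.} This needs continuity of the adelic action for $\dd_{\LLNRi}$. The paper nowhere establishes this, and it requires the same missing ingredient.

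The missing idea is an approximation lemma for bases. Suppose $\Lambda = I_1\psi_1+\dotsb+I_r\psi_r$ has rank $r$ and $\Lambda'$ of rank $r$ is $\dd_\LL$-close to $\Lambda$. Then $\Lambda' = I_1\psi'_1+\dotsb+I_r\psi'_r$ with the same ideals and $\psi'$ close to $\psi$.

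You can prove this as in part (3) of \cref{prop:Gek:isotopology}. By \cref{prop:dd_LL_radius}, the points of $\Lambda$ and $\Lambda'$ in a large ball are matched bijectively and $A$-linearly with error $<\epsilon$, and both lattices have the same minimal norm $D>\epsilon$. Define $\psi'_j$ through the partners of the points of $I_j\psi_j$, taking the $I_j$ integral. To see that the finite-index sublattice $\sum_j I_j\psi'_j$ is all of $\Lambda'$:
\begin{enumerate}
  \item reduce any $\lambda'\in\Lambda'$ modulo it to an element of bounded size using \cref{lem:idealBoundedReps};
  \item match that element with a point $\sum_j a_j\psi_j$ of $\Lambda$;
  \item observe that the discrepancy with $\sum_j a_j\psi'_j$ is an element of $\Lambda'$ of norm $<\epsilon<D$, hence $0$.
\end{enumerate}

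With this lemma you need neither $m$, nor \cref{prop:LLR_irredComponentOpen}, nor the adelic reduction. Choose $g'$ diagonal and $\gamma_0\in\GL[r]{\hat{A}}$ as in \cref{prop:latticeClassCharac}, so that $(\Lambda,\alpha)=\Theta([(\psi,g'\gamma_0)])$. Then the point $\Theta([(\psi',g'\gamma_0)])$:
\begin{itemize}
  \item has lattice $\Lambda'$;
  \item lies in $C$, since it has the same second coordinate;
  \item is $\dd_{\LLNRi}$-close to $(\Lambda,\alpha)$, because $\psi'\to\psi$ and $e_{\Lambda'}\to e_\Lambda$ uniformly on bounded sets.
\end{itemize}
Your separation argument then shows that its level structure equals $\alpha'$, so $(\Lambda',\alpha')\in C$.
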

As in the proof of \cref{prop:LLR_irredComponentOpen}, this proof is technical and omitted here; for a full proof, see \cite{baker2020lattice}.

\begin{corollary} \label{coro:LLNR_irredComponentBdClosed}
  $C \cup \bd_N^r$ is closed in $\LLNRi$.
\end{corollary}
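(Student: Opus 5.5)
The plan is to copy the proof of \cref{coro:LLR_irredComponentBdComplete}, working inside the complete metric space $\LLNRi$. Let $\mathcal{D}$ denote the set of irreducible components of $\LL_N^r$. These components partition $\LL_N^r$: they are the fibres of the determinant map of \cref{prop:detSeperatesComps}, transported via $\Theta$ of \cref{thm:PsiDoubleQuotientIso} and \cref{prop:LLNR_components}. By \cref{para:LLNRimainCompoIdentLLNR}, $\LLNRi$ is the disjoint union of $\LL_N^r$ (the pairs whose lattice has rank exactly $r$) and the boundary $\bd_N^r$.

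First we compute the complement:
\[ \LLNRi - \parens{C \cup \bd_N^r} = \LL_N^r - C = \bigcup_{C' \in \mathcal{D}-\set{C}} C'. \]
For the first equality, $\LLNRi - \bd_N^r = \LL_N^r$ and $C \subseteq \LL_N^r$. For the second, the components are pairwise disjoint and cover $\LL_N^r$.

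Next we apply \cref{prop:LLNR_irredComponentOpen} to each $C' \in \mathcal{D}-\set{C}$. That proposition holds for an arbitrary fixed component, so each such $C'$ is open in $\LLNRi$. A union of open sets is open, so the complement of $C \cup \bd_N^r$ is open, and hence $C \cup \bd_N^r$ is closed.

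The only point needing care is the identification of the complement, namely that the components of $\LL_N^r$ really partition it. The number of components is finite by \cref{prop:comps_HubschmidNumber}, though finiteness is not even needed here. All the real difficulty lies in \cref{prop:LLNR_irredComponentOpen}, which we are allowed to assume.
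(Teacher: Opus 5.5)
Your proposal is correct and follows essentially the same approach as the paper. The paper also writes $\LLNRi - \parens{C \cup \bd_N^r} = \LL_N^r - C$ as the union of the remaining irreducible components, and concludes that this union is open by \cref{prop:LLNR_irredComponentOpen}.
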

\begin{proof}
  \newcommand*{\md}{\mathcal{D}}
  If $\md$ denotes the set of irreducible components of $\LL_N^r$, then
  \[ \LLNRi -\parens{C \cup \bd_N^r} = \LL_N^r -C = \bigcup_{C' \in \md-\set{C}} C' \lsptext{is open.} \qedhere \]
\end{proof}

\begin{theorem} \label{thm:LLNR_irredComponent_completion}
  $C \cup \bd_N^r$ is the completion of $C$, \ie the closure of $C$ in $\LLNRi$.
\end{theorem}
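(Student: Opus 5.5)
This mirrors the proof of \cref{thm:LLR_irredComponent_completion} in the level-free setting, and I would argue it the same way. The claim has two parts. First, $C$ is dense in $C \cup \bd_N^r$. Second, $C \cup \bd_N^r$ is closed in $\LLNRi$. Together these say that the closure of $C$ in $\LLNRi$ is exactly $C \cup \bd_N^r$.

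For the second part I would use \cref{coro:LLNR_irredComponentBdClosed}, which gives closedness directly. By \cref{prop:LLNRi_complete}, $\LLNRi$ is complete, and a closed subset of a complete metric space is itself complete. So $C \cup \bd_N^r$ is a complete metric space containing $C$.

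For the first part, \cref{prop:LLNR_irredComponentDense} says that every point of $\bd_N^r$ is a limit of points of $C$; hence $C$ is dense in $C \cup \bd_N^r$. A complete metric space in which $C$ is dense is, by uniqueness of completions, the completion of $C$. Equivalently, the closure of $C$ in $\LLNRi$ contains $C \cup \bd_N^r$ by density and is contained in it by closedness, so the two sets are equal.

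With those results granted, the argument is a formal combination and has no real obstacle. All the substance sits in the openness of $C$ (\cref{prop:LLNR_irredComponentOpen}), which feeds \cref{coro:LLNR_irredComponentBdClosed}, and in the density construction. In that construction the care needed is in choosing the inverse level structures $\iota_n$ on $\Lambda_n = \Lambda + f^n\Lambda'$ so that $(\Lambda_n,\iota_n)$ lies in the prescribed component $C$, as determined by $\det$ via \cref{prop:latticeN*Charac} and \cref{thm:IdentCompCharac}. One also needs $\mu_{\Lambda_n,\iota_n}(l) \to 0$ for $l \notin \Im\iota$; this follows from \cref{coro:e_Lambda_infinity}, since the newly added torsion points move arbitrarily far from $\Lambda$.
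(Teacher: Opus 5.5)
Your proposal is correct and is essentially the paper's proof. It combines density of $C$ in $C \cup \bd_N^r$ (\cref{prop:LLNR_irredComponentDense}) with closedness (\cref{coro:LLNR_irredComponentBdClosed}), and gets completeness because $C \cup \bd_N^r$ is a closed subset of the complete space $\LLNRi$ (\cref{prop:LLNRi_complete}).
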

\begin{proof}
  By \cref{prop:LLNR_irredComponentDense}, $C$ is dense in $C \cup \bd_N^r$, which is closed in $\LLNRi$ by \cref{coro:LLNR_irredComponentBdClosed} and hence complete.
\end{proof}

%% file: text/3_3_ll_strata.tex
\subsection{Boundary strata for lattices with level structure}

We now decompose $\LLNRi$ into a disjoint union of $\LL_N^s$ for $0 \leq s \leq r$, but first we will need some technical ring- and module-theoretic results, the proofs of most of which are omitted:

\subsubsection{The structure of finitely generated free $A/N$-modules}


\begin{lemma} \label{lem:lIndepBasis}
  If $S$ is a linearly independent subset of the free $A/N$-module $\parens{A/N}^r$, then $S$ can be extended to a basis.
\end{lemma}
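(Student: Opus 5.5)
The plan is to reduce to the case where $A/N$ is a local ring, where a residue-field argument together with Nakayama's lemma applies. By the Chinese remainder theorem,
\[ A/N \simeq \prod_{\fp \mid N} A/\fp^{e_\fp}, \qquad e_\fp = v_\fp(N), \]
and $\parens{A/N}^r$ decomposes accordingly as $\prod_\fp R_\fp^r$ with $R_\fp = A/\fp^{e_\fp}$. I will use the orthogonal idempotents $\epsilon_\fp \in A/N$ to project $S$ to sets $S_\fp = \epsilon_\fp S \subseteq R_\fp^r$.

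First, I claim each $S_\fp$ is linearly independent over $R_\fp$ and that $s \mapsto \epsilon_\fp s$ is injective on $S$. Suppose $\sum_s c_s \epsilon_\fp s = 0$ with $c_s \in R_\fp$. Lift each $c_s$ to the element of $A/N$ that has $\fp$-component $c_s$ and all other components $0$; this lift equals $c_s\epsilon_\fp$. We get a relation $\sum_s (c_s\epsilon_\fp) s = 0$ in $\parens{A/N}^r$, so every $c_s\epsilon_\fp = 0$ by independence of $S$, hence $c_s = 0$. The same relation, applied to two elements $s \neq s'$ with $\epsilon_\fp s = \epsilon_\fp s'$ and coefficients $\epsilon_\fp$ and $-\epsilon_\fp$, gives injectivity. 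In particular all the $S_\fp$ have the same cardinality $\size S$.

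Second, the local case. Let $R = R_\fp$, which is a local principal Artinian ring with maximal ideal $\mathfrak m = \pi R$, where $\pi^{e} = 0 \neq \pi^{e-1}$, and residue field $k = R/\mathfrak m$. I show that the image $\bar S_\fp$ of $S_\fp$ in $k^r$ is $k$-linearly independent. Suppose $\sum c_s s \in \mathfrak m R^r$ with some $c_s$ a unit. Multiplying by $\pi^{e-1}$ kills the right-hand side, since $\pi^{e-1}\mathfrak m = 0$. This yields the relation $\sum \pi^{e-1}c_s s = 0$, in which the coefficient $\pi^{e-1}c_s$ is nonzero, contradicting independence. This socle argument is the key step, and the main obstacle, since over a non-field ring independence does not automatically pass to the residue field. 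Consequently $\size S \leq r$.

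Next I extend $\bar S_\fp$ to a $k$-basis of $k^r$ and lift the added vectors arbitrarily to $R^r$, obtaining a set $B_\fp \supseteq S_\fp$ of exactly $r$ elements. By Nakayama's lemma, $B_\fp$ generates $R^r$. The induced surjection $R^r \onto R^r$ is then an isomorphism, since a surjective endomorphism of a finitely generated module is injective (or simply by finiteness of $R^r$). Hence $B_\fp$ is a basis.

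Finally, I glue the local bases. Write $B_\fp = S_\fp \sqcup T_\fp$ with $\size T_\fp = r - \size S$, the same number for every $\fp$. Enumerate each $T_\fp = \set{t_{\fp,1}, \dotsc, t_{\fp,r-\size S}}$ and set
\[ t_j = \sum_\fp t_{\fp,j} \in \parens{A/N}^r, \]
viewing each $R_\fp^r$ as the summand $\epsilon_\fp\parens{A/N}^r$. Then $S \cup \set{t_j}$ has $r$ elements, and its $\fp$-projection is $B_\fp$, because $\epsilon_\fp s$ ranges over $S_\fp$ compatibly for every $\fp$. A module map $\parens{A/N}^r \to \parens{A/N}^r$ is an isomorphism if and only if each of its $\fp$-components is. Therefore $S \cup \set{t_j}$ is a basis extending $S$.
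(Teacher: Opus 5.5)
Your proof is correct and complete. The Chinese-remainder splitting into the local factors $A/\fp^{e_\fp}$ works as written. So do the check that projection preserves independence and is injective on $S$, the socle trick (multiplying by $\pi^{e-1}$) that carries independence down to $k^r$, Nakayama plus finiteness to see that the $r$ lifted vectors form a basis, and the componentwise gluing.

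The paper gives no proof of this lemma. It is one of the module-theoretic facts whose proofs are omitted, with details deferred to \cite{baker2020lattice}, so there is no argument in the text to compare yours against.

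The socle step is exactly where the special structure of $A/N$ enters. The statement fails over $A$ itself: for a nonzero nonunit $a \in A$, the set $\set{a}$ is independent in $A^1$ but extends to no basis. Your construction also shows directly that $\size S \leq r$ and that the extended basis has exactly $r$ elements. That is what the paper relies on in \cref{prop:SurInjFree} when it writes the extended basis as $T = \set{t_1, \dotsc, t_r}$.
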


\begin{proposition} \label{prop:SurInjFree} ~ \begin{enumerate}[leftmargin=2\bigskipamount]
  \item If $f : \parens{A/N}^r \onto \parens{A/N}^s$ is an $A/N$-module surjection, then $\ker f \simeq \parens{A/N}^{r-s}$ is free.
  \item If $i : \parens{A/N}^s \into \parens{A/N}^r$ is an $A/N$-module injection, then $\rquotient{\parens{A/N}^r}{\Im{i}} \simeq \parens{A/N}^{r-s}$ is free. \qedhere
\end{enumerate} \end{proposition}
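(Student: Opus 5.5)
The plan is to reduce everything to the structure theory of finitely generated modules over the finite ring $A/N$. The key fact is that $A/N$ decomposes, via the Chinese remainder theorem, as $\prod_{\fp \mid N} A/\fp^{e_\fp}$ with $e_\fp = v_\fp(N)$, and each factor $A/\fp^{e_\fp}$ is a local principal ideal ring. Every $A/N$-module $M$ splits accordingly as $M = \bigoplus_{\fp \mid N} M_\fp$, where $M_\fp = e_\fp M$ for the corresponding idempotent $e_\fp$. A module is free of rank $k$ over $A/N$ if and only if each $M_\fp$ is free of rank $k$ over $A/\fp^{e_\fp}$. This lets me work one prime at a time, over a local ring $R = A/\fp^e$ whose maximal ideal $\fp R$ is principal and nilpotent.

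For part (1), I take a surjection $f : R^r \onto R^s$. Since $R^s$ is free, hence projective, the surjection splits, so $R^r \simeq \ker f \oplus R^s$. Then $\ker f$ is a direct summand of a free module, so it is projective. Over a local ring, finitely generated projective modules are free; this can be seen directly by Nakayama's lemma, since $R$ is finite. Comparing ranks after tensoring with the residue field $R/\fp R$ gives $\ker f \simeq R^{r-s}$. Reassembling over all $\fp \mid N$ gives the global statement.

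For part (2), the main obstacle is that an injection $R^s \into R^r$ need not split a priori; I have to use that $R$ is self-injective. My first approach is to show that $R$ is an injective $R$-module. Since $R = A/\fp^e$ is a principal ideal ring whose ideals are exactly $\fp^j R$, Baer's criterion reduces to extending maps $\fp^j R \to R$. Such a map sends a generator $\varpi^j$ to an element annihilated by $\fp^{e-j}$, that is, to an element of $\fp^j R$, so it is multiplication by some element of $R$ and extends. Hence $R^s$ is injective, so $i$ splits: $R^r \simeq \Im i \oplus Q$ with $Q \simeq R^r/\Im i$. As before, $Q$ is projective, hence free, and its rank is $r-s$ by counting cardinalities, $\size Q = \size R^{r-s}$. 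As an alternative route to part (2), \cref{lem:lIndepBasis} could be applied to the image of the standard basis, provided injectivity of $i$ guarantees linear independence. Extending that image to a basis exhibits $\Im i$ as a free direct summand with complement spanned by the remaining $r-s$ basis vectors. I would use whichever route matches the lemma's intended hypotheses. Finally, I reassemble over the primes dividing $N$.
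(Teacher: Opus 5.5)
Your argument is correct, but it takes a different route from the paper.

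\textbf{The paper's route.} The paper never decomposes $A/N$. In (1) it takes a section $g$ of $f$, extends the image under $g$ of a basis of $\parens{A/N}^s$ to a basis $t_1,\dotsc,t_r$ of $\parens{A/N}^r$ using \cref{lem:lIndepBasis}, and identifies $\ker f$ with the span of $t_{s+1},\dotsc,t_r$. In (2) it follows exactly your ``alternative route'': it extends the image of a basis under $i$ to a basis and reads off a basis of the quotient. That image is linearly independent by injectivity of $i$, as you expected.

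\textbf{Where the content lies.} In the paper, all the substance sits in \cref{lem:lIndepBasis}, whose proof is omitted. That lemma is far from formal: over $\ZZ$, the linearly independent set $\set{2}$ cannot be extended to a basis. It holds for $A/N$ only because of special features of this ring. One such feature is the self-injectivity of each factor $A/\fp^{e}$, which your Baer-criterion argument establishes.

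\textbf{What each approach buys.} Your proof is self-contained and pinpoints the ring-theoretic property doing the work. The price is the CRT reduction and the ``projective over a local ring is free'' step. That step is needed because a projective $A/N$-module is a priori only free over each factor $A/\fp^{e}$, possibly with different ranks on different factors. The paper's proof is shorter and yields explicit bases adapted to $\ker f$ and $\Im{i}$.

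\textbf{A small elision in the paper.} Your splitting $R^r \simeq \ker f \oplus R^s$ also avoids a gap in the paper's part (1). After extending to a basis, one cannot simply assume $f(t_i) = 0$ for $i > s$. One must first replace each such $t_i$ by $t_i - g(f(t_i))$, which keeps the set a basis.
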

\begin{proof} ~ \begin{enumerate}[leftmargin=2\bigskipamount]
  \item Since $\parens{A/N}^s$ is free it is projective and hence $f$ has a right inverse $g : \parens{A/N}^s \into \parens{A/N}^r$.
  Then for a basis $S$ of $\parens{A/N}^s$, $g(S)$ is linearly independent in $\parens{A/N}^r$, and so by \cref{lem:lIndepBasis} can be extended to a basis $T = \set{t_1, \dotsc, t_r}$, where without loss of generality $S = \set{f(t_1), \dotsc, f(t_s)}$ and $f(t_i) = 0$ for $s < i \leq r$.
  Then $\ker{f} \simeq \sum_{s < i \leq r} \parens{A/N} t_i \simeq \parens{A/N}^{r-s}$.
  \item Let $B$ be a basis for $\parens{A/N}^s$.
  Then $i(B)$ is linearly independent in $\parens{A/N}^r$ and so by \cref{lem:lIndepBasis} can be extended to a basis $T = \set{t_1, \dotsc, t_r}$ where without loss of generality $i(B) = \set{t_1, \dotsc, t_s}$.
  Then if $h$ is the projection $h : \parens{A/N}^r \onto \rquotient{\parens{A/N}^r}{\Im{i}}$, the quotient has as a basis $\set{h(t_{s+1}), \dotsc, h(t_r)}$ and thus is free. \qedhere
\end{enumerate} \end{proof}

\begin{definition}
  For each pair of integers $r,s$ such that $0 \leq s \leq r$, we define the sets
  \begin{align*}
      \Sur_N^{r,s} &= \set{\parens{A/N}^r \onto \parens{A/N}^s} \quad\text{of $A/N$-module surjections (epimorphisms), and} \\
      \Inj_N^{s,r} &= \set{\parens{A/N}^s \into \parens{A/N}^r} \quad\text{of $A/N$-module injections (monomorphisms).} \qedhere
  \end{align*}
\end{definition}

\begin{paragraph} \label{para:SurInjSplit}
  By \cref{prop:SurInjFree}, the elements of $\Sur_N^{r,s}$ and $\Inj_N^{s,r}$ are actually all split epimorphisms and split monomorphisms, respectively.
\end{paragraph}

\begin{proposition} \label{prop:GLactFreeTrans} ~ \begin{enumerate}
  \item The natural left action of $\GL[s]{A/N}$ on $\Sur_N^{r,s}$ is free.
  \item The natural right action of $\GL[r]{A/N}$ on $\Sur_N^{r,s}$ is transitive.
  \item The natural right action of $\GL[s]{A/N}$ on $\Inj_N^{s,r}$ is free.
  \item The natural left action of $\GL[r]{A/N}$ on $\Inj_N^{s,r}$ is transitive. \qedhere
\end{enumerate} \end{proposition}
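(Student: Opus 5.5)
We prove the four statements in pairs. Freeness is a cancellation argument. Transitivity comes from the splitting in \cref{para:SurInjSplit} together with the basis-extension result \cref{lem:lIndepBasis}. Throughout, $\GL[s]{A/N}$ acts on $\Sur_N^{r,s}$ by $(\gamma,f) \mapsto \gamma \circ f$ and $\GL[r]{A/N}$ by $(f,\gamma) \mapsto f \circ \gamma$. On $\Inj_N^{s,r}$, $\GL[s]{A/N}$ acts by $(i,\gamma) \mapsto i \circ \gamma$ and $\GL[r]{A/N}$ by $(\gamma,i) \mapsto \gamma \circ i$. Each of these is clearly again a surjection, respectively an injection.

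For freeness, suppose $\gamma \circ f = f$ for some $f \in \Sur_N^{r,s}$ and $\gamma \in \GL[s]{A/N}$. Since $f$ is surjective, every $y \in \parens{A/N}^s$ has the form $y = f(x)$, so $\gamma(y) = \gamma(f(x)) = f(x) = y$, and hence $\gamma = \mathrm{Id}$. This proves item 1. Item 3 is dual: if $i \circ \gamma = i$ with $i$ injective, then $i(\gamma(x)) = i(x)$ forces $\gamma(x) = x$ for all $x$.

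For item 4, take $i, i' \in \Inj_N^{s,r}$ and let $B = \set{b_1, \dotsc, b_s}$ be the standard basis of $\parens{A/N}^s$. As in the proof of \cref{prop:SurInjFree}, the sets $i(B)$ and $i'(B)$ are linearly independent. By \cref{lem:lIndepBasis} they extend to bases $\set{i(b_1), \dotsc, i(b_s), t_{s+1}, \dotsc, t_r}$ and $\set{i'(b_1), \dotsc, i'(b_s), t'_{s+1}, \dotsc, t'_r}$ of $\parens{A/N}^r$. Define $\gamma$ to be the $A/N$-linear map sending $i(b_j) \mapsto i'(b_j)$ and $t_k \mapsto t'_k$. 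This sends a basis to a basis, so $\gamma \in \GL[r]{A/N}$, and by construction $\gamma \circ i = i'$.

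For item 2, take $f, f' \in \Sur_N^{r,s}$. By \cref{para:SurInjSplit} there are sections $g, g'$ with $f \circ g = \mathrm{Id} = f' \circ g'$. By \cref{prop:SurInjFree}, there are bases $\set{g(b_1), \dotsc, g(b_s), t_{s+1}, \dotsc, t_r}$ and $\set{g'(b_1), \dotsc, g'(b_s), t'_{s+1}, \dotsc, t'_r}$ with $t_k \in \ker f$ and $t'_k \in \ker f'$. The point needing care is that the complementary vectors must actually lie in the kernels. The proof of \cref{prop:SurInjFree} only asserts this ``without loss of generality'', so I would fix it explicitly: $\parens{A/N}^r = \Im g \oplus \ker f$, and $\ker f$ is free of rank $r-s$ by \cref{prop:SurInjFree}, so take $t_{s+1}, \dotsc, t_r$ to be a basis of $\ker f$, and likewise for $f'$. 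Now let $\gamma \in \GL[r]{A/N}$ send $g'(b_j) \mapsto g(b_j)$ and $t'_k \mapsto t_k$. Checking on this basis, $f \circ \gamma$ and $f'$ both send $g'(b_j) \mapsto b_j$ and $t'_k \mapsto 0$, so $f \circ \gamma = f'$.

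The main obstacle is the one flagged in item 2. It is handled by the internal direct sum decomposition coming from the splitting, which makes the basis extension compatible with the kernels; everything else is routine.
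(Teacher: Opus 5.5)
Your proof is correct. The paper omits the proof of this proposition, and your argument is the one its surrounding results (\cref{lem:lIndepBasis}, \cref{prop:SurInjFree}, \cref{para:SurInjSplit}) are set up for: freeness by cancelling against surjectivity or injectivity, and transitivity by extending to bases and matching them via the splitting $(A/N)^r = \Im g \oplus \ker f$. The concern you flag in item~2 can also be settled directly, without leaning on the conclusion of \cref{prop:SurInjFree}: replace each complementary basis vector $t_k$ by $t_k - g(f(t_k))$, which is still a basis (a unitriangular change) and now lies in $\ker f$; this is exactly what justifies the ``without loss of generality'' in that proposition's proof.
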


We now turn to counting the cardinalities of the sets $\Sur_N^{r,s}$ and $\Inj_N^{s,r}$:
\begin{proposition} \label{prop:SurInjCount}
  \[ \size{\Inj_N^{s,r}} = \size{\Sur_N^{r,s}} = \rquotient{\size{\GL[r]{A/N}}}{\size{\GL[r-s]{A/N}} \cdot \abs{N}^{s(r-s)}} \qedhere \]
\end{proposition}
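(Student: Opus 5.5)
Both counts come from orbit--stabiliser arguments built on \cref{prop:GLactFreeTrans}. We first reduce to one of the two sets by duality, then compute a stabiliser in block form.

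\emph{Step 1: equal cardinalities.} Apply $\operatorname{Hom}_{A/N}(-,A/N)$ and identify $\parens{A/N}^k$ with its dual via the standard basis, i.e.\ take transposes of matrices. By \cref{para:SurInjSplit}, $f \in \Sur_N^{r,s}$ is split, so $f^T$ is split injective. Conversely, an injection $i$ has a left inverse by \cref{para:SurInjSplit}, so $i^T$ is surjective. Transposition is an involution, so it gives a bijection $\Sur_N^{r,s} \leftrightarrow \Inj_N^{s,r}$.

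\emph{Step 2: count $\Inj_N^{s,r}$.} By \cref{prop:GLactFreeTrans}(4), $\GL[r]{A/N}$ acts transitively on $\Inj_N^{s,r}$. Hence
\[ \size{\Inj_N^{s,r}} = \size{\GL[r]{A/N}}/\size{\mathrm{Stab}(i_0)}, \]
where $i_0$ is the inclusion onto the first $s$ coordinates. A matrix $g$ fixes $i_0$ exactly when its first $s$ columns are $e_1,\dotsc,e_s$. Thus
\[ g = \begin{pmatrix} I_s & B \\ 0 & D \end{pmatrix} \]
with $B$ an arbitrary $s\times(r-s)$ matrix over $A/N$. Such a $g$ is invertible if and only if $D \in \GL[r-s]{A/N}$, since $\det g = \det D$ and invertibility over the commutative ring $A/N$ is equivalent to the determinant being a unit. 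So
\[ \size{\mathrm{Stab}(i_0)} = \size{\GL[r-s]{A/N}}\cdot \size{(A/N)}^{s(r-s)}. \]

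\emph{Step 3: normalisation.} It remains to identify $\size{(A/N)}$ with $\abs{N}$. Here I take the paper's $\abs{N}$ to denote the norm $\size{(A/N)}$ of the ideal; this is the only interpretation consistent with the formula. That gives the stated count.

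The only step needing any care is Step 1, specifically that transposition preserves surjectivity and injectivity over the non-field ring $A/N$. The splitting from \cref{para:SurInjSplit} handles this. Alternatively, the set $\Sur_N^{r,s}$ can be counted directly using transitivity of the right $\GL[r]{A/N}$-action in \cref{prop:GLactFreeTrans}(2), with the analogous block lower-triangular stabiliser of the projection onto the first $s$ coordinates. The two computations agree.
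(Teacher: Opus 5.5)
Your proof is correct and is essentially the paper's argument: orbit--stabiliser using the transitivity in \cref{prop:GLactFreeTrans}, with the stabiliser described in block-matrix form. The only difference is that you run it on $\Inj_N^{s,r}$ under the left action and transfer the count to $\Sur_N^{r,s}$ by transposition, using the splittings of \cref{para:SurInjSplit}. The paper's sketch instead computes the stabiliser of a fixed surjection under the right action directly, and your transposition step has the small bonus of making the equality $\size{\Inj_N^{s,r}} = \size{\Sur_N^{r,s}}$ explicit, which the paper leaves implicit.
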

\begin{proof}[Sketch of proof]
  Considering the transitive right action of $G = \GL[r]{A/N}$ on $\Sur_N^{r,s}$, we only need to calculate the cardinality of the stabiliser group $G_f$ for a fixed  surjection $f$.
  We then consider a $\gamma \in G_f$ as a matrix with entries in $A/N$; translating $f \circ \gamma = f$ to conditions on the entries of $\gamma$ then yields the desired count.
\end{proof}

\begin{corollary} \label{coro:LLNRi_numInvStructs}
  If $\Lambda$ is a lattice of rank $s \leq r$, then the number of $r$-inverse level $N$ structures for $\Lambda$ is equal to
  \[ \rquotient{\size{\GL[r]{A/N}}}{\size{\GL[r-s]{A/N}} \cdot \abs{N}^{s(r-s)}}. \qedhere \]
\end{corollary}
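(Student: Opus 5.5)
The plan is to reduce the count to \cref{prop:SurInjCount} by identifying the set of $r$-inverse level $N$ structures for $\Lambda$ with $\Inj_N^{s,r}$.

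First, since $\Lambda$ has rank $s$, the discussion before \cref{def:lattice_level} gives an $A/N$-module isomorphism $N^{-1}\Lambda/\Lambda \simeq \bigoplus_{i=1}^s A/N$. Likewise $(N^{-1}/A)^r \simeq (A/N)^r$, because $N^{-1}/A \simeq A/N$ as $A/N$-modules. This last fact is the rank-one case of the same statement, applied to the lattice $A$.

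Next, fix such isomorphisms $\beta : (A/N)^s \isoto N^{-1}\Lambda/\Lambda$ and $\epsilon : (N^{-1}/A)^r \isoto (A/N)^r$. The map $\iota \mapsto \epsilon \circ \iota \circ \beta$ is then a bijection from the set of $A/N$-module injections $N^{-1}\Lambda/\Lambda \into (N^{-1}/A)^r$ onto $\Inj_N^{s,r}$. Its inverse is $j \mapsto \epsilon^{-1} \circ j \circ \beta^{-1}$, and composing with isomorphisms preserves injectivity. By \cref{def:LLNR}, the former set is exactly the set of $r$-inverse level $N$ structures for $\Lambda$.

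Finally, \cref{prop:SurInjCount} gives $\size{\Inj_N^{s,r}} = \size{\GL[r]{A/N}}/(\size{\GL[r-s]{A/N}}\cdot\abs{N}^{s(r-s)})$, which is the claimed number.

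The only step that needs any care is the module identification in the first step. In particular, $N^{-1}\Lambda/\Lambda$ must be \emph{free} of rank $s$ over $A/N$, and not merely of the right cardinality. This was already recorded in the paper, citing \cite[p.~67]{goss2012basic}, so nothing beyond bookkeeping remains.
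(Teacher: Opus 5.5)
Your proof is correct and follows the paper's argument: the paper also identifies the $r$-inverse level $N$ structures for $\Lambda$ with $\Inj_N^{s,r}$, using that $N^{-1}\Lambda/\Lambda$ is free of rank $s$ over $A/N$ (\cref{para:lattice_quot}), and then reads off the count from \cref{prop:SurInjCount}. Your version just writes out explicitly how the injections are transported along the fixed isomorphisms.
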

\begin{proof}
  We want to count the $A/N$-module injections $N^{-1}\Lambda/\Lambda \into \parens{N^{-1}/A}^r$.
  By \cref{para:lattice_quot}, this is equal to the number of $A/N$-module injections $\parens{A/N}^s \into \parens{A/N}^r$; \ie the cardinality of $\Inj_N^{s,r}$.
\end{proof}

\begin{definition} \label{def:FreeSubsAndMaps}
  For an ideal $N$ of $A$ and integers $r,s$ with $0 \leq s \leq r$, we let $\Free_N^{s,r}$ be the set of free $A/N$-submodules of $\parens{A/N}^r$ of rank $s$, and define $\Free_N^r = \sqcup_{s = 0}^r \Free_N^{s,r}$.

  Also, we say that
  \[ \delta : \Free_N^r \longto \sqcup_{s = 0}^r \Inj_N^{s,r}, \ U \longmapsto \delta_U \]
  is an \emph{injective selection of $\Free_N^r$} if for each $U \in \Free_N^{s,r}$, $\delta_U \in \Inj_N^{s,r}$ has $\Im{\delta_U} = U$.
  Similarly,
  \[ \epsilon : \Free_N^r \longto \sqcup_{s = 0}^r \Sur_N^{r,s}, \ U \longmapsto \epsilon_U \]
  is a \emph{surjective selection of $\Free_N^r$} if for each $U \in \Free_N^{s,r}$, $\epsilon_U \in \Sur_N^{r,s}$ has $\ker{\epsilon_U} = U$.
\end{definition}

\begin{proposition} \label{prop:FreeNsrBijInjQuot}
  There are bijections
  \begin{align*}
    \rquotient{\Inj_N^{s,r}}{\GL[s]{A/N}} &\longionto \Free_N^{s,r}, & [i] &\longmapsto \Im{i} \lsptext{and} \\
    \lquotient{\GL[s]{A/N}}{\Sur_N^{r,s}} &\longionto \Free_N^{s,r}, & [f] &\longmapsto \ker{f}. \qedhere
  \end{align*}
\end{proposition}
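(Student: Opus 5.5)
For the first bijection, I will check that the map $[i] \mapsto \Im{i}$ is well defined, lands in $\Free_N^{s,r}$, and is injective and surjective.
\emph{Well definedness:} if $i' = i \circ \gamma$ with $\gamma \in \GL[s]{A/N}$, then $\Im{i'} = i(\gamma((A/N)^s)) = \Im{i}$, since $\gamma$ is a bijection.
\emph{Landing in $\Free_N^{s,r}$:} $\Im{i} \simeq (A/N)^s$ because $i$ is injective, so it is a free submodule of rank $s$.
\emph{Surjectivity:} given $U \in \Free_N^{s,r}$, choose an isomorphism $(A/N)^s \isoto U$ and compose with the inclusion $U \subseteq (A/N)^r$; this gives an injection with image $U$.
\emph{Injectivity:} suppose $\Im{i} = \Im{i'} = U$. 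Then $i$ and $i'$ corestrict to isomorphisms $(A/N)^s \isoto U$, so $\gamma \defeq i^{-1} \circ i'$ (inverse taken on $U$) is an automorphism of $(A/N)^s$, \ie an element of $\GL[s]{A/N}$, and $i' = i \circ \gamma$. Freeness of the right action (\cref{prop:GLactFreeTrans}) is not needed here, only the existence of $\gamma$.

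For the second bijection I follow the same pattern with kernels.
\emph{Well definedness:} if $f' = \gamma \circ f$ with $\gamma \in \GL[s]{A/N}$, then $\ker f' = \ker f$, since $\gamma$ is injective.
\emph{Landing in $\Free_N^{s,r}$:} by \cref{prop:SurInjFree}(1), $\ker f \simeq (A/N)^{r-s}$ is free. Note this has rank $r-s$, not $s$.
\emph{Injectivity:} if $\ker f = \ker f' = U$, both $f$ and $f'$ factor through isomorphisms $\bar f, \bar f' : (A/N)^r/U \isoto (A/N)^s$. Then $\gamma = \bar f' \circ \bar f^{-1} \in \GL[s]{A/N}$ satisfies $f' = \gamma \circ f$.
\emph{Surjectivity:} given a free submodule $U$ of the appropriate rank, I need $(A/N)^r/U$ to be free of rank $s$; then a choice of isomorphism $(A/N)^r/U \isoto (A/N)^s$ composed with the projection is a surjection with kernel $U$. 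Freeness of the quotient follows from \cref{prop:SurInjFree}(2), applied to the inclusion $(A/N)^{r-s} \simeq U \into (A/N)^r$, which gives $(A/N)^r/U \simeq (A/N)^{s}$.

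The main obstacle is the rank bookkeeping in the second bijection. Kernels of elements of $\Sur_N^{r,s}$ lie in $\Free_N^{r-s,r}$, not $\Free_N^{s,r}$. So either the intended target is $\Free_N^{r-s,r}$, or one should compose with a bijection between $\Free_N^{r-s,r}$ and $\Free_N^{s,r}$.
My reading is to treat the statement as $[f] \mapsto \ker f \in \Free_N^{r-s,r}$, consistent with \cref{def:FreeSubsAndMaps}, where $\epsilon_U$ for $U$ of rank $s$ lies in $\Sur_N^{r,r-s}$ in effect. If an honest bijection onto $\Free_N^{s,r}$ is wanted, I would compose with $U \mapsto U^{\perp}$, the annihilator of $U$ in $(A/N)^r$ under the standard dot-product pairing, which is perfect since $A/N$ is Gorenstein (a quotient of a Dedekind domain).
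The rest is formal, using that finite free modules are projective (\cref{para:SurInjSplit}).
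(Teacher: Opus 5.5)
Your proof is correct. The first bijection is argued exactly as in the paper: you corestrict $i$ and $i'$ to isomorphisms onto the common image to produce $\gamma$, and you choose a basis of $U$ for surjectivity. For the second bijection the paper only says ``proven analogously''.

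Your rank bookkeeping there is right and catches a genuine slip in the statement. By \cref{prop:SurInjFree}(1), kernels of elements of $\Sur_N^{r,s}$ lie in $\Free_N^{r-s,r}$, so $[f]\mapsto\ker f$ is really a bijection onto $\Free_N^{r-s,r}$. The same slip occurs in the paper's definition of a surjective selection, which is never used later.

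Your fix of composing with $U\mapsto U^{\perp}$ also works, and it needs no Gorenstein input. Since $U$ is a direct summand with free complement, $U^{\perp}$ is free of rank $r-s$ and $U^{\perp\perp}=U$.
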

\begin{proof}
  We will prove the first bijection; the second is proven analogously.
  By \cref{prop:SurInjFree} $\Im{i} \in \Free_N^{s,r}$ for each $i \in \Inj_N^{s,r}$, and if two $i_1, i_2 \in \Inj_N^{s,r}$ satisfy $ I = \Im{i_1} = \Im{i_2}$, then they induce bijections $i_1', i_2' : \parens{A/N}^s \ionto I$; thence $\gamma = i_1'^{-1} \circ i_2' \in \GL[s]{A/N}$ satisfies $i_1 \circ \gamma = i_2$ and so $i_1, i_2$ are equivalent under the action of $\GL[s]{A/N}$.
  Moreover, $\Im{i} = \Im(i \circ \gamma)$ for $\gamma \in \GL[s]{A/N}$ and $i \in \Inj_N^{s,r}$.
  Also, for any $U \in \Free_N^{s,r}$, $U$ has a basis by \cref{lem:lIndepBasis}; thus $\parens{A/N}^s \simeq U \subseteq \parens{A/N}^r$, and the composition $i \in \Inj_N^{s,r}$ of these relations has $\Im{i} = U$, proving the bijection.
\end{proof}

\begin{paragraph} \label{para:GLactInjSurSel}
  By \cref{prop:FreeNsrBijInjQuot}, injective and surjective selections of $\Free_N^r$ exist, by choosing an element in each class of $\rquotient{\Inj_N^{s,r}}{\GL[s]{A/N}}$ and $\lquotient{\GL[s]{A/N}}{\Sur_N^{r,s}}$ for $s$ between $0$ and $r$, respectively.
\end{paragraph}

\subsubsection{Decomposition of $\LLNRi$ into a union of $\LL_N^s$}

\begin{paragraph} \label{para:AN.NinvASwap}
  For the following theorem, we fix isomorphisms $\parens{N^{-1}/A}^s \ionto \parens{A/N}^s$ for $s \in \NNO$, and will pass through these isomorphisms often without mention.
  More generally, we will henceforth view $\Inj_N^{s,r}$ and $\Sur_N^{r,s}$ as maps between $\parens{N^{-1}/A}^r$ and $\parens{N^{-1}/A}^s$ instead of between $\parens{A/N}^r$ and $\parens{A/N}^s$, and similarly for $\Free_N^r$, $\Free_N^{s,r}$ and their injective and surjective selections.
\end{paragraph}

Analogously to \cref{prop:LLleqR_union}, we can view $\LLNRi$ as a disjoint union of $\LL_N^s$ for $0 \leq s \leq r$, but on the contrary, we usually have multiple copies of $\LL_N^s$. 
\begin{theorem} \label{prop:LLNRi_union}
  If $\delta$ is an injective selection of $\Free_N^r$, then we have a bijection
  \begin{align*}
    \bigsqcup_{\substack{0 \leq s \leq r \\ U \in \Free_N^{s,r}}} \LL_N^s &\longisoto \LLNRi \\
    \parens{\Lambda, \alpha}_U &\longmapsto (\Lambda, \delta_U \circ \alpha^{-1}). \qedhere
  \end{align*}
\end{theorem}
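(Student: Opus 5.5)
The bijection will be proved by writing down an explicit inverse and checking that both maps are well defined; no topology is involved, only the module-theoretic results of the previous subsubsection.

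\emph{Well-definedness.} Let $(\Lambda,\alpha)\in\LL_N^s$ sit in the copy indexed by $U\in\Free_N^{s,r}$. Then $\alpha^{-1} : N^{-1}\Lambda/\Lambda \to \parens{N^{-1}/A}^s$ is an $A/N$-module isomorphism. Also $\delta_U\in\Inj_N^{s,r}$ is injective with image $U$, where we pass through the identifications of \cref{para:AN.NinvASwap}. Hence $\delta_U\circ\alpha^{-1}$ is an $A/N$-module injection $N^{-1}\Lambda/\Lambda\into\parens{N^{-1}/A}^r$. Since $\Lambda$ has rank $s\leq r$, the pair $(\Lambda,\delta_U\circ\alpha^{-1})$ lies in $\LLNRi$. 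Note also that its image is exactly $U$; this is what makes the construction of the inverse possible.

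\emph{Inverse map.} Take $(\Lambda,\iota)\in\LLNRi$ with $\Lambda$ of rank $s\leq r$.
\begin{enumerate}
  \item By \cref{para:lattice_quot}, $N^{-1}\Lambda/\Lambda\simeq\parens{A/N}^s$ is free of rank $s$, so $U\defeq\Im\iota$ is the image of an injection $\parens{A/N}^s\into\parens{A/N}^r$.
  \item By \cref{prop:FreeNsrBijInjQuot} (or directly from \cref{prop:SurInjFree}), $U$ is a free submodule of rank $s$, so $U\in\Free_N^{s,r}$ and $\delta_U$ is defined.
  \item Since $\delta_U$ and $\iota$ both have image $U$, $\delta_U$ corestricts to an isomorphism $\delta_U' : \parens{N^{-1}/A}^s\ionto U$, and $\iota$ corestricts to an isomorphism $\iota' : N^{-1}\Lambda/\Lambda\ionto U$.
  \item Set $\alpha\defeq\iota'^{-1}\circ\delta_U'$. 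This is a level $N$ structure on $\Lambda$, so $(\Lambda,\alpha)\in\LL_N^s$. Send $(\Lambda,\iota)$ to $(\Lambda,\alpha)_U$.
\end{enumerate}

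\emph{The two composites.} Starting from $(\Lambda,\iota)$, we get $\delta_U\circ\alpha^{-1}=\delta_U\circ\delta_U'^{-1}\circ\iota'$. Here $\delta_U\circ\delta_U'^{-1}$ is the inclusion $U\subseteq\parens{N^{-1}/A}^r$, so the composite is $\iota$. Conversely, starting from $(\Lambda,\alpha)_U$, the image of $\delta_U\circ\alpha^{-1}$ is $U$, so we recover the same index $U$. Corestricting $\delta_U\circ\alpha^{-1}$ to $U$ gives $\iota'=\delta_U'\circ\alpha^{-1}$, hence $\iota'^{-1}\circ\delta_U'=\alpha$. So the two maps are mutually inverse.

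\emph{Where the care is needed.} The only delicate point is that the index $U$ must be recoverable from the image, which requires that every image $\Im\iota$ actually lies in $\Free_N^{s,r}$. This holds because $\iota$ is injective from a free module of rank $s$, so its image is isomorphic to $\parens{A/N}^s$. Disjointness of the union is then automatic, since distinct $U$ give distinct images $\Im\iota$. One must also keep track of the silent identifications $\parens{N^{-1}/A}^s\simeq\parens{A/N}^s$ fixed in \cref{para:AN.NinvASwap}. Finally, the case $s=0$ is consistent: the only element is the zero lattice, with $U=0$ and the empty structure.
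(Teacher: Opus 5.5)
Your proposal is correct and follows essentially the same route as the paper. The paper also constructs the explicit inverse $(\Lambda,\iota)\mapsto(\Lambda,\iota'^{-1}\circ\delta'_{\Im\iota})_{\Im\iota}$ using the corestricted isomorphisms onto $\Im\iota$, and then checks that both composites are the identity.
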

\begin{proof}
  \newcommand*{\apt}{\operatorname{apt}}
  \newcommand*{\tog}{\operatorname{tog}}

  We show that the above map from the union over the $\Free_N^{s,r}$ to $\LLNRi$ is a bijection by describing its inverse: let $(\Lambda,\iota) \in \LLNRi$ with $\Lambda$ of rank $s$.
  Then $\Im{\iota}$ is free of rank $s$ via similar arguments as before, and $\iota$ and $\delta_{\Im{\iota}}$ induce isomorphisms $\iota' : N^{-1}\Lambda/\Lambda \ionto \Im{\iota}$ and $\delta_{\Im{\iota}}' : \parens{N^{-1}/A}^s \ionto \Im{\iota}$; hence we obtain an $s$-level $N$ structure $\alpha_\iota = \iota'^{-1} \circ \delta_{\Im{\iota}}'$ for $\Lambda$ which satisfies $\delta_{\Im{\iota}} \circ \alpha_\iota^{-1} = \iota$.
  Call the map given in the theorem statement by the name `$\tog$' and this proposed inverse by the name `$\apt$'\footnote{Short for `together' and `apart', respectively.}, so that $\tog{(\Lambda,\alpha)_U} = (\Lambda, \delta_U \circ \alpha^{-1})$ and $\apt{(\Lambda,\iota)} = (\Lambda,\alpha_\iota)_{\Im{\iota}}$.
  Then
  \begin{align*}
    \tog{\apt{(\Lambda,\iota)}} &= \tog{(\Lambda, \alpha_\iota)_{\Im{\iota}}} = (\Lambda, \delta_{\Im{\iota}} \circ \alpha_\iota^{-1}) = (\Lambda, \iota) \lsptext{and} \\
    \apt{\tog{(\Lambda,\alpha)_U}} &= \apt{(\Lambda, \delta_U \circ \alpha^{-1})} = (\Lambda, \alpha_{\delta_U \circ \alpha^{-1}})_{\Im(\delta_U \circ \alpha^{-1})} \\
    &= (\Lambda, (\delta_U \circ \alpha^{-1})'^{-1} \circ \delta'_{\Im{\delta_U}})_{\Im{\delta_U}} = (\Lambda, (\delta_U \circ \alpha^{-1})'^{-1} \circ \delta'_U)_U \\
    &= (\Lambda, \alpha)_U,
  \end{align*}
  which completes the proof.
\end{proof}

\begin{definition} \label{def:LLNRi_bdStrata}
  In the above decomposition of $\LLNRi$ into a disjoint union of $\LL_N^s$ for $0 \leq s \leq r$, each of the uniands\footnote{Uniands are to unions as summands are to sums.} $\LL_N^s$ is called a \emph{stratum of dimension $s$} and \emph{codimension} $r-s$; if $s < r$, they are also called \emph{boundary strata}, and in the case where $s = r$ the stratum $\LL_N^r$ is called the \emph{main stratum}.
\end{definition}


\subsubsection{The number of strata}

\begin{corollary} \label{coro:LLNRi_NumStrata}
  The number of strata of dimension $s$ in $\LLNRi$ is
  \[ \frac{\size{\GL[r]{A/N}}}{\size{\GL[s]{A/N}} \size{\GL[r-s]{A/N}} \sizep{A/N}^{s(r-s)}}. \qedhere \]
\end{corollary}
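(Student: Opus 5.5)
The count follows from the decomposition in \cref{prop:LLNRi_union}: the strata of dimension $s$ in $\LLNRi$ are indexed exactly by the elements $U \in \Free_N^{s,r}$, one copy of $\LL_N^s$ for each such $U$. So the claim reduces to computing $\size{\Free_N^{s,r}}$. By \cref{prop:FreeNsrBijInjQuot}, $\Free_N^{s,r}$ is in bijection with the orbit set $\rquotient{\Inj_N^{s,r}}{\GL[s]{A/N}}$. By \cref{prop:GLactFreeTrans}(3), the right action of $\GL[s]{A/N}$ on $\Inj_N^{s,r}$ is free. Hence every orbit has exactly $\size{\GL[s]{A/N}}$ elements, and
\[ \size{\Free_N^{s,r}} = \frac{\size{\Inj_N^{s,r}}}{\size{\GL[s]{A/N}}}. \]

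Substituting the count from \cref{prop:SurInjCount},
\[ \size{\Inj_N^{s,r}} = \frac{\size{\GL[r]{A/N}}}{\size{\GL[r-s]{A/N}} \cdot \abs{N}^{s(r-s)}}, \]
gives the stated formula. The only point to check is notational: $\abs{N}$ in \cref{prop:SurInjCount} should mean the norm of the ideal $N$, that is $\sizep{A/N}$, so the two denominators agree. To confirm this, I would recheck the stabiliser computation sketched for \cref{prop:SurInjCount}. The stabiliser of a fixed surjection consists of block matrices whose free block is an $s \times (r-s)$ (or $(r-s)\times s$) block with entries in $A/N$, contributing $\sizep{A/N}^{s(r-s)}$, together with a $\GL[r-s]{A/N}$ block.

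The main obstacle, though mild, is confirming that distinct $U$ really give distinct strata, so that the count is not inflated. This is immediate from the disjointness in \cref{prop:LLNRi_union}: the map $\apt$ recovers $U = \Im{\iota}$. One should also note that the count does not depend on the choice of injective selection $\delta$.
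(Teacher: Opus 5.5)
Your proposal is correct and is essentially the paper's own proof. The paper also indexes the dimension-$s$ strata by $\Free_N^{s,r} \simeq \rquotient{\Inj_N^{s,r}}{\GL[s]{A/N}}$, uses freeness of the $\GL[s]{A/N}$-action to divide $\size{\Inj_N^{s,r}}$ by $\size{\GL[s]{A/N}}$, and quotes the count of $\Inj_N^{s,r}$ (via \cref{coro:LLNRi_numInvStructs}, which is \cref{prop:SurInjCount}). Your reading of $\abs{N}$ as $\sizep{A/N}$ and your stabiliser check (an arbitrary $(r-s)\times s$ block together with a $\GL[r-s]{A/N}$ block) are both right.
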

\begin{proof}
  The number in question is the number of free submodules of $\parens{A/N}^r$ of rank $s$, or equivalently the cardinality of $\rquotient{\Inj_N^{s,r}}{\GL[s]{A/N}}$.
  Now by \cref{prop:GLactFreeTrans} the right action of $\GL[s]{A/N}$ on $\Inj_N^{s,r}$ is free; hence the cardinality of $\rquotient{\Inj_N^{s,r}}{\GL[s]{A/N}}$ is the ratio of the cardinalities of $\Inj_N^{s,r}$ and $\GL[s]{A/N}$, and so \cref{coro:LLNRi_numInvStructs} completes the proof.
\end{proof}

Interestingly, the above number is invariant under the involution $s \mapsto r-s$.

\begin{definition} \label{def:EulerPhi}
  For nonnegative integer $r$ and an ideal $N$ of $A$ which factorises into a product of prime ideals as $N = \prod_i \fp_i^{d_i}$, we define the function 
  \[ \phi^r(N) = \abs{N}^{r} \cdot \prod_i \parens{1-\abs{\fp_i}^{-r}} \]
  analogously to the definition of the classical Euler $\phi$ function.
\end{definition}

There should be no confusion between this use of $\phi^r$ as a function on ideals and $\phi^\Lambda$ as a polynomial, since $r$ is an integer while $\Lambda$ is a lattice.
Note that $\phi^r$ is a multiplicative function, \ie if $M$ and $N$ are coprime ideals (\ie $M+N = A$) then $\phi^r(MN) = \phi^r(M) \phi^r(N)$.

\begin{proposition} \label{prop:GLFormula}
  \[ \size{\GL[r]{A/N}} = \abs{N}^{r(r-1)/2} \cdot \phi^r(N) \phi^{r-1}(N) \dotsm \phi^1(N). \qedhere \]
\end{proposition}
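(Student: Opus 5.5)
The plan is to reduce to prime powers via the Chinese remainder theorem and then count $\GL[r]{A/\fp^d}$ directly. Write $N = \prod_i \fp_i^{d_i}$. The isomorphism $A/N \simeq \prod_i A/\fp_i^{d_i}$ gives $\GL[r]{A/N} \simeq \prod_i \GL[r]{A/\fp_i^{d_i}}$. The right-hand side of the claimed formula is also multiplicative: $\abs{N}^{r(r-1)/2}$ is multiplicative since $\abs{N} = \sizep{A/N}$, and each $\phi^k$ is multiplicative, as noted after \cref{def:EulerPhi}. So it suffices to treat $N = \fp^d$.

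For a local ring $R = A/\fp^d$ with maximal ideal $\fp/\fp^d$ and residue field $k = A/\fp$ of size $Q = \abs{\fp}$, I would use the reduction map $\GL[r]{R} \to \GL[r]{k}$.

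\begin{itemize}
\item It is surjective, because a matrix over $R$ is invertible exactly when its determinant is a unit, that is, when its reduction mod $\fp$ is invertible. Hence any lift of an invertible matrix over $k$ is invertible.
\item By the same criterion, the kernel is the full set of matrices congruent to the identity, namely $1 + M_r(\fp/\fp^d)$. This set has $\abs{\fp^{d-1}}^{r^2} = Q^{(d-1)r^2}$ elements.
\end{itemize}

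Therefore $\size{\GL[r]{R}} = Q^{(d-1)r^2}\size{\GL[r]{k}}$.

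Over the field $k$ I count bases column by column. The $j$th column can be any vector outside the span of the previous $j-1$ columns, which gives
\[ \size{\GL[r]{k}} = \prod_{j=0}^{r-1}(Q^r - Q^j) = Q^{r(r-1)/2}\prod_{m=1}^{r}(Q^m-1). \]

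It remains to match this with $\abs{N}^{r(r-1)/2}\prod_{m=1}^r \phi^m(N)$ for $N = \fp^d$, where $\abs{N} = Q^d$ and $\phi^m(\fp^d) = Q^{dm}(1-Q^{-m}) = Q^{(d-1)m}(Q^m-1)$. The product over $m$ then equals $Q^{(d-1)r(r+1)/2}\prod_m (Q^m-1)$. Multiplying by $\abs{N}^{r(r-1)/2} = Q^{dr(r-1)/2}$, the total power of $Q$ is
\[ \frac{dr(r-1) + (d-1)r(r+1)}{2} = dr^2 - \frac{r(r+1)}{2}. \]
On the other side, $Q^{(d-1)r^2}\cdot Q^{r(r-1)/2}$ has exponent $(d-1)r^2 + \frac{r(r-1)}{2} = dr^2 - \frac{r(r+1)}{2}$. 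The two exponents agree, which completes the proof.

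The main obstacle is the kernel and surjectivity step. It needs the fact that invertibility over the local ring $R$ is detected modulo the maximal ideal, and that $\size{(\fp/\fp^d)} = Q^{d-1}$; the latter follows because each $\fp^i/\fp^{i+1}$ is one-dimensional over $k$.
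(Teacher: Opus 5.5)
Your proof is correct. The CRT reduction, the surjection $\GL[r]{A/\fp^d} \to \GL[r]{A/\fp}$ with kernel $1 + M_r(\fp/\fp^d)$ of size $\abs{\fp}^{(d-1)r^2}$, the column count over the residue field, and the exponent bookkeeping $dr^2 - r(r+1)/2$ all check out. The paper gives no argument of its own and only cites \cite[Lemma 2.3]{breuer2010torsion}; your proof is the standard self-contained derivation of this count, which is equivalently $\abs{N}^{r^2}\prod_{\fp \mid N}\prod_{m=1}^{r}\bigl(1-\abs{\fp}^{-m}\bigr)$, so it can stand in for that citation.
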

This has been proven in \cite[Lemma 2.3]{breuer2010torsion}.

\begin{corollary} \label{coro:LLNRi_NumStrataPhi}
  The number of strata of $\LLNRi$ of dimension $s$ is equal to
  \[ \frac{\phi^r(N) \dotsm \phi^1(N)}{\phi^s(N) \dotsm \phi^1(N) \cdot \phi^{r-s}(N) \dotsm \phi^1(N)}. \qedhere \]
\end{corollary}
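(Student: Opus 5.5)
The plan is to substitute the formula of \cref{prop:GLFormula} for each of the three general linear group orders in \cref{coro:LLNRi_NumStrata} and cancel the powers of $\abs{N}$. By \cref{prop:GLFormula},
\begin{align*}
  \size{\GL[r]{A/N}} &= \abs{N}^{r(r-1)/2}\, \phi^r(N) \dotsm \phi^1(N), \\
  \size{\GL[s]{A/N}} &= \abs{N}^{s(s-1)/2}\, \phi^s(N) \dotsm \phi^1(N), \\
  \size{\GL[r-s]{A/N}} &= \abs{N}^{(r-s)(r-s-1)/2}\, \phi^{r-s}(N) \dotsm \phi^1(N).
\end{align*}
For the degenerate cases $s = 0$ or $s = r$, I take the empty product to be $1$ and $\GL[0]{A/N}$ to be trivial, which is consistent with the formula.

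Next I identify $\sizep{A/N}$ with $\abs{N}$. This is the normalisation already used in \cref{prop:SurInjCount}, where $\abs{N}^{s(r-s)}$ appears in place of $\sizep{A/N}^{s(r-s)}$. With this, the power of $\abs{N}$ in the denominator of \cref{coro:LLNRi_NumStrata} has exponent
\[ \tfrac{s(s-1)}{2} + \tfrac{(r-s)(r-s-1)}{2} + s(r-s). \]
Expanding, this equals $\tfrac{s^2 + (r-s)^2 + 2s(r-s) - s - (r-s)}{2} = \tfrac{r^2 - r}{2} = \tfrac{r(r-1)}{2}$. That is exactly the exponent in the numerator, so all powers of $\abs{N}$ cancel and only the stated quotient of $\phi$-products remains.

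There is no real obstacle here. The only point needing care is justifying $\sizep{A/N} = \abs{N}$, i.e.\ that the norm $\abs{N}$ means the index $[A : N]$. This matches its use in \cref{def:EulerPhi}, where it must be for $\phi^r(N)$ to generalise the Euler function, and in \cref{prop:GLFormula}. I will state this convention explicitly and then carry out the cancellation.
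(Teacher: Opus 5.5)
Your proposal is correct and is the paper's argument: the paper's proof just applies \cref{prop:GLFormula} to \cref{coro:LLNRi_NumStrata}, and your check that $\tfrac{s(s-1)}{2}+\tfrac{(r-s)(r-s-1)}{2}+s(r-s)=\tfrac{r(r-1)}{2}$ makes explicit the cancellation it leaves implicit. Reading $\abs{N}$ as $\sizep{A/N}$ is the right convention, consistent with how \cref{prop:SurInjCount} and \cref{prop:GLFormula} use it.
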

\begin{proof}
  Apply \cref{prop:GLFormula} to \cref{coro:LLNRi_NumStrata}.
\end{proof}

Interestingly, this number is multiplicative, \ie for coprime ideals $N,M$ we have that the number of strata of dimension $s$ of $\overleftarrow{\LL_{MN}^r}$ is equal to the product of those of $\overleftarrow{\LL_M^r}$ and $\overleftarrow{\LL_N^r}$. Also, this formula is very reminiscent of the formula for binomial coefficients in terms of factorials.

\begin{paragraph}
  Note that if we set $N = A$ all through this and the previous subsection, each lattice $\Lambda$ has exactly one ($r$-inverse) level $N$ structure, being the zero map.
  Hence we have the following isomorphisms between spaces: $\LL_A^r \simeq \LL^r$ and $\overleftarrow{\LL_A^r} \simeq \LL^{\leq r}$; and so considerations of inclusion of level structure do not exclude the spaces without level structure, as long as $N \neq A$ is not assumed.
\end{paragraph}

%% file: text/3_4_ll_GLrAN.tex
\subsection[The action of GLr(A/N)]{The action of $\GL[r]{A/N}$ on $\LLNRi$}

Recall the action of $\gamma \in \GL[r]{A/N}$ on $\LL_N^r$ defined by $\gamma (\Lambda,\alpha) = (\Lambda, \alpha \circ \gamma^{-1})$.
Given the identification of the rank-$r$ subset of $\LLNRi$ with $\LL_N^r$ in \cref{para:LLNRimainCompoIdentLLNR} via $(\Lambda,\iota) \leftrightarrow (\Lambda,\alpha_{\iota}) = (\Lambda,\iota^{-1})$, we see how to extend this action to $\LLNRi$:

\begin{definition} \label{def:GLrANActLLNRi}
  $\GL[r]{A/N}$ acts on $\LLNRi$ on the left via $\gamma(\Lambda,\iota) = (\Lambda, \gamma \circ \iota)$.
\end{definition}

\begin{proposition} \label{GLrANActIsometry}
  The action of $\GL[r]{A/N}$ on $\LLNRi$ is an isometry.
\end{proposition}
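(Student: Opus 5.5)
The approach is to show that $\gamma$ leaves the $\dd_\LL$ part of the metric unchanged and only permutes the terms in the sum over $(N^{-1}/A)^r$. This gives $\dd_{\LLNRi}(\gamma x,\gamma y) = \dd_{\LLNRi}(x,y)$ for all $x,y \in \LLNRi$ and every $\gamma \in \GL[r]{A/N}$.

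First, note that $\gamma \circ \iota$ is again an $A/N$-module injection $N^{-1}\Lambda/\Lambda \into \parens{N^{-1}/A}^r$. This holds because $\gamma$ is an $A/N$-linear automorphism of $\parens{N^{-1}/A}^r$, via the identification of \cref{para:AN.NinvASwap}. So the action is well defined on $\LLNRi$, and it is clearly a group action. Since the underlying lattice is unchanged, the first term $\dd_\LL(\Lambda_1,\Lambda_2)$ of \cref{def:dd_LLNR} is unaffected.

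The key step is the identity
\[ \mu_{\Lambda,\gamma\circ\iota}(l) = \mu_{\Lambda,\iota}(\gamma^{-1}(l)) \lsptext{for all} l \in \parens{N^{-1}/A}^r, \]
checked directly from \cref{def:LLNR_recipRoot}. We have $\Im(\gamma\circ\iota) = \gamma(\Im\iota)$, so $l \in \Im(\gamma\circ\iota)-\set{0}$ if and only if $\gamma^{-1}(l) \in \Im\iota - \set{0}$, since $\gamma$ is bijective and fixes $0$. In that case $(\gamma\circ\iota)^{-1}(l) = \iota^{-1}(\gamma^{-1}(l))$, so both sides equal $e_\Lambda(\iota^{-1}(\gamma^{-1}l))^{-1}$. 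Otherwise both sides are $0$.

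Substituting this identity into the sum in \cref{def:dd_LLNR} and reindexing by $l' = \gamma^{-1}(l)$, which is a bijection of the finite set $\parens{N^{-1}/A}^r$, turns the sum for $(\gamma x,\gamma y)$ into the sum for $(x,y)$. Hence the action preserves distances. Each $\gamma$ is also surjective, with inverse $\gamma^{-1}$, so it is an isometric bijection. There is no real obstacle here; the only point needing care is the case distinction on membership in $\Im\iota$ inside the definition of $\mu$.
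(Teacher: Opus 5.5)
Your proof is correct, and it is exactly the verification the paper leaves to the reader: the lattice component, and hence the $\dd_\LL$ term, is untouched, and the identity $\mu_{\Lambda,\gamma\circ\iota} = \mu_{\Lambda,\iota}\circ\gamma^{-1}$ shows that the sum in \cref{def:dd_LLNR} is simply reindexed by the bijection $\gamma^{-1}$ of $\parens{N^{-1}/A}^r$. The appeal to \cref{para:AN.NinvASwap} is not needed, since matrices over $A/N$ already act directly on column vectors in $\parens{N^{-1}/A}^r$.
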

\begin{proof}
  Left as an exercise to the reader.
\end{proof}
  

We now see how this action interacts with the decomposition of $\LLNRi$ into strata of the form $\LL_N^s$:
\begin{proposition} \label{prop:GLactBdComps}
  For $\delta$ an injective selection of $\Free_N^r$, $\gamma \in \GL[r]{A/N}$ acts on the decomposition $\bigsqcup_{\substack{0 \leq s \leq r \\ U \in \Free_N^{s,r}}} \LL_N^s$ of $\LLNRi$ by
  \[ \gamma(\Lambda, \alpha)_U = (\Lambda, \alpha \circ \delta_U^{-1} \circ \gamma^{-1} \circ \delta_{\gamma U})_{\gamma U}. \qedhere \]
\end{proposition}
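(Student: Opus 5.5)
The proof is a direct unwinding of the bijection `tog' from \cref{prop:LLNRi_union}, so the plan is to compute the image of $\gamma(\Lambda,\alpha)_U$ there and pull it back with `apt'. First apply `tog': the element $(\Lambda,\alpha)_U$ with $U \in \Free_N^{s,r}$ is sent to $(\Lambda,\delta_U \circ \alpha^{-1}) \in \LLNRi$. By \cref{def:GLrANActLLNRi}, $\gamma$ sends this to $(\Lambda, \gamma \circ \delta_U \circ \alpha^{-1})$, and $\iota \defeq \gamma \circ \delta_U \circ \alpha^{-1}$ is still an injection $N^{-1}\Lambda/\Lambda \into (N^{-1}/A)^r$.

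Next apply `apt' to $(\Lambda,\iota)$. This needs $\Im{\iota}$. Since $\alpha^{-1}$ is a bijection onto $(N^{-1}/A)^s$ and $\gamma$ is an automorphism, $\Im{\iota} = \gamma(\Im{\delta_U}) = \gamma U$. This set lies in $\Free_N^{s,r}$, because $\gamma$ maps a basis of $U$ to a linearly independent set of the same size. The index of the resulting stratum is therefore $\gamma U$.

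It remains to identify the level structure. By construction, `apt' gives $\alpha_\iota = \iota'^{-1} \circ \delta'_{\gamma U}$, the unique $s$-level $N$ structure with $\delta_{\gamma U} \circ \alpha_\iota^{-1} = \iota$. So I only need to check that $\alpha' \defeq \alpha \circ \delta_U^{-1} \circ \gamma^{-1} \circ \delta_{\gamma U}$ satisfies $\delta_{\gamma U} \circ \alpha'^{-1} = \gamma \circ \delta_U \circ \alpha^{-1}$. Here $\delta_U^{-1}$ means the inverse of the corestriction $\delta'_U : (N^{-1}/A)^s \ionto U$. Because $\gamma^{-1}$ maps $\gamma U = \Im{\delta_{\gamma U}}$ onto $U$, the composite is a well-defined isomorphism $(N^{-1}/A)^s \ionto N^{-1}\Lambda/\Lambda$. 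Inverting gives $\alpha'^{-1} = \delta_{\gamma U}'^{-1} \circ \gamma \circ \delta'_U \circ \alpha^{-1}$, and composing with $\delta_{\gamma U}$ on the left yields $\gamma \circ \delta_U \circ \alpha^{-1}$, as required. Uniqueness, which holds because $\delta_{\gamma U}$ is injective, then gives $\alpha_\iota = \alpha'$.

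The only real obstacle is the bookkeeping of domains and codomains: $\delta_U^{-1}$ and $\delta_{\gamma U}^{-1}$ must be read as inverses of corestrictions onto $U$ and $\gamma U$, and the restriction of $\gamma^{-1}$ to $\gamma U$ must land in $U$. Once these are made explicit, the check is a formal cancellation.
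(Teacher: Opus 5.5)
Your proposal is correct and follows essentially the same route as the paper. Both transport the action through the bijection of \cref{prop:LLNRi_union}, read off the new index as $\Im(\gamma \circ \delta_U \circ \alpha^{-1}) = \gamma U$, and solve $\delta_{\gamma U} \circ \alpha'^{-1} = \gamma \circ \delta_U \circ \alpha^{-1}$ for $\alpha'$. Your reading of $\delta_U^{-1}$ and $\delta_{\gamma U}^{-1}$ as inverses of corestrictions, and your check that $\gamma U \in \Free_N^{s,r}$, only make explicit what the paper covers in the remark after its proof.
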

\begin{proof}
  Let $\gamma(\Lambda,\alpha)_U = (\Lambda',\alpha')_{U'} \mapsto (\Lambda', \delta_{U'} \circ \alpha'^{-1})$.
  Firstly, it is easy to see that $\Lambda' = \Lambda$.
  Secondly, we have from \cref{def:GLrANActLLNRi} that
  \[ (\Lambda',\alpha')_{U'} = \gamma(\Lambda,\alpha)_U \mapsto \gamma (\Lambda, \delta_U \circ \alpha^{-1}) = (\Lambda, \gamma \circ \delta_U \circ \alpha^{-1}). \]
  Hence $U' = \Im(\gamma \circ \delta_U \circ \alpha^{-1}) = \gamma \Im(\delta_U) = \gamma U$.
  Finally,
  \begin{align*}
    \delta_{U'} \circ \alpha'^{-1} = \gamma \circ \delta_U \circ \alpha^{-1} &\iff \alpha'^{-1} = \delta_{\gamma U}^{-1} \circ \gamma \circ \delta_U \circ \alpha^{-1} \\
    &\iff \alpha' = \alpha \circ \delta_U^{-1} \circ \gamma^{-1} \circ \delta_{\gamma U}. \qedhere
  \end{align*}
\end{proof}

Note that in the above action, although $\delta_U$ is not always a bijection and so $\delta_U^{-1}$ does not exist as a function from $\parens{N^{-1}/A}^r$ to $U$, since
\[ \Im(\gamma^{-1} \circ \delta_{\gamma U}) = \gamma^{-1} \Im{\delta_{\gamma U}} = \gamma^{-1} \gamma U = U \]
the composite $\delta_U^{-1} \circ \gamma^{-1} \circ \delta_{\gamma U} : \gamma U \ionto U$ is in fact always well defined.

Also note that if for the main stratum $\LL_N^r$ of $\LLNRi$ (which corresponds to $U = \parens{N^{-1}/A}^r$) we have that $\delta_{U}$ is the identity map, then the action of $\gamma$ on the main stratum simplifies to $\gamma (\Lambda,\alpha)_U = (\Lambda, \alpha \circ \gamma^{-1})_U$, \ie the way it is defined on $\LL_N^r$.


%% file: text/3_5_ll_invadele.tex
\subsection[The action of the invertible adeles]{The action of $\invertadele$}

Recall the action of $x \in \invertadele$ on $\LL_N^r$ given by $x (\Lambda,\alpha) = (J(x)^{-1}\Lambda, x^{-1} \circ \alpha)$ in \cref{prop:invAdele_L}.
The action of $\invertadele$ on $\LL_N^r$ induces a rigid analytic action of the set of fractional ideals on $\LL^r$ by $\Lambda \xmapsto{J} J^{-1} \Lambda$.
In fact, we can extend this action to $\LL^{\leq r}$ by the following result:
\begin{proposition} \label{prop:fracIdealActLeqR}
  The action of $\cJ(A)$ on $\LL^{\leq r}$ given by $J(\Lambda) = J^{-1} \Lambda$ is a homeomorphism.
\end{proposition}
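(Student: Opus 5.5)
The plan is to prove continuity of $\Lambda \mapsto J^{-1}\Lambda$ for a single fixed $J \in \cJ(A)$. The action is a group action and $J^{-1}$ acts as its inverse. So the same argument applied to $J^{-1}$ shows that the inverse map is continuous, and hence each $J$ acts as a homeomorphism. First, the map is well defined. For a lattice $\Lambda$ of rank $s \leq r$, the module $J^{-1}\Lambda$ is again finitely generated over $A$ and has the same rank. It is also strongly discrete: choose nonzero $a \in A$ with $aJ^{-1} \subseteq A$. Then $J^{-1}\Lambda \subseteq a^{-1}\Lambda$, and $a^{-1}\Lambda$ is strongly discrete. So the action preserves $\LL^{\leq r}$ and each stratum $\LL^s$.

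For continuity, I would pick nonzero $a \in A$ with $aJ^{-1} \subseteq A$, and also $b \in J^{-1}$ nonzero (for instance $b=a$ if $a \in J^{-1}$). This gives
\[ b\Lambda \subseteq J^{-1}\Lambda \subseteq a^{-1}\Lambda, \]
so $J^{-1}\Lambda$ sits between two lattices whose dependence on $\Lambda$ is explicit. By \cref{prop:e_Lambda_props}(5), $e_{a^{-1}\Lambda}(z) = a^{-1}e_\Lambda(az)$. Using \cref{prop:dd_LL_radius} with $R = \abs{a}$, we get that $\Lambda' \to \Lambda$ implies $e_{a^{-1}\Lambda'} \to e_{a^{-1}\Lambda}$ uniformly on bounded sets; the same holds for $b\Lambda$.

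Next I would express $e_{J^{-1}\Lambda}$ through these. By \cref{prop:e_Lambda_props}(7),
\[ e_{J^{-1}\Lambda}(z) = e_{e_{b\Lambda}(J^{-1}\Lambda)}\bigl(e_{b\Lambda}(z)\bigr) = e_{b\Lambda}(z)\prodp_{y} \parens{1 - \frac{e_{b\Lambda}(z)}{y}}, \]
where $y$ runs over the finite set $e_{b\Lambda}(J^{-1}\Lambda/b\Lambda)$. The quotient $J^{-1}\Lambda/b\Lambda \simeq J^{-1}/bA \otimes$ is a finite $A$-module. The key point is to use a description of it that is uniform in $\Lambda$ of a fixed rank, writing $\Lambda = I_1\psi_1 + \dotsb + I_s\psi_s$ so that the quotient is $\bigoplus_i J^{-1}I_i/bI_i$.

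The main obstacle is controlling these finitely many nonzero values $y = e_{b\Lambda}(\lambda)$ as $\Lambda'$ varies, including when $\Lambda'$ has larger rank than the limit $\Lambda$. I would argue exactly as in the proofs of \cref{prop:lattice_limit} and \cref{prop:LLNRi_zeroNeighbour}, working with reciprocals $1/y$ rather than $y$. As $\Lambda' \to \Lambda$, the points of $J^{-1}\Lambda'/b\Lambda'$ split into two kinds. Those near points of $J^{-1}\Lambda/b\Lambda$ have $1/y$ converging to the corresponding reciprocal. The remaining ones have $\abs{1/y} \to 0$, because they lie far from $\Lambda$ by \cref{coro:e_Lambda_infinity} and \cref{prop:e_Lambda_infinity}. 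The polynomial $X\prod'(1-X/y)$ is continuous in the reciprocals $1/y$, so $e_{J^{-1}\Lambda'} \to e_{J^{-1}\Lambda}$ uniformly on $\abs{z}\leq 1$, as required.

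To make the matching of quotient points rigorous, I would fall back on the completeness/Drinfeld-module description in \cref{prop:LLleqR_complete}. The quantities $1/y$ are the reciprocal roots of the $\tau$-polynomial linking $\phi^{b\Lambda}$ and $\phi^{J^{-1}\Lambda}$, and these coefficients depend continuously on the Drinfeld module coefficients.
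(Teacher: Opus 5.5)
Your route is the paper's in disguise, but the decisive step is not proved. Since $b \in J^{-1}$, the ideal $I \defeq bJ$ is integral and $J^{-1}\Lambda = I^{-1}(b\Lambda)$. By \cref{def:IdealDModule} and \cref{prop:IdealDModuleEffect}, your polynomial $X\prodp_{y}(1-X/y)$ is therefore exactly $\phi^{b\Lambda}_I$, and $e_{J^{-1}\Lambda} = \phi^{b\Lambda}_I \circ e_{b\Lambda}$. The paper's sketch splits the action the same way: a principal scaling, which you handle correctly via \cref{prop:e_Lambda_props}(5) and \cref{prop:dd_LL_radius}, and the action of an integral ideal.

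Everything then rests on the continuity, on all of $\LL^{\leq r}$ and across strata, of the coefficients of $\phi^{\Lambda}_I$. That is what you do not establish.
\begin{itemize}
\item \cref{prop:e_Lambda_infinity} and \cref{coro:e_Lambda_infinity} concern a \emph{fixed} prelattice and a moving point. They give no uniform control while the lattice $b\Lambda'$ itself moves.
\item You never carry out the matching of classes of $J^{-1}\Lambda'/b\Lambda'$ with those of $J^{-1}\Lambda/b\Lambda$. A short element of $J^{-1}\Lambda'$ lies near a point of $a^{-1}\Lambda$, but you need it near a point of $J^{-1}\Lambda$. The decomposition $\bigoplus_i J^{-1}I_i/bI_i$ of the fixed $\Lambda$ says nothing about $\Lambda'$, whose rank may be larger.
\item Your fallback, that the isogeny's coefficients ``depend continuously on the Drinfeld module coefficients'', just restates the claim. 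For non-principal $I$, $\phi_I$ is not one of the $\phi_a$, only a common right divisor of them. Its $\tau$-degree $s\deg I$ jumps between strata, and \cref{prop:LLleqR_complete} says nothing about its coefficients.
\end{itemize}

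The missing ingredient is \cref{prop:DmoduleCoeffsModular}: the coefficients $\phi^\Lambda_{I,i}$ are modular forms on $\LL^{\leq r}$, hence continuous there. They are symmetric polynomials in the $E^1_l = \mu_{\Lambda,\iota}(l)$, whose continuity is built into $\dd_{\LLNRi}$, so the level-structure metric does your ``matching'' for you. Combine this with three facts:
\begin{itemize}
\item $\Lambda \mapsto b\Lambda$ is continuous;
\item $\phi^\Lambda_I$ has at most $r\deg I + 1$ nonzero coefficients;
\item $e_{b\Lambda'} \to e_{b\Lambda}$ uniformly on $\abs{z} \leq 1$.
\end{itemize}
The two-term estimate from the proof of \cref{prop:dd_LL_radius} then gives $e_{J^{-1}\Lambda'} = \phi^{b\Lambda'}_I \circ e_{b\Lambda'} \to \phi^{b\Lambda}_I \circ e_{b\Lambda} = e_{J^{-1}\Lambda}$ uniformly on $\abs{z} \leq 1$.
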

\begin{proof}[Sketch of proof]
  It is enough to show that each action by $J \in \cJ(A)$ is continuous, since then the action by $J^{-1}$ is continuous too.
  We then split the proof into cases of $J$ being an ideal of $A$, where we use that the coefficients of the polynomial $\phi_J^\Lambda(X)$ are continuous on $\LL^{\leq r}$, and of $J = (a)^{-1}$ for $a \in A$ which is straghtforward.
\end{proof}

Unfortunately we have not yet, as of the time of writing, been able to extend the action of $\invertadele$ to $\LLNRi$ in a sufficiently `nice' way. In fact, we suspect this extension to not be possible, although we have not yet proven this either.
We hope that the interested reader would carry this research forward.

%% file: text/4__modular_forms.tex
\section{Modular Forms} \label{sec:modular_forms}

\input{text/4_1_mf_def.tex}

\input{text/4_2_mf_eg.tex}

\input{text/4_3_mf_cusp_exp.tex}

\input{text/4_4_mf_BBP_relation.tex}

\input{text/4_5_mf_gek_relation.tex}

%% file: text/4_1_mf_def.tex
\subsection{Definitions}

\subsubsection{Modular forms on $\LL_N^r$}

To understand the structure and shape of the space of lattices with level structure or isomorphism classes of Drinfeld modules with level structure, one strategy is to investigate the collections of functions on those spaces.

\begin{definition} \label{def:weakMForm}
  For $k \in \ZZ$, a \emph{weak modular form} $f$ of \emph{weight} $k$ and \emph{rank} $r$ for the congruence subgroup $K(N)$ is a function $\LL_N^r \to \CCi$ which is
  \begin{itemize}
    \item holomorphic\footnote{Here holomorphic on $\LL_N^r$ means holomorphic on the isomorphic rigid analytic space $\lquotient{\GL[r]{F}}{\parens{\Psi^r \times \rquotient{\GL[r]{\finadele}}{K(N)}}}$, \ie holomorphic on the space $\Psi^r \subset \CCi^r$ and invariant under the two group actions of $\GL[r]{F}$ and $K(N)$.}, and
    \item homogeneous of degree $-k$, \ie
    \[ f(t \cdot \Lambda, t \cdot \alpha) = t^{-k} f(\Lambda,\alpha) \lsptext{for all} (\Lambda,\alpha) \in \LL_N^r \sptext{and} t \in \CCi^\times. \]
  \end{itemize}
  We denote the $\CCi$-vector space of weak modular forms for $K(N)$ of weight $k$ and rank $r$ by $\WeakMF_N^{k,r}$.
\end{definition}

\begin{paragraph} \label{para:weakMFormGradedRing}
  It is easy to check that $\WeakMF_N^{k,r}$ is a $\CCi$-vector space.
  Moreover, the product of two modular forms of weight $k_1$ and $k_2$ is a modular form of weight $k_1+k_2$, so that $\WeakMF_N^r \defeq \oplus_{k = 0}^\infty \WeakMF_N^{k,r}$ is a graded $\CCi$-algebra, graded by the weight $k$.
\end{paragraph}

\begin{definition} \label{def:strongMForm}
  A \emph{(strong) modular form} of \emph{weight} $k$ and \emph{rank} $r$ for $K(N)$ is a function $\LLNRi \to \CCi$ which is:
  \begin{itemize}
    \item continuous on $\LLNRi$,
    \item homogeneous of degree $-k$, and
    \item holomorphic on the main stratum $\LL_N^r$ of $\LLNRi$.
  \end{itemize}
  In general we will omit the adjective \emph{strong}, unless we are comparing strong with weak modular forms, and will omit the reference to $K(N)$.

  We denote the $\CCi$-vector space of strong modular forms for $K(N)$ of weight $k$ and rank $r$ by $\StrongMF_N^{k,r}$.
\end{definition}

\begin{paragraph} \label{para:strongMFormGradedRing}
  As in \cref{para:weakMFormGradedRing}, the strong modular forms of rank $r$ form a graded $\CCi$-algebra
  \[ \StrongMF_N^r \defeq \bigoplus_{k = 0}^\infty \StrongMF_N^{k,r}, \]
  graded by the weight $k$.
\end{paragraph}

\begin{paragraph}
  It is apparent that the restriction of a strong modular form to the main stratum $\LL_N^r$ of $\LLNRi$ is a weak modular form, and given the denseness of $\LL_N^r$ in $\LLNRi$, the values of a strong modular form (which is continuous) on the boundary strata can be recovered from the values on the main stratum.
  However, not every weak modular form can necessarily be extended to a strong modular form, if for instance it does not have a limit as one tends to the boundary of $\LLNRi$.
  Thus there is an injection $\StrongMF_N^{k,r}\into \WeakMF_N^{k,r}$ of $\CCi$-vector spaces for each weight $k$ and an injection $\StrongMF_N^r \into \WeakMF_N^r$ of graded $\CCi$-algebras.
\end{paragraph}

\begin{proposition} \label{prop:mFormNonpositiveWeight}
  The only modular forms of weight $0$ are the constant maps, and the only modular forms of negative weight are the zero maps.
\end{proposition}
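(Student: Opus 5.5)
The plan is to exploit the zero lattice $(0,0) \in \LLNRi$, which lies in the closure of every $\CCi^\times$-orbit. We combine homogeneity with continuity at that point.

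Let $f \in \StrongMF_N^{k,r}$ with $k \leq 0$, and fix $(\Lambda,\iota) \in \LLNRi$. Scaling extends to $\LLNRi$ by $t(\Lambda,\iota) = (t\Lambda, \iota \circ t^{-1})$, where $t^{-1} : N^{-1}t\Lambda/t\Lambda \to N^{-1}\Lambda/\Lambda$. This is the extension for which homogeneity in \cref{def:strongMForm} is meant, and on the main stratum it agrees with $(t\Lambda,t\alpha)$.

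First I would show that $t(\Lambda,\iota) \to (0,0)$ in $\LLNRi$ as $\abs{t} \to \infty$. Since $\Lambda$ is strongly discrete, $\minp_{\lambda \in t\Lambda}\abs{\lambda} = \abs{t}\minp_{\lambda\in\Lambda}\abs{\lambda} \to \infty$. By \cref{prop:lattice_large} this gives $\dd_\LL(t\Lambda,0) \to 0$, and then \cref{prop:LLNRi_zeroNeighbour} gives $\dd_{\LLNRi}\bigl(t(\Lambda,\iota),(0,0)\bigr) \to 0$. If $\Lambda = 0$ the claim is trivial.

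By continuity, $f(t(\Lambda,\iota)) \to f(0,0)$. Homogeneity gives $f(t(\Lambda,\iota)) = t^{-k} f(\Lambda,\iota)$.

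\emph{Case $k=0$.} The left side is constant equal to $f(\Lambda,\iota)$, so $f(\Lambda,\iota) = f(0,0)$ for every point, and $f$ is constant. Conversely, constants are continuous, holomorphic and homogeneous of degree $0$.

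\emph{Case $k<0$.} We have $\abs{t^{-k} f(\Lambda,\iota)} = \abs{t}^{\abs{k}}\abs{f(\Lambda,\iota)}$. This stays bounded as $\abs{t}\to\infty$, since it converges to $f(0,0)$, only if $f(\Lambda,\iota)=0$. So $f \equiv 0$.

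An alternative for the second case is to apply homogeneity at the point $(0,0)$ itself, which is fixed by scaling: $f(0,0) = t^{-k} f(0,0)$ for all $t$ forces $f(0,0)=0$ whenever $k \neq 0$. This is consistent with the argument above.

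The main subtlety is the first step. It requires that the scaling action extends continuously to $\LLNRi$, and that the $\mu$-terms in $\dd_{\LLNRi}$ vanish in the limit rather than only the $\dd_\LL$ term. This is exactly what \cref{prop:LLNRi_zeroNeighbour} provides, so once the extension of homogeneity to boundary points is accepted, the rest is immediate.
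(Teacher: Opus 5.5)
Your proposal is correct and follows essentially the same argument as the paper: you scale by $t$ with $\abs{t}\to\infty$ so that $t(\Lambda,\iota)\to(0,0)$, then combine continuity at the zero lattice with homogeneity of degree $-k$. The only difference is that you justify the convergence to $(0,0)$ explicitly via \cref{prop:lattice_large,prop:LLNRi_zeroNeighbour}, which the paper simply asserts.
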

\begin{proof}
  Let $f$ be a modular form of weight $0$, $(\Lambda,\iota) \in \LLNRi$, and $t \in \CCi^\times$ be large.
  Then since $f$ is homogeneous of weight $0$, $f(\Lambda,\iota) = f(t\Lambda,\iota t)$.
  But since $f$ is continuous, and $(t\Lambda,\iota t) \to 0$ as $\abs{t} \to \infty$, we get that $f(\Lambda,\iota) = f(0)$; thus $f$ is constant.
  Conversely, any constant map is a modular form of weight $0$.

  Now let $f$ be a modular form of weight $k < 0$, let $(\Lambda,\iota) \in \LLNRi$, and let $t \in \CCi^\times$ be large.
  Then $f(\Lambda,\iota) = t^k f(t\Lambda,t\iota)$; but $(t\Lambda,t\iota) \to 0$ and $t^k \to 0$ as $\abs{t} \to \infty$, so since $f$ is continuous we get that $t^k f(t\Lambda,t\iota) \to 0 \times f(0) = 0$; thus $f$ is the zero map, which is a modular form of weight $k$.
\end{proof}

\begin{definition} \label{def:cuspForm}
  A modular form $f \in \StrongMF_N^{k,r}$ is called a \emph{cusp form} if $f(\Lambda,\iota) = 0$ for $(\Lambda,\iota)$ in the boundary strata, \ie when $\Lambda$ is of rank strictly less than $r$.
  
  The $\CCi$-vector space of cusp forms of weight $k$ and rank $r$ for $K(N)$ is denoted $\CuspMF_N^{k,r}$, and the $\StrongMF_N^r$-algebra of all cusp forms for $K(N)$, which is also a graded $\CCi$-algebra graded by weight, is denoted $\CuspMF_N^r$.
  
  $\CuspMF_N^r$ is also an ideal of $\StrongMF_N^r$.
\end{definition}

The left action of $\GL[r]{A/N}$ on $\LLNRi$ carries over to a right action on the space of modular forms:
\begin{definition} \label{def:mFormSlash}
  For a modular form $f$ and $\gamma \in \GL[r]{A/N}$, we define the function $f|\gamma$ on $\LLNRi$ by $(f|\gamma)(\Lambda,\iota) = f(\gamma(\Lambda,\iota)) = f(\Lambda, \gamma \circ \iota)$.
\end{definition}

\begin{proposition} \label{prop:mFormSlashPreserve}
  The map $f \mapsto f|\gamma$ is a right action of $\gamma \in \GL[r]{A/N}$ on $\StrongMF_N^r$, which preserves the weight $k$ and maps cusp forms to cusp forms.
\end{proposition}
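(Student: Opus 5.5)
The plan is to check, in order, that $f|\gamma$ is a strong modular form of the same weight, that $f \mapsto f|\gamma$ is a right action, and that cusp forms go to cusp forms. The two inputs are that $\gamma$ acts on $\LLNRi$ as an isometry (\cref{GLrANActIsometry}) and that this action commutes with scaling by $\CCi^\times$.

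\emph{Continuity.} By \cref{GLrANActIsometry}, the map $(\Lambda,\iota) \mapsto (\Lambda,\gamma\circ\iota)$ is an isometry of $\LLNRi$, in particular continuous. Hence $f|\gamma = f \circ \gamma$ is continuous.

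\emph{Homogeneity.} Scaling by $t \in \CCi^\times$ sends $(\Lambda,\iota)$ to $(t\Lambda, \iota \circ t^{-1})$, where $t^{-1}: N^{-1}t\Lambda/t\Lambda \to N^{-1}\Lambda/\Lambda$ is multiplication by $t^{-1}$. Since $\gamma$ is post-composed while the scaling is pre-composed, the two commute. Therefore
\[ (f|\gamma)(t\cdot(\Lambda,\iota)) = f(t\cdot\gamma(\Lambda,\iota)) = t^{-k}(f|\gamma)(\Lambda,\iota), \]
so the weight $k$ is preserved.

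\emph{Holomorphy on the main stratum.} On the main stratum, the action of $\gamma$ coincides with the action $(\Lambda,\alpha)\mapsto(\Lambda,\alpha\circ\gamma^{-1})$ of \cref{prop:GLrANactLa}, taking $\alpha = \iota^{-1}$. Under $\Theta$ this is $[(\psi,g)] \mapsto [(\psi, g\hat\gamma^{-1})]$ for a lift $\hat\gamma\in\GL[r]{\hat A}$. This map does not touch $\psi$ and merely permutes the discrete factor $\rquotient{\GL[r]{\finadele}}{K(N)}$, so it is a rigid analytic automorphism, as noted after \cref{prop:GLrAhatActionVerify}. Composing the holomorphic $f$ with it gives a holomorphic function. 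This identification of the two actions, including the inverse and ordering conventions in $\alpha = \iota^{-1}$, is the one step I expect to need care. Everything else is formal.

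\emph{Right action.} From the definition,
\[ (f|\gamma_1)|\gamma_2\,(\Lambda,\iota) = (f|\gamma_1)(\Lambda,\gamma_2\circ\iota) = f(\Lambda,\gamma_1\gamma_2\circ\iota) = (f|\gamma_1\gamma_2)(\Lambda,\iota), \]
and $f|1 = f$. Linearity of $f \mapsto f|\gamma$ and compatibility with products are evident, since the action is by precomposition.

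\emph{Cusp forms.} The action of $\gamma$ leaves the lattice $\Lambda$ unchanged, so it preserves rank and maps the boundary $\bd_N^r$ into itself. If $f$ vanishes on $\bd_N^r$, then so does $f|\gamma$.
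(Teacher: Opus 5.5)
Your proof is correct and follows essentially the same route as the paper's: continuity from the isometry of \cref{GLrANActIsometry}, holomorphy on the main stratum because the action is a rigid analytic automorphism of $\LL_N^r$, homogeneity because the action commutes with scaling, the right-action identity by precomposition, and cusp forms preserved because $\Lambda$ is unchanged. You also spell out two checks the paper leaves as ``easy to see''. These are the commutation $\gamma\circ(\iota\circ t^{-1})=(\gamma\circ\iota)\circ t^{-1}$, and the identification via $\alpha=\iota^{-1}$ giving $(\gamma\circ\iota)^{-1}=\alpha\circ\gamma^{-1}$ on the main stratum, and both are right.
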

\begin{proof}
  Let $f$ be a modular form of weight $k$; then since the action of $\gamma \in \GL[r]{A/N}$ is an isometry, $f|\gamma$ is also continuous; since the action is a rigid analytic automorphism of $\LL_N^r$, $f|\gamma$ is also holomorphic on $\LL_N^r$; and it is easy to see that $f|\gamma$ is also homogeneous of degree $-k$.
  Thus $\gamma$ maps $\StrongMF_N^{k,r}$ to $\StrongMF_N^{k,r}$, and it is easy to see that $f \mapsto f|\gamma$ satisfies the conditions of a right action.
  
  Finally, since the action of $\gamma$ leaves $\Lambda$ unchanged, if $f(\Lambda,\iota) = 0$ for $\Lambda$ of rank less than $r$, then the same is true for $f|\gamma$.
\end{proof}

\subsubsection{Modular forms for $\LL^{\leq r}$}

\begin{definition} \label{def:mForm_noLevel}
  For $k \in \ZZ$, a (strong) modular form of weight $k$ and rank $r$ for $\LL^{\leq r}$ is a function $\LL^{\leq r} \to \CCi$ which is:
  \begin{itemize}
    \item continuous on $\LL^{\leq r}$,
    \item homogeneous of degree $-k$, and
    \item holomorphic on the main stratum $\LL^r$ of $\LL^{\leq r}$.
  \end{itemize}

  For $k \in \ZZ$, a weak modular form of weight $k$ and rank $r$ for $\LL^r$ is a function $\LL^r \to \CCi$ which is:
  \begin{itemize}
    \item homogeneous of degree $-k$, and
    \item holomorphic.
  \end{itemize}

  In addition, a strong modular form for $\LL^{\leq r}$ is a function $\LL^{\leq r} \to \CCi$ which is continuous on $\LL^{\leq r}$ and for which the restriction to $\LL^r$ is a weak modular form.

  We denote the spaces of weak and strong modular forms of weight $k$ and rank $r$ for $\LL^{\leq r}$ by $\WeakMF^{k,r}$ and $\StrongMF^{k,r}$, respectively.
  As before, a modular form on $\LL^{\leq r}$ will be assumed to mean a strong form unless otherwise indicated.
\end{definition}

\begin{paragraph}
  As in \cref{para:weakMFormGradedRing}, the weak and strong modular forms on $\LL^{\leq r}$ of rank $r$ form graded $\CCi$-algebras $\WeakMF^r \defeq \bigoplus_{k = 0}^\infty \WeakMF^{k,r}$ and $\StrongMF^r \defeq \bigoplus_{k = 0}^\infty \StrongMF^{k,r}$, graded by $k$.
\end{paragraph}

\begin{paragraph}
  Note that if a modular form $f$ for $\LLNRi$ is independent of the level structure $\iota$ (\ie if $f(\Lambda,\iota_1) = f(\Lambda,\iota_2)$ for any two inverse level $N$ structures $\iota_1, \iota_2$ for a lattice $\Lambda$, or equivalently it is invariant under the action of $\GL[r]{A/N}$) then it induces a unique modular form on $\LL^{\leq r} \simeq \lquotient{\GL[r]{A/N}}{\LLNRi}$.
\end{paragraph}

\begin{definition} \label{def:cuspForm_noLevel}
  A modular form $f \in \StrongMF^{k,r}$ is called a \emph{cusp form} if $f(\Lambda) = 0$ for $\Lambda$ in the boundary strata $\LL^{\leq r-1}$, \ie when $\Lambda$ has rank less than $r$.
  
  The $\CCi$-vector space of cusp forms of weight $k$ and rank $r$ is denoted $\CuspMF^{k,r}$, and the $\StrongMF^r$-algebra of all cusp forms, which is also a graded $\CCi$-algebra graded by weight, is denoted $\CuspMF^r$.
\end{definition}

The action of $\invertadele$ on $\LL^{\leq r}$, or equivalently of $\cJ(A)$ on $\LL^{\leq r}$, also carries over to an action on the space of modular forms on $\LL^{\leq r}$:
\begin{definition} \label{def:mFormSlashJ}
  For a modular form $f$ on $\LL^{\leq r}$ and a fractional ideal $J$, we define the function $f|J$ on $\LL^{\leq r}$ by $(f|J)(\Lambda) \defeq f(J(\Lambda)) = f(J^{-1}\Lambda)$.
\end{definition}

\begin{proposition} \label{prop:mFormSlashJPreserve}
  The map $f \mapsto f|J$ is an action of $\cJ(A)$ on $\StrongMF^{k,r}$ for each $k \in \ZZ$, which maps cusp forms to cusp forms.
\end{proposition}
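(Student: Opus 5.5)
We need to show that for $f \in \StrongMF^{k,r}$ and $J \in \cJ(A)$, the function $f|J$ again lies in $\StrongMF^{k,r}$. We also need that $f \mapsto f|J$ is a group action, and that $f|J$ is a cusp form whenever $f$ is.

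\textbf{Continuity.} This is immediate from \cref{prop:fracIdealActLeqR}. The map $\Lambda \mapsto J^{-1}\Lambda$ is a homeomorphism of $\LL^{\leq r}$, so $f|J = f \circ J(\cdot)$ is continuous on $\LL^{\leq r}$.

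\textbf{Homogeneity.} Since $J^{-1}(t\Lambda) = t\,J^{-1}\Lambda$ for $t \in \CCi^\times$, we get $(f|J)(t\Lambda) = f(tJ^{-1}\Lambda) = t^{-k}(f|J)(\Lambda)$.

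\textbf{Holomorphy on $\LL^r$.} The map $\Lambda \mapsto J^{-1}\Lambda$ preserves rank, so it maps $\LL^r$ to itself. By the remark after \cref{prop:invAdele_L}, it is the rigid analytic map on $\LL^r$ induced from the action of an adele $x$ with $J(x) = J$ on $\LL_N^r$. Such an $x$ exists by the surjectivity of $J$ in \cref{prop:invAdele_ExactSeq}. To make this precise, pull back along the quotient $\LL_N^r \onto \LL^r$ from \cref{prop:LLR_rigidAnalytic}. There the action of $x$ is $[(\psi,g)] \mapsto [(\psi, gx^{-1})]$, which is the identity on the $\Psi^r$ coordinate and a permutation of the discrete factor $\GL[r]{\finadele}/K(N)$. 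Hence the pullback of $f|J$ to $\Psi^r$ on each component is the pullback of $f$ on another component, and so it is holomorphic.

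We must also check compatibility with the $\GL[r]{A/N}$ quotient. The action of $x$ commutes with that of $\GL[r]{A/N}$ because $x$ is central, so it descends to $\LL^r$. I expect this descent-of-holomorphy step to be the only one needing care. The cleanest route is to note that a function on $\LL^r$ is holomorphic exactly when its pullback to $\LL_N^r$ is, and then argue as above.

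\textbf{Action and cusp forms.} The action property follows from $(J_1J_2)^{-1}\Lambda = J_1^{-1}(J_2^{-1}\Lambda)$: this gives $(f|J_1)|J_2 = f|(J_1J_2)$, consistent since $\cJ(A)$ is abelian, and $f|A = f$. Finally, $J^{-1}\Lambda$ has the same rank as $\Lambda$. So if $f$ vanishes on $\LL^{\leq r-1}$, then $f|J$ does too, and cusp forms map to cusp forms.
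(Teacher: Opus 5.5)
Your proposal is correct and takes essentially the same route as the paper's proof. Continuity comes from the homeomorphism of \cref{prop:fracIdealActLeqR}, holomorphy from the action being a rigid analytic automorphism of $\LL^r$, homogeneity from $J^{-1}$ commuting with scaling, and preservation of cusp forms from $J^{-1}\Lambda$ having the same rank as $\Lambda$. You additionally spell out the holomorphy via the double-quotient description on $\LL_N^r$ and check the action identity $(f|J_1)|J_2 = f|(J_1J_2)$, both of which the paper leaves implicit.
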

\begin{proof}
  Let $f$ be a modular form of weight $k$; then since the action of $J$ is a homeomorphism, $f|J$ is also continuous; since the action is a rigid analytic automorphism of $\LL^r$, $f|J$ is also holomorphic on $\LL^r$; and since the action of $J$ commutes with scaling of a lattice, $f|J$ is also homogeneous of degree $-k$.
  Thus $J$ maps $\StrongMF^{k,r}$ to itself.

  Finally, if $\Lambda$ has rank $r' < r$, then so does $J^{-1}\Lambda$, and so if $f$ is a cusp form, then $f|J$ is too.
\end{proof}

\begin{paragraph}
  As a special case of this action by fractional ideals, we have the instances where $J = (t)$ is a principal ideal for $t \in F^\times$.
  Here, $J$ is simply scaling the lattices by $t^{-1}$, and thus is scaling the $\StrongMF_N^{k,r}$ by $t^k$.
  So the more interesting case is of non-principal ideals, with the ideal class group $\Cl(F) = \rquotient{\cJ(A)}{\mathrm{Prin}(A)}$ being of interest.
\end{paragraph}

%% file: text/4_2_mf_eg.tex
\subsection{Examples of modular forms}

In this subsection we list some classes of examples of modular forms, where we omit the more technical proofs that these are in fact holomorphic and continuous; for these proofs see \cite{baker2020lattice}.

There is large overlap between these modular forms and the examples given in \cite{BBPIII}, the correspondence between which may be made clearer by the connective results in the following subsection.

\subsubsection{Eisenstein Series and $e_\Lambda$ coefficient forms}

\begin{definition} \label{def:EisensteinSeries}
  For $k \in \NN$ we define the \emph{Eisenstein series}
  \[ E^k(\Lambda) = \sump_{\lambda \in \Lambda} \lambda^{-k}. \qedhere \]
\end{definition}

These converge since the lattice elements go to infinity.
Moreover, $E^k$ is homogeneous of degree $-k$; \ie $E^k(c \Lambda) = c^{-k} E^k(\Lambda)$ for $c \in \CCi^\times$.
Finally, $E^k$ is a holomorphic function on $\LL^r$.

There is a connection between these Eisenstein series and the Taylor expansion coefficients of $e_\Lambda$, or more specifically of $1/e_\Lambda$:
\begin{proposition} \label{prop:eLambdaEiskCoeff}
  For $\abs{z} < \minp_{\lambda \in \Lambda} \abs{\lambda}$,
  \[ \frac{1}{e_\Lambda(z)} -\frac{1}{z} = -\sum_{k = 1}^\infty E^k(\Lambda) z^{k-1}. \qedhere \]
\end{proposition}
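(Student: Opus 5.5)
We start from \cref{prop:e_Lambda_props}(3), which gives, for every $z \notin \Lambda$,
\[ \frac{1}{e_\Lambda(z)} = \sum_{\lambda \in \Lambda} \frac{1}{z-\lambda} = \frac{1}{z} + \sump_{\lambda \in \Lambda} \frac{1}{z-\lambda}. \]
Subtracting $1/z$, the task reduces to expanding each term $\frac{1}{z-\lambda}$ with $\lambda \neq 0$ as a power series in $z$ and then interchanging the two summations.

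For nonzero $\lambda$ with $\abs{z} < \abs{\lambda}$, the geometric series gives
\[ \frac{1}{z-\lambda} = -\frac{1}{\lambda}\cdot\frac{1}{1-z/\lambda} = -\sum_{k=1}^\infty \lambda^{-k} z^{k-1}, \]
and this converges because $\abs{z/\lambda} < 1$. The hypothesis $\abs{z} < \minp_{\lambda\in\Lambda}\abs{\lambda} \eqdef R$ ensures this holds for every nonzero $\lambda$ at once. Formally, summing over $\lambda$ and swapping the order of summation yields
\[ -\sum_{k=1}^\infty \Bigl(\sump_{\lambda\in\Lambda}\lambda^{-k}\Bigr) z^{k-1} = -\sum_{k=1}^\infty E^k(\Lambda) z^{k-1}, \]
which is the claimed identity.

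The main obstacle is justifying the interchange of summations. In the non-archimedean setting this reduces to checking that the terms of the double family $(\lambda^{-k}z^{k-1})_{\lambda \neq 0,\, k\geq 1}$ tend to $0$ along the filter of cofinite subsets. Once that holds, the double sum converges unconditionally and may be summed in either order. We have
\[ \abs{\lambda^{-k} z^{k-1}} = \abs{\lambda}^{-1}\,\abs{z/\lambda}^{k-1} \leq \abs{\lambda}^{-1}\,(\abs{z}/R)^{k-1}. \]
The factor $(\abs{z}/R)^{k-1}$ tends to $0$ as $k \to \infty$ uniformly in $\lambda$, since $\abs{z} < R$. The factor $\abs{\lambda}^{-1}$ tends to $0$ as $\lambda$ leaves finite sets, because $\Lambda$ is strongly discrete and so only finitely many $\lambda$ lie in any ball; the same observation shows that each $E^k(\Lambda)$ converges. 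Hence for each $\varepsilon > 0$ only finitely many pairs $(\lambda,k)$ give a term of absolute value at least $\varepsilon$. Non-archimedean unconditional convergence then licenses the rearrangement, and the proof is complete.
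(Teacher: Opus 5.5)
Your proof is correct and matches the paper's argument: the partial-fraction expansion of $1/e_\Lambda$ from \cref{prop:e_Lambda_props}, a geometric expansion of each $1/(z-\lambda)$, and then swapping the two sums. The one difference is that you justify the swap explicitly, using unconditional convergence of the double family in the non-archimedean setting; the paper performs the interchange without comment.
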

\begin{proof}
  By \cref{prop:e_Lambda_props},
  \begin{align*}
    \frac{1}{e_\Lambda(z)} -\frac{1}{z} &= \sump_{\lambda \in \Lambda} \frac{1}{z-\lambda} = -\sump_{\lambda \in \Lambda} \sum_{k = 1}^\infty \frac{z^{k-1}}{\lambda^k} \\
    &= -\sum_{k = 1}^\infty z^{k-1} \sump_{\lambda \in \Lambda} \lambda^{-k} = -\sum_{k = 1}^\infty E^k(\Lambda) z^{k-1}. \qedhere
  \end{align*}
\end{proof}

\begin{definition} \label{def:eLambdaCoeffs}
  For $i \in \NNO$ we define the \emph{coefficient forms}
  \[ e_{\Lambda,i} = \cin{z}{q^i} e_\Lambda(z)\,.\footnote{Here and elsewhere, the notation $\cin{z}{k} f(z)$ denotes the coefficient of $z^k$ in $f(z)$.} \]
  As functions of $\Lambda$, these are continuous on $\LL^{\leq r}$, holomorphic on $\LL^r$, and homogeneous of degree $1-q^i$.
\end{definition}

\begin{proposition} \label{prop:EiskELambdaGF}
  For $i \in \NNO$ and $k \in \NN$ there are polynomials $G_i$, $G^k$ with coefficients in $\FF_p$ such that $e_{\Lambda,i} = G_i(E^1(\Lambda), E^2(\Lambda), \dotsc)$ and $E^k(\Lambda) = G^k(e_{\Lambda,1}, e_{\Lambda,2}, \dotsc)$.
\end{proposition}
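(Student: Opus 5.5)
We will work with formal power series in $z$ over the ring $R = \FF_p[E^1, E^2, \dotsc]$ (respectively $\FF_p[e_{\Lambda,1}, e_{\Lambda,2}, \dotsc]$), using \cref{prop:eLambdaEiskCoeff} together with the power series expansion of \cref{prop:e_Lambda_props}. Since $e_\Lambda$ has derivative $1$, we have $e_{\Lambda,0} = 1$. Hence
\[ e_\Lambda(z) = z\Bigl(1 + \sum_{i \geq 1} e_{\Lambda,i} z^{q^i-1}\Bigr), \]
and on $\abs{z} < \minp_{\lambda \in \Lambda}\abs{\lambda}$ we have, by \cref{prop:eLambdaEiskCoeff},
\[ \frac{z}{e_\Lambda(z)} = 1 - \sum_{k \geq 1} E^k(\Lambda) z^k. \]
Therefore, as formal power series in $z$,
\[ \Bigl(1 + \sum_{i\geq1} e_{\Lambda,i} z^{q^i-1}\Bigr)\Bigl(1-\sum_{k\geq1}E^k(\Lambda)z^k\Bigr) = 1. \]
The identity holds as power series because both sides are convergent power series that agree on a nonempty open disc, so their coefficients agree.

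Both directions then follow by inverting a power series with constant term $1$ over a ring. Write $u = \sum_{i\geq1} e_{\Lambda,i} z^{q^i-1}$ and $v = \sum_{k \geq 1} E^k z^k$. Then
\[ 1 - v = (1+u)^{-1} = \sum_{m\geq0} (-u)^m. \]
Comparing coefficients of $z^k$ gives $E^k$ as a polynomial with integer coefficients, reduced into $\FF_p$, in those $e_{\Lambda,i}$ with $q^i - 1 \leq k$. The sum is finite because each factor of $u$ raises the $z$-degree by at least $q-1 \geq 1$. This defines $G^k$.

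Conversely, $1 + u = (1-v)^{-1} = \sum_{m \geq 0} v^m$. Comparing coefficients of $z^{q^i-1}$ expresses $e_{\Lambda,i}$ as a polynomial over $\FF_p$ in $E^1, \dotsc, E^{q^i-1}$. This defines $G_i$, for $i \geq 1$, and we take $G_0 = 1$.

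There is no real obstacle in this argument. The one point that needs care is justifying that the coefficient comparison is legitimate. This requires the identity to hold as formal power series, which follows from the convergence of both expansions on a small disc around $0$ (\cref{prop:e_Lambda_props} and \cref{prop:eLambdaEiskCoeff}) and the uniqueness of power series coefficients over $\CCi$. As a byproduct, the comparison also shows that $E^k = 0$ unless $(q-1) \mid k$, though this is not needed for the statement.
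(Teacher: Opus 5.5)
Your proposal is correct and follows essentially the same route as the paper. Both rewrite \cref{prop:eLambdaEiskCoeff} as $z/e_\Lambda(z) = 1-\sum_{k\geq1} E^k(\Lambda)z^k$ and invert a power series with constant term $1$ by the geometric series, once in each direction. Your explicit justification of the coefficient comparison, via uniqueness of convergent power series expansions on a disc, is a point the paper leaves implicit.
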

\begin{proof} For $i \in \NNO$,
  \begin{align*}
    e_{\Lambda, i} &= \cin{z}{q^i} e_\Lambda(z) \\
    &= \cin{z}{q^i} z \brackets{1 -\parens{1-\frac{z}{e_\Lambda(z)}}}^{-1} = \cin{z}{q^i-1} \sum_{n = 0}^\infty \parens{1-\frac{z}{e_\Lambda(z)}}^n \\
    &= \cin{z}{q^i-1} \sum_{n = 0}^\infty \parens{\sum_{k = 1}^\infty E^k(\Lambda)z^k}^n \lsptext{by \cref{prop:eLambdaEiskCoeff}} \\
    &= \sum_{n = 0}^{q^i-1} \cin{z}{q^i-1} \parens{\sum_{k = 1}^\infty E^k(\Lambda) z^k}^n \\
    &= \sum_{n = 0}^{q^i-1} \sum_{\substack{k_1 +\dotsb +k_n = q^i-1 \\ k_1, k_2, \dotsc, k_n \geq 1}} E^{k_1}(\Lambda) E^{k_2}(\Lambda) \dotsm E^{k_n}(\Lambda)
  \end{align*}
  which is a polynomial in the $E^k(\Lambda)$ as desired.
  The second series of polynomials is found similarly, using a geometric series expansion on the relation
  \[ E^k(\Lambda) = \cin{z}{k} \parens{1-\frac{z}{e_\Lambda(z)}} = -\cin{z}{k} \frac{1}{1 +(e_\Lambda(z)/z-1)}. \qedhere \]
\end{proof}



Thus the coefficients $e_{\Lambda,i}$ and the Eisenstein series $E^k(\Lambda)$ form our first two classes of modular forms for $\LLNRi$, and are in fact modular forms on $\LL^{\leq r}$.
As such, we can extend these classes to more modular forms on $\LL^{\leq r}$:
\begin{proposition} \label{prop:EiskJLeJLCoeffsModular}
  For each $J \in \cJ(A)$ and $i,k \in \NNO$ we have that $e_{J^{-1}\Lambda,i}$ and $E^k(J^{-1}\Lambda)$ are modular forms on $\LL^{\leq r}$ of weight $q^i-1$ and $k$ respectively.
\end{proposition}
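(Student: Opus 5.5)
The plan is to reduce the statement to the already established facts that $e_{\Lambda,i}$ and $E^k(\Lambda)$ are modular forms on $\LL^{\leq r}$, and then use the action of $\cJ(A)$. By \cref{def:mFormSlashJ}, the function $\Lambda \mapsto e_{J^{-1}\Lambda,i}$ is exactly $e_{\cdot,i}|J$, and likewise $\Lambda \mapsto E^k(J^{-1}\Lambda)$ is $E^k|J$. By \cref{prop:mFormSlashJPreserve}, $f \mapsto f|J$ maps $\StrongMF^{k',r}$ into itself, so it suffices to check three things: that $e_{\Lambda,i}$ and $E^k$ lie in $\StrongMF^{q^i-1,r}$ and $\StrongMF^{k,r}$ respectively, and that the weights are as claimed.

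The weights come straight from homogeneity. By \cref{prop:e_Lambda_props}(5), $e_{c\Lambda}(cz) = c\,e_\Lambda(z)$. Comparing the coefficients of $z^{q^i}$ gives $c^{q^i} e_{c\Lambda,i} = c\,e_{\Lambda,i}$, so $e_{c\Lambda,i} = c^{1-q^i} e_{\Lambda,i}$. This is homogeneous of degree $-(q^i-1)$, i.e.\ weight $q^i-1$. For $E^k$, the identity $E^k(c\Lambda) = c^{-k}E^k(\Lambda)$ gives weight $k$. Homogeneity is preserved under $J$ because $J^{-1}(c\Lambda) = c(J^{-1}\Lambda)$.

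Continuity on $\LL^{\leq r}$ and holomorphy on $\LL^r$ of $e_{\Lambda,i}$ are asserted in \cref{def:eLambdaCoeffs}, which I take as given. For $E^k$ I use \cref{prop:EiskELambdaGF}, which writes $E^k(\Lambda) = G^k(e_{\Lambda,1}, e_{\Lambda,2}, \dotsc)$ with $G^k$ a polynomial. Because $G^k$ is a polynomial, only finitely many of the $e_{\Lambda,j}$ appear, so $E^k$ inherits continuity on $\LL^{\leq r}$ and holomorphy on $\LL^r$. The case $k=0$ (a constant, or weight $0$) and the case $i=0$ (where $e_{\Lambda,0}=1$) are trivial constants. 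One caveat is that $E^k$ vanishes unless $q-1 \mid k$, but that does not affect modularity.

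The main obstacle is the justification, inside \cref{prop:mFormSlashJPreserve}, that $J$ acts continuously on $\LL^{\leq r}$. This rests on \cref{prop:fracIdealActLeqR}, which is only sketched and which I assume. If I wanted to avoid it, I would instead argue directly for integral ideals $J$ using \cref{prop:e_Lambda_latticeProps}(2): it expresses $e_{J^{-1}\Lambda}$ as $\phi^\Lambda_J(e_\Lambda(z))$, whose coefficients are continuous in $\Lambda$. I would then handle a general $J$ by writing $J = a^{-1}J'$ with $J'$ integral and $a \in A$, and using homogeneity in $a$.
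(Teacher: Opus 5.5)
Your proposal is correct and matches the paper's proof. The paper also just observes that $e_{J^{-1}\Lambda,i} = e_{\Lambda,i}|J$ and $E^k(J^{-1}\Lambda) = E^k(\Lambda)|J$ and invokes \cref{prop:mFormSlashJPreserve}, taking the modularity of $e_{\Lambda,i}$ and $E^k$ from the preceding discussion, which you additionally justify via homogeneity and \cref{prop:EiskELambdaGF}.

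One small correction: $E^0$ is not a constant but is simply undefined. \Cref{def:EisensteinSeries} only covers $k \in \NN$, and $\sum_{0 \neq \lambda \in \Lambda} \lambda^{0}$ diverges for $\Lambda \neq 0$. So the case $k = 0$ of the statement should be read as excluded.
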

\begin{proof}
  $e_{J^{-1}\Lambda,i} = e_{\Lambda,i}|J$ and $E^k(J^{-1}\Lambda) = E^k(\Lambda)|J$, so \cref{prop:mFormSlashJPreserve} applies.
\end{proof}

\subsubsection{Partial Eisenstein Series}

\begin{definition} \label{def:Eisk}
  For each $k \in \NN$ and $l \in \parens{N^{-1}/A}^r -\set{0}$, we define the \emph{(partial) Eisenstein series} $E_l^k$ on $\LLNRi$ by
  \[ E_l^k(\Lambda,\iota) = \begin{dcases} \sum_{v \in \iota^{-1}(l)} v^{-k} & \sptext{if} l \in \Im{\iota} \\ 0 & \sptext{otherwise} \end{dcases}. \]
  We will omit the adjective \emph{partial}, unless contrasting with the \emph{complete} Eisenstein series of \cref{def:EisensteinSeries}.
\end{definition}

Restricting to the main stratum $\LL_N^r$, we have the corresponding weak modular forms:
\begin{definition} \label{def:weakEisk}
  For $k \in \NN$ and $l \in \parens{N^{-1}/A}^r -\set{0}$, we define the \emph{weak (partial) Eisenstein series} $E_l^k$ on $\LL_N^r$ by
  \[ E_l^k(\Lambda,\alpha) = \sum_{v \in \alpha(l)} v^{-k}. \qedhere \]
\end{definition}
It will be possible to see from each context whether the weak or the strong modular form $E_l^k$ is intended.

Each $E_l^k$ is homogeneous of degree $-k$, holomorphic on $\LL_N^r$ and continuous on $\LLNRi$ in the case of the strong/non-weak partial Eisenstein series.

In the case of weight $k = 1$, we encounter something familiar:

\begin{proposition} \label{prop:Eis1mu}
  $E_l^1(\Lambda,\iota) = \mu_{\Lambda,\iota}(l)$.
\end{proposition}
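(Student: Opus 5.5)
The proof is a direct case split on whether $l \in \Im{\iota}$, using the partial fraction expansion of $1/e_\Lambda$ from \cref{prop:e_Lambda_props}(3). The case $l \notin \Im{\iota}$ is immediate: \cref{def:Eisk} gives $E_l^1(\Lambda,\iota) = 0$, and \cref{def:LLNR_recipRoot} gives $\mu_{\Lambda,\iota}(l) = 0$. Note also that $l \neq 0$ by the hypothesis in \cref{def:Eisk}, so the case $l = 0$, where $\mu$ vanishes by definition, never arises.

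Now take $l \in \Im{\iota} - \set{0}$. Then $\iota^{-1}(l)$ is a single class in $N^{-1}\Lambda/\Lambda$, and it is nonzero because $\iota$ is injective and $l \neq 0$. Choose a representative $v_0 \in N^{-1}\Lambda$ of this class, so that the coset is $\iota^{-1}(l) = v_0 + \Lambda$ and $v_0 \notin \Lambda$. Then, by definition,
\[ E_l^1(\Lambda,\iota) = \sum_{\lambda \in \Lambda} \frac{1}{v_0+\lambda}, \]
and no term has a zero denominator since $v_0 \notin \Lambda$. The sum converges, as already remarked after \cref{def:weakEisk}, because $\abs{v_0+\lambda} \to \infty$ by strong discreteness.

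Apply \cref{prop:e_Lambda_props}(3) at $z = v_0$, and reindex via $\lambda \mapsto -\lambda$, which is a bijection of $\Lambda$:
\[ \frac{1}{e_\Lambda(v_0)} = \sum_{\lambda \in \Lambda} \frac{1}{v_0 - \lambda} = \sum_{\lambda \in \Lambda} \frac{1}{v_0 + \lambda}. \]
The value $e_\Lambda(v_0)$ is nonzero and depends only on the class $v_0 + \Lambda$, so it equals $e_\Lambda(\iota^{-1}(l))$. Hence $E_l^1(\Lambda,\iota) = e_\Lambda(\iota^{-1}(l))^{-1} = \mu_{\Lambda,\iota}(l)$.

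There is no substantial obstacle. The only points needing care are that the nonarchimedean series converges unconditionally, which justifies the reindexing, and that the choice of representative $v_0$ is irrelevant on both sides, which holds because $\Lambda$-translation permutes the summation set and $e_\Lambda$ is $\Lambda$-periodic.
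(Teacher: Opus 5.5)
Your proposal is correct and follows essentially the same route as the paper: split on whether $l \in \Im{\iota}$, pick a representative of the coset $\iota^{-1}(l)$, and evaluate the partial-fraction expansion $1/e_\Lambda(z) = \sum_{\lambda \in \Lambda} 1/(z-\lambda)$ at that representative. Your extra remarks make explicit what the paper's one-line computation leaves implicit: the reindexing $\lambda \mapsto -\lambda$, that $e_\Lambda(v_0) \neq 0$ because $v_0 \notin \Lambda$, and that the result does not depend on the choice of representative.
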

\begin{proof}
  If $l \in \Im{\iota}$, let $\lambda'$ be an element of $\iota^{-1}(l)$.
  Then by \cref{prop:e_Lambda_props},
  \[ E_l^1(\Lambda,\iota) = \sum_{v \in \iota^{-1}(l)} \frac{1}{v} = \sum_{\lambda \in \Lambda} \frac{1}{\lambda'+\lambda} = \frac{1}{e_\Lambda(\lambda')} = \frac{1}{e_\Lambda(\iota^{-1}(l))} = \mu_{\Lambda,\iota}(l)\,. \]
  For $l \notin \Im{\iota}$, both sides are $0$, and so the proof is complete.
\end{proof}


There is in fact some dependence between the $E_l^k$ and the modular forms $e_{\Lambda,i}$:

\begin{proposition} \label{prop:EislkELambdaGF}
  For each $k \in \NN$ there is a polynomial $g_k(x,y_1,y_2,\dotsc)$ with coefficients in $\FF_p$ such that $E_l^k = g_k(E_l^1, e_{\Lambda,1},e_{\Lambda,2},\dotsc)$ for all $l \in \parens{N^{-1}/A}^r-\set{0}$.
\end{proposition}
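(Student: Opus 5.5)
We obtain the $E_l^k$ as Taylor coefficients of $1/e_\Lambda$ shifted by a fixed $N$-torsion point, in the same way \cref{prop:eLambdaEiskCoeff} and \cref{prop:EiskELambdaGF} handle the complete Eisenstein series.

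Fix $(\Lambda,\iota)$. If $l\notin\Im\iota$, then $E_l^k=0$ for every $k$, and in particular $E_l^1=0$. We will therefore want $g_k(0,y_1,y_2,\dotsc)=0$, and we will check this at the end. So assume $l\in\Im\iota$ and pick a representative $\lambda'\in\iota^{-1}(l)$. Then $\lambda'\notin\Lambda$, and by \cref{prop:e_Lambda_props}(3) we have
\[ \frac{1}{e_\Lambda(\lambda'+z)} = \sum_{\lambda\in\Lambda}\frac{1}{\lambda'+\lambda+z}. \]
For $\abs{z}<\dd(\lambda',\Lambda)$ we expand each term geometrically:
\[ \frac{1}{\lambda'+\lambda+z}=\sum_{k\ge1}(-1)^{k-1}(\lambda'+\lambda)^{-k}z^{k-1}. \]
Interchanging the sums is justified because $\abs{\lambda'+\lambda}\to\infty$ and the space is non-archimedean. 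This gives
\[ \frac{1}{e_\Lambda(\lambda'+z)}=\sum_{k\ge1}(-1)^{k-1}E_l^k(\Lambda,\iota)\,z^{k-1}. \]

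Next we compute the left side a second way. By $\FF_q$-linearity (\cref{prop:e_Lambda_props}(4)), $e_\Lambda(\lambda'+z)=e_\Lambda(\lambda')+e_\Lambda(z)$. We have $e_\Lambda(\lambda')=1/E_l^1$ by \cref{prop:Eis1mu}, writing $x=E_l^1\neq0$. Also $e_\Lambda(z)=z+\sum_{i\ge1}e_{\Lambda,i}z^{q^i}$ by \cref{prop:e_Lambda_props}(6), using $e_{\Lambda,0}=1$ from (2). Hence
\[ \frac{1}{e_\Lambda(\lambda'+z)}=\frac{1}{x^{-1}+e_\Lambda(z)}=\frac{x}{1+x\,e_\Lambda(z)}=\sum_{n\ge0}(-1)^n x^{n+1}e_\Lambda(z)^n. \]
This holds for $\abs{z}$ small, where $\abs{x\,e_\Lambda(z)}<1$. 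The coefficient of $z^{k-1}$ only involves $n\le k-1$, because $e_\Lambda(z)^n$ starts at $z^n$. Each such coefficient is an integer polynomial in $x$ and the $e_{\Lambda,i}$, reduced into $\FF_p$. Comparing coefficients yields
\[ E_l^k=(-1)^{k-1}\sum_{n=0}^{k-1}(-1)^n x^{n+1}\cin{z}{k-1}\Bigl(z+\sum_{i\ge1}y_iz^{q^i}\Bigr)^n\Big|_{y_i=e_{\Lambda,i}}, \]
and we take this expression as the definition of $g_k$. It depends only on $k$, not on $l$, $\Lambda$ or $\iota$, and only finitely many $y_i$ occur, namely those with $q^i\le k-1$.

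Each term of $g_k$ has a factor $x^{n+1}$ with $n+1\ge1$, so $g_k(0,y)=0$, as needed for the case $l\notin\Im\iota$. The identity is established pointwise on all of $\LLNRi$: on the main stratum this is the computation above, and on boundary strata the same computation works because it only uses $\Lambda$ and $\iota$. No continuity argument is needed.

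The main obstacle is the convergence and interchange of the double series. Choosing $\abs{z}$ smaller than both $\dd(\lambda',\Lambda)$ and the radius where $\abs{x\,e_\Lambda(z)}<1$ gives two convergent power-series expansions of the same analytic function near $0$. By uniqueness of power-series coefficients, the identity of coefficients follows.
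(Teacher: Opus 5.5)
Your proof is correct and follows essentially the same route as the paper: the paper expands the generating function $\sum_{k>0}E_l^k z^k = z/\bigl(e_\Lambda(\iota^{-1}(l))-e_\Lambda(z)\bigr)$ as a geometric series in $E_l^1 e_\Lambda(z)$. Up to the substitution $z\mapsto -z$ and a factor of $z$, this is your expansion of $1/e_\Lambda(\lambda'+z)$. The extra signs $(-1)^{k-1-n}$ in your $g_k$ are harmless, and your justification of the series interchanges is a detail the paper leaves implicit.
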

\begin{proof}
  Consider the generating function of the $E_l^k$.
  For $l \in \Im{\iota}$:
  \begin{align*}
    &\mathrel{\phantom{=}} \sum_{k > 0} E_l^k(\Lambda,\iota) z^k \\
    &= \sum_{v \in \iota^{-1}(l)} \sum_{k > 0} v^{-k}z^k = \sum_{v \in \iota^{-1}(l)} \frac{z/v}{1-z/v} = z \sum_{v \in \iota^{-1}(l)} \frac{1}{v-z} \\
    &= \frac{z}{e_\Lambda(\iota^{-1}(l)-z)} = \frac{z}{e_\Lambda(\iota^{-1}(l))-e_\Lambda(z)} = \frac{z}{e_\Lambda(\iota^{-1}(l))} \brackets{1-\frac{e_\Lambda(z)}{e_\Lambda(\iota^{-1}(l))}}^{-1} \\
    &= \sum_{n > 0} \frac{z e_\Lambda(z)^{n-1}}{e_\Lambda(\iota^{-1}(l))^n} = \sum_{n > 0} z e_\Lambda(z)^{n-1} (E_l^1)^n.
  \end{align*}
  For $l \notin \Im{\iota}$, both sides are zero, and so the equality still holds.
  So in general,
  \[ E_l^k = \cin{z}{k} \sum_{n > 0} E_l^n z^n = \cin{z}{k} \sum_{n > 0} z e_\Lambda(z)^{n-1} (E_l^1)^n = \sum_{n > 0} (E_l^1)^n \cin{z}{k-1} e_\Lambda(z)^{n-1}. \]
  Now each $\cin{z}{k-1} e_\Lambda(z)^{n-1}$ for $n > 0$ is a polynomial in the $e_{\Lambda,i}$ with coefficients in $\FF_p$.
  Moreover, since $e_\Lambda(z)$ has zero constant term, $\cin{z}{k-1} e_\Lambda(z)^{n-1} = 0$ for $n > k$, and so the last sum above is finite.
  This proves the claim.
\end{proof}

The $E_l^k$ form our third class of examples of modular forms, and our first which are not modular forms for $\LL^{\leq r}$.
Looking at their behaviour under the action of $\GL[r]{A/N}$, we see a

\begin{proposition} \label{prop:EislkSlashAN}
  For $l \in \parens{N^{-1}/A}^r-\set{0}$ and $\gamma \in \GL[r]{A/N}$,
  \[ E_l^k |\gamma = E_{\gamma^{-1}l}^k. \qedhere \]
\end{proposition}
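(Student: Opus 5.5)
The identity is a direct unwinding of \cref{def:mFormSlash} and \cref{def:Eisk}, evaluated pointwise at an arbitrary $(\Lambda,\iota) \in \LLNRi$. By \cref{def:mFormSlash} and \cref{def:GLrANActLLNRi},
\[ (E_l^k|\gamma)(\Lambda,\iota) = E_l^k(\gamma(\Lambda,\iota)) = E_l^k(\Lambda, \gamma \circ \iota), \]
so it suffices to show that $E_l^k(\Lambda,\gamma\circ\iota) = E_{\gamma^{-1}l}^k(\Lambda,\iota)$ for every such pair. Note first that $\gamma^{-1}l \neq 0$ because $\gamma$ is an automorphism of $\parens{N^{-1}/A}^r$. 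Hence $E^k_{\gamma^{-1}l}$ is defined.

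I then split into the two cases of \cref{def:Eisk}. The key observations are these:
\begin{itemize}
\item $\Im(\gamma\circ\iota) = \gamma\,\Im{\iota}$, so $l \in \Im(\gamma\circ\iota)$ if and only if $\gamma^{-1}l \in \Im{\iota}$.
\item In that case, since $\gamma$ is bijective, the fibres agree: $(\gamma\circ\iota)^{-1}(l) = \iota^{-1}(\gamma^{-1}l)$ as cosets in $N^{-1}\Lambda/\Lambda$.
\end{itemize}

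In the first case, $l \in \Im(\gamma\circ\iota)$, and
\[ E_l^k(\Lambda,\gamma\circ\iota) = \sum_{v \in (\gamma\circ\iota)^{-1}(l)} v^{-k} = \sum_{v \in \iota^{-1}(\gamma^{-1}l)} v^{-k} = E_{\gamma^{-1}l}^k(\Lambda,\iota). \]
In the second case, $l \notin \Im(\gamma\circ\iota)$, so $\gamma^{-1}l\notin\Im{\iota}$ and both sides are $0$.

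There is no real obstacle here. The only point needing care is the bookkeeping of the direction of $\gamma$: because the action is $\iota \mapsto \gamma\circ\iota$, the index moves by $\gamma^{-1}$ rather than $\gamma$. One could also remark, as a consistency check, that on the main stratum this matches $\alpha\mapsto\alpha\circ\gamma^{-1}$ via $\iota=\alpha^{-1}$. In that picture the weak series satisfies $E_l^k(\Lambda,\alpha\circ\gamma^{-1}) = E_{\gamma^{-1}l}^k(\Lambda,\alpha)$ directly from \cref{def:weakEisk}.
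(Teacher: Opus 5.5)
Your proof is correct and is essentially the paper's own argument. You evaluate at $(\Lambda,\iota)$, write $(E_l^k|\gamma)(\Lambda,\iota)=E_l^k(\Lambda,\gamma\circ\iota)$, and rewrite both the condition $l\in\Im(\gamma\circ\iota)$ and the fibre $(\gamma\circ\iota)^{-1}(l)$ in terms of $\gamma^{-1}l$ and $\iota$. Your extra remarks (that $\gamma^{-1}l\neq 0$, and the consistency check on the main stratum) are harmless additions not present in the paper.
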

\begin{proof} ~ \vspace{-18pt}
  \begin{align*}
    \parens{E_l^k|\gamma}(\Lambda,\iota) &= E_l^k(\Lambda,\gamma\iota) \\
    &= \begin{dcases}
      \sum_{v \in (\gamma\iota)^{-1}(l)} v^{-k} & \sptext{if} l \in \Im(\gamma\iota) \\
      0 & \sptext{otherwise}
    \end{dcases} \\
    &= \begin{dcases}
      \sum_{v \in \iota^{-1}(\gamma^{-1}(l))} v^{-k} & \sptext{if} \gamma^{-1}(l) \in \Im{\iota} \\
      0 & \sptext{otherwise}
    \end{dcases} \\
    &= E_{\gamma^{-1}l}^k(\Lambda,\iota) \qedhere
  \end{align*}
\end{proof}

Since these Eisenstein series are our first series of examples which actually depend on the $r$-inverse level structure $\iota$, it is interesting to see how a partial Eisenstein series looks when restricted to a boundary stratum $\LL_N^s$ of $\LLNRi$:
\begin{proposition} \label{prop:Eislk_LLNRiUnion}
  Let $l \in \parens{N^{-1}/A}^r-\set{0}$, $k \in \NNO$, and $\delta$ be an injective selection of $\Free_N^r$.
  Then for each $U \in \Free_N^r$ of rank $s$, the restriction $E_l^k|_U$ of the partial Eisenstein series $E_l^k$ on $\LLNRi$ to the boundary stratum $\LL_N^s$ corresponding to $U$ as in \cref{prop:LLNRi_union} is a partial Eisenstein series of rank $s$, as follows:
  \[ E_l^k|_U(\Lambda,\alpha) = \begin{cases} E_{\delta_u^{-1}(l)}^k (\Lambda,\alpha) & l \in \Im{\delta_U} \\ 0 & \text{otherwise} \end{cases}. \qedhere \]
\end{proposition}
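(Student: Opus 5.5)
The plan is to unwind the identification of \cref{prop:LLNRi_union} and then apply \cref{def:Eisk} directly.

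Fix $U \in \Free_N^{s,r}$ and a point $(\Lambda,\alpha)_U$ in the corresponding copy of $\LL_N^s$. By \cref{prop:LLNRi_union}, this point is sent to $(\Lambda,\iota) \in \LLNRi$ with $\iota = \delta_U \circ \alpha^{-1}$. By definition, the restriction $E_l^k|_U$ evaluated at $(\Lambda,\alpha)$ equals $E_l^k(\Lambda,\delta_U\circ\alpha^{-1})$.

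First I will compute $\Im{\iota}$. Since $\alpha$ is a bijection $\parens{N^{-1}/A}^s \ionto N^{-1}\Lambda/\Lambda$, the inverse $\alpha^{-1}$ is surjective onto $\parens{N^{-1}/A}^s$. Hence $\Im{\iota} = \Im{\delta_U} = U$.

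The case split in \cref{def:Eisk} therefore becomes the split $l \in \Im{\delta_U}$ versus $l \notin \Im{\delta_U}$.

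In the second case, \cref{def:Eisk} gives $E_l^k(\Lambda,\iota) = 0$, which is the claimed value.

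In the first case, $\delta_U$ is injective, so $\delta_U^{-1}(l)$ is a well-defined element of $\parens{N^{-1}/A}^s$. It is nonzero because $l \neq 0$ and $\delta_U(0) = 0$. I will compute the fibre:
\[ \iota^{-1}(l) = \alpha\parens{\delta_U^{-1}(l)} \subseteq N^{-1}\Lambda/\Lambda, \]
which is a single coset of $\Lambda$ in $N^{-1}\Lambda$. Substituting into \cref{def:Eisk} gives
\[ E_l^k(\Lambda,\iota) = \sum_{v \in \alpha(\delta_U^{-1}(l))} v^{-k}. \]
By \cref{def:weakEisk}, this is exactly the rank-$s$ weak partial Eisenstein series $E_{\delta_U^{-1}(l)}^k(\Lambda,\alpha)$.

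There is no real obstacle; the main care is bookkeeping. The statement writes $\delta_u$, which should be read as $\delta_U$. The cosets live in $N^{-1}\Lambda/\Lambda$ via the fixed identifications of \cref{para:AN.NinvASwap}, and the index $\delta_U^{-1}(l)$ must be nonzero, as checked above, so that the rank-$s$ series is defined.

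Two edge cases need a remark. When $s=0$, $U=0$, so $l \notin U$ and the formula gives $0$, which is consistent. For $k=0$, the sum $\sum v^{0}$ does not converge. I would either restrict to $k \in \NN$ as in \cref{def:Eisk}, or note that the identity holds formally in that case as well.
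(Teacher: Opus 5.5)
Your proof is correct and follows the same route as the paper: pull back along $(\Lambda,\alpha)_U \mapsto (\Lambda,\delta_U\circ\alpha^{-1})$ from \cref{prop:LLNRi_union}, use $\Im(\delta_U\circ\alpha^{-1}) = \Im{\delta_U}$ to match the case split, and read off the fibre over $l$ from \cref{def:Eisk} and \cref{def:weakEisk}. Your fibre $\iota^{-1}(l) = \alpha(\delta_U^{-1}(l))$ is the correctly oriented form of the paper's intermediate step, which writes the composite inverses loosely. Your checks that $\delta_U^{-1}(l) \neq 0$ and your comment on $k=0$ are also appropriate.
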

\begin{proof}
  For $(\Lambda,\alpha) \in \LL_N^s$, by \cref{prop:LLNRi_union} we have
  \begin{align*}
    E_l^k|_U(\Lambda,\alpha) &= E_l^k(\Lambda, \delta_U \circ \alpha^{-1}) \\
    &= \begin{dcases} \sum_{v \in (\delta_U\circ\alpha)^{-1}(l)} v^{-k} & \sptext{if} l \in \Im(\delta_U\circ\alpha) \\ 0 & \sptext{otherwise} \end{dcases} \\
    &= \begin{dcases} \sum_{v \in \alpha^{-1}(\delta_U^{-1}(l))} v^{-k} & \sptext{if} l \in \Im{\delta_U} \\ 0 & \sptext{otherwise} \end{dcases} \\
    &= \begin{dcases} E_{\delta_u^{-1}(l)}^k & \sptext{if} l \in \Im{\delta_U} \\ 0 & \sptext{otherwise} \end{dcases}. \qedhere
  \end{align*}
\end{proof}

\subsubsection{Drinfeld module coefficient forms}

\begin{proposition} \label{prop:DmoduleCoeffsModular}
  For each $a \in A$ and for each ideal $I$ of $A$, the coefficients $\phi^\Lambda_{a,i} = \cin{\tau}{i} \phi^\Lambda_a = \cin{X}{q^i} \phi^\Lambda_a(X)$ and $\phi^\Lambda_{I,i} = \cin{\tau}{i} \phi^\Lambda_I = \cin{X}{q^i} \phi^\Lambda_I(X)$ for $i \in \NNO$ considered as functions of $\Lambda$ are modular forms of weight $q^i-1$ on $\LL^{\leq r}$.
\end{proposition}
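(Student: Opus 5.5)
We need to show that the coefficients $\phi^\Lambda_{a,i}$ and $\phi^\Lambda_{I,i}$ are continuous on $\LL^{\leq r}$, holomorphic on $\LL^r$, and homogeneous of degree $1-q^i$.

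The plan is to reduce everything to the ideal case, since $\phi^\Lambda_a = a\cdot\phi^\Lambda_{(a)}$. Each coefficient $\phi^\Lambda_{a,i}$ is therefore a constant multiple of $\phi^\Lambda_{(a),i}$, and it suffices to treat $\phi^\Lambda_{I,i}$ for an ideal $I$.

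\textbf{Homogeneity.} For $c\in\CCi^\times$, \cref{prop:e_Lambda_props} gives $e_{c\Lambda}(c\lambda)=c\,e_\Lambda(\lambda)$, and $I^{-1}(c\Lambda)/c\Lambda = c\,(I^{-1}\Lambda/\Lambda)$. Substituting into \cref{def:IdealDModule} yields
\[ \phi^{c\Lambda}_I(X)=c\,\phi^\Lambda_I(c^{-1}X). \]
Comparing coefficients of $X^{q^i}$ gives $\phi^{c\Lambda}_{I,i}=c^{1-q^i}\phi^\Lambda_{I,i}$, which is homogeneity of degree $-(q^i-1)$, i.e.\ weight $q^i-1$.

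\textbf{Algebraic expression in coefficient forms.} By \cref{prop:IdealDModuleEffect}, $\phi^\Lambda_I(e_\Lambda(z))=e_{I^{-1}\Lambda}(z)$. I would write $\log_\Lambda$ for the formal compositional inverse of $e_\Lambda$ in $\CCi\set{\set{\tau}}$; its coefficients are universal polynomials over $\FF_p$ in the $e_{\Lambda,j}$, since $e_{\Lambda,0}=1$. Then
\[ \phi^\Lambda_I = e_{I^{-1}\Lambda}\circ\log_\Lambda \]
as formal $\FF_q$-linear series. Hence $\phi^\Lambda_{I,i}$ is a polynomial, with coefficients in $\FF_p$, in $e_{\Lambda,j}$ for $j\le i$ and $e_{I^{-1}\Lambda,j}$ for $j\le i$. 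By \cref{def:eLambdaCoeffs} the $e_{\Lambda,j}$ are modular forms on $\LL^{\leq r}$. By \cref{prop:EiskJLeJLCoeffsModular} the $e_{I^{-1}\Lambda,j}$ are modular forms on $\LL^{\leq r}$, namely $e_{\Lambda,j}|I$. Products and sums of continuous functions on $\LL^{\leq r}$ that are holomorphic on $\LL^r$ remain so, which gives both continuity and holomorphy.

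\textbf{Main obstacle.} The delicate point is continuity across strata. On a lower-rank $\Lambda$, $\phi^\Lambda_I$ has smaller $\tau$-degree, and one must check that the identity above still defines the function at boundary points. It does: the identity holds pointwise for every lattice of rank $\leq r$, with vanishing higher coefficients. The substantive input is therefore the continuity of $\Lambda\mapsto e_{I^{-1}\Lambda,j}$, which is exactly the continuity of the $\cJ(A)$-action from \cref{prop:fracIdealActLeqR}.

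If one prefers to avoid that proposition (its sketch itself uses continuity of $\phi_J$), an alternative is a direct argument. Write the coefficients of $\phi^\Lambda_I$ as elementary symmetric expressions in the reciprocals $1/e_\Lambda(\lambda)$ for $\lambda\in I^{-1}\Lambda/\Lambda$, which are the partial Eisenstein series $E^1_l$ of \cref{prop:Eis1mu}. Their continuity on $\LLNRi$ is what the metric $\dd_{\LLNRi}$ builds in, and these symmetric functions are $\GL[r]{A/I}$-invariant, so they descend to $\LL^{\leq r}$. The one care needed here is that the reciprocal-root form $X\prod(1-\mu X)$ of \cref{prop:LLNR_muModuleFactorisation} is a polynomial in the $\mu$'s. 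Its $X^{q^i}$ coefficient is a signed elementary symmetric polynomial in the $\mu_{\Lambda,\iota}(l)$, so it is continuous.
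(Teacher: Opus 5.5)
Your primary route is circular in the paper's logical order, so it does not prove the proposition as it stands.

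\textbf{The circularity.} You take continuity of $\Lambda\mapsto e_{I^{-1}\Lambda,j}$ on $\LL^{\leq r}$ from \cref{prop:EiskJLeJLCoeffsModular}. That proposition rests on \cref{prop:mFormSlashJPreserve}, hence on \cref{prop:fracIdealActLeqR}. The proof of \cref{prop:fracIdealActLeqR}, in the case where $J$ is an integral ideal, uses exactly that the coefficients of $\phi^\Lambda_J$ are continuous on $\LL^{\leq r}$, which is what you are proving. The same happens with \cref{prop:dd_LL_radius}. For $I=(a)$ it is precisely continuity of $\Lambda\mapsto a^{-1}\Lambda$, since $e_{a^{-1}\Lambda}(z)=a^{-1}e_\Lambda(az)$. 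The paper proves it immediately after this proposition, by means of it.

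The $e_{\Lambda,j}$ are continuous for $\dd_\LL$, and the relations $\phi^\Lambda_I=e_{I^{-1}\Lambda}\circ\log_\Lambda$ and $e_{I^{-1}\Lambda}=\phi^\Lambda_I\circ e_\Lambda$ run in both directions. So what your computation actually shows is that continuity of the $\phi^\Lambda_{I,i}$ is \emph{equivalent} to continuity of $\Lambda\mapsto I^{-1}\Lambda$. Without an independent proof of the latter, this relocates the difficulty rather than resolving it. Your homogeneity computation, the reduction $\phi^\Lambda_a=a\,\phi^\Lambda_{(a)}$, and the holomorphy part are fine.

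\textbf{The fallback.} Your alternative argument is essentially the paper's proof. Via \cref{prop:Dmod_from_lattice} (resp.\ \cref{def:IdealDModule}), the coefficient is, up to the factor $a$, a homogeneous symmetric polynomial of degree $q^i-1$ in the $E^1_l=\mu_{\Lambda,\iota}(l)$. Here $l$ ranges over all of $(N^{-1}/A)^r$, with $N=(a)$ or $N=I$. It is therefore continuous on $\LLNRi$, because $\dd_{\LLNRi}$ dominates each $\abs{\mu_{\Lambda_1,\iota_1}(l)-\mu_{\Lambda_2,\iota_2}(l)}$. Its symmetry gives $\GL[r]{A/N}$-invariance, so it descends to $\LL^{\leq r}$. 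Fixing the number of variables at $\sizep{A/N}^r$, with zeros off $\Im{\iota}$, is what makes the boundary strata harmless.

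So promote this to your main argument. When you descend, cite explicitly the identification $\LL^{\leq r}\simeq\lquotient{\GL[r]{A/N}}{\LLNRi}$ stated in the paper's discussion of modular forms on $\LL^{\leq r}$. Continuity for $\dd_\LL$ needs this to hold as a homeomorphism, which the paper also uses without proof.
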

\begin{proof}
  By \cref{prop:Dmod_from_lattice}, for each $i \in \NNO$ we have that
  \begin{align*}
    \phi^\Lambda_{a,i} &= \cin{X}{q^i} \phi^\Lambda_a(X) = a \cin{X}{q^i} X \prodp_{\lambda \in a^{-1}\Lambda/\Lambda} \parens{1-\frac{X}{e_\Lambda(\lambda)}} \\
    &= a \cin{X}{q^i-1} \prodp_{l \in ((a)^{-1}/A)^r} \parens{1-E_l^1(\Lambda)X}
  \end{align*}
  is a homogeneous symmetric polynomial of degree $q^i-1$ in the $E_l^1(\Lambda)$ for $l \in ((a)^{-1}/A)^r -\set{0}$, and hence is a modular form of degree $q^i-1$ on $\LLi{(a)}{r}$.
  Moreover, since the aforementioned polynomial is symmetric, $\phi^\Lambda_{a,i}$ is invariant under the action of $\GL[r]{A/(a)}$ on $\LLi{(a)}{r}$ and hence is a modular form on $\LL^{\leq r}$.
  
  The proof for $\phi^\Lambda_{I,i}$ is similar, based instead on \cref{def:IdealDModule}.
\end{proof}

\begin{corollary} \label{coro:DmoduleCoeffsJModular}
  For each $J \in \cJ(A)$, $i \in \NNO$, $a \in A$, and ideal $I$ of $A$, the coefficients $\phi^{J^{-1}\Lambda}_{a,i} = \cin{\tau}{i} \phi^{J^{-1}\Lambda}_a$ and $\phi^{J^{-1}\Lambda}_{I,i} = \cin{\tau}{i} \phi^{J^{-1}\Lambda}_I$ are modular forms on $\LL^{\leq r}$ of weight $q^i-1$.
\end{corollary}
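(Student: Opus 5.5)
The plan is to reduce the statement to \cref{prop:DmoduleCoeffsModular} via the action of $\cJ(A)$ on $\LL^{\leq r}$, in the same way that \cref{prop:EiskJLeJLCoeffsModular} was obtained from the unshifted coefficient forms.

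Fix $J \in \cJ(A)$, $a \in A$, an ideal $I$ of $A$ and $i \in \NNO$. By \cref{prop:DmoduleCoeffsModular}, the functions
\[ f_1(\Lambda) = \phi^\Lambda_{a,i} \lrsptext{and} f_2(\Lambda) = \phi^\Lambda_{I,i} \]
are (strong) modular forms on $\LL^{\leq r}$ of weight $q^i-1$. By \cref{def:mFormSlashJ},
\[ (f_1|J)(\Lambda) = f_1(J^{-1}\Lambda) = \phi^{J^{-1}\Lambda}_{a,i} \lrsptext{and} (f_2|J)(\Lambda) = \phi^{J^{-1}\Lambda}_{I,i}. \]
So the two functions in the statement are exactly $f_1|J$ and $f_2|J$.

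\Cref{prop:mFormSlashJPreserve} states that $f \mapsto f|J$ maps $\StrongMF^{k,r}$ to itself for every $k$. Applying it with $k = q^i-1$ shows that $f_1|J$ and $f_2|J$ are modular forms on $\LL^{\leq r}$ of weight $q^i-1$, which proves the corollary.

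The only point that needs care is that $J^{-1}\Lambda$ is again a lattice of the same rank, so that it lies in $\LL^{\leq r}$. This holds because $J^{-1}$ is a fractional ideal, $J^{-1}\Lambda$ is finitely generated and strongly discrete, and its rank is unchanged. All the substantive content, namely continuity of the $\cJ(A)$-action (\cref{prop:fracIdealActLeqR}) and its compatibility with scaling, is already contained in \cref{prop:mFormSlashJPreserve}. I therefore expect no real obstacle beyond unwinding these definitions.
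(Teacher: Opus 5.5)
Your proposal is correct and is essentially the paper's proof: it likewise writes the two coefficient functions as $\phi^{\Lambda}_{a,i}|J$ and $\phi^{\Lambda}_{I,i}|J$ and then applies \cref{prop:mFormSlashJPreserve} to \cref{prop:DmoduleCoeffsModular}. Your extra remark that $J^{-1}\Lambda$ is again a lattice of the same rank is a harmless sanity check that the paper leaves implicit.
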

\begin{proof}
  $\phi^{J^{-1}\Lambda}_{a,i} = \phi^{\Lambda}_{a,i} |J$ and $\phi^{J^{-1}\Lambda}_{I,i} = \phi^{\Lambda}_{I,i} |J$, and so \cref{prop:mFormSlashJPreserve} applies.
\end{proof}

We can now prove a proposition which was used in the previous section:
\ddLLradius*
\begin{proof} \label{proof:ddLLradius}
  Let $a \in A$ such that $\abs{a} \geq R$; since then
  \[ \sup_{\abs{z} \leq R} \abs{e_{\Lambda'}(z)-e_{\Lambda}(z)} \leq \sup_{\abs{z} \leq \abs{a}} \abs{e_{\Lambda'}(z)-e_{\Lambda}(z)}, \]
  it is enough to prove the above statement for the case $R = \abs{a}$ for some $a \in A$.

  Consider the polynomial $\phi^\Lambda_a$; since each of its finitely many nonzero coefficients is continuous on $\LL^{\leq r}$, this polynomial is itself continuous on $\LL^{\leq r}$. Thus
  \begin{align*}
    \sup_{\abs{z} \leq \abs{a}} \abs{e_{\Lambda'}(z)-e_\Lambda(z)} 
    &= \sup_{\abs{az} \leq \abs{a}} \abs{e_{\Lambda'}(az)-e_\Lambda(az)} = \sup_{\abs{z} \leq 1} \abs{\phi^{\Lambda'}_a(e_{\Lambda'}(z))-\phi^\Lambda_a(e_\Lambda(z))} \\
    &\leq \sup_{\abs{z} \leq 1} \abs{\phi^{\Lambda'}_a(e_{\Lambda'}(z)-e_\Lambda(z))} +\sup_{\abs{z} \leq 1} \abs{\phi^{\Lambda'}_a(e_\Lambda(z))-\phi^\Lambda_a(e_\Lambda(z))}.
  \end{align*}
  Now since $\Lambda' \to \Lambda$, we have that $\phi^{\Lambda'}_a \to \phi^\Lambda_a$ and so the second term goes to zero since $e_\Lambda(z)$ is bounded on $\abs{z} \leq 1$.
  For the first term, note that since $\phi^{\Lambda'}_a \to \phi^\Lambda_a$, the coefficients of the polynomial $\phi^{\Lambda'}_a$ are bounded and so since $e_{\Lambda'}(z)-e_\Lambda(z) \to 0$ uniformly on $\abs{z} \leq 1$, the first term also goes to zero.
  Thus as desired,
  \[ \sup_{\abs{z} \leq \abs{a}} \abs{e_{\Lambda'}(z)-e_\Lambda(z)} \longto 0. \qedhere \]
\end{proof}

\begin{paragraph}
  There is some dependence between these Drinfeld module coefficient forms for different values of $a \in A$ and ideals $I$.
  Indeed, from $\phi^\Lambda_{ab} = \phi^\Lambda_a \circ \phi^\Lambda_b$ we see that each coefficient $\phi^\Lambda_{ab,i}$ can be written as a polynomial in the coefficients $\phi^\Lambda_{a,j}$ and $\phi^\Lambda_{b,j}$ and similarly $\phi^\Lambda_{IJ,i}$ can be written as a polynomial in the $\phi^\Lambda_{I,j}$ and the $\phi^\Lambda_{J,j}$ for ideals $I$ and $J$.
  Of course, since each $\phi^\Lambda_a$ is a polynomial of degree at most $r \cdot \deg{a}$ in $\tau$, and exactly that much if $\Lambda$ has rank equal to $r$, $\phi^\Lambda_{a,i}$ is zero for $i > r\deg{a}$.
\end{paragraph}

\begin{paragraph}
  Let us also consider the modular forms $\Delta_a(\Lambda) = \phi^\Lambda_{a,r\deg{a}}$ on $\LL^{\leq r}$ for $a \in A$ and $\Delta_I(\Lambda) = \phi^\Lambda_{I,r\deg{I}}$, which are nonzero on the main stratum $\LL^r$ since $\phi^\Lambda_a$ and $\phi^\Lambda_I$ have degree exactly $r\deg{a}$ and $r\deg{I}$ respectively. For the same reason, these modular forms are cusp forms, being zero on the boundary $\LL^{\leq r-1} = \LL^{\leq r} -\LL^r$.
\end{paragraph}

\begin{paragraph}
  From the relation $\phi^\Lambda_{ab} = \phi^\Lambda_a \circ \phi^\Lambda_b$ we have that
  \begin{align*}
    \Delta_{ab}(\Lambda) &= \cin{\tau}{r\deg{ab}} \phi^\Lambda_{ab} = \cin{\tau}{r\deg{ab}} \phi^\Lambda_a \phi^\Lambda_b \\
    &= \cin{\tau}{r\deg{ab}} \parens{\Delta_a(\Lambda) \tau^{r\deg{a}}+\littleo{\tau^{r\deg{a}}}} \parens{\Delta_b(\Lambda) \tau^{r\deg{b}} +\littleo{\tau^{r\deg{b}}}} \\
    &= \cin{\tau}{r\deg{a}+r\deg{b}} \Delta_a(\Lambda) \tau^{r\deg{a}} \Delta_b(\Lambda) \tau^{r\deg{b}} \\
    &= \Delta_a(\Lambda) \Delta_b(\Lambda)^{q^{r\deg{a}}} = \Delta_a(\Lambda) \Delta_b(\Lambda)^{\abs{a}^r}.
  \end{align*}
  Since $ab = ba$, this implies that $\Delta_a(\Lambda) \Delta_b(\Lambda)^{\abs{a}^r} = \Delta_b(\Lambda) \Delta_a(\Lambda)^{\abs{b}^r}$, and so we have that $\Delta_a(\Lambda)^{\abs{b}^r-1} = \Delta_b(\Lambda)^{\abs{a}^r-1}$.
  Thus if $\sqrt[\abs{a}^r-1]{\Delta_a(\Lambda)}$ exists in some sense, it would be largely independent of $a$; the same applies for the ideal-based Drinfeld module coefficients.
\end{paragraph}

%% file: text/4_3_mf_cusp_exp.tex
\subsection{Expansions of modular forms at a cusp}

\subsubsection{Cusp expansions on $\LL^{r}$}

Similarly to the case of modular forms of rank $2$, higher rank modular forms have expansions at a cusp, i.e. as a lattice tends to a lattice of rank one less.
Before we begin, we need a technical lemma:

\begin{lemma} \label{lem:funcCoeffsSmall}
  Let $f(z) = \sum_{k = -\infty}^\infty f_k z^k$ be a series which converges on $0 < \abs{z} \leq 1$.
  Then if $\abs{f(z)} < \epsilon$ for all $z$ with $\abs{z} = 1$, each coefficient $f_k$ has $\abs{f_k} < \epsilon$.
\end{lemma}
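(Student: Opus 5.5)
The plan is the non-archimedean Cauchy estimate, obtained by reading off the coefficients from the value on the unit circle $\abs{z} = 1$. The key fact is that for a series $g(z) = \sum_k g_k z^k$ converging on $\abs{z} = 1$, the sup of $\abs{g}$ over $\abs{z} = 1$ in $\CCi$ equals the Gauss norm $\max_k \abs{g_k}$. So I would first reduce the lemma to this maximum modulus principle for Laurent series on the annulus.

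\textbf{Step 1.} Since $f$ converges at $\abs{z} = 1$, we have $\abs{f_k} \to 0$ as $k \to \pm\infty$. Hence $M \defeq \max_k \abs{f_k}$ is attained (if $M = 0$ there is nothing to prove), and the set $S = \setst{k}{\abs{f_k} = M}$ is finite.

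\textbf{Step 2.} Rescale by a constant $c \in \CCi$ with $\abs{c} = M$ (possible since $M$ lies in the value group of $\CCi$, being $\abs{f_k}$). Then $h(z) = c^{-1} z^{-k_0} f(z)$, with $k_0 = \min S$, has coefficients of absolute value $\leq 1$, equal to $1$ exactly on the finite index set $S - k_0 \subseteq \NNO$. Reduce modulo the maximal ideal of the valuation ring of $\CCi$. This gives a nonzero polynomial $\bar h$ over the residue field $\bar\FF_q$, since only finitely many coefficients are units.

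\textbf{Step 3.} The residue field $\bar\FF_q$ is infinite, so some unit $u$ with $\abs{u} = 1$ has $\bar h(\bar u) \neq 0$. For that $u$ we get $\abs{h(u)} = 1$, i.e. $\abs{f(u)} = M$. Consequently $M = \abs{f(u)} < \epsilon$, so every $\abs{f_k} \leq M < \epsilon$.

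The only delicate point is Step 2/3: checking that the tail terms, which have strictly smaller absolute value, do not disturb the reduction. This uses the ultrametric inequality together with the fact that only finitely many indices attain the maximum. Only the convergence on $\abs{z} = 1$ is used; the hypothesis on $0 < \abs{z} < 1$ is not needed.
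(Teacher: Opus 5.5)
Your proof is correct, but it takes a different route from the paper's.

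\textbf{The paper's route.} The paper isolates each coefficient with a discrete Fourier (roots-of-unity) filter. For a fixed $\ell$ it picks a primitive $N$-th root of unity $\zeta \in \FF_{q^n}$ with $N = q^n-1 > 2K_\epsilon$, where $\abs{f_k} < \epsilon$ for $\abs{k} \geq K_\epsilon$. It then computes
\[ \sum_{i=0}^{N-1} f(\zeta^i)\zeta^{-i\ell} = -\sum_{k} f_{\ell+Nk}, \]
using that $N \equiv -1$ in characteristic $p$, so $N$ is a unit of absolute value $1$. The left side has absolute value $<\epsilon$ by the hypothesis at the finitely many points $\zeta^i$. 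The terms with $k \neq 0$ on the right are $<\epsilon$ by the choice of $N$, so the ultrametric inequality forces $\abs{f_\ell} < \epsilon$.

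\textbf{Your route.} You prove the non-archimedean maximum modulus principle directly. You normalise the series, reduce it modulo the maximal ideal, and use that the residue field $\overline{\FF}_q$ is infinite. This produces a single point $u$ with $\abs{u}=1$ and $\abs{f(u)} = \max_k \abs{f_k}$.

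\textbf{What each buys.}
\begin{itemize}
\item Your argument gives the stronger statement $\max_k \abs{f_k} = \max_{\abs{z}=1}\abs{f(z)}$, with the maximum attained.
\item It treats all coefficients at once and needs only an infinite residue field.
\item It does rely on the reduction map commuting with the infinite tail. You correctly justify this by the ultrametric inequality; equivalently, the maximal ideal is closed.
\item The paper's argument avoids reduction entirely. It needs only the ultrametric inequality, the supply of roots of unity of order prime to $p$ in $\CCi$, and the fact $\abs{N}=1$.
\item The cost is a choice of $N$ tied to the tail of the coefficients. There is also an implicit case distinction: for $\abs{\ell} \geq K_\epsilon$ the bound on $f_\ell$ is immediate, and otherwise the shifted indices $\ell+Nk$, $k \neq 0$, land in the tail.
\end{itemize}
Both arguments ultimately exploit $\overline{\FF}_q \subset \CCi$. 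You are also right that, like the paper, you only use convergence on $\abs{z}=1$.
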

\begin{proof}
  Let $\ell \in \ZZ$.
  Since $f(z)$ converges, we have that $f_k \to 0$ as $k \to \pm\infty$; so let $\abs{f_k} < \epsilon$ for $\abs{k} \geq K_\epsilon$.
  Since $\CCi \supset \overline{\FF_q}$, we can let $\zeta \in \FF_{q^n}-\FF_{q^{n-1}} \subset \CCi$ be a primitive $N$-th root of unity, for some $n$ with $N = q^n-1 > 2K_\epsilon$.

  Now consider the sum
  \[ F = \sum_{i = 0}^{N-1} f(\zeta^i)\zeta^{-i\ell} = \sum_{k = -\infty}^\infty f_k \sum_{i = 0}^{N-1} \zeta^{i(k-\ell)} = \sum_{\substack{k \in \ZZ \\ N \mid k-\ell}} N f_k = -\sum_{k = -\infty}^\infty f_{\ell+Nk}\,. \]
  On the one hand, $\abs{F} \leq \max_{i = 0}^{N-1} \abs{f(\zeta^i)\zeta^{-i\ell}} = \max_{i = 0}^{N-1} \abs{f(\zeta^i)} < \epsilon$.
  On the other hand, $\abs{F+f_\ell} \leq \max_{k \in \ZZ, k \neq 0} \abs{f_{\ell+Nk}} < \epsilon$; thus $\abs{f_\ell} < \epsilon$ as desired.
\end{proof}

Now recall from \cref{prop:lattice_limit} that if $\dd(\omega,\Lambda) \to \infty$ then $\Lambda +I\omega \to \Lambda$.
\begin{proposition} \label{prop:mFormSeries}
  Let $f$ be a weak modular form of weight $k$ on $\LL^r$, $\Lambda$ a lattice of rank $r-1$, $I$ a fractional ideal of $A$, and $\omega \in \CCi$ a variable such that $\dd(\omega,\Lambda) \to \infty$.
  Then $f(\Lambda+I\omega)$ has a `Fourier' expansion
  \[ f(\Lambda+I\omega) = \sum_{n=-\infty}^\infty f_{I,n}(\Lambda) e_{I^{-1}\Lambda}(\omega)^{-n} \]
  where each $f_{I,n}$ is a weak modular form of weight $k-n$ on $\LL^{r-1}$.

  Additionally, if $f$ is the restriction to $\LL^{r}$ of a \emph{strong} modular form, then $f_{I,n} = 0$ for $n < 0$, and $f_{I,0} = 0$ if $f$ is also the restriction of a cusp form.\footnote{Note that for large enough $n$ the $f_{I,n}$ above have negative weight but are not necessarily the zero form, even if $f$ is a strong modular form.}
\end{proposition}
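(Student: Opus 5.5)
The approach is to treat $\omega \mapsto f(\Lambda+I\omega)$, for fixed $\Lambda$ of rank $r-1$, as a holomorphic function on a punctured neighbourhood of the cusp, show that it factors through the variable $u = e_{I^{-1}\Lambda}(\omega)^{-1}$, and then take its Laurent expansion in $u$. Here $I^{-1}\Lambda = \setst{x \in \CCi}{Ix \subseteq \Lambda}$.

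\emph{Periodicity.} If $\lambda \in I^{-1}\Lambda$, then $I(\omega+\lambda) \subseteq I\omega + \Lambda$, and conversely. Hence $\Lambda + I(\omega+\lambda) = \Lambda + I\omega$, so $\omega \mapsto f(\Lambda+I\omega)$ is $I^{-1}\Lambda$-periodic. By the bijection $e_{I^{-1}\Lambda} : \CCi/I^{-1}\Lambda \to \CCi$ it therefore factors as $F_\Lambda(u)$ with $u = e_{I^{-1}\Lambda}(\omega)^{-1}$.

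Since $\Lambda \subseteq I^{-1}\Lambda$ with finite index, $\dd(\omega,\Lambda) \to \infty$ is equivalent to $\dd(\omega,I^{-1}\Lambda) \to \infty$. By \cref{prop:e_Lambda_infinity} this holds exactly when $\abs{u} \to 0$. So $F_\Lambda$ is defined on a punctured disc $0 < \abs{u} \leq \rho$. It is rigid-analytic there, because $f$ is holomorphic on $\LL^r$ and $e_{I^{-1}\Lambda}$ is a local analytic isomorphism off $I^{-1}\Lambda$. A rigid analytic function on a punctured disc has a convergent Laurent expansion $\sum_n f_{I,n}(\Lambda)u^n$, which gives the stated expansion.

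\emph{Properties of the coefficients.} For homogeneity, replace $(\Lambda,\omega)$ by $(t\Lambda,t\omega)$. Then $e_{tI^{-1}\Lambda}(t\omega)^{-n} = t^{-n}e_{I^{-1}\Lambda}(\omega)^{-n}$ by \cref{prop:e_Lambda_props}. Comparing coefficients in $f(t\Lambda+tI\omega) = t^{-k}f(\Lambda+I\omega)$, using uniqueness of Laurent coefficients, gives $f_{I,n}(t\Lambda) = t^{n-k}f_{I,n}(\Lambda)$, so $f_{I,n}$ has weight $k-n$.

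Holomorphy of $f_{I,n}$ on $\LL^{r-1}$ is the step I expect to be the main obstacle. My plan is to work locally with $\Lambda = \psi'(F^{r-1}\cap g\hat{A}^{r-1})$, $\psi' \in \Psi^{r-1}$, so that $(\psi',\omega)$ are coordinates in $\Psi^r$. Then $F(\psi',u)$ is holomorphic on a product of an affinoid in $\Psi^{r-1}$ with an annulus in $u$. The Laurent coefficients of such a function are holomorphic in $\psi'$, by the standard description of functions on an affinoid times an annulus as Laurent series with coefficients in the affinoid algebra. Alternatively, one can extract coefficients by the root-of-unity averaging in the proof of \cref{lem:funcCoeffsSmall}: this expresses each $f_{I,n}$ as a uniform limit of finite combinations of holomorphic functions, with uniformity coming from the same lemma applied to tails. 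The rest of the argument, invariance under the relevant $\GL{F}$-type group, follows from independence of the choice of representative of $\Lambda$.

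\emph{Strong and cusp forms.} If $f$ extends continuously to $\LL^{\leq r}$, then by \cref{prop:lattice_limit} we have $\Lambda+I\omega \to \Lambda$, so $F_\Lambda(u) \to f(\Lambda)$ as $u \to 0$ and $\abs{F_\Lambda} \leq M$ near $0$. Apply \cref{lem:funcCoeffsSmall} to $z \mapsto F_\Lambda(u_0 z)$, whose coefficients are $f_{I,n}u_0^n$. This gives $\abs{f_{I,n}}\abs{u_0}^n \leq M$ for all small $u_0$, and letting $u_0 \to 0$ forces $f_{I,n} = 0$ for $n<0$. Then $F_\Lambda(u) - f_{I,0}(\Lambda) = \sum_{n>0} f_{I,n}u^n \to 0$, so $f_{I,0}(\Lambda) = f(\Lambda)$. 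This vanishes when $f$ is a cusp form.
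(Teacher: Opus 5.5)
Your proposal is correct and follows essentially the same route as the paper. You factor $\omega\mapsto f(\Lambda+I\omega)$ through $u=e_{I^{-1}\Lambda}(\omega)^{-1}$ using $I^{-1}\Lambda$-periodicity, take the Laurent expansion in $u$, read off the weight $k-n$ from homogeneity, and kill the negative coefficients (and, for cusp forms, the constant one) by applying \cref{lem:funcCoeffsSmall} to rescalings $F_\Lambda(u_0z)$. Beyond the paper's proof, you actually justify holomorphy of the $f_{I,n}$ in $\Lambda$, which the paper merely asserts, and you sharpen the cusp case to $f_{I,0}(\Lambda)=f(\Lambda)$. One small slip: for a fractional $I\not\subseteq A$ you only have that $\Lambda$ and $I^{-1}\Lambda$ are commensurable, not $\Lambda\subseteq I^{-1}\Lambda$, but commensurability is enough for $\dd(\omega,\Lambda)\to\infty \iff \dd(\omega,I^{-1}\Lambda)\to\infty$.
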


\begin{proof}
  Consider the functions $a(\omega) = f(\Lambda+I\omega)$ and $b(\omega) = e_{I^{-1}\Lambda}(\omega)^{-1}$:
  \begin{itemize}
    \item These are holomorphic functions,
    \item $\begin{multlined}[t]
      b(\omega_1) = b(\omega_2) \iff e_{I^{-1}\Lambda}(\omega_1-\omega_2) = 0 \iff \omega_1-\omega_2 \in I^{-1}\Lambda \\
      \iff I(\omega_1-\omega_2) \subseteq \Lambda \iff \Lambda+I\omega_1 = \Lambda+I\omega_2 \implies a(\omega_1) = a(\omega_2),\ \text{and}
    \end{multlined}$
    \item $b'(\omega) = 1 \neq 0$ for all $\omega$.
  \end{itemize}
  Thus the composite $c = a \circ b^{-1}$ is well defined and holomorphic.
  The domain of $c$ is the range of $e_{I^{-1}\Lambda}^{-1}$, which is $\CCi\setminus\set{0}$ since $e_{I^{-1}\Lambda}$ is entire.
  Hence, being holomorphic, it has a Laurent series expansion $c(z) = \sum_{n=-\infty}^\infty f_n z^n$ convergent in a neighbourhood of $0$.
  Thus \[ f(\Lambda+I\omega) = a(\omega) = c(b(\omega)) = \sum_{n=-\infty}^\infty f_{I,n}(\Lambda) e_{I^{-1}\Lambda}(\omega)^{-n} \]
  for small enough $b(\omega) = e_{I^{-1}\Lambda}(\omega)^{-1}$, i.e. for large enough $\dd(\omega,\Lambda)$.

  Now since $f$ is homogeneous of degree $-k$, we have that for $r \in \CCi^\times$,
  \begin{align*}
    & \sum_{n=-\infty}^\infty r^{-k} f_{I,n}(\Lambda) e_{I^{-1}\Lambda}(\omega)^{-n} = r^{-k} f(\Lambda+I\omega) = f(r\Lambda+rI\omega) \\
    =& \sum_{n=-\infty}^\infty f_{I,n}(r\Lambda) e_{I^{-1}r\Lambda}(r\omega)^{-n} = \sum_{n=-\infty}^\infty f_{I,n}(r\Lambda) \brackets{r e_{I^{-1}\Lambda}(\omega)}^{-n} \\
    \implies& r^{-k} f_{I,n}(\Lambda) = r^{-n} f_{I,n}(r\Lambda) \lsptext{for each} n \in \ZZ,
  \end{align*}
  so that each $f_{I,n}$ is homogeneous of degree $n-k$.
  Finally, $a$ and $b$ above are holomorphic functions of $\Lambda$ (as long as $\Lambda$ stays away from $\FF_\infty \omega$), and so the series coefficients $f_{I,n}$ are too.
  
  Additionally, if $f$ is the restriction of a \emph{strong} modular form, then $f$ is continuous as $\dd(\omega,\Lambda) \to \infty$, and so $c(z)$ is continuous and thus bounded as $z \to 0$.
  So applying \cref{lem:funcCoeffsSmall} to $c(rz)$ where $r \to 0$, we get that the coefficients $f_n r^n$ are uniformly bounded for $n \in \ZZ$ and small enough $r$; thus $f_n = 0$ for $n < 0$.
  Finally, if $f$ is the restriction of a \emph{cusp} form, then the aforementioned uniform bound goes to $0$ as $r \to 0$ which gives $f_0 = 0$.
\end{proof}

\subsubsection{Cusp expansions on $\LL_N^r$}

Similarly to the case of $\LL^r$, we have cusp expansions for modular forms with level, but we need a bit more setup to handle the level structure:

\begin{lemma} \label{lem:lmLimitLevels}
  Let $\Lambda$ be a lattice of rank $r-1$, and $\iota : N^{-1}\Lambda/\Lambda \into \parens{N^{-1}/A}^r$ an $r$-inverse level $N$ structure.
  Let $I$ be a fractional ideal, let $\omega \in \CCi$ be a variable such that $\dd(\omega,\Lambda) \to \infty$, and let $j : N^{-1}I/I \to \parens{N^{-1}/A}^r$ be an $A/N$-module injection such that $\Im{\iota} \cap \Im{j} = \set{0}$.
  
  Then there is a unique $r$-inverse level $N$ structure $(\iota,j)_\omega$ for $\Lambda+I\omega$ defined by $(\iota,j)_\omega(\lambda+i\omega) = \iota(\lambda)+j(i)$ for $\lambda \in N^{-1}\Lambda/\Lambda$ and $i \in N^{-1}I/I$.
  Moreover, $(\Lambda+I\omega,(\iota,j)_\omega) \to (\Lambda,\iota)$ with respect to $\dd_{\LLNRi}$ as $\dd(\omega,\Lambda) \to \infty$.
\end{lemma}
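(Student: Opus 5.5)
The plan has two parts: first construct the level structure $(\iota,j)_\omega$, then prove the convergence by bounding each term of $\dd_{\LLNRi}$ separately.

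\emph{Well-definedness and uniqueness.} Since $\omega$ is $F_\infty$-linearly independent of $\Lambda$ (for $\dd(\omega,\Lambda)$ large, certainly $\omega\notin F_\infty\Lambda$), the sum $\Lambda+I\omega$ is direct. Hence
\[ N^{-1}(\Lambda+I\omega)/(\Lambda+I\omega) \simeq N^{-1}\Lambda/\Lambda \oplus N^{-1}I/I, \]
and the formula $\lambda+i\omega\mapsto\iota(\lambda)+j(i)$ gives a well-defined $A/N$-linear map. This map is injective because $\iota$ and $j$ are injective and $\Im{\iota}\cap\Im{j}=\set{0}$. Uniqueness is immediate, since the map is prescribed on every element.

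\emph{Convergence.} The lattice term $\dd_\LL(\Lambda+I\omega,\Lambda)\to0$ is \cref{prop:lattice_limit}; the proof there uses $I\subseteq A$, but it works verbatim for a fractional ideal, or one first scales $I$ into $A$. It remains to control the $\mu$-terms, which I split into three cases according to $l$.

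\begin{itemize}
\item If $l\notin\Im{\iota}\oplus\Im{j}$, both $\mu$-values are $0$.
\item If $l=\iota(\lambda)$ with $\lambda\ne0$, then by \cref{prop:e_Lambda_props}
\[ e_{\Lambda+I\omega}(\lambda)=e_{e_\Lambda(I\omega)}(e_\Lambda(\lambda)). \]
The proof of \cref{prop:lattice_limit} shows that $R_3=\minp\abs{e_\Lambda(I\omega)}\to\infty$. So \cref{prop:e_Lambda_leqR}, applied with the fixed value $e_\Lambda(\lambda)$ (which has bounded absolute value), gives $e_{\Lambda+I\omega}(\lambda)\to e_\Lambda(\lambda)\neq0$. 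Inverting, $\mu\to\mu_{\Lambda,\iota}(l)$.
\item If $l=\iota(\lambda)+j(i)$ with $i\ne0$ in $N^{-1}I/I$, then $\mu_{\Lambda,\iota}(l)=0$, and I must show $\abs{e_{\Lambda+I\omega}(\lambda+i\omega)}\to\infty$.
\end{itemize}

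For the third case, fix a representative $i\in N^{-1}I\setminus I$. By \cref{coro:e_Lambda_infinity}, with $\Lambda\subseteq\Lambda+I\omega$, it suffices to show $\dd(\lambda+i\omega,\Lambda+I\omega)\to\infty$. That distance is $\min_{i'\in I}\dd\bigl((i-i')\omega,\Lambda\bigr)$ over the coset $i+I$, whose elements are nonzero elements of $N^{-1}I$. Picking a fixed nonzero $a\in N$, multiplication by $a$ sends these into $I\setminus\set{0}$. So it is enough that $\dd(x\omega,\Lambda)\to\infty$ uniformly over $x\in N^{-1}I$, $x\ne0$. The argument in the proof of \cref{prop:lattice_limit} gives $\abs{e_\Lambda(x\omega)}$ large uniformly in $x$, using \cref{lem:idealBoundedReps} applied to $N^{-1}I$. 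By \cref{prop:e_Lambda_infinity} this is equivalent to $\dd(x\omega,\Lambda)$ large, at least in the forward direction needed; the uniformity needs care. Only finitely many $l$ occur, so the finite sum of the $\mu$-differences tends to $0$.

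The main obstacle is the uniformity in the third case. To handle it, I would reprove the lower bound
\[ \abs{e_\Lambda(x\omega)}\geq\abs{x}\,\bigl(\abs{e_\Lambda(\omega)}/L\bigr)^{\abs{x}^{r-1}} \]
from \cref{prop:lattice_limit} for $x$ in a fractional ideal rather than in $A$. The alternative is to pass to $e_{I^{-1}\Lambda}$ via the expansion variable $e_{I^{-1}\Lambda}(\omega)$, as in \cref{prop:mFormSeries}.
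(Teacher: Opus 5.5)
Your construction of $(\iota,j)_\omega$, your treatment of the lattice term, your case split of the $\mu$-terms, and your first two cases are correct. This is also the route the paper points to: its proof is deferred to the thesis, but compare the sketch for \cref{prop:LLNR_dense}.

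The third case is not yet a proof, for three reasons.
\begin{enumerate}
\item You reduce to $\dd(\lambda+i\omega,\Lambda+I\omega)\to\infty$ by citing \cref{coro:e_Lambda_infinity}. That result, like \cref{prop:e_Lambda_infinity}, concerns a \emph{fixed} prelattice, whereas $\Lambda+I\omega$ moves with $\omega$.
\item When you compute that distance you silently drop the shift $\lambda$.
\item You leave open the uniformity you call the main obstacle. Your proposed remedy, a product-formula bound for $e_\Lambda(x\omega)$ with $x$ in a fractional ideal, is not available as stated, since \cref{prop:e_Lambda_latticeProps} requires $x\in A$. It is also not needed.
\end{enumerate}

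Here is the repair. For any prelattice $Q$ and any $w\in\CCi$, part (3) of \cref{prop:e_Lambda_props} and the ultrametric inequality give $\abs{e_Q(w)}\geq\dd(w,Q)$, uniformly in $Q$; this is the trick used in \cref{prop:LLNRi_zeroNeighbour}. Take $Q=e_\Lambda(I\omega)$, so that $e_{\Lambda+I\omega}=e_Q\circ e_\Lambda$, and use additivity of $e_\Lambda$. This gives
\[ \abs{e_{\Lambda+I\omega}(\lambda+i\omega)}\geq\inf_{x\in i+I}\abs{e_\Lambda(\lambda)+e_\Lambda(x\omega)}, \]
where every $x\in i+I$ is a nonzero element of $N^{-1}I$.

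Now choose $c\in A-\set{0}$ with $cN^{-1}I\subseteq A$. The lower bound in the proof of \cref{prop:lattice_limit} holds simultaneously for all nonzero multipliers in $A$, with $L$ independent of the multiplier. So once $\abs{e_\Lambda(\omega/c)}>L$, you get $\abs{e_\Lambda(x\omega)}=\abs{e_\Lambda((cx)\,\omega/c)}\geq\abs{e_\Lambda(\omega/c)}$ for all such $x$.

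Since $\dd(\omega/c,\Lambda)\geq\abs{c}^{-1}\dd(\omega,\Lambda)$, \cref{prop:e_Lambda_infinity} applied to the fixed $\Lambda$ gives $\abs{e_\Lambda(\omega/c)}\to\infty$. Because $e_\Lambda(\lambda)$ is fixed, the infimum above tends to $\infty$, and so $\mu_{\Lambda+I\omega,(\iota,j)_\omega}(l)\to0$.

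Your distance formulation can also be made to work. For fixed $\Lambda$, $e_\Lambda$ is bounded on $\setst{z}{\dd(z,\Lambda)\leq M}$, so $\abs{e_\Lambda(z)}$ large forces $\dd(z,\Lambda)$ large, automatically uniformly in $z$. Moreover, $\dd(\lambda+x\omega,\Lambda)=\dd(x\omega,\Lambda)$ once the latter exceeds $\abs{\lambda}$, which takes care of the dropped shift.
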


The proof is technical but not hard, and is omitted here; see \cite{baker2020lattice} for details.

Note that if we have a full $r$-inverse level structure $p$ for $\Lambda+I\omega$, we can recover an $r$-inverse level structure $\iota$ for $\Lambda$ and the injection $j : N^{-1}I/I \into \parens{N^{-1}/A}^r$ by $\iota(\lambda) = p(\lambda)$ and $j(i) = p(i\omega)$ and we will have $\Im{\iota} \cap \Im{j} = \set{0}$ as in the lemma.

  

\begin{proposition} \label{prop:mFormNSeries}
  Let $\Lambda$, $\iota$, $I$, $\omega$, $j$, and $(\iota,j)_{\omega}$ be defined as in \cref{lem:lmLimitLevels}.
  Additionally, let $f$ be a weak modular form of weight $k$ and rank $r$ for $K(N)$.
  Then $f(\Lambda+I\omega,(\iota,j)_\omega)$ has a `Fourier' expansion
  \[ f(\Lambda+I\omega,(\iota,j)_\omega) = \sum_{n=-\infty}^\infty f_{I,j,n}(\Lambda,\iota) e_{I^{-1}\Lambda}(\omega)^{-n}, \]
  where each $f_{I,j,n}$ is a weak modular form of weight $k-n$ and rank $r-1$.

  Additionally, if $f$ is the restriction to $\LL_N^r$ of a \emph{strong} modular form, then $f_{I,j,n} = 0$ for $n < 0$, and if $f$ is (the restriction of) a cusp form, then $f_{I,j,0} = 0$ as well.
\end{proposition}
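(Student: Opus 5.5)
We mirror the proof of \cref{prop:mFormSeries}, with the level structure carried along. Fix $(\Lambda,\iota)$, $I$ and $j$ as in \cref{lem:lmLimitLevels}, and consider the two functions
\[ a(\omega) = f\bigl(\Lambda+I\omega,(\iota,j)_\omega\bigr) \lsptext{and} b(\omega) = e_{I^{-1}\Lambda}(\omega)^{-1}. \]
Both are holomorphic in $\omega$; for $a$ this uses the holomorphy of $f$ on $\LL_N^r$ and the fact that $\omega \mapsto (\Lambda+I\omega,(\iota,j)_\omega)$ is holomorphic, since locally it lifts to $\Psi^r$ by adjoining $\omega$ times a basis of $I$ to a basis of $\Lambda$.

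The key point is that $a$ factors through $b$. If $b(\omega_1)=b(\omega_2)$, then $\omega_1-\omega_2 \in I^{-1}\Lambda$, so $i\omega_1 - i\omega_2 \in \Lambda$ for every $i \in I$ and $\Lambda+I\omega_1=\Lambda+I\omega_2$. We must also check that the level structures agree. For $i \in N^{-1}I$ we have $i(\omega_1-\omega_2) \in N^{-1}\Lambda$, so the classes of $i\omega_1$ and $i\omega_2$ in $N^{-1}(\Lambda+I\omega)/(\Lambda+I\omega)$ differ by the image of an element $\lambda_i \in N^{-1}\Lambda/\Lambda$. Then $(\iota,j)_{\omega_1}$ and $(\iota,j)_{\omega_2}$ differ by $\iota(\lambda_i)$ on the $I$-part. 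This is the step I expect to be the main obstacle: $(\iota,j)_{\omega_1} = (\iota,j)_{\omega_2}$ need not hold for all such pairs. If it fails, I would replace $b$ by $e_{N^{-1}I^{-1}\Lambda}(\omega)^{-1}$ and then rewrite the result in terms of $e_{I^{-1}\Lambda}(\omega)^{-1}$ using \cref{prop:IdealDModuleEffect}, since $\phi^{I^{-1}\Lambda}_N$ is a polynomial. Alternatively, one can note that $\lambda_i = i(\omega_1-\omega_2) \bmod \Lambda$ lies in $\Lambda$ whenever $\omega_1-\omega_2 \in I^{-1}\Lambda$ and $i \in I$. Carrying this out gives a well-defined holomorphic $c=a\circ b^{-1}$ on a punctured neighbourhood of $0$; local invertibility holds because $b'$ is nonvanishing there. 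Hence $c$ has a Laurent expansion $\sum_n f_{I,j,n}(\Lambda,\iota)z^n$, valid once $\dd(\omega,\Lambda)$ is large, by \cref{prop:e_Lambda_infinity}.

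Homogeneity follows exactly as before. Scaling by $t\in\CCi^\times$ sends $(\Lambda+I\omega,(\iota,j)_\omega)$ to $(t\Lambda+It\omega,(t\iota,j)_{t\omega})$, where $t\iota$ denotes $\iota$ precomposed with division by $t$. Also $e_{I^{-1}t\Lambda}(t\omega)=t\,e_{I^{-1}\Lambda}(\omega)$. Comparing coefficients then shows that $f_{I,j,n}$ is homogeneous of degree $n-k$, that is, of weight $k-n$. The coefficients $f_{I,j,n}$ are holomorphic in $\Lambda$ as Cauchy-type coefficient integrals of a function holomorphic in the parameters; the argument is the same as in \cref{prop:mFormSeries}. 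Here $\iota$ is treated as an $(r-1)$-level structure on the image $\Im\iota$, identified with $(N^{-1}/A)^{r-1}$ via \cref{prop:LLNRi_union}, so $f_{I,j,n}$ is a weak form of rank $r-1$.

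Finally, suppose $f$ extends to a strong form. By \cref{lem:lmLimitLevels}, $(\Lambda+I\omega,(\iota,j)_\omega)\to(\Lambda,\iota)$, so continuity makes $c(z)$ bounded as $z\to0$, with limit $f(\Lambda,\iota)$. Applying \cref{lem:funcCoeffsSmall} to $c(\rho z)$ as $\rho\to0$ shows that $f_{I,j,n}=0$ for $n<0$. If $f$ is a cusp form, then $f(\Lambda,\iota)=0$, so the bound tends to $0$ and $f_{I,j,0}=0$.
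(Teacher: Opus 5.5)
\textbf{The factorisation step fails, and so does the statement as written.}

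The gap is exactly the step you flagged, and neither of your remedies closes it. Your computation is right. If $\omega_1-\omega_2\in I^{-1}\Lambda$, then $(\iota,j)_{\omega_1}$ and $(\iota,j)_{\omega_2}$ differ on the class of $i\omega_1$ by $\iota(\lambda_i)$, where $\lambda_i=i(\omega_1-\omega_2)\in N^{-1}\Lambda$. So they agree for all $i\in N^{-1}I$ precisely when $N^{-1}I(\omega_1-\omega_2)\subseteq\Lambda$, i.e.\ when $\omega_1-\omega_2\in NI^{-1}\Lambda$.

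Your ``alternatively'' remark only treats $i\in I$, and these are the zero classes of $N^{-1}I/I$. For $i\in N^{-1}I\setminus I$ nothing forces $\lambda_i\in\Lambda$.

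Replacing $b$ by $e_{N^{-1}I^{-1}\Lambda}(\omega)^{-1}$ goes the wrong way. Its period lattice $N^{-1}I^{-1}\Lambda$ is \emph{larger} than $I^{-1}\Lambda$, so factoring $a$ through it would need more invariance, not less. The lattice that actually governs $a$ is $NI^{-1}\Lambda$, and from there you cannot return to $e_{I^{-1}\Lambda}$. Indeed \cref{prop:IdealDModuleEffect} gives $e_{I^{-1}\Lambda}(\omega)=\phi^{NI^{-1}\Lambda}_N\bigl(e_{NI^{-1}\Lambda}(\omega)\bigr)$, which writes the coarse parameter as a polynomial in the fine one, not conversely. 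So ``carrying this out gives a well-defined $c=a\circ b^{-1}$'' is unjustified.

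The step cannot be repaired, because the expansion in powers of $e_{I^{-1}\Lambda}(\omega)^{-1}$ is false in general. The paper omits its proof as ``similar to \cref{prop:mFormSeries}'', and that similarity breaks down exactly here. Take $f=E^1_l$ with $l=j(i_0)$ and $i_0\in N^{-1}I\setminus I$.
\begin{itemize}
\item By \cref{prop:Eis1mu}, $a(\omega)=e_M(i_0\omega)^{-1}$ with $M=\Lambda+I\omega$.
\item For $x\in I^{-1}\Lambda$ the lattice $M$ is unchanged, and $a(\omega+x)=\bigl(e_M(i_0\omega)+e_M(i_0x)\bigr)^{-1}$.
\item Here $e_M(i_0x)\neq0$ as soon as $i_0x\notin\Lambda$, because $i_0x\in N^{-1}\Lambda$ and $M\cap N^{-1}\Lambda=\Lambda$.
\item A concrete instance: $A=\FF_q[T]$, $N=(T)$, $\Lambda=A$, $I=A$, $i_0=1/T$, $x=1$.
\end{itemize}
Any convergent series in $e_{I^{-1}\Lambda}(\omega)^{-1}$ is invariant under $\omega\mapsto\omega+x$, so no such expansion exists for this $f$.

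The correct statement uses the parameter $e_{NI^{-1}\Lambda}(\omega)^{-1}$. With that $b$, the equality $b(\omega_1)=b(\omega_2)$ gives $N^{-1}I(\omega_1-\omega_2)\subseteq\Lambda$, so every $\lambda_i\in\Lambda$. The rest of your argument then goes through unchanged: homogeneity via $e_{NI^{-1}t\Lambda}(t\omega)=t\,e_{NI^{-1}\Lambda}(\omega)$, continuity via \cref{lem:lmLimitLevels}, and \cref{lem:funcCoeffsSmall}.
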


The proof is omitted since it is quite similar to the proof of \cref{prop:mFormSeries}.

%% file: text/4_4_mf_BBP_relation.tex
\subsection{Relation with BBP definitions}

We will investigate the relation between our modular forms and that defined by Basson, Breuer, and Pink\footnote{For the remainder of this section, we will largely refer to the authors Basson, Breuer, and Pink collectively as \emph{BBP}.} in \cite{BBP}, which was previously on the ArXiv as the trio of papers \cite{BBPI,BBPII,BBPIII}.
There are some differences in notation between our work and that of \cite{BBP}; for instance, there the elements of $\Omega^r$ and $\Psi^r$ are considered as column vectors as opposed to row vectors, which results in differences of definition of various actions.
For this reason, we will translate relevant definitions and results from \cite{BBP} to our context before referring to them.

Let $\xi \in \CCi^\times$ be fixed for the remainder of this paper.
Note that each $\omega \in \Omega^r \simeq \rquotient{\Psi^r}{\CCi^\times}$ has a representative $\overline{\omega} \in \Psi^r$ with last component $\overline{\omega}_r = \xi$; we denote this representative by $\psi(\omega)$.
The map $ \Omega^r \to \Psi^r$, $\omega \mapsto \psi(\omega)$ is rigid analytic, since the rigid analytic structure on $\Psi^r$ has been defined to be that of the product $\Omega^r \times \CCi$ via the isomorphism \cref{prop:PsiIsoRigid}.

\begin{definition} \label{def:jGammaOmega}
  For $\gamma \in \GL[r]{F}$, $\psi \in \Psi^r$, and $\omega \in \Omega^r$, we define
  \[ j(\gamma,\psi) \defeq \xi^{-1} \cdot \parens{\psi\gamma^{-1}}_r, \]
  where $(\psi\gamma^{-1})_r$ denotes the last entry of the vector $\psi\gamma^{-1}$, and similarly
  \[ j(\gamma,\omega) \defeq \xi^{-1} \cdot \parens{\psi(\omega)\gamma^{-1}}_r = j(\gamma,\psi(\omega)). \qedhere \]
\end{definition}

This $j(\gamma,\psi)$ serves as a normalisation factor, preserving the last component being equal to $\xi$ under the action of $\GL[r]{F}$:
\begin{proposition} \label{prop:jGammaOmega}
  For $\omega \in \Omega^r$ and $\gamma \in \GL[r]{F}$,
  \[ \psi(\omega\gamma^{-1}) = j(\gamma,\psi(\omega))^{-1} \cdot \psi(\omega)\gamma^{-1} = j(\gamma,\omega)^{-1} \cdot \psi(\omega)\gamma^{-1}. \qedhere \]
\end{proposition}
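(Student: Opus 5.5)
The plan is a direct computation from \cref{def:jGammaOmega} together with the defining property of the normalised representative $\psi(\cdot)$. The second equality in the statement is immediate, since $j(\gamma,\omega)$ is by definition $j(\gamma,\psi(\omega))$. So the content lies entirely in the first equality.

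For the first equality, I would first check that the right-hand side is a legitimate candidate. The vector $\psi(\omega)\gamma^{-1}$ lies in $\Psi^r$, because right multiplication by $\gamma^{-1} \in \GL[r]{F}$ permutes the $F_\infty$-rational hyperplanes and so preserves $\Psi^r$. Its image in $\Omega^r \simeq \rquotient{\Psi^r}{\CCi^\times}$ is $\omega\gamma^{-1}$, which is how the action on $\Omega^r$ is defined in \cref{para:DblQuotIso} via representatives. The scalar $j(\gamma,\psi(\omega)) = \xi^{-1}\parens{\psi(\omega)\gamma^{-1}}_r$ is nonzero. Indeed, a zero last entry would put $\psi(\omega)\gamma^{-1}$ on the $F$-rational hyperplane $\set{x_r = 0}$, which is impossible in $\Psi^r$. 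Hence $j(\gamma,\psi(\omega))^{-1}\cdot\psi(\omega)\gamma^{-1}$ is again an element of $\Psi^r$ representing $\omega\gamma^{-1}$.

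Next I would compute its last component:
\[ j(\gamma,\psi(\omega))^{-1}\parens{\psi(\omega)\gamma^{-1}}_r = \xi\,\parens{\psi(\omega)\gamma^{-1}}_r^{-1}\parens{\psi(\omega)\gamma^{-1}}_r = \xi. \]
Finally, I would use uniqueness: each class in $\Omega^r$ has exactly one representative in $\Psi^r$ with last component $\xi$, because representatives differ by a scalar in $\CCi^\times$ and the last entry is nonzero. This vector is therefore $\psi(\omega\gamma^{-1})$, which gives the first equality.

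No real obstacle is expected. The only point needing care is the nonvanishing of $\parens{\psi(\omega)\gamma^{-1}}_r$, which is settled by the hyperplane argument above.
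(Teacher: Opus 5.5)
Your proposal is correct and follows essentially the same route as the paper: the paper observes that $\psi(\omega\gamma^{-1})$ and $\psi(\omega)\gamma^{-1}$ both represent $\omega\gamma^{-1}$, hence differ by a scalar in $\CCi^\times$, and then reads off that scalar from the last components. You additionally check that $\psi(\omega)\gamma^{-1}\in\Psi^r$, that its last entry is nonzero, and that the representative with last entry $\xi$ is unique; the paper leaves these points implicit.
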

\begin{proof}
  With $\simeq$
  denoting similarity up to a multiple of $\CCi^\times$, we have that $\psi(\omega\gamma^{-1}) \simeq \omega\gamma^{-1} \simeq \psi(\omega)\gamma^{-1}$.
  Inspecting the last components of the first and last terms yields the relevant scaling factor $j(\gamma,\psi(\omega))$.
\end{proof}

\begin{definition} \label{def:OmegaRSlash}
  For $f : \Omega^r \to \CCi$, $k \in \ZZ$, and $\gamma \in \GL[r]{F}$, we define
  \[ f|_k \gamma : \Omega^r \longto \CCi, \qquad \omega \longmapsto j(\gamma,\omega)^{-k} f(\omega \gamma^{-1}). \qedhere \]
\end{definition}

It is easily shown that this `slash operator' $|_k$ induces a right action of $\GL[r]{F}$ on the set of functions $\Omega^r \to \CCi$.

For future reference, we include here the definition of a weak modular form in \cite{BBPI}, which we will refer to as a \emph{weak BBP modular form} to contrast with the weak modular forms defined earlier.\footnote{We do not include the `type' parameter $m$ investigated in BBP's work; all the modular forms considered in this paper are of type 0.}
\begin{definition} \label{def:BBP_weakForm}
  Consider an integer $k$ and an arithmetic subgroup $\Gamma < \GL[r]{F}$.
  A \emph{weak BBP modular form $f$ of weight $k$ for $\Gamma$} is a holomorphic function $f : \Omega^r \to \CCi$ such that for all $\gamma \in \Gamma$, $f|_k\gamma = f$.

  The $\CCi$-vector space of weak BBP modular forms of weight $k$ for $\Gamma$ will be denoted by $\cw_k(\Gamma)$, and the graded $\CCi$-algebra of weak BBP modular forms for $\Gamma$ will be denoted by $\cw_*(\Gamma)$.
\end{definition}

With the decomposition of $\LL_N^r$ into $\bigsqcup_{g \in H}\ \lquotient{\Gamma_g}{\Psi^r}$ in \cref{prop:LLNR_components} in mind, with $H$ being a set of representatives in $\GL[r]{\finadele}$ for {\small $\lrquotient{\GL[r]{F}}{\GL[r]{\finadele}}{K(N)}$} and $\Gamma_g$ being defined as $g K(N) g^{-1} \cap \GL[r]{F}$ for each $g \in H$, we define maps from modular forms on $\LL_N^r$ to BBP modular forms on the quotients $\lquotient{\Gamma_g}{\Omega^r}$.
These maps essentially separate the function $f$ into its values on each of the irreducible components of $\LL_N^r$.

\begin{definition} \label{def:wFormToComps}
  For a function $f : \LL_N^r \to \CCi$ and $g \in \GL[r]{\finadele}$, we define the function $f_g : \Omega^r \to \CCi$ by
  \[ f_g(\omega) = f\parens{\Theta([\psi(\omega),g])}; \]
  here $\Theta$ is the bijection defined in \cref{thm:PsiDoubleQuotientIso}.

  For $f : \LLNRi \to \CCi$, we let $f_g$ denote the same construction using the restriction of $f$ to the main stratum $\LL_N^r$ of $\LLNRi$.
\end{definition}

\begin{proposition} \label{prop:wFormToComps_slashF}
  For $f \in \WeakMF_N^{k,r}$ and $\gamma \in \GL[r]{F}$, we have that $f_g|_k\gamma = f_{\gamma^{-1}g}$.
\end{proposition}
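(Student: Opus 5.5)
The plan is to unwind the definitions and combine three ingredients: invariance of $\Theta$ under the left $\GL[r]{F}$-action (\cref{thm:PsiDoubleQuotientIso}), the normalisation identity of \cref{prop:jGammaOmega}, and the homogeneity of $f$. Fix $\omega \in \Omega^r$. By \cref{def:OmegaRSlash} and \cref{def:wFormToComps},
\[ (f_g|_k\gamma)(\omega) = j(\gamma,\omega)^{-k} f_g(\omega\gamma^{-1}) = j(\gamma,\omega)^{-k} f\parens{\Theta([\psi(\omega\gamma^{-1}),g])}. \]

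First, I will rewrite $\psi(\omega\gamma^{-1})$ using \cref{prop:jGammaOmega} as $j(\gamma,\omega)^{-1}\cdot\psi(\omega)\gamma^{-1}$. Second, I will note that the scaling action of $\CCi^\times$ on $\Psi^r$ corresponds under $\Theta$ to scaling the lattice together with its level structure. This is clear from the formula $\Lambda = \psi(F^r \cap g\hat{A}^r)$ and from \cref{diag:psig_lattice_level}, since $\psi$ enters the diagram linearly. Homogeneity of degree $-k$ then gives
\[ f\parens{\Theta([t\psi,g])} = t^{-k} f\parens{\Theta([\psi,g])}, \qquad t = j(\gamma,\omega)^{-1}. \]
The factor $t^{-k} = j(\gamma,\omega)^{k}$ cancels the prefactor $j(\gamma,\omega)^{-k}$, leaving $f\parens{\Theta([\psi(\omega)\gamma^{-1},g])}$.

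Third, I will apply the left action of $\GL[r]{F}$ from \cref{para:DblQuotIso} with the element $\gamma^{-1}$:
\[ \gamma^{-1}(\psi(\omega)\gamma^{-1}, g) = (\psi(\omega)\gamma^{-1}\gamma, \gamma^{-1}g) = (\psi(\omega),\gamma^{-1}g). \]
Since $\Theta$ is well defined on $\GL[r]{F}$-orbits, $[\psi(\omega)\gamma^{-1},g] = [\psi(\omega),\gamma^{-1}g]$. Hence the value equals $f\parens{\Theta([\psi(\omega),\gamma^{-1}g])} = f_{\gamma^{-1}g}(\omega)$, which is the claim.

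The only step needing care is the second, namely that $\Theta(t\psi,g) = t\cdot\Theta(\psi,g)$ as a lattice \emph{with} level structure. The point to check is that the level structure $\alpha$ obtained from \cref{diag:psig_lattice_level} for $t\psi$ is $t\alpha$. This follows because the vertical map labelled $\psi$ simply becomes multiplication by $t$ composed with it, while the other maps in the diagram are unchanged. Apart from this, the proof is a matter of keeping track of the direction of the actions.
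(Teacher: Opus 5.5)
Your proposal is correct and follows the paper's proof step for step. You unwind the slash operator and $f_g$, rewrite $\psi(\omega\gamma^{-1})$ via \cref{prop:jGammaOmega}, cancel $j(\gamma,\omega)^{-k}$ by homogeneity, and finish with $[\psi(\omega)\gamma^{-1},g] = [\psi(\omega),\gamma^{-1}g]$ in the double quotient; the only difference is that you explicitly check that $\Theta$ is compatible with scaling by $\CCi^\times$, including on the level structure, which the paper uses without comment.
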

\begin{proof}
  Let $\omega \in \Omega^r$.
  Then since $f$ is homogeneous of degree $-k$,
  \begin{align*}
    (f_g|_k\gamma)(\omega) &= j(\gamma,\omega)^{-k} f_g(\omega\gamma^{-1}) \\
    &= j(\gamma,\omega)^{-k} f\parens{\Theta([\psi(\omega\gamma^{-1}),g])} \\
    &= j(\gamma,\omega)^{-k} f\parens{\Theta([j(\gamma,\psi(\omega))^{-1} \cdot \psi(\omega)\gamma^{-1},g])} \\
    &= j(\gamma,\omega)^{-k} f\parens{j(\gamma,\omega)^{-1} \Theta([\psi(\omega)\gamma^{-1},g])} \\
    &= j(\gamma,\omega)^{-k} j(\gamma,\omega)^k f\parens{\Theta([\psi(\omega),\gamma^{-1}g])} \\
    &= f_{\gamma^{-1}g}(\omega)
  \end{align*}
  since $[\psi\gamma^{-1},g] = [\psi,\gamma^{-1}g]$ because $\gamma \in \GL[r]{F}$.
\end{proof}

\begin{theorem} \label{thm:wFormToComps_weak}
  If $f \in \WeakMF_N^{k,r}$ is a weak modular form of weight $k$, then each $f_g$ is a weak BBP modular form for $\Gamma_g$.
\end{theorem}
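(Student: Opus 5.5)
To show that $f_g$ is a weak BBP modular form for $\Gamma_g$, I have to check the two conditions of \cref{def:BBP_weakForm}: $f_g$ is holomorphic on $\Omega^r$, and $f_g|_k\gamma = f_g$ for all $\gamma \in \Gamma_g$. The plan is to get invariance from \cref{prop:wFormToComps_slashF} together with the definition of $\Gamma_g$, and holomorphy from the rigid analytic structures already put on $\Psi^r$ and on the double quotient.

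For invariance, let $\gamma \in \Gamma_g = gK(N)g^{-1}\cap\GL[r]{F}$. By \cref{prop:wFormToComps_slashF}, $f_g|_k\gamma = f_{\gamma^{-1}g}$, so it suffices to show $f_{\gamma^{-1}g} = f_g$. Since $\gamma \in gK(N)g^{-1}$, also $\gamma^{-1} \in gK(N)g^{-1}$. Hence $\gamma^{-1}g = gk$ for some $k \in K(N)$. In the double quotient of \cref{thm:PsiDoubleQuotientIso}, the right action of $K(N)$ is quotiented out, so $[(\psi(\omega),\gamma^{-1}g)] = [(\psi(\omega),gk)] = [(\psi(\omega),g)]$. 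Applying $\Theta$ and then $f$ gives $f_{\gamma^{-1}g}(\omega) = f_g(\omega)$ for every $\omega$. I would also note that the right $K(N)$-action on the pair, $(\psi,g)k = (\psi,gk)$, leaves $\psi$ untouched, so no scaling factor appears here; all the scaling was already absorbed in \cref{prop:wFormToComps_slashF}.

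For holomorphy, write $f_g$ as a composite $\omega \mapsto \psi(\omega) \mapsto [(\psi(\omega),g)] \mapsto f(\Theta([(\psi(\omega),g)]))$. The first map is rigid analytic, since it corresponds under $\kappa$ of \cref{prop:PsiIsoRigid} to $\omega \mapsto (\omega,\xi)$ into $\Omega^r\times\CCi^\times$, as remarked before \cref{def:jGammaOmega}. By the footnote to \cref{def:weakMForm}, holomorphy of $f$ means precisely that $\psi \mapsto f(\Theta([(\psi,g)]))$ is holomorphic on $\Psi^r$ for each fixed $g$, the factor $\rquotient{\GL[r]{\finadele}}{K(N)}$ being discrete. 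So $f_g$ is a composite of holomorphic maps and hence holomorphic.

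I expect the main obstacle to be the holomorphy step. The only subtlety is making sure that "holomorphic on the double quotient" legitimately pulls back along the projection $\Psi^r \times \{g\} \to \lquotient{\GL[r]{F}}{(\Psi^r\times\GL[r]{\finadele}/K(N))}$. This uses that the quotient map is a morphism of rigid analytic spaces, which is \cref{prop:rigidAnalyticQuotient} combined with \cref{prop:LLNR_components}. Once that is granted, the invariance computation above is immediate.
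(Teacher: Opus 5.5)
Your proposal is correct and matches the paper's proof. Holomorphy comes from $f_g$ being a composite of the rigid analytic maps $\omega\mapsto\psi(\omega)$, $\Theta$ and $f$, and invariance comes from \cref{prop:wFormToComps_slashF} together with $[\gamma^{-1}g]=[g]$ in $\GL[r]{\finadele}/K(N)$ for $\gamma\in\Gamma_g$. The paper phrases that last step as $\gamma^{-1}g\cdot(g^{-1}\gamma g)=g$ with $g^{-1}\gamma g\in K(N)$, which is the same as your $\gamma^{-1}g=gk$.
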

\begin{proof}
  Since $f$, $\Theta$, and the map $\omega \mapsto \psi(\omega)$ are rigid analytic, $f_g$ is also rigid analytic.
  So it remains to show that $f_g$ satisfies the relevant transformation relation in \cref{def:BBP_weakForm}.
  Now $f_g|_k\gamma = f_{\gamma^{-1}g}$ and $g^{-1} \gamma g \in K(N)$, so that in the quotient $\rquotient{\GL[r]{\finadele}}{K(N)}$ we have $[\gamma^{-1}g] = [\gamma^{-1} g g^{-1} \gamma g] = [g]$.
  Thus
  \[ f_{\gamma^{-1}g}(\omega) = f\parens{\Theta([\psi(\omega),\gamma^{-1}g])} = f\parens{\Theta([\psi(\omega),g])} = f_g(\omega). \qedhere \]
\end{proof}

\begin{proposition} \label{prop:wFormToComps_algebra}
  For each $g \in \GL[r]{\finadele}$ the map
  \[ \WeakMF_N^r \longto \cw_*(\Gamma_g), \qquad f \longmapsto f_g \]
  is a homomorphism of graded $\CCi$-algebras.
\end{proposition}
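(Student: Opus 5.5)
The plan is to verify the required properties directly from \cref{def:wFormToComps}. There are three: that the map lands in $\cw_*(\Gamma_g)$, that it is $\CCi$-linear, and that it is multiplicative and respects the grading. The first is already given by \cref{thm:wFormToComps_weak}, applied to each homogeneous component: for $f \in \WeakMF_N^{k,r}$, the function $f_g$ is a weak BBP modular form of weight $k$ for $\Gamma_g$. Consequently $f \mapsto f_g$ sends $\WeakMF_N^{k,r}$ into $\cw_k(\Gamma_g)$. We then extend additively to the direct sum $\WeakMF_N^r = \bigoplus_k \WeakMF_N^{k,r}$, which gives a map into $\cw_*(\Gamma_g) = \bigoplus_k \cw_k(\Gamma_g)$ that preserves the grading by construction.

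For the algebra structure, the key observation is that $f_g$ is just the pullback of $f$ along the fixed map
\[ \Omega^r \to \LL_N^r, \qquad \omega \mapsto \Theta([\psi(\omega),g]). \]
Pullback along any map of sets respects pointwise operations. Hence for $c \in \CCi$ and $f,h \in \WeakMF_N^r$ we get
\[ (cf+h)_g = c f_g + h_g \lsptext{and} (fh)_g = f_g h_g . \]
The constant function $1$, of weight $0$, pulls back to the constant $1$. Addition and multiplication in both $\WeakMF_N^r$ and $\cw_*(\Gamma_g)$ are the pointwise operations on functions, and the product of forms of weights $k_1$ and $k_2$ has weight $k_1+k_2$ on both sides (\cref{para:weakMFormGradedRing}). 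So the map is a homomorphism of graded $\CCi$-algebras.

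I do not expect a real obstacle here. The only care needed is bookkeeping. First, $f_g$ of an inhomogeneous element is defined componentwise, and this agrees with the pointwise pullback of the sum. Second, the identification of $\cw_*(\Gamma_g)$ with functions on $\Omega^r$ must be made so that the sum of forms of different weights is the pointwise sum; BBP's graded algebra is defined exactly this way. If one prefers to avoid identifying inhomogeneous elements with functions, we can check linearity and multiplicativity on homogeneous elements only and extend bilinearly.
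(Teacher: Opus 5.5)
Your proposal is correct and follows essentially the same route as the paper: you invoke \cref{thm:wFormToComps_weak} to see that weight-$k$ forms go to weight-$k$ weak BBP forms, and you observe that $f \mapsto f_g$ is a pullback and therefore preserves $\CCi$-scaling, sums and products. Your extra bookkeeping on inhomogeneous elements, checking on homogeneous components and extending bilinearly, is a harmless refinement of the paper's one-line argument.
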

\begin{proof}
  \cref{thm:wFormToComps_weak} shows that the above map sends a weak modular form of weight $k$ to a weak BBP modular form of weight $k$, and from \cref{def:wFormToComps} it preserves scaling by $\CCi$ and multiplication of weak modular forms.
\end{proof}

\begin{proposition} \label{prop:wFormToComps_Injective}
  If $H$ is a set of representatives in $\GL[r]{\finadele}$ for the double quotient
  \[ \lrquotient{\GL[r]{F}}{\GL[r]{\finadele}}{K(N)}, \]
  then the following homomorphism of graded $\CCi$-algebras is injective:
  \[ \WeakMF_N^r \longto \prod_{g \in H} \cw_*(\Gamma_g), \qquad f \longmapsto (f_g)_{g \in H}. \qedhere \]
\end{proposition}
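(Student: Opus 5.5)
The map is a homomorphism of graded algebras because each coordinate $f \mapsto f_g$ is one, by \cref{prop:wFormToComps_algebra}. It remains to prove injectivity. Let $f \in \WeakMF_N^r$ with $f_g = 0$ for every $g \in H$. Since the kernel of a graded homomorphism is graded, it suffices to treat the case where $f$ is homogeneous of some weight $k$. We will then show that $f(\Lambda,\alpha) = 0$ for every $(\Lambda,\alpha) \in \LL_N^r$.

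Fix $(\Lambda,\alpha) \in \LL_N^r$. By \cref{thm:PsiDoubleQuotientIso} there is a pair $(\psi,g') \in \Psi^r \times \GL[r]{\finadele}$ with $\Theta([(\psi,g')]) = (\Lambda,\alpha)$. As $H$ is a set of representatives for the double quotient, we can write $g' = f_0 g k$ with $f_0 \in \GL[r]{F}$, $g \in H$ and $k \in K(N)$. The class $[(\psi,g')]$ in the double quotient equals $[(\psi f_0, g)]$, by the definition of the action $f(\psi,g)k = (\psi f^{-1}, fgk)$. This is just the surjectivity part of \cref{prop:LLNR_components}: every point lies in the image of some component $\lquotient{\Gamma_g}{\Psi^r}$.

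Set $\psi' = \psi f_0 \in \Psi^r$ and let $\omega \in \Omega^r$ be its image. Then $\psi' = t\,\psi(\omega)$ with $t = \psi'_r/\xi \in \CCi^\times$; here $\psi'_r \ne 0$ because $\psi' \in \Psi^r$ avoids the $F_\infty$-rational hyperplane $\{x_r = 0\}$. The bijection $\Theta$ is compatible with scaling by $\CCi^\times$: $\Theta([(t\psi,g)])$ equals $t$ times the pair $\Theta([(\psi,g)])$, since both the lattice and the level structure in \cref{diag:psig_lattice_level} scale linearly in $\psi$. Using this and the homogeneity of $f$ we get
\[ f(\Lambda,\alpha) = f\bigl(t\,\Theta([(\psi(\omega),g)])\bigr) = t^{-k} f_g(\omega) = 0. \]

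There is no real obstacle in this argument. The only points that need care are the bookkeeping of the left and right actions when moving $f_0$ from the adelic coordinate to $\psi$, and checking that $\Theta$ commutes with scaling. Both follow directly from the definitions in \cref{thm:PsiDoubleQuotientIso} and the computation already made in the proof of \cref{prop:wFormToComps_slashF}.
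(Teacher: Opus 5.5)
Your proof is correct and follows essentially the same route as the paper. You reduce to a single weight $k$, then use that every point of $\LL_N^r$ is a $\CCi^\times$-multiple of some $\Theta([(\psi(\omega),g)])$ with $g \in H$, so homogeneity forces $f = 0$. The only difference is that you spell out the double-coset bookkeeping and the compatibility of $\Theta$ with scaling, which the paper compresses into ``scaling by $\CCi^\times$''.
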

\begin{proof}
  By \cref{prop:wFormToComps_algebra}, it is enough to show that if if $f \in \WeakMF_N^{k,r}$ is mapped to $0 = (0)_{g \in H}$ then $f = 0$.
  So assume $f$ to be such that $f_g = 0$ for all $g \in H$.

  So let $g \in H$, so that $f_g = f(\Theta([\psi(\omega),g])) = 0$ for all $\omega \in \Omega^r$.
  Then scaling by $\CCi^\times$ we see that $f(\Lambda,\alpha) = 0$ for all $(\Lambda,\alpha)$ in the irreducible component corresponding to $g$.
  Since $H$ is a complete set of representatives for $\lrquotient{\GL[r]{F}}{\GL[r]{\finadele}}{K(N)}$, $f = 0$ over all the irreducible components of $\LL_N^r$ and thus is identically zero.
\end{proof}

Now after our definition of weak modular forms two subsections ago we defined strong modular forms, which can be seen as weak modular forms which satisfy continuity conditions at the boundary $\bd_N^r$ of $\LLNRi$.
Similarly, BBP modular forms are defined as weak BBP modular forms which satisfy an additional condition `at infinity':

\emph{Holomorphicity at infinity} and \emph{going to zero at infinity} are conditions regarding the behaviour of a holomorphic function on $\Omega^r$ as the first component of $\omega \in \Omega^r$ goes to infinity, which we will not repeat here.
We will, however, include the following characterisation of holomorphicity at infinity and going to zero at infinity which was communicated to us by BBP; the proof is included in the appendix of \cite{baker2020lattice}:

\begin{proposition} \label{prop:BBP_verticalStrips}
  Let $\Gamma < \GL[r]{F}$ be an arithmetic subgroup and $f : \Omega^r \to \CCi$ be a holomorphic function such that $f|_k\gamma = f$ for all $\gamma \in \Gamma$.
  
  Then $f$ is holomorphic at infinity if and only if it is bounded on every vertical line, \ie for every vector $\psi' = (\psi_2, \dotsc, \psi_r) \in \Psi^{r-1}$ there are real numbers $N > 0$ and $R > 0$ such that for all $\psi_1 \in \CCi$ satisfying $\dd(\psi_1,\psi'F_\infty^{r-1}) > R$ we have $\abs{f\pcoord{\psi_1 : \dotsc : \psi_r}} < N$.
  
  Moreover, $f$ goes to zero at infinity if and only if it goes to zero on each vertical line, \ie for every $\psi' = (\psi_2, \dotsc, \psi_r) \in \Psi^{r-1}$ and $\epsilon > 0$ there is an $R > 0$ such that $\abs{f\pcoord{\psi_1 : \dotsc : \psi_r}} < \epsilon$ for $\dd\parens{\psi_1,\psi'F_\infty^{r-1}} > R$.
\end{proposition}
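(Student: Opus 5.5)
The plan is to reduce both equivalences to a statement about Laurent coefficients of a one-variable expansion at the cusp, as in the proof of \cref{prop:mFormSeries}.

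First I fix $\psi' = (\psi_2, \dotsc, \psi_r) \in \Psi^{r-1}$ and look at the unipotent subgroup of $\Gamma$ of matrices that are the identity except in the first column, i.e.\ those acting by $\psi_1 \mapsto \psi_1 + \psi' m$. Since $\Gamma$ is arithmetic, the set $M$ of translation vectors $m$ that occur is an $A$-lattice of rank $r-1$ in $F^{r-1}$. So $\Lambda' \defeq \psi' M$ is a lattice of rank $r-1$ in $\CCi$. For these elements $j(\gamma,\omega)=1$, so invariance $f|_k\gamma = f$ says that $\psi_1 \mapsto f\pcoord{\psi_1 : \psi_2 : \dotsc : \psi_r}$ is $\Lambda'$-periodic.

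Next I use $u \defeq e_{\Lambda'}(\psi_1)^{-1}$ as the uniformising parameter. Exactly as in the proof of \cref{prop:mFormSeries}, $f$ factors through $u$ and becomes a holomorphic function of $u$ on a punctured disc, with a Laurent expansion
\[ f\pcoord{\psi_1 : \dotsc : \psi_r} = \sum_{n \in \ZZ} f_n(\psi') u^n , \]
whose coefficients depend holomorphically on $\psi'$. The set $\psi' F_\infty^{r-1}$ is within bounded distance of $\Lambda'$, because $M$ has a bounded fundamental domain by \cref{lem:idealBoundedReps} applied coordinatewise. Combined with \cref{prop:e_Lambda_infinity}, this gives that $\dd(\psi_1, \psi' F_\infty^{r-1}) \to \infty$ if and only if $\abs{u} \to 0$. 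So ``outside a large tube around the vertical line'' is the same as ``inside a small punctured $u$-disc''.

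With this translation, both characterisations become coefficient statements, and the key step is the argument from \cref{prop:mFormSeries}.
\begin{itemize}
  \item \emph{Holomorphic at infinity.} I expect BBP's condition to say, in this normalisation, that $f_n = 0$ for $n<0$. If $f$ is bounded near $u=0$, then applying \cref{lem:funcCoeffsSmall} to $c(\rho z)$ with $\rho \to 0$ shows that $f_n(\psi')\rho^n$ is bounded uniformly in $n$, hence $f_n(\psi') = 0$ for $n<0$. Conversely, if these coefficients vanish, $f$ is a convergent power series in $u$ and is therefore bounded for small $\abs{u}$.
  \item \emph{Going to zero at infinity.} The same argument shows that $f \to 0$ on the line if and only if in addition $f_0(\psi') = 0$.
\end{itemize}
In both cases a coefficient vanishes as a holomorphic function of $\psi'$ as soon as it vanishes pointwise for every $\psi'$, which is what the quantifier ``for every $\psi'$'' provides.

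The main obstacle is matching BBP's actual definition with this pointwise statement. First, BBP state their condition via a $u$-expansion normalised to a fixed boundary component, with neighbourhoods of infinity that are uniform over compact or affinoid sets of $\psi'$, not line by line. I would handle this via the holomorphy of the $f_n$ in $\psi'$ together with the maximum principle, as in the proof of \cref{lem:funcCoeffsSmall}. Second, if BBP's definition also requires the condition at the other cusps (for all $f|_k\gamma$ with $\gamma \in \GL[r]{F}$), I would need that $f|_k\gamma$ for $\gamma\in\GL[r]{F}$ is again covered by the hypothesis. I would try to check that $\gamma$ maps the region far from one vertical line into the region far from another vertical line, which is possible because $\GL[r]{F}$ acts transitively on $F$-rational hyperplanes; this is the step I am least sure of.
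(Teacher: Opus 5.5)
Your argument is correct in substance, and it is the natural proof. Along a fixed vertical line, $f$ is periodic for $\psi'M$, so it is a Laurent series in $u=e_{\psi'M}(\psi_1)^{-1}$ on a punctured disc. Complements of tubes around $\psi'F_\infty^{r-1}$ correspond to small $\abs{u}$. Then \cref{lem:funcCoeffsSmall} shows that boundedness (resp.\ decay to $0$) on the line is equivalent to $f_n(\psi')=0$ for all $n<0$ (resp.\ $n\le 0$). This is the same mechanism as the paper's proof of \cref{prop:mFormSeries}. The paper gives no proof of the present proposition itself and defers to the appendix of \cite{baker2020lattice}.

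The two obstacles in your last paragraph are not real, and you should discard the patches you propose for them.

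First, BBP's holomorphy (resp.\ vanishing) at infinity is a condition on the coefficient functions $f_n$ on $\Omega^{r-1}$ of the $u$-expansion at the standard boundary component: $f_n=0$ for $n<0$ (resp.\ $n\le 0$). For fixed $\omega'$ that expansion converges on a punctured $u$-disc. By uniqueness of Laurent coefficients, $f_n(\omega')$ is therefore exactly your pointwise coefficient. The quantifier ``for every $\psi'$'' then gives $f_n\equiv 0$ directly. No uniformity in $\psi'$, no maximum principle, and no holomorphic dependence on $\psi'$ is needed.

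Second, the quantifier over $\gamma\in\GL[r]{F}$ is not part of ``holomorphic at infinity''. It enters only in \cref{def:BBP_strongMForm,def:BBP_cuspForm}, where the proposition is applied separately to each $f|_k\gamma$, a form for the arithmetic group $\gamma^{-1}\Gamma\gamma$, as in the proof of \cref{prop:mFormToComps}. The hypothesis only controls $f$ near the standard boundary component, so it could not detect behaviour elsewhere anyway. The step you sketch would also fail. An element of $\GL[r]{F}$ outside the parabolic subgroup stabilising the standard boundary component, for instance one interchanging $\psi_1$ and $\psi_2$, carries tube complements to neighbourhoods of a different boundary component.

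Finally, a minor inaccuracy: for an arbitrary arithmetic $\Gamma$, the translation set $M$ is only an additive subgroup of $F^{r-1}$ commensurable with $A^{r-1}$, and it need not be an $A$-module. This is harmless. The set $\psi'M$ is still strongly discrete and cocompact in $\psi'F_\infty^{r-1}$, and its exponential has all the properties you use. For the groups $\Gamma_g$ to which the paper applies the proposition, $M$ is in fact an $A$-lattice.
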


\begin{definition} \label{def:BBP_strongMForm}
  For an integer $k$ and an arithmetic subgroup $\Gamma < \GL[r]{F}$, a \emph{strong BBP modular form $f$ of weight $k$ for $\Gamma$} is a weak BBP modular form such that $f|_k\gamma$ is holomorphic at infinity for all $\gamma \in \GL[r]{F}$.

  The $\CCi$-vector space of strong BBP modular forms of weight $k$ for $\Gamma$ is denoted by $\cm_k(\Gamma)$, with the graded $\CCi$-algebra of all strong BBP modular forms denoted by $\cm_*(\Gamma)$.
  Also, the adjective \emph{strong} will be omitted unless contrasting with BBP \emph{weak} modular forms.
\end{definition}
\begin{definition} \label{def:BBP_cuspForm}
  A \emph{BBP cusp form of weight $k$ for $\Gamma$} is a strong BBP modular form $f$ such that $f|_k\gamma$ goes to zero at infinity for all $\gamma \in \GL[r]{F}$.

  The $\CCi$-vector space of BBP cusp forms of weight $k$ for $\Gamma$ will be denoted by $\cs_k(\Gamma)$, with the graded $\CCi$-algebra of all BBP cusp forms denoted by $\cs_*(\Gamma)$.
\end{definition}

Using \cref{prop:BBP_verticalStrips}, we can prove the following:
\begin{proposition} \label{prop:mFormToComps}
  If $f \in \StrongMF_N^{k,r}$ is a strong modular form and $g \in \GL[r]{\finadele}$, then $f_g$ is a strong BBP modular form of weight $k$ for $\Gamma_g$.
\end{proposition}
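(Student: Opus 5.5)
By \cref{thm:wFormToComps_weak}, $f_g$ is already a weak BBP modular form of weight $k$ for $\Gamma_g$. What remains is to show that $f_g|_k\gamma$ is holomorphic at infinity for every $\gamma \in \GL[r]{F}$.

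The first step is a reduction. By \cref{prop:wFormToComps_slashF} we have $f_g|_k\gamma = f_{\gamma^{-1}g}$. Moreover, $\Gamma_{\gamma^{-1}g}$ is again an arithmetic subgroup, and $f_{\gamma^{-1}g}$ is a weak BBP form for it by \cref{thm:wFormToComps_weak}, whose proof works for arbitrary $g \in \GL[r]{\finadele}$. It therefore suffices to prove, for \emph{every} $h \in \GL[r]{\finadele}$, that $f_h$ is holomorphic at infinity. By \cref{prop:BBP_verticalStrips}, this amounts to showing that $f_h$ is bounded on every vertical line.

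So fix $\psi' = (\psi_2,\dotsc,\psi_r) \in \Psi^{r-1}$. Points of $\Omega^r$ on the vertical line correspond to $\psi = (\psi_1,\psi')$ with $\dd(\psi_1,\psi'F_\infty^{r-1}) > R$. Homogeneity lets us replace $\psi(\omega)$ by this representative $\psi$, since its last coordinate $\psi_r$ is fixed and so the scaling factor $(\xi/\psi_r)^{-k}$ is a constant. Thus we must bound $f(\Theta([\psi,h]))$.

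Next I would describe the lattice explicitly. Write $L = F^r \cap h\hat{A}^r$, a projective $A$-lattice in $F^r$, and let $L' = L \cap (0 \oplus F^{r-1})$. Then $L' $ has rank $r-1$, and $L/L'$ is a rank-one projective module, isomorphic via the first coordinate to a fractional ideal $I$. Since $L'$ is projective of rank $r-1$ and $I$ is projective, the sequence $0 \to L' \to L \to I \to 0$ splits. Choosing a splitting vector, we get
\[ \Theta([\psi,h]) = \bigl(\Lambda' + I(\psi_1 + c),\ \alpha\bigr), \]
where $\Lambda' = \psi'(L') $ is a fixed lattice of rank $r-1$ and $c$ ranges over elements of $\psi' F^{r-1}$ depending only on the splitting. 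The shift by $c$ does not matter, because $\dd(\psi_1 + c, \Lambda') \geq \dd(\psi_1, \psi'F_\infty^{r-1})$ once we take the splitting vector's extra coordinates in $F^{r-1}$. The level structure $\alpha$ is independent of $\psi_1$ modulo $N$ (diagram \cref{diag:psig_lattice_level}). Hence it has the form $(\iota,j)_\omega$ from \cref{lem:lmLimitLevels} with $\omega = \psi_1 + c$, with $\iota, j$ fixed.

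Now \cref{lem:lmLimitLevels} gives $(\Lambda' + I\omega, (\iota,j)_\omega) \to (\Lambda',\iota)$ in $\LLNRi$ as $\dd(\omega,\Lambda') \to \infty$. Since $f$ is continuous on $\LLNRi$, the values $f(\Theta([\psi,h]))$ converge to $f(\Lambda',\iota)$, so they are bounded once $\dd(\psi_1,\psi'F_\infty^{r-1}) > R$ for suitable $R$. This proves holomorphy at infinity.

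The main obstacle is the bookkeeping in the middle step. One must check that the distance condition $\dd(\psi_1,\psi'F_\infty^{r-1}) \to \infty$ really forces $\dd(\omega,\Lambda') \to \infty$, which holds since $\Lambda' \subset \psi'F_\infty^{r-1}$ and $c \in \psi'F^{r-1}$. One must also check that the induced level structure genuinely has the split form $(\iota,j)_\omega$ with $\psi_1$-independent $\iota, j$. For the latter, I would pick the splitting of $L$ and the lift of $h$ simultaneously, using the transitivity argument from the proof of \cref{prop:latticeClassCharac}.
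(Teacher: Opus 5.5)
Your proof is correct and follows the same route as the paper. You reduce via $f_g|_k\gamma = f_{\gamma^{-1}g}$ to boundedness on vertical lines (\cref{prop:BBP_verticalStrips}), and then use continuity of $f$ on $\LLNRi$ at the boundary point $(\Lambda',\iota)$ to which $\Theta([\psi,\gamma^{-1}g])$ converges as $\dd(\psi_1,\psi'F_\infty^{r-1}) \to \infty$.

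Your splitting $L = L' \oplus \{(i,iv) : i \in I\}$, which gives $\Lambda = \Lambda' + I(\psi_1 + c)$ and an inverse level structure of the form $(\iota,j)_\omega$ from \cref{lem:lmLimitLevels}, is in fact more careful than the paper. The paper writes $F^r \cap \gamma^{-1}g\hat{A}^r = (I_1,\dotsc,I_r)^T$ as a coordinatewise product of ideals, which need not hold for general $\gamma^{-1}g$. Your closing remark about re-choosing the lift of $h$ is unnecessary: $\alpha^{-1}$ already splits along $N^{-1}L'/L' \oplus N^{-1}s(I)/s(I)$, with both pieces independent of $\psi_1$.
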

\begin{proof}
  Let $\gamma \in \GL[r]{F}$ and $\psi' \in \Psi^{r-1}$, where without loss of generality $\psi_r = \xi$, and let $\psi = (\psi_1, \dotsc, \psi_r)$ where $\psi_1 \in \CCi$ is variable.
  Also let $F^r \cap \gamma^{-1}g\hat{A}^r = (I_1, I_2, \dotsc, I_r)^T$ where the $I_i$ are fractional ideals, so that for $(\Lambda,\alpha) = \Theta([\psi,\gamma^{-1}g])$ we have $\Lambda = \psi(F^r \cap \gamma^{-1}g\hat{A}^r) = I_1\psi_1 +I_2\psi_2 +\dotsb I_r\psi_r$.
  Then as in \cref{prop:lattice_limit} we have that as $\dd(\psi_1,\psi'F_\infty^{r-1}) \to \infty$ the lattice $\Lambda$ tends to $\Lambda' \defeq I_2\psi_2 +\dotsb +I_r\psi_r$.
  Also, as in the proof of \cref{prop:LLNR_dense} the $r$-inverse level $N$ structure $\iota = \alpha^{-1}$ tends to the $r$-inverse level $N$ structure
  \[ \iota' : N^{-1}\Lambda'/\Lambda' \longinto \parens{N^{-1}/A}^r,\quad [\lambda_2\psi_2 +\dotsb +\lambda_r\psi_r]_{\Lambda'} \longmapsto \iota([\lambda_2\psi_2 +\dotsb +\lambda_r\psi_r]_{\Lambda})\,. \]
  Thus as $\dd(\psi_1,\psi'F_\infty^{r-1}) \to \infty$ we have that $(\Lambda,\iota) \to (\Lambda',\iota')$, with $(\Lambda',\iota')$ not dependent on $\psi_1$.
  Thus since $f$ is continuous on $\LLNRi$ we have that
  \[ (f_g|_k\gamma)(\omega) = f_{\gamma^{-1}g}(\omega) = f(\Lambda,\iota) \longto f(\Lambda',\iota'); \]
  since the limit exists, $f_g|_k\gamma$ is bounded on the vertical line defined by $\psi'$.
\end{proof}

\begin{proposition} \label{prop:cFormToComps}
  If $f \in \CuspMF_N^{k,r}$ is a cusp form and $g \in \GL[r]{\finadele}$, then $f_g$ is a BBP cusp form of weight $k$ for $\Gamma_g$.
\end{proposition}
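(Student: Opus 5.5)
By \cref{prop:mFormToComps}, $f_g$ is already a strong BBP modular form of weight $k$ for $\Gamma_g$. So it remains to show that $f_g|_k\gamma$ goes to zero at infinity for every $\gamma \in \GL[r]{F}$. By the second half of \cref{prop:BBP_verticalStrips}, this amounts to showing that $f_g|_k\gamma$ tends to zero along every vertical line. The plan is to rerun the proof of \cref{prop:mFormToComps} and use the cusp condition to identify the limit.

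Fix $\gamma \in \GL[r]{F}$ and $\psi' = (\psi_2,\dotsc,\psi_r) \in \Psi^{r-1}$, normalised so that $\psi_r = \xi$, and let $\psi_1 \in \CCi$ vary. We first apply \cref{prop:wFormToComps_slashF} to get $(f_g|_k\gamma)(\omega) = f_{\gamma^{-1}g}(\omega)$. We then write $F^r \cap \gamma^{-1}g\hat{A}^r = (I_1,\dotsc,I_r)^T$, so that $\Theta([\psi,\gamma^{-1}g]) = (\Lambda,\alpha)$ with $\Lambda = I_1\psi_1 + \Lambda'$ and $\Lambda' = I_2\psi_2+\dotsb+I_r\psi_r$. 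The next step is to check that $\dd(\psi_1,\psi'F_\infty^{r-1}) \to \infty$ forces $\dd(\psi_1,\Lambda') \to \infty$. This holds because $\Lambda' \subseteq \psi' F_\infty^{r-1}$. Hence \cref{prop:lattice_limit} applies, with the fractional ideal $I_1$ rescaled by an element of $A$ if necessary so that it is integral.

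For the level structures, the inverse level structure $\iota = \alpha^{-1}$ has exactly the shape $(\iota',j)_{\psi_1}$ of \cref{lem:lmLimitLevels}. Here $\iota'$ is the restriction of $\iota$ to $N^{-1}\Lambda'/\Lambda'$, and $j(i) = \iota(i\psi_1)$, which is independent of $\psi_1$ because the construction through $\Theta$ only involves $\gamma^{-1}g$. That lemma then gives $(\Lambda,\iota) \to (\Lambda',\iota')$ in $\LLNRi$, and the limit point lies in the boundary $\bd_N^r$ since $\Lambda'$ has rank $r-1$.

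Since $f$ is continuous on $\LLNRi$, we get $f_{\gamma^{-1}g}(\omega) = f(\Lambda,\iota) \to f(\Lambda',\iota')$. Since $f$ is a cusp form, $f(\Lambda',\iota') = 0$. Therefore for every $\epsilon > 0$ there is an $R$ such that $\abs{(f_g|_k\gamma)(\omega)} < \epsilon$ whenever $\dd(\psi_1,\psi'F_\infty^{r-1}) > R$. This is exactly the vertical-line criterion, so $f_g \in \cs_k(\Gamma_g)$.

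The main obstacle is making the convergence of level structures rigorous and uniform in the direction $\psi_1 \to \infty$. One has to check that the description of $\iota$ as $(\iota',j)_{\psi_1}$ is legitimate for all $\psi_1$ along the line, with $\iota'$ and $j$ fixed. The most delicate point is that the hypothesis of \cref{lem:lmLimitLevels} holds, namely $\dd(\psi_1,\Lambda')\to\infty$ together with $\Im{\iota'}\cap\Im{j}=\set{0}$. I would verify this directly from \cref{diag:psig_lattice_level}, using that $N^{-1}\Lambda/\Lambda$ splits as $N^{-1}I_1/I_1 \oplus N^{-1}\Lambda'/\Lambda'$.
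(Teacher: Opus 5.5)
Your proposal is correct and is essentially the paper's proof. Like the paper, you rerun the argument of \cref{prop:mFormToComps}, so that $(\Lambda,\iota)\to(\Lambda',\iota')\in\bd_N^r$ along each vertical line, and use the cusp condition to get $f(\Lambda',\iota')=0$; your citation of \cref{lem:lmLimitLevels} just makes explicit the level-structure convergence that the paper only gestures at. One caveat, shared with the paper: $F^r\cap\gamma^{-1}g\hat{A}^r$ need not split along the coordinate axes as $(I_1,\dotsc,I_r)^T$, but you can write it as $Y'\oplus Ib$, with $Y'$ its intersection with $0\oplus F^{r-1}$ (so $b_1\neq 0$), and use $\omega=\psi b$ in place of $\psi_1$; since $\dd(\psi b,\psi(Y'))\geq\abs{b_1}\,\dd(\psi_1,\psi'F_\infty^{r-1})$, the same argument then goes through verbatim.
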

\begin{proof}
  Let $\gamma \in \GL[r]{F}$.
  Then similarly to \cref{prop:mFormToComps},
  \[ (f_g|_k\gamma)(\omega) = f_{\gamma^{-1}g}(\omega) = f(\Lambda,\iota) \longto f(\Lambda',\iota') = 0, \]
  the zero due to $f$ being a cusp form.
  Since the limit is zero, $f_g|_k\gamma$ goes to zero at infinity on the vertical line defined by $\psi'$.
\end{proof}

\begin{theorem} \label{prop:mFormToComps_Injective}
  If $H$ is a set of representatives in $\GL[r]{\finadele}$ for the double quotient
  \[ \lrquotient{\GL[r]{F}}{\GL[r]{\finadele}}{K(N)}, \]
  then the following homomorphisms of graded $\CCi$-algebras are injective:
  \[ \StrongMF_N^r \longto \prod_{g \in H} \cm_*(\Gamma_g), \qquad \CuspMF_N^r \longto \prod_{g \in H} \cs_*(\Gamma_g), \qquad f \longmapsto (f_g)_{g \in H}. \qedhere \]
\end{theorem}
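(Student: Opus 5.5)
The plan is to reduce everything to the weak-form result \cref{prop:wFormToComps_Injective}, combined with \cref{prop:mFormToComps,prop:cFormToComps}. By \cref{prop:mFormToComps}, each $f \in \StrongMF_N^{k,r}$ gives, for every $g \in H$, a strong BBP form $f_g \in \cm_k(\Gamma_g)$. By \cref{prop:cFormToComps}, each cusp form gives $f_g \in \cs_k(\Gamma_g)$. So the two maps in the statement are well defined and land in the stated targets.

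Next we show that they are homomorphisms of graded $\CCi$-algebras. By \cref{def:wFormToComps}, $f_g$ for $f$ on $\LLNRi$ is just $(f|_{\LL_N^r})_g$. Restriction to the main stratum $\StrongMF_N^r \to \WeakMF_N^r$ respects sums, scalar multiples, products and weights. The map $f \mapsto f_g$ is an algebra homomorphism by \cref{prop:wFormToComps_algebra}. For the cusp forms, $\CuspMF_N^r$ is a graded subalgebra (an ideal) of $\StrongMF_N^r$, and $\cs_*(\Gamma_g) \subseteq \cm_*(\Gamma_g)$, so the second map is simply a restriction of the first.

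For injectivity, it suffices by linearity and the grading to treat homogeneous $f \in \StrongMF_N^{k,r}$ with $f_g = 0$ for all $g \in H$. Then \cref{prop:wFormToComps_Injective} gives $f|_{\LL_N^r} = 0$. Because $f$ is continuous on $\LLNRi$ and $\LL_N^r$ is dense in $\LLNRi$ by \cref{prop:LLNR_dense}, $f$ vanishes on all of $\LLNRi$. The cusp-form case is then immediate, since that map is a restriction of an injective map.

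The only slightly delicate point is the passage from the main stratum to the boundary, which depends on density together with continuity. A subtler point is making sure the argument does not quietly rely on a specific representative $g$. This is handled because $H$ is a full set of representatives of the double quotient, so the components $\Gamma_g\backslash\Psi^r$ for $g \in H$ cover $\LL_N^r$ by \cref{prop:LLNR_components}. That covering is exactly what \cref{prop:wFormToComps_Injective} uses. I expect no genuine obstacle beyond these points.
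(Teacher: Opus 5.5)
Your proposal is correct and takes the same approach as the paper, whose proof simply cites \cref{prop:wFormToComps_Injective,prop:mFormToComps,prop:cFormToComps}. The one step you spell out is that vanishing on the main stratum forces vanishing on all of $\LLNRi$, by continuity together with the density of $\LL_N^r$ (\cref{prop:LLNR_dense}); the paper leaves this implicit, relying on its earlier remark that restriction gives an injection $\StrongMF_N^{k,r} \into \WeakMF_N^{k,r}$.
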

\begin{proof}
  These are shown by \cref{prop:wFormToComps_Injective,prop:mFormToComps,prop:cFormToComps}.
\end{proof}

\begin{corollary} \label{coro:mForms_finiteDimension}
  For each $k \in \ZZ$, $\StrongMF_N^{k,r}$ is a finite-dimensional $\CCi$-vector space.
\end{corollary}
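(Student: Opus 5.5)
The plan is to push finite dimensionality of the BBP spaces back to $\StrongMF_N^{k,r}$ through the injection of \cref{prop:mFormToComps_Injective}. That injection is a homomorphism of graded algebras, so it preserves weight. Restricted to weight $k$, it therefore gives an injective $\CCi$-linear map
\[ \StrongMF_N^{k,r} \longinto \prod_{g \in H} \cm_k(\Gamma_g), \qquad f \longmapsto (f_g)_{g \in H}. \]
Two facts then suffice. First, the index set $H$ is finite. Second, each $\cm_k(\Gamma_g)$ is finite-dimensional over $\CCi$. Given both, the target is a finite product of finite-dimensional spaces, and the injectivity of the map bounds $\dim \StrongMF_N^{k,r}$ by $\sum_{g \in H} \dim \cm_k(\Gamma_g)$.

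For the finiteness of $H$, I will use \cref{coro:DetDblQuot}. It identifies $H$ with $\lrquotient{F^\times}{\parens{\finadele}^\times}{\det K(N)}$. By \cref{prop:comps_HubschmidNumber}, this set has cardinality $\size{\Cl(F)} \cdot \sizep{\hat{A}^\times/(\FF_q^\times \cdot \det K(N))}$. Both factors are finite. The class group of a global function field is finite. By \cref{lem:detKNStructure}, $\hat{A}^\times/\det K(N)$ is a quotient of $(A/N)^\times$, which is finite. (Alternatively, \cref{prop:Comps_ClF_AN*_Bijection} exhibits the set as an extension of $\Cl(F)$ by a quotient of $(A/N)^\times$.)

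For the finite dimensionality of each $\cm_k(\Gamma_g)$, I will cite BBP. Each $\Gamma_g = gK(N)g^{-1} \cap \GL[r]{F}$ is an arithmetic subgroup of $\GL[r]{F}$, as remarked after \cref{prop:moduli_components}. Basson, Breuer and Pink prove in \cite{BBP} that for any arithmetic subgroup $\Gamma$ and any weight $k$ the space of (type $0$) modular forms $\cm_k(\Gamma)$ is finite-dimensional; they obtain this from the Satake compactification, whose ample invertible sheaf has these forms as its global sections. This step needs the most care, though it is a matter of matching conventions rather than new mathematics. Our \cref{def:BBP_strongMForm} was translated from BBP's column-vector conventions into our row-vector ones, so I must check that $\cm_k(\Gamma_g)$ is exactly (up to the transpose-inverse identification of $\Gamma_g$) the space to which BBP's finiteness theorem applies. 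That includes checking that ``holomorphic at infinity for all $\gamma \in \GL[r]{F}$'' matches their condition at all cusps; \cref{prop:BBP_verticalStrips} is the bridge used earlier for this.

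For negative $k$, \cref{prop:mFormNonpositiveWeight} gives $\StrongMF_N^{k,r} = 0$ directly, and for $k = 0$ it gives the constants. These cases can be noted separately, so the citation is only needed for $k > 0$.
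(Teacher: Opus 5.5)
Your proposal is correct and is essentially the paper's proof: restrict the injection of \cref{prop:mFormToComps_Injective} to weight $k$ to view $\StrongMF_N^{k,r}$ as a subspace of $\prod_{g \in H} \cm_k(\Gamma_g)$, then invoke Basson--Breuer--Pink's finiteness result (the paper cites Theorem~11.1 of \cite{BBPII}). Your check that $H$ is finite, via \cref{coro:DetDblQuot} and \cref{prop:comps_HubschmidNumber}, and your separate treatment of $k \leq 0$ via \cref{prop:mFormNonpositiveWeight} make explicit points that the paper's one-line proof leaves tacit.
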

\begin{proof}
  By \cref{prop:mFormToComps} and \cref{prop:mFormToComps_Injective}, $\StrongMF_N^{k,r}$ can be seen as a subspace of $\prod_{g \in H} \cm_k(\Gamma_g)$; by \cite[Theorem 11.1]{BBPII} this space is finite dimensional.
\end{proof}

\begin{paragraph}
  The above injection of strong modular forms is not a bijection in general, as by \cref{thm:LLNR_irredComponent_completion} the irreducible components of $\LL_N^r$ share a common boundary and hence the condition of continuity on $\LLNRi$ imposes relations between the modular form's value on different irreducible components, whereas in the product $\prod_{g \in H} \cm_*(\Gamma_g)$ the modular forms on each component are completely independent.
  As a more explicit example, consider the modular form of weight $0$ on $\prod_{g \in H} \cm_*(\Gamma_g)$ which is defined to have constant value $1$ on $\cm_*(\Gamma_{g_0})$ for some particular $g_0 \in \GL[r]{\finadele}$ and constant value $0$ on $\cm_*(\Gamma_g)$ for $g \in H - \set{g_0}$.
\end{paragraph}
However, the above injection does become a bijection when restricting to cusp forms, which are zero on the boundary:

\cuspFormsToCompsBijective
\begin{proof}
  Let $(f_g)_{g \in H} \in \prod_{g \in H} \cs_k(\Gamma_g)$ be a tuple of BBP cusp forms of weight $k$, and define the weak modular form $f \in \CuspMF_N^{k,r}$ by $f(t \Theta([\psi(\omega),g])) = t^{-k} f_g(\omega)$ for $\omega \in \Omega^r$, $t \in \CCi$ and $g \in H$; the holomorphicity and homogeneity of $f$ follows from the corresponding properties of the $f_g$.
  All that remains is to show that $f$ is continuous when considered as a function on $\LLNRi$, defined to be zero on the boundary $\bd_N^r = \LLNRi-\LL_N^r$.
  However, since by \cref{coro:LLNR_irredComponentBdClosed} the unions $C_g \cup \bd_N^r$, where $C_g$ denotes the irreducible component of $\LL_N^r$ corresponding to $g \in H$, are each closed in $\LLNRi$ and together cover the whole space $\LLNRi$, and there are finitely many such, it is enough to show that $f$ is continuous on each $C_g \cup \bd_N^r$; as in the proofs of \cref{prop:mFormToComps,prop:cFormToComps}, this follows from the fact that each $f_g$ is a cusp form.
\end{proof}

%% file: text/4_5_mf_gek_relation.tex
\subsection{Relation with Gekeler work}

In this subsection, we will investigate the relation between our modular forms and those defined by Gekeler in \cite{gekelerHigherVII}, being part of his series \cite{gekelerHigherI,gekelerHigherII,gekelerHigherIII,gekelerHigherIV,gekelerHigherV,gekelerHigherVI,gekelerHigherVII,gekeler2026expansions} of papers on a theory of Drinfeld modular forms of higher rank.
In addition, Gekeler defines modular forms of an integer type $m$ modulo $q-1$; as in the previous section, we note that the modular forms defined in this paper are only those of type $0$.


We begin with a bijection between the spaces on which modular forms are defined in Gekeler's work and in ours, which we will prove have the same topologies.
In this section, as per Gekeler's notation in \cite{gekelerHigherVII}, we let $r \geq 1$ be fixed, and for $F$-subspaces $U$ of $F^r$ we denote by $\Psi_U$ the set of discrete embeddings $i : U \to \CCi$, with the strong topology given there.
In particular, $\Psi^r = \Psi_{F^r}$ and $\overline{\Psi}^r = \cup_{U \leq F^r} \Psi_U$.
Note that as in \cite[\S 1.2]{gekelerHigherVII}, $\omega \in \Psi^r$ can be seen both as an embedding $F^r \to \CCi$ and as $\omega = (\omega_1, \dotsc, \omega_r) \in \CCi^r$ via the standard basis of $F^r$; in this subsection, we will primarily take the former view.

Now for a fixed $A$-lattice $Y$ (i.e.~projective $A$-module of rank $r$) in $F^r$, it can be written as $Y = I_1b_1+\dotsb+I_rb_r$ for some fractional ideals $I_i$ of $A$ and linearly independent $b_i \in F^r$; thus there is a corresponding $\pi(Y) = [I_1\dotsm I_r] \in \Cl(F)$ and thus a corresponding irreducible component $C$ of $\LL^r$ with $\pi(C) = \pi(Y)$.
\begin{definition} \label{def:gek_space_bijection}
For a fixed $A$-lattice $Y \subset F^r$ of rank $r$, we define the mapping
\begin{align*}
    \Xi_Y : \lquotient{\GL{Y}}{\overline{\Psi}^r} &\longto \overline{C} = C \cup \LL^{\leq r-1} \\
    i : U \to \CCi &\longmapsto i(Y\cap U) \qedhere
\end{align*}
\end{definition}
$\Xi_Y$ maps $\Psi^r$ to $C$ while it maps $\Psi_U$ (for a subspace $U < F^r$ of dimension $s < r$) to $\LL^s$.
\begin{proposition} \label{prop:Gek:isotopology} ~
\begin{enumerate}[leftmargin=2\bigskipamount]
    \item $\Xi_Y$ is a well-defined bijection.
    \item If, in $\overline{\Psi}^r$, we have $(U',i') \to (U,i)$ for fixed $U \subseteq U'$ and $i : U \to \CCi$ and variable $i' : U' \to \CCi$, then $i'(Y\cap U') \to i(Y \cap U)$.
    \item If a variable $\Lambda' \to \Lambda$ in $\overline{C}$, then there is a variable embedding $i' : U' \to \CCi$ and a fixed embedding $i : U \to \CCi$ such that $i' \to i$, $\Xi_Y(i') = \Lambda'$, and $\Xi_Y(i) = \Lambda$.
    \qedhere
\end{enumerate}
\end{proposition}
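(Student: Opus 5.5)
For part (1), I would first check that $\Xi_Y$ is well defined on the quotient. For $\gamma \in \GL{Y}$, the action on an embedding $i : U \to \CCi$ is by precomposition, sending $(U,i)$ to $(\gamma^{-1}U, i\circ\gamma)$. Then
\[ (i\circ\gamma)(Y \cap \gamma^{-1}U) = i(\gamma Y \cap U) = i(Y \cap U), \]
since $\gamma Y = Y$.

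Next I would check that the image lands in $\overline{C}$. Take $U$ of dimension $s$. The set $Y \cap U$ is a projective $A$-module of rank $s$ spanning $U$, and $i$ is a discrete embedding, so $i(Y\cap U)$ is a lattice of rank $s$. For $s=r$, writing $Y = I_1b_1+\dotsb+I_rb_r$ gives $i(Y) = I_1 i(b_1)+\dotsb+I_r i(b_r)$. By \cref{prop:latticeClassCharac}, $\pi(i(Y)) = [I_1\dotsm I_r] = \pi(C)$, so $i(Y) \in C$.

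Surjectivity comes from choosing a basis. For $\Lambda \in C$, \cref{coro:latticeClass1Charac} and \cref{para:latticeAmodule} give $\Lambda = J_1\psi_1+\dotsb+J_r\psi_r$ with $[J_1\dotsm J_r]=\pi(Y)$. By Steinitz's theorem, projective modules with the same rank and Steinitz class are isomorphic, so there is an $A$-isomorphism $Y \to \Lambda$. This extends $F$-linearly to $F^r \to \CCi$, and the extension is a discrete embedding because $\Lambda$ is strongly discrete. For $\Lambda$ of rank $s<r$, every rank-$s$ projective $A$-module occurs as $Y\cap U$ for a suitable $U$. Indeed, $Y \cap U$ is a saturated submodule, hence a direct summand, and summands of $Y$ realise every Steinitz class in rank $s<r$ because the complementary summand can absorb the class.

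Injectivity: suppose $i(Y\cap U) = i'(Y\cap U')$ with both of rank $s$. The map $i'^{-1}\circ i$ restricts to an $A$-isomorphism $Y\cap U \to Y\cap U'$, which extends to an $F$-isomorphism $U\to U'$. I would then extend this to an automorphism of $Y$. This works because $Y = (Y\cap U)\oplus P$ and $Y = (Y\cap U')\oplus P'$, and cancellation for projective modules over a Dedekind domain (equal Steinitz class and rank) gives $P\simeq P'$. This is the most delicate algebraic point, and I expect it to be routine given the structure theory cited in \cref{para:latticeAmodule}.

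For parts (2) and (3), I would use Gekeler's description of the strong topology. Convergence $(U',i')\to(U,i)$ means that $i'|_U \to i$ pointwise on a basis, and that $i'$ pushes a complement of $U$ (inside $U'$) to infinity modulo $i'(U)$. Writing $Y\cap U' = (Y\cap U)\oplus P$, the lattice $\Lambda' = i'(Y\cap U)+i'(P)$ is then compared with $i(Y\cap U)$ in two steps. The first step is continuity of the lattice when its generators move. I would establish this through the coefficients of $\phi^\Lambda_a$, equivalently of $e_\Lambda$, as in the continuity statements of \cref{prop:DmoduleCoeffsModular}. The second step is the limit of \cref{prop:lattice_limit}, iterated one generator at a time, which shows that adding generators far from the lattice gives a perturbation tending to $0$.

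Part (3) is the converse, and I expect it to be the main obstacle. Given $\Lambda'\to\Lambda$, I would choose adapted bases. The shortest-vector (successive minima) structure of $\Lambda'$ should separate into elements approaching those of $\Lambda$ and elements whose distance to the approximating sublattice tends to infinity. My reason is that $e_{\Lambda'}\to e_\Lambda$ uniformly on balls (\cref{prop:dd_LL_radius}), so zeros in a bounded region converge. Then I would transport these bases back to $Y$ using the isomorphisms from part (1). The delicate points are making the choice of $i'$ uniform along the sequence and controlling $\GL{Y}$-ambiguity. I would handle both by fixing the summand decomposition of $Y$ once and choosing, for each $\Lambda'$, a reduced basis compatible with it.
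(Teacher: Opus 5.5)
\textbf{Where the proposal stands.} Your part (1) is the paper's argument: Steinitz classes, the decompositions $Y=(Y\cap U)\oplus Z$, and extending $(i')^{-1}\circ i$ by an isomorphism of complements. Your plan for (3) uses the same mechanism as the paper: uniform convergence of $e_{\Lambda'}$ to $e_\Lambda$ on large balls (\cref{prop:dd_LL_radius}) puts the short vectors of $\Lambda'$ in bijection with those of $\Lambda$. The genuine gap is in (2). Your two-step reduction postpones exactly the analytic content of the statement.

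\textbf{The moving-generators step.} This step is not an auxiliary fact: it is statement (2) itself in the case $U'=U$. The result you cite for it, \cref{prop:DmoduleCoeffsModular}, gives the converse implication. It says $\Lambda\mapsto\phi^\Lambda_{a,i}$ is continuous for $\dd_\LL$, i.e.\ convergence of lattices forces convergence of coefficients. Suppose you grant that each $e_{\Lambda,i}$ depends continuously on the generators. The metric is still $\sup_{\abs{z}\le1}\abs{e_{\Lambda'}(z)-e_\Lambda(z)}=\sup_i\abs{e_{\Lambda',i}-e_{\Lambda,i}}$. So you need bounds on the tail coefficients $e_{\Lambda',i}$, $i\gg0$, that are uniform in $\Lambda'$, and writing ``equivalently of $e_\Lambda$'' skips precisely this. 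These bounds can be obtained, e.g.\ from the recursion $e_{\Lambda,i}(a^{q^i}-a)=\sum_{j\ge1}\phi^\Lambda_{a,j}\,e_{\Lambda,i-j}^{q^j}$ with $\abs{a}>1$, but the argument has to be given.

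\textbf{The escaping-generators step.} \cref{prop:lattice_limit} is proved for a \emph{fixed} base lattice. In your iteration the base moves with $i'$: first $i'(Y\cap U)$, then $i'(Y\cap U)+I\omega'$. You therefore need a version uniform in the base. One is obtainable from $\abs{e_\Lambda(x)}\ge\dd(x,\Lambda)$, which can be read off from the product formula, together with a uniform bound on $\sup_{\abs{z}\le1}\abs{e_{\Lambda_n}(z)}$. Its input must be the uniform escape $\inf_{\ell\in(Y\cap U')\setminus U}\abs{i'(\ell)}\to\infty$ over all such $\ell$, not just over a chosen basis of a complement.

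\textbf{How the paper avoids both issues.} It uses a single estimate. From $1/e_\Lambda(z)=\sum_{\lambda\in\Lambda}1/(z-\lambda)$ it writes $e_{\Lambda'}-e_\Lambda=e_{\Lambda'}e_\Lambda\bigl(1/e_\Lambda-1/e_{\Lambda'}\bigr)$ as a sum indexed by $Y\cap U'$. On a small ball $\abs{z}\le\epsilon$ it then bounds three groups of terms separately:
\begin{itemize}
\item the finitely many $\ell\in Y\cap U$ with $\abs{i(\ell)}\le R$, each of which tends to $0$;
\item the remaining $\ell\in Y\cap U$, which contribute less than $1/R$;
\item the $\ell\notin U$, which are uniformly small because $\abs{i'(\ell)}\to\infty$ uniformly.
\end{itemize}
The radius is then enlarged as in \cref{prop:dd_LL_radius}.
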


\begin{proof}~
\begin{enumerate}[leftmargin=2\bigskipamount,itemsep=\medskipamount]
    \item
    \begin{description}[leftmargin=0pt,itemsep=\smallskipamount]
        \item[Well-defined] Let $\gamma \in \GL{Y}$ (i.e.~$\gamma \in \GL{F^r}$ fixes $Y$), let $r_\gamma : F^r \to F^r$, $x \mapsto x\gamma$, and let $i : U \to \CCi$ be an embedding.
        Then $\gamma i = i \circ r_\gamma : U\gamma^{-1} \to \CCi$,
        \[ \rsptext{so that} \gamma i(Y \cap U\gamma^{-1}) = i((Y \cap U\gamma^{-1})\gamma) = i(Y\gamma \cap U\gamma^{-1}\gamma) = i(Y\cap U); \]
        thus $\Xi_Y$ is well defined.
        \item[Injective] Suppose that we have embeddings $i : U \to \CCi$ and $i' : U' \to \CCi$ such that $i(Y \cap U) = i'(Y \cap U')$ has rank $s \leq r$.
        Then by \cite[\S 22.14]{curtisreiner2006repr} and since $Y \cap F(Y \cap U) = Y \cap U$, we can decompose $Y = (Y \cap U) + Z$ where $Z \subset F^r$ is a projective $A$ module of rank $r-s$; similarly, $Y = (Y \cap U') + Z'$ where $Z'$ also has rank $r-s$.
        Now
        \begin{gather*}
            \pi(i(Y \cap U)) = \pi(i'(Y \cap U')) \implies \pi(Y \cap U) = \pi(Y \cap U'), \lsptext{and} \\
            \pi(Y \cap U) \pi(Z) = \pi((Y \cap U) + Z) = \pi(Y) =
            \pi((Y \cap U') + Z') =
            \pi(Y \cap U') \pi(Z'),
        \end{gather*}
        and so $\pi(Z) = \pi(Z')$; and so since $Z$ and $Z'$ have the same rank there is an $A$-module bijection $t : Z \to Z'$.
        Thus there is an $A$-module bijection $\gamma : Y = (Y \cap U) + Z \to Y = (Y \cap U') + Z'$, given by $(i')^{-1} \circ i$ on $Y \cap U$ and by $t$ on $Z$, and so $i$ and $i'$ are equivalent under the action of $\GL{Y}$ on $\overline{\Psi}^r$.
        \item[Surjective] Let $\Lambda \in \overline{C} = C \cup \LL^{\leq r-1}$, and let $I$ be an $A$-fractional ideal such that $\pi(Y) = [I]$.
        If $\Lambda \in C$, or equivalently if $\Lambda$ has rank $r$, then we can write $\Lambda = A \omega_1 +\dotsb +A \omega_{r-1} + I \omega_r$ and $Y = A b_1 +\dotsb A b_{r-1} +I b_r$ for $F_\infty$-linearly independent $\omega_j \in \CCi$ and $F$-linearly independent $b_j \in F^r$.
        Then with the $F$-linear $i : F^r \to \CCi$ given by $i(b_j) = \omega_j$ for each $j$, we get that $i(Y\cap F^r) = i(Y) = \Lambda$.
        If on the other hand $\Lambda \in \LL^{\leq r-1}$, let $\Lambda$ have rank $s < r$, so that it can be written as $\Lambda = A \omega_1 +\dotsb +A \omega_{s-1} +J \omega_s$ for $F_\infty$-linearly independent $\omega_j \in \CCi$ and a fractional ideal $J$.
        Now for the $A$-module $Q = A \oplus \dotsb \oplus A \oplus I/J$ of rank $r-s$, since $\pi(\Lambda \oplus Q) = [J] [I/J] = [I] = \pi(Y)$ and they have the same rank, we have that $Y \simeq \Lambda \oplus Q$; in particular, we can decompose $Y = Y' + Q'$ for $\Lambda \simeq Y' \subset Y \subset F^r$ and $Q \simeq Q' \subseteq Y \subset F^r$.
        The bijection $Y' \simeq \Lambda$ can be extended $F$-linearly to a map $i : U \defeq F Y' \to \CCi$, so that $i(Y \cap U) = i(Y') = \Lambda$.
    \end{description}
    \item We show that $e_{i'(Y \cap U')}(z) \to e_{i(Y \cap U)}(z)$ uniformly on $\abs{z} \leq 1$ using the characterisation of the topology in \cite[\S 1.3]{gekelerHigherVII}.
    Now similarly to the proof of \cref{prop:dd_LL_radius}, it suffices to prove this uniform convergence on $\abs{z} \leq \epsilon$ for some small $\epsilon > 0$; we thus choose such an $\epsilon$ that $\epsilon < \abs{i'(\ell)}$ for nonzero $\ell \in Y \cap U'$ and $\epsilon < \abs{i(\ell)}$ for nonzero $\ell \in Y \cap U$.
    Now
    \begin{align*}
        &\mathrel{\phantom{=}} e_{i'(Y\cap U')}(z) - e_{i(Y\cap U)}(z) \\
        &= e_{i'(Y\cap U')}(z) e_{i(Y\cap U)}(z) \brackets{e_{i(Y\cap U)}^{-1}(z) - e_{i'(Y\cap U')}^{-1}(z)} \\
        &= e_{i'(Y\cap U')}(z) e_{i(Y\cap U)}(z) \brackets{\sum_{\lambda \in i(Y \cap U)} \frac{1}{z-\lambda} -\sum_{\lambda \in i'(Y\cap U')} \frac{1}{z-\lambda}} \\
        &= e_{i'(Y\cap U')}(z) e_{i(Y\cap U)}(z) \brackets{\sum_{\ell \in Y\cap U} \parens{\frac{1}{z-i(\ell)}-\frac{1}{z-i'(\ell)}} -\sum_{\ell \in (Y \cap U')\setminus U} \frac{1}{z-i'(\ell)}} \\
        &= e_{i'(Y\cap U')}(z) e_{i(Y\cap U)}(z) \brackets{\sum_{\ell \in Y\cap U} \frac{i(\ell)-i'(\ell)}{(z-i(\ell))(z-i'(\ell))} -\sum_{\ell \in (Y \cap U')\setminus U} \frac{1}{z-i'(\ell)}}. \tag{A}
    \end{align*}
    Now for $0 \neq \ell \in Y \cap U$, we have that $i'(\ell) \to i(\ell)$, and for $0 \neq \ell \in (Y \cap U') \setminus U$, we have that $\abs{i'(\ell)} \to \infty$ uniformly in $\ell$; thus $i'(Y \cap U')$ is uniformly bounded away from $0$, and so $e_{i(Y\cap U)}(z) = z \prodp_{\lambda \in i(Y\cap U)} (1-\frac{z}{\lambda})$ is bounded above for $\abs{z} \leq \epsilon$, and similarly for $e_{i'(Y\cap U')}(z)$.
    Moreover, since $\abs{i'(\ell)} \to \infty$ uniformly in $\ell$, the second sum in (A) tends to $0$ uniformly over $\abs{z} \leq \epsilon$.
    Finally, for the first sum in (A), let $R>0$ be large.
    Note that since $i$ and $i'$ are discrete embeddings, $L_R \defeq \setst{\ell \in Y\cap U}{\abs{i(\ell)} \leq R}$ is finite, and so
    \[ \abs{\sum_{\ell \in L_R} \frac{i(\ell)-i'(\ell)}{(z-i(\ell))(z-i'(\ell))}} \leq \max_{\substack{\ell \in L_R \\ \ell \neq 0}} \abs{\frac{i(\ell)-i'(\ell)}{i(\ell)i'(\ell)}} \longto 0 \;\;\;\text{since each}\ i'(\ell) \longto i(\ell) \neq 0. \]
    On the other hand,
    \begin{align*}
        &\mathrel{\phantom{\leq}} \abs{\sum_{\ell \in (Y\cap U) \setminus L_R} \frac{i(\ell)-i'(\ell)}{(z-i(\ell))(z-i'(\ell))}} \\
        &\leq \max_{\substack{\ell \in Y \cap U \\ \abs{i(\ell)} > R}} \abs{\frac{i(\ell)-i'(\ell)}{i(\ell)i'(\ell)}} = \max_{\substack{\ell \in Y \cap U \\ \abs{i(\ell)} > R}} \abs{\frac{1}{i'(\ell)}-\frac{1}{i(\ell)}} < \frac{1}{R}\,.
    \end{align*}
    Hence, as $R \to +\infty$, and as $(U',i') \to (U,i)$ in $\overline{\Psi}^r$, we have that $e_{i'(Y \cap U')}(z) \to e_{i(Y \cap U)}(z)$ uniformly on $\abs{z} \leq \epsilon$ and thus on $\abs{z} \leq 1$.
    \item Since $\Xi_Y$ is surjective, we can let $i : U \to \CCi$ have $i(Y \cap U) = \Lambda$.
    Let $D \defeq \min_{0 \neq \lambda \in \Lambda} \abs{\lambda}$, let $R > D$ be large, let $a \in A$ with $\abs{a} > 1$, and let $\epsilon > 0$ be small; in particular, let $\epsilon < \abs{a}^{-1} D$.
    Then by \cref{prop:dd_LL_radius}, we have that for $\Lambda'$ sufficiently close to $\Lambda$, $\abs{e_{\Lambda'}(z)-e_\Lambda(z)} < \epsilon$ for all $\abs{z} < R$.
    Thus for each $\lambda' \in \Lambda'$ with $\abs{\lambda'} < R$, we have that $\abs{e_\Lambda(\lambda')} < \epsilon$; thus by \cref{prop:e_Lambda_props} item (3) we must have $\abs{\lambda'-\lambda} < \epsilon$ for some $\lambda \in \Lambda$; since $\epsilon < D$, this $\lambda$ is unique.
    Now assume for contradiction that there are two distinct $\lambda_1', \lambda_2' \in \Lambda'$ with $\abs{\lambda_1'-\lambda} < \epsilon > \abs{\lambda_2'-\lambda}$; then $0 \neq \lambda_3' \defeq \lambda_1'-\lambda_2'$ satisfies $\abs{\lambda_3'} < \epsilon$.
    Letting $n \in \mathbb{N}$ be the smallest such that $\abs{a^n\lambda_3'} \geq \epsilon$, we have that $a^n \lambda_3' \in \Lambda'$ and $\abs{a^n\lambda_3'} < \abs{a}\epsilon < D < R$.
    Similarly to before, since $\abs{e_\Lambda(a^n\lambda_3')} < \epsilon$, there must be a $\lambda \in \Lambda$ such that $\abs{\lambda-a^n\lambda_3'} < \epsilon$.
    Since $\abs{a^n\lambda_3'} \geq \epsilon$, we cannot have $\lambda = 0$, but then $\abs{\lambda} \geq D$ and $\abs{a^n\lambda_3'} < D$ give $\abs{\lambda-a^n\lambda_3'} \geq D > \epsilon$, a contradiction.\footnote{We leave the proof that the assignment $\lambda' \to \lambda$ is $A$-linear to the reader.}
    As a corollary, $\min_{0 \neq \lambda' \in \Lambda'} \abs{\lambda'}$ is also equal to $D$.
    With this corollary, we can argue similarly that for each $\lambda \in \Lambda$ there is a unique $\Lambda' \in \Lambda'$ with $\abs{\lambda-\lambda'} < \epsilon$, and so we have a bijection between $\setst{\lambda \in \Lambda}{\abs{\lambda} < R}$ and $\setst{\lambda' \in \Lambda'}{\abs{\lambda'} < R}$.
    
    We then partially define $i'$ on $Y \cap U$ (and thus on $U$ by $F$-linearity) for $\Lambda'$ sufficiently close to $\Lambda$ by $i'(\ell) = \lambda'$ for $i(\ell) = \lambda$ with $\abs{i(\ell)} < R$, and we can extend the definition of $i'$ arbitrarily to a subspace $U' \subseteq U$ to account for $\lambda' \in \Lambda' \setminus F\setst{\lambda' \in \Lambda'}{\abs{\lambda'} < R}$.
    Since $\abs{i(\ell)-i'(\ell)} < \epsilon$, sending $\epsilon \to 0$ we get that $i'(\ell) \to i(\ell)$ for $\ell \in Y \cap U$.
    Moreover, since $\abs{i'(\ell)} > R$ for $\ell \in Y \setminus U$, we have that $\abs{i'(\ell)} \to \infty$ uniformly over such $\ell$.
    Thus $i' \to i$ as per \cite[\S 1.3.3.]{gekelerHigherVII}.
    \qedhere
\end{enumerate}
\end{proof}

\paragraph{}
Since Gekeler in \cite[\S 2.2, \S 1.12]{gekelerHigherVII} defines modular forms of weight $k$ (of type $0$) for $\GL{Y}$ as holomorphic functions on $\overline{\Psi^r}$ which are homogeneous of degree $-k$ and continuous on the boundary, as do we, it follows that
\begin{proposition}
    There is a bijection between our modular forms on $\overline{C} = C \cup \LL^{\leq r-1}$ and the space $\mathbf{Mod}_{k,0}(\GL{Y})$ of Gekeler's modular forms for $\GL{Y}$ of type $0$ on $\overline{\Psi^r}$.
\end{proposition}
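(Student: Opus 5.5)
The plan is to transport functions along the bijection $\Xi_Y$ of \cref{def:gek_space_bijection}. To a modular form $f$ on $\overline{C}$ of weight $k$ (continuous on $\overline{C}$, homogeneous of degree $-k$, holomorphic on $C$) we associate $F \defeq f \circ \Xi_Y \circ p$, where $p : \overline{\Psi}^r \onto \lquotient{\GL{Y}}{\overline{\Psi}^r}$ is the quotient map. Conversely, a Gekeler form $F$ for $\GL{Y}$ is invariant under $\GL{Y}$ (type $0$), so it descends to $\lquotient{\GL{Y}}{\overline{\Psi}^r}$. We then set $f \defeq F \circ \Xi_Y^{-1}$. By part (1) of \cref{prop:Gek:isotopology} these two assignments are mutually inverse. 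What remains is to check that each one preserves the three defining conditions.

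\textbf{Invariance and homogeneity.} $\GL{Y}$-invariance of $F$ is automatic, since $F$ factors through $p$. For homogeneity, note that scaling an embedding $i$ by $t \in \CCi^\times$ gives $(ti)(Y \cap U) = t\, i(Y\cap U)$. Hence $\Xi_Y$ commutes with the $\CCi^\times$-actions, and homogeneity of degree $-k$ passes in both directions.

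\textbf{Continuity.} This is handled by parts (2) and (3) of \cref{prop:Gek:isotopology}, which say that $\Xi_Y$ preserves convergent sequences in both directions. Since both spaces are first countable (ours is metric, and Gekeler's topology is described by sequences in \cite[\S 1.3]{gekelerHigherVII}), sequential continuity suffices.
\begin{itemize}
\item Suppose $f$ is continuous and $i' \to i$. Part (2) gives $\Xi_Y(i') \to \Xi_Y(i)$, so $F(i') \to F(i)$.
\item Conversely, suppose $F$ is continuous and $\Lambda' \to \Lambda$. Part (3) supplies lifts $i' \to i$, so $f(\Lambda') = F(i') \to F(i) = f(\Lambda)$.
\end{itemize}

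\textbf{Holomorphy.} Restricted to $\Psi^r = \Psi_{F^r}$, the map $i \mapsto i(Y)$ is exactly the lattice map $\psi \mapsto \psi(F^r \cap g\hat{A}^r)$ of \cref{thm:PsiDoubleQuotientIso}, for a $g$ with $F^r \cap g\hat{A}^r = Y$ (or $Y$ transposed to our row/column convention). By \cref{prop:LLNR_components} and the construction of $C$ via $\Gamma_g$, this identifies $\lquotient{\GL{Y}}{\Psi^r}$ with $C$ as rigid analytic spaces. Holomorphy of $f$ on $C$ is therefore equivalent to holomorphy of $F$ on $\Psi^r$.

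The main obstacle is this last identification. One has to match $\GL{Y}$ with the appropriate arithmetic group $\Gamma_g$ at level $N = A$, and reconcile Gekeler's analytic structure on $\Psi^r$ with ours, which was defined via $\Omega^r \times \CCi^\times$ in \cref{prop:PsiIsoRigid}. I would first check that the two structures agree on $\Psi^r \subset \CCi^r$, since both are open subsets of $\CCi^r$. I would then verify that $\GL{Y} = g\GL[r]{\hat{A}}g^{-1} \cap \GL[r]{F}$, up to transpose, for $g$ chosen with $g\hat{A}^r \cap F^r = Y$.
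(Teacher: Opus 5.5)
Your proposal is correct and takes essentially the paper's route. The paper also transports functions along $\Xi_Y$, using \cref{prop:Gek:isotopology} for bijectivity and for compatibility of the topologies, and then observes that Gekeler's defining conditions (holomorphy, homogeneity of degree $-k$, continuity up to the boundary) coincide with ours; you simply spell out the homogeneity and holomorphy checks the paper leaves implicit, notably the identification $\GL{Y} = g\GL[r]{\hat{A}}g^{-1}\cap\GL[r]{F}$ that matches the main stratum with the component $C$.
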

Since the $\size\Cl(F)$ irreducible components $C$ of $\LL^r$ share the same boundary in our definitions, we also have the following
\begin{theorem}
    Let $(Y_c)_{c \in \Cl(F)}$ be a set of $A$-lattices in $F^r$ with $\pi(Y_c) = c$ for each $c \in \Cl(F)$, and let $k$ be a nonnegative integer.
    Then there is an injection
    \begin{align*}
        \StrongMF^{k,r} &\longinto \prod_{c \in \Cl(F)} \mathbf{Mod}_{k,0}(\GL{Y_c}) \\
        f : \LL^{\leq r} \to \CCi &\longmapsto (f\circ\Xi_{Y_c})_{c \in \Cl(F)}
    \end{align*}
    from our modular forms of weight $k$ for $\LL^{\leq r}$ to a tuple of Gekeler modular forms of weight $k$ and type $0$.
    Moreover, when restricted to cusp forms, this is a bijection.

    These induce an injection and a bijection of the algebras
    \[ \StrongMF^r \longinto \prod_{c \in \Cl(F)} \mathbf{Mod}^0(\GL{Y_c}) \;\;\:\text{and}\;\;\: \CuspMF^r \longionto \prod_{c \in \Cl(F)} \mathbf{Mod}^{0,cusp}(\GL{Y_c}) \]
    of our modular forms to Gekeler modular forms respectively.
\end{theorem}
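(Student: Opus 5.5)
The plan is to assemble the map component by component, using the preceding Proposition, which identifies our modular forms on $\overline{C} = C \cup \LL^{\leq r-1}$ with $\mathbf{Mod}_{k,0}(\GL{Y})$ via $\Xi_Y$. This identification rests on \cref{prop:Gek:isotopology}: $\Xi_Y$ is a bijection, and it matches convergent sequences in both directions, so continuity on $\overline{\Psi}^r$ corresponds to continuity on $\overline{C}$.

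For each $c \in \Cl(F)$, let $C_c$ be the irreducible component of $\LL^r$ with $\pi(C_c) = c$. For $f \in \StrongMF^{k,r}$, the restriction $f|_{\overline{C_c}}$ is continuous on $\overline{C_c}$. It is homogeneous of degree $-k$, and it is holomorphic on $C_c$, which is open in $\LL^{\leq r}$ by \cref{prop:LLR_irredComponentOpen}. Hence $f \circ \Xi_{Y_c}$ is a Gekeler modular form for $\GL{Y_c}$. Holomorphy is transported because $\Xi_{Y_c}$ restricted to $\Psi^r$ is the quotient map $\omega \mapsto \omega(Y_c)$, and this is compatible with the rigid structure on $\LL^r$ coming from \cref{thm:PsiDoubleQuotientIso}. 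Linearity of the map is clear, and so is multiplicativity, so the algebra statements follow once the statement for each weight is known.

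\textbf{Injectivity.} Since $\LL^{\leq r} = \bigcup_c \overline{C_c}$ by \cref{prop:LLleqR_union}, if every $f\circ\Xi_{Y_c}$ vanishes then $f$ vanishes on every $\overline{C_c}$ by the surjectivity in \cref{prop:Gek:isotopology}(1). Hence $f=0$.

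\textbf{Surjectivity on cusp forms.} Take a tuple $(g_c)_c$ of Gekeler cusp forms. Define $f$ on $C_c$ by $f = g_c \circ \Xi_{Y_c}^{-1}$, and set $f=0$ on $\LL^{\leq r-1}$. This is well defined because the $C_c$ are disjoint and their closures meet only in the common boundary $\LL^{\leq r-1}$ (\cref{thm:LLR_irredComponent_completion}), where every $g_c$ vanishes. Homogeneity and holomorphy on $\LL^r$ hold component-wise. For continuity, each $\overline{C_c}$ is closed (\cref{coro:LLR_irredComponentBdComplete}), and there are finitely many of them covering $\LL^{\leq r}$, so the pasting lemma reduces continuity of $f$ to continuity on each $\overline{C_c}$. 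That in turn follows from continuity of $g_c$ together with \cref{prop:Gek:isotopology}(3): a sequence $\Lambda' \to \Lambda$ in $\overline{C_c}$ lifts to a sequence $i' \to i$. Finally, $f$ vanishes on the boundary, so it is a cusp form mapping to $(g_c)_c$.

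The main obstacle is the transfer of holomorphy and continuity through $\Xi_Y$, specifically checking that the sequential characterisation in \cref{prop:Gek:isotopology}(2),(3) is enough to make $\Xi_Y$ a homeomorphism onto $\overline{C}$. Both topologies are first countable (ours is metric, and I would assume Gekeler's strong topology is too), so sequential continuity suffices. The rest is bookkeeping. For non-cusp forms surjectivity can fail: the $g_c$ need not agree on the shared boundary $\LL^{\leq r-1}$, so the pasted function may not be well defined there.
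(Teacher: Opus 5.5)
Your proposal is correct and follows essentially the paper's route. The paper deduces this theorem from the preceding component-wise bijection $\overline{C}\leftrightarrow\mathbf{Mod}_{k,0}(\GL{Y})$ (resting on \cref{prop:Gek:isotopology}) together with the fact that all components $C_c$ share the boundary $\LL^{\leq r-1}$, and your gluing argument over the finitely many closed sets $\overline{C_c}$ is the same one the paper uses for the analogous BBP cusp-form bijection, so your version just makes explicit the steps the paper leaves implicit.
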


Considering the level $N$ introduces additional technical considerations.
We trust that the reader will be convinced of the following
\toGekelerLevelN

\begin{proof}[Proof sketch]
That $\Xi_{Y_c}(i)$ is an appropriate lattice and that the topologies are compatible follows from \cref{prop:Gek:isotopology} and the fact that in $\StrongMF_N^{k,r}$ the different components share a boundary.

To show that $t \circ i^{-1}$ is an $r$-inverse level structure for an injection $i : U \into \CCi$ and an bijection $t : N^{-1} Y_c/Y_c \ionto (N^{-1}/A)^r$, note first that for $\Lambda = \Xi_{Y_c}(i)$, we have that $N^{-1} \Lambda = N^{-1} i(Y_c \cap U) = i(N^{-1} Y_c \cap U)$ since $i$ is $A$-linear, so that $N^{-1}\Lambda / \Lambda = i((N^{-1} Y_c/Y_c) \cap U)$.
Thus $i^{-1}$ is an $A$-linear bijection from $N^{-1} \Lambda / \Lambda$ to $(N^{-1}Y_c/Y_c) \cap U \subseteq N^{-1}Y_c/Y_c$, so that $t \circ i^{-1}$ is an $A$-linear injection from $N^{-1} \Lambda/\Lambda$ to $(N^{-1}/A)^r$.

The injectivity of the above maps follows from the fact that the map
\[ X_N^r : (c,t,i) \longmapsto (\Xi_{Y_c}(i), t \circ i^{-1}) \in \LLNRi \] for $c \in \Cl(F)$, $t \in \Bij(Y_c)$, and $i : U \to \CCi$ is surjective, and the injectivity when restricted to cusp forms follows since it is injective on the main rank $r$ stratum (when $U = F^r$).
\end{proof}

%% file: text/6__conclusion.tex
\section{Conclusion} \label{sec:conclusion}

In this paper we have presented a theory of modular forms of arbitrary finite rank $r$, similarly to the work of Gekeler and of Basson, Breuer, and Pink.
In contrast to these other works, ours does not make much use of rigid analysis and uses continuity in a metric space to define when a weak modular form is in fact a strong modular form.
Hence it may be more accessible to those unfamiliar with rigid analysis.
We have also introduced actions of $\GL[r]{A/N}$ and $\cJ(A)$ on these modular forms, which as far as we can tell is novel and will hopefully have effects on the general theory of modular forms.

There are a number of directions in which this work may be extended, which we hope will be investigated in future:
\begin{itemize}
  \item Dimension formulae for the vector spaces of modular forms of weight $k$.
  \item A characterisation of when the action of $\cJ(A)$ on $\LL_N^r$ extends to a homeomorphism on $\LLNRi$ and thus to an action on modular forms for $K(N)$.
  \item Generalising the principal congruence subgroup $K(N)$ to a general compact open subgroup $\mathcal{K}$ of $\GL[r]{\finadele}$.
  \item Including the type $m \in \ZZ$ of a modular form as in \cite{BBP} and in \cite{gekelerHigherVII}.
\end{itemize}

%% file: phdthesis.bib
@phdthesis{baker2020lattice,
  author     = {Baker, Liam},
  school     = {Stellenbosch University},
  title      = {{D}rinfeld modular forms of higher rank from a lattice-oriented point of view},
  year       = {2020},
  eprinttype = {hdl},
  eprint     = {10019.1/108242},
  note       = {\url{http://hdl.handle.net/10019.1/108242}},
  url        = {http://hdl.handle.net/10019.1/108242}
}

@phdthesis{basson2014coefficients,
  title      = {On the coefficients of {D}rinfeld modular forms of higher rank},
  author     = {Basson, Dirk},
  year       = {2014},
  school     = {Stellenbosch University},
  eprinttype = {hdl},
  eprint     = {10019.1/86387},
  note       = {\url{http://hdl.handle.net/10019.1/86387}}
}

@article{basson2017product,
  title     = {A product formula for the higher rank {D}rinfeld discriminant function},
  author    = {Basson, Dirk},
  journal   = {Journal of Number Theory},
  volume    = {178},
  pages     = {190--200},
  year      = {2017},
  publisher = {Elsevier},
  doi       = {10.1016/j.jnt.2017.02.010},
  note      = {\url{https://doi.org/10.1016/j.jnt.2017.02.010}}
}

@article{basson2017certain,
  title      = {On certain {D}rinfeld modular forms of higher rank},
  author     = {Basson, Dirk and Breuer, Florian},
  journal    = {Journal de Th{\'e}orie des Nombres de Bordeaux},
  volume     = {29},
  number     = {3},
  pages      = {827--843},
  year       = {2017},
  publisher  = {JSTOR},
  eprint     = {26274100},
  eprinttype = {jstor},
  note       = {\url{https://www.jstor.org/stable/26274100}}
}

@article{BBPI,
  shorthand   = {BBP1},
  title       = {Drinfeld modular forms of arbitrary rank, {P}art {I}: {A}nalytic {T}heory},
  author      = {Basson, Dirk and Breuer, Florian and Pink, Richard},
  journal     = {arXiv preprint},
  year        = {2018},
  pages       = {1--24},
  eprint      = {1805.12335},
  eprinttype  = {arXiv},
  note        = {\url{https://arxiv.org/abs/1805.12335}},
  eprintclass = {math.NT}
}

@article{BBPII,
  shorthand     = {BBP2},
  title         = {Drinfeld modular forms of arbitrary rank, {P}art {II}: {C}omparison with {A}lgebraic {T}heory},
  author        = {Basson, Dirk and Breuer, Florian and Pink, Richard},
  journal       = {arXiv preprint},
  year          = {2018},
  pages         = {1--29},
  eprint        = {1805.12337},
  note          = {\url{https://arxiv.org/abs/1805.12337}},
  archiveprefix = {arXiv},
  primaryclass  = {math.NT}
}

@article{BBPIII,
  shorthand     = {BBP3},
  title         = {Drinfeld modular forms of arbitrary rank, {P}art {III}: {E}xamples},
  author        = {Basson, Dirk and Breuer, Florian and Pink, Richard},
  journal       = {arXiv preprint},
  year          = {2018},
  pages         = {1--30},
  eprint        = {1805.12339},
  note          = {\url{https://arxiv.org/abs/1805.12339}},
  archiveprefix = {arXiv},
  primaryclass  = {math.NT}
}

@article{BBP,
  author  = {Basson, Dirk and Breuer, Florian and Pink, Richard},
  journal = {Memoirs of the American Mathematical Society},
  number  = {1531},
  title   = {{D}rinfeld {M}odular {F}orms of {A}rbitrary {R}ank},
  volume  = {304},
  year    = {2024},
  doi     = {10.1090/memo/1531},
  note    = {\url{https://doi.org/10.1090/memo/1531}},
  pages   = {1-77}
}

@article{breuer2009drinfeld,
  title     = {Drinfeld modular polynomials in higher rank},
  author    = {Breuer, Florian and Rück, Hans-Georg},
  journal   = {Journal of Number Theory},
  volume    = {129},
  number    = {1},
  pages     = {59--83},
  year      = {2009},
  publisher = {Elsevier},
  doi       = {10.1016/j.jnt.2008.07.010},
  note      = {\url{https://doi.org/10.1016/j.jnt.2008.07.010}}
}

@article{breuer2010torsion,
  title     = {Torsion bounds for elliptic curves and {D}rinfeld modules},
  author    = {Breuer, Florian},
  journal   = {Journal of Number Theory},
  volume    = {130},
  number    = {5},
  pages     = {1241--1250},
  year      = {2010},
  publisher = {Elsevier},
  doi       = {10.1016/j.jnt.2009.11.009},
  note      = {\url{https://doi.org/10.1016/j.jnt.2009.11.009}}
}

@inproceedings{cornelissen1997survey,
  title        = {A survey of {D}rinfeld modular forms},
  author       = {Cornelissen, Gunther},
  booktitle    = {Proceedings of the workshop on {D}rinfeld modules, modular schemes and applications},
  editor       = {Gekeler, Ernst-Ulrich and van der Put, M and Reversat, M and Van Geel, J},
  pages        = {167--187},
  year         = {1997},
  organization = {World Scientific},
  note         = {\url{https://doi.org/10.1142/9789814529990}},
  doi          = {10.1142/9789814529990}
}

@article{drinfeld1974english,
  shorthand    = {Dri|en},
  title        = {Elliptic modules},
  author       = {Drinfel'd, V.G.},
  year         = {1974},
  month        = {4},
  journal      = {Mathematics of the USSR-Sbornik},
  shortjournal = {Math. USSR-Sb.},
  volume       = {23},
  number       = {4},
  pages        = {561--592},
  language     = {English},
  publisher    = {American Mathematical Society},
  doi          = {10.1070/sm1974v023n04abeh001731},
  note         = {\url{https://doi.org/10.1070/sm1974v023n04abeh001731}}
}

@article{gekelerHigherI,
  shorthand = {GHR|1},
  author    = {Gekeler, Ernst-Ulrich},
  title     = {On {D}rinfeld modular forms of higher rank},
  journal   = {Journal de Théorie des Nombres de Bordeaux},
  publisher = {Société Arithmétique de Bordeaux},
  volume    = {29},
  number    = {3},
  year      = {2017},
  pages     = {875--902},
  doi       = {10.5802/jtnb.1005},
  note      = {\url{https://doi.org/10.5802/jtnb.1005}}
}

@article{gekelerHigherII,
  shorthand = {GHR|2},
  title     = {On {D}rinfeld modular forms of higher rank {II}},
  journal   = {Journal of Number Theory},
  year      = {2019},
  pages     = {4--32},
  issn      = {0022-314X},
  doi       = {10.1016/j.jnt.2018.11.011},
  note      = {\url{https://doi.org/10.1016/j.jnt.2018.11.011}},
  author    = {Gekeler, Ernst-Ulrich}
}

@article{gekelerHigherIII,
  shorthand = {GHR|3},
  title     = {On {D}rinfeld modular forms of higher rank {III}: {T}he analogue of the $k/12$-formula},
  journal   = {Journal of Number Theory},
  volume    = {192},
  pages     = {293--306},
  year      = {2018},
  issn      = {0022-314X},
  doi       = {10.1016/j.jnt.2018.04.018},
  note      = {\url{https://doi.org/10.1016/j.jnt.2018.04.018}},
  author    = {Gekeler, Ernst-Ulrich}
}

@article{gekelerHigherIV,
  shorthand = {GHR|4},
  title     = {On {D}rinfeld modular forms of higher rank {IV}: {M}odular forms with level},
  journal   = {Journal of Number Theory},
  year      = {2019},
  pages     = {33-74},
  issn      = {0022-314X},
  doi       = {10.1016/j.jnt.2019.04.019},
  note      = {\url{https://doi.org/10.1016/j.jnt.2019.04.019}},
  author    = {Gekeler, Ernst-Ulrich}
}

@article{gekelerHigherV,
  shorthand = {GHR|5},
  title     = {On {D}rinfeld modular forms of higher rank {V}: The behavior of distinguished forms on the fundamental domain},
  journal   = {Journal of Number Theory},
  volume    = {222},
  pages     = {75-114},
  year      = {2021},
  issn      = {0022-314X},
  doi       = {\url{https://doi.org/10.1016/j.jnt.2020.10.007}},
  note      = {\url{https://doi.org/10.1016/j.jnt.2020.10.007}},
  url       = {\url{https://www.sciencedirect.com/science/article/pii/S0022314X20303322}},
  author    = {Gekeler, Ernst-Ulrich}
}

@article{gekelerHigherVI,
  shorthand = {GHR|6},
  title     = {On {D}rinfeld modular forms of higher rank {VI}: The simplicial complex associated with a coefficient form},
  journal   = {Journal of Number Theory},
  volume    = {252},
  pages     = {326-378},
  year      = {2023},
  issn      = {0022-314X},
  doi       = {\url{https://doi.org/10.1016/j.jnt.2023.05.004}},
  note      = {\url{https://doi.org/10.1016/j.jnt.2023.05.004}},
  url       = {\url{https://www.sciencedirect.com/science/article/pii/S0022314X23001191}},
  author    = {Gekeler, Ernst-Ulrich}
}

@article{gekelerHigherVII,
  shorthand = {GHR|7},
  title     = {On {D}rinfeld modular forms of higher rank {VII}: {E}xpansions at the boundary},
  journal   = {Journal of Number Theory},
  volume    = {269},
  pages     = {260-340},
  year      = {2025},
  issn      = {0022-314X},
  doi       = {10.1016/j.jnt.2024.09.015},
  note      = {\url{https://doi.org/10.1016/j.jnt.2024.09.015}},
  url       = {\url{https://www.sciencedirect.com/science/article/pii/S0022314X24002269}},
  author    = {Gekeler, Ernst-Ulrich}
}

@article{gekeler1999survey,
  title   = {A survey on {D}rinfeld modular forms},
  author  = {Gekeler, Ernst-Ulrich},
  journal = {Turkish J. Math},
  volume  = {23},
  number  = {4},
  pages   = {485--518},
  note    = {\url{https://journals.tubitak.gov.tr/cgi/viewcontent.cgi?article=2905&context=math}},
  year    = {1999}
}

@article{goss1980eisenstein,
  title     = {$\pi$-adic {E}isenstein series for function fields},
  author    = {Goss, David},
  journal   = {Compositio Mathematica},
  volume    = {41},
  number    = {1},
  pages     = {3--38},
  year      = {1980},
  publisher = {Sijthoff \& Noordhoff International Publishers},
  url       = {http://www.numdam.org/item/CM_1980__41_1_3_0},
  note      = {\url{http://www.numdam.org/item/CM_1980__41_1_3_0}}
}

@inproceedings{goss1992integrals,
  title     = {Some integrals attached to modular forms in the theory of function fields},
  author    = {Goss, David},
  booktitle = {The Arithmetic of Function Fields},
  editor    = {Goss, David and Rosen, Michael I and Hayes, David R},
  series    = {Ohio State University Mathematical Research Institute publications},
  pages     = {227--251},
  year      = {1992},
  publisher = {Walter de Gruyter},
  note      = {\url{https://doi.org/10.1515/9783110886153}},
  url       = {https://doi.org/10.1515/9783110886153},
  doi       = {10.1515/9783110886153}
}

@article{gekeler2026expansions,
  title={Modular forms for $\mathrm{GL}(r,\mathbb{F}_q[T])$: $t$-expansions of the basic forms},
  author={Gekeler, Ernst-Ulrich},
  journal={The Ramanujan Journal},
  volume={69},
  number={4},
  pages={97},
  year={2026},
  publisher={Springer},
  note={\url{https://doi.org/10.1007/s11139-026-01359-9}},
}

@article{hubschmid2013andre,
  title     = {The {A}ndré--{O}ort conjecture for {D}rinfeld modular varieties},
  author    = {Hubschmid, Patrik},
  journal   = {Compositio Mathematica},
  volume    = {149},
  number    = {4},
  pages     = {507--567},
  year      = {2013},
  publisher = {London Mathematical Society},
  doi       = {10.1112/S0010437X12000681},
  note      = {\url{https://doi.org/10.1112/S0010437X12000681}}
}

@article{kapranov1988english,
  shorthand    = {Kap|en},
  title        = {On cuspidal divisors on the modular varieties of elliptic modules},
  author       = {Kapranov, Mikhail Mikhailovich},
  journal      = {Mathematics of the USSR-Izvestiya},
  shortjournal = {Math. USSR-Izv.},
  volume       = {30},
  number       = {3},
  pages        = {533--547},
  year         = {1988},
  month        = {6},
  language     = {English},
  doi          = {10.1070/im1988v030n03abeh001029},
  note         = {\url{https://doi.org/10.1070/im1988v030n03abeh001029}}
}

@incollection{katz1973p,
  title     = {P-adic properties of modular schemes and modular forms},
  author    = {Katz, Nicholas M},
  booktitle = {Modular functions of one variable III},
  editor    = {Kuijk, Willem and Serre, Jean-Pierre},
  pages     = {69--190},
  year      = {1973},
  doi       = {10.1007/978-3-540-37802-0_3},
  note      = {\url{https://doi.org/10.1007/978-3-540-37802-0_3}},
  publisher = {Springer},
  isbn      = {978-3-540-06483-1},
  series    = {Lecture Notes in Mathematics},
  volume    = {350}
}

@article{Pink2013compactification,
  title     = {Compactification of {D}rinfeld modular varieties and {D}rinfeld modular forms of arbitrary rank},
  author    = {Pink, Richard},
  journal   = {Manuscripta mathematica},
  volume    = {140},
  number    = {3-4},
  pages     = {333--361},
  year      = {2013},
  publisher = {Springer},
  doi       = {10.1007/s00229-012-0544-3},
  note      = {\url{https://doi.org/10.1007/s00229-012-0544-3}}
}

@article{Pink2019Areciprocal,
  title   = {Compactification of {D}rinfeld {M}oduli {S}paces as {M}oduli {S}paces of {$A$}-{R}eciprocal {M}aps and {C}onsequences for {D}rinfeld {M}odular {F}orms},
  author  = {Pink, Richard},
  note    = {\url{https://doi.org/10.1090/jag/772}},
  journal = {Journal of Algebraic Geometry},
  year    = {2021},
  pages   = {477--527},
  volume  = {30},
  number  = {3}
}

@article{SchneiderStuhler1991cohomology,
  title        = {The cohomology of $p$-adic symmetric spaces},
  author       = {Schneider, Peter and Stuhler, Ulrich},
  journal      = {Inventiones Mathematicae},
  shortjournal = {U. Invent Math},
  volume       = {105},
  number       = {1},
  pages        = {47--122},
  year         = {1991},
  publisher    = {Springer},
  doi          = {10.1007/BF01232257},
  note         = {\url{https://doi.org/10.1007/BF01232257}}
}

@book{gekeler2006dmc,
  shorthand = {Ge|DMC},
  title     = {{D}rinfeld {M}odular {C}urves},
  author    = {Gekeler, Ernst-Ulrich},
  series    = {Lecture Notes in Mathematics},
  volume    = {1231},
  year      = {2006},
  publisher = {Springer},
  pagetotal = {108},
  doi       = {10.1007/BFb0072692},
  note      = {\url{https://doi.org/10.1007/BFb0072692}}
}

@book{goss2012basic,
  shorthand = {Go|Bas},
  title     = {Basic {S}tructures of {F}unction {F}ield {A}rithmetic},
  author    = {Goss, David},
  year      = {2012},
  publisher = {Springer Science \& Business Media},
  doi       = {10.1007/978-3-642-61480-4},
  note      = {\url{https://doi.org/10.1007/978-3-642-61480-4}},
  isbn      = {978-3-540-63541-3},
  pagetotal = {424}
}

@book{curtisreiner2006repr,
 shorthand = {CR|Rep},
    AUTHOR = {Curtis, Charles W. and Reiner, Irving},
     TITLE = {Representation theory of finite groups and associative algebras},
      NOTE = {Reprint of the 1962 original. \url{https://doi.org/10.1090/chel/356}},
 PUBLISHER = {AMS Chelsea Publishing, Providence, RI},
      YEAR = {2006},
     PAGES = {xiv+689},
      ISBN = {0-8218-4066-5},
       DOI = {10.1090/chel/356},
       URL = {https://doi.org/10.1090/chel/356},
}
